\apptocmd{\thebibliography}{\raggedright}{}{}
\patchcmd{\@maketitle}{\global\topskip42\p@\relax}
  {\global\topskip42\p@\relax \vspace*{-38pt}}
  {}{}
\renewcommand*{\backref}[1]{}
\renewcommand*{\backrefalt}[4]{%
    \ifcase #1 (Not cited.)%
    \or        (Cited on page~#2.)%
    \else      (Cited on pages~#2.)%
    \fi}
\newcommand{\arxiv}[1]{\href{http://arxiv.org/abs/#1}{{\tt arXiv:#1}}}
\numberwithin{equation}{section}
\theoremstyle{plain}
\newtheorem{theorem}{Theorem}[section]
\newtheorem{maintheorem}{Theorem}
\newtheorem{maintheoremprime}{Theorem}
\newtheorem{proposition}[theorem]{Proposition}
\newtheorem{lemma}[theorem]{Lemma}
\newtheorem{corollary}[theorem]{Corollary}
\newtheorem{conjecture}[theorem]{Conjecture}
\newenvironment{step}[1]
 {\stepx}
 {\endstepx}
\newenvironment{claim}[1]
 {\claimx}
 {\endclaimx}
\theoremstyle{definition}
\newtheorem{asm}[theorem]{Assumption}
\newtheorem{defn}[theorem]{Definition}
\newtheorem{notn}[theorem]{Notation}
\newenvironment{notation}[1][]{\begin{notn}[#1]\pushQED{\qed}}{\popQED \end{notn}}
\theoremstyle{remark}
\newtheorem{rmk}[theorem]{Remark}
\newenvironment{remark}[1][]{\begin{rmk}[#1] \pushQED{\qed}}{\popQED \end{rmk}}
\newtheorem{eg}[theorem]{Example}
\newenvironment{example}[1][]{\begin{eg}[#1] \pushQED{\qed}}{\popQED \end{eg}}
\DeclareMathOperator{\Hom}{Hom}
\DeclareMathOperator{\Diff}{Diff}
\DeclareMathOperator{\Image}{Im}
\DeclareMathOperator{\SL}{SL}
\DeclareMathOperator{\Spin}{Spin}
\DeclareMathOperator{\Sp}{Sp}
\DeclareMathOperator{\SO}{SO}
\newcommand\R{\ensuremath{\mathbb{R}}}
\newcommand\C{\ensuremath{\mathbb{C}}}
\newcommand\Z{\ensuremath{\mathbb{Z}}}
\newcommand\Q{\ensuremath{\mathbb{Q}}}
\newcommand\Field{\ensuremath{\mathbb{F}}}
\DeclareMathOperator{\HH}{H}
\newcommand\RH{\ensuremath{\widetilde{\HH}}}
\DeclareMathOperator{\CC}{C}
\newcommand\RC{\ensuremath{\widetilde{\CC}}}
\DeclareMathOperator{\Aut}{Aut}
\DeclareMathOperator{\Interior}{Int}
\newcommand\Set[2]{\ensuremath{\left\{\text{#1 $|$ #2}\right\}}}
\newcommand\cD{\ensuremath{\mathcal{D}}}
\newcommand\cF{\ensuremath{\mathcal{F}}}
\newcommand\cG{\ensuremath{\mathcal{G}}}
\newcommand\cH{\ensuremath{\mathcal{H}}}
\newcommand\cK{\ensuremath{\mathcal{K}}}
\newcommand\cM{\ensuremath{\mathcal{M}}}
\newcommand\cP{\ensuremath{\mathcal{P}}}
\newcommand\cQ{\ensuremath{\mathcal{Q}}}
\newcommand\fB{\ensuremath{\mathfrak{B}}}
\newcommand\fD{\ensuremath{\mathfrak{D}}}
\newcommand\fH{\ensuremath{\mathfrak{H}}}
\newcommand\fR{\ensuremath{\mathfrak{R}}}
\newcommand\bG{\ensuremath{\mathbf{G}}}
\newcommand\bb{\ensuremath{\mathbf{b}}}
\newcommand\bbD{\ensuremath{\mathbb{D}}}
\newcommand\bbL{\ensuremath{\mathbb{L}}}
\newcommand\bbT{\ensuremath{\mathbb{T}}}
\newcommand\bbX{\ensuremath{\mathbb{X}}}
\newcommand\bbY{\ensuremath{\mathbb{Y}}}
\newcommand\bbk{\ensuremath{\Bbbk}}
\newcommand\tT{\ensuremath{\widetilde{T}}}
\newcommand\tX{\ensuremath{\widetilde{X}}}
\newcommand\oU{\ensuremath{\overline{U}}}
\newcommand\ophi{\ensuremath{\overline{\phi}}}
\newcommand\opsi{\ensuremath{\overline{\psi}}}
\newcommand\omu{\ensuremath{\overline{\mu}}}
\newcommand\ub{\ensuremath{\underline{b}}}
\newcommand\up{\ensuremath{\underline{p}}}
\newcommand\uchi{\ensuremath{\underline{\chi}}}
\newcommand\hC{\ensuremath{\widehat{C}}}
\newcommand\hH{\ensuremath{\widehat{H}}}
\newcommand\hU{\ensuremath{\widehat{U}}}
\newcommand\hcD{\ensuremath{\widehat{\cD}}}
\DeclareMathOperator{\Mod}{Mod}
\DeclareMathOperator{\Moduli}{\mathcal{M}}
\newcommand\hMod{\ensuremath{\widehat{\Mod}}}
\DeclareMathOperator{\Torelli}{\mathcal{I}}
\DeclareMathOperator{\PP}{PP}
\newcommand\bbTT{\ensuremath{\mathbb{TT}}}
\newcommand\bbSB{\ensuremath{\mathbb{SB}}}
\DeclareMathOperator{\FLink}{\overrightarrow{Link}}
\DeclareMathOperator{\Simp}{\tt Simp}
\DeclareMathOperator{\tSimp}{\widetilde{\Simp}}
\newcommand\ubbk{\ensuremath{\underline{\bbk}}}
\newcommand\Romega[1]{\ensuremath{\omega^{\fR}_{#1}}}
\DeclareMathOperator{\emptysimp}{[\ ]}
\title[Stable cohomology of the moduli space of curves with level structures]{The stable cohomology of the moduli space of curves with level structures}
\author{Andrew Putman}
\address{Dept of Mathematics; University of Notre Dame; 255 Hurley Hall; Notre Dame, IN 46556}
\email{andyp@nd.edu}
\thanks{AP was supported in part by NSF grant DMS-1811210.}
\date{June 23, 2025}
\begin{document}

\newpage

\begin{abstract}
We prove that in a stable range, the rational cohomology of the moduli space of curves with level structures
is the same as that of the ordinary moduli space of curves.
\end{abstract}

\maketitle
\thispagestyle{empty}

\section{Introduction}
\label{section:introduction}

Let $\Moduli_{g,p}$ be the moduli stack of smooth genus $g$ algebraic curves over
$\C$ equipped with $p$ distinct ordered marked points.
The fundamental group of $\Moduli_{g,p}$ is the
mapping class group $\Mod_{g,p}$ of an oriented genus $g$ surface
$\Sigma_{g,p}$ with $p$ punctures, i.e., the group of isotopy classes of
orientation-preserving diffeomorphisms of $\Sigma_{g,p}$ that fix each puncture.
In fact, $\Moduli_{g,p}$ is a classifying stack for $\Mod_{g,p}$, so
\[\HH^{\bullet}(\Moduli_{g,p};\Q) \cong \HH^{\bullet}(\Mod_{g,p};\Q).\]
There is a rich interplay between the topology of $\Mod_{g,p}$ and the algebraic
geometry of $\Moduli_{g,p}$.  In this paper, we study the cohomology of certain
finite covers of $\Moduli_{g,p}$, or equivalently
finite-index subgroups of $\Mod_{g,p}$.

\subsection{Analogy}
More generally, let $\Sigma_{g,p}^b$ be an oriented genus $g$ surface with $p$ punctures
and $b$ boundary components and let $\Mod_{g,p}^b$ be its mapping class group, i.e., the
group of isotopy classes of orientation-preserving diffeomorphisms of $\Sigma_{g,p}^b$
that fix each puncture and boundary component pointwise.  We will omit $p$ or $b$ if it
vanishes.  There is a fruitful analogy between $\Mod_{g,p}^b$ and arithmetic groups
like $\SL_n(\Z)$ (see, e.g., \cite{BridsonVogtmann}).  This table lists some parallel structures and results:

\medskip
\begin{tabular}{l|l|l}
& $\SL_n(\Z)$ & $\Mod_{g,p}^b$ \\
\hline
natural action                     & vector in $\Z^n$                                               & curve on $\Sigma_{g,p}^b$ \\
associated space                   & locally symmetric space                                        & $\Moduli_{g,p}$ \\
normal form                        & Jordan normal form                                             & Thurston normal form (see \cite{FLP}) \\
Bieri--Eckmann duality             & Borel--Serre \cite{BorelSerreCorners}                          & Harer \cite{HarerVCD} \\
homological stability              & Charney \cite{CharneyStability}, Maazen \cite{MaazenStability} & Harer \cite{HarerStability} \\
calculation of stable $\HH^{\bullet}$ & Borel \cite{BorelStability}                                    & Madsen--Weiss \cite{MadsenWeiss}
\end{tabular}
\medskip

\noindent
Our main theorem gives another entry in this table.  It is related to but different from
homological stability, so we discuss this first.

\subsection{Homological stability}
For simplicity, we restrict to surfaces without punctures.
An embedding $\Sigma_{g}^b \hookrightarrow \Sigma_{g'}^{b'}$
induces a homomorphism
$\Mod_{g}^b \rightarrow \Mod_{g'}^{b'}$ that extends mapping classes by the identity.  Harer \cite{HarerStability}
proved that the induced map
$\HH^k(\Mod_{g'}^{b'}) \rightarrow \HH^k(\Mod_{g}^{b})$
is an isomorphism if $g \gg k$.  The cohomology in
this regime is known as the stable cohomology.  Madsen--Weiss \cite{MadsenWeiss}
proved that rationally it is a polynomial
algebra in classes $\kappa_n \in \HH^{2n}$ called the Miller--Morita--Mumford classes.
Some surveys about this include \cite{GalatiusSurvey, HatcherSurvey, WahlHandbook, WahlSurvey}.

\subsection{Borel stability}
Borel's stability theorem \cite{BorelStability} is about another kind of stability.
Roughly speaking, it says that in a stable range,
the rational cohomology of a lattice $\Gamma$ in a semisimple Lie group $\bG$
depends only on $\bG$, not on $\Gamma$.  In particular, it is unchanged when you replace $\Gamma$ by
a finite-index subgroup.  

For example, $\Gamma = \SL_n(\Z)$ is a lattice in $\bG = \SL_n(\R)$.
For $\ell \geq 2$, define $\SL_n(\Z,\ell)$ be the level-$\ell$ subgroup of $\SL_n(\Z)$, i.e., the kernel of the action of $\SL_n(\Z)$ on
$(\Z/\ell)^n$.  We thus have a short exact sequence
\begin{equation}
\label{eqn:sllevelseq}
1 \longrightarrow \SL_n(\Z,\ell) \longrightarrow \SL_n(\Z) \longrightarrow \SL_n(\Z/\ell) \longrightarrow 1.
\end{equation}
Borel's stability theorem implies that the inclusion $\SL_n(\Z,\ell) \hookrightarrow \SL_n(\Z)$
induces an isomorphism\footnote{We have switched to homology since that is more natural for
the subsequent discussion.} $\HH_k(\SL_n(\Z,\ell);\Q) \cong \HH_k(\SL_n(\Z);\Q)$ for $n \gg k$.
Note that this involves making $\SL_n(\Z)$ smaller by passing to a finite-index subgroup
rather than larger by increasing $n$.
See \cite{CharneyCongruence} and \cite[Theorem C]{PutmanTwistedStability} for direct proofs 
that passing to $\SL_n(\Z,\ell)$ does not change
the stable rational homology.

\subsection{Level-\texorpdfstring{$\ell$}{l} subgroup}
For $\ell \geq 2$, the level-$\ell$ subgroup of $\Mod_{g,p}^b$, denoted
$\Mod_{g,p}^b(\ell)$, is the kernel of the action of $\Mod_{g,p}^b$ on
$\HH_1(\Sigma_{g,p}^b;\Z/\ell)$.  This action preserves the algebraic intersection form,
which is a symplectic form if $p+b \leq 1$.  In that case, we have a short exact sequence
\[1 \longrightarrow \Mod_{g,p}^b(\ell) \longrightarrow \Mod_{g,p}^b \longrightarrow \Sp_{2g}(\Z/\ell) \longrightarrow 1\]
that is analogous to \eqref{eqn:sllevelseq}.  For $p+b \geq 2$, we get a similar exact sequence, but
with a more complicated cokernel.  For $b=0$ and $p \leq 1$, the associated finite cover of
$\Moduli_{g,p}$ is the moduli space $\Moduli_{g,p}[\ell]$ of smooth genus-$g$ curves over $\C$
with $p$ marked points equipped with a full level-$\ell$ structure, i.e., a basis for the $\ell$-torsion in their
Jacobian.\footnote{The subgroup of $\Mod_{g,p}$ corresponding to $\Moduli_{g,p}[\ell]$ is the kernel $\Mod_{g,p}[\ell]$
of the action of $\Mod_{g,p}$ on $\HH_1(\Sigma_g;\Z/\ell)$
coming from the map $\Mod_{g,p} \rightarrow \Mod_g$ that fills in the punctures.  In
general $\Mod_{g,p}[\ell]$ is larger than $\Mod_{g,p}(\ell)$, but they are equal for $p \leq 1$.
We also prove a theorem for $\Mod_{g,p}[\ell]$ (see \S \ref{section:transfer}).}

\subsection{Main theorem}
Since $\Mod_{g,p}^b$ is not a lattice in a Lie group, the only potential analogue of the Borel stability theorem
that might possibly make sense for it would involve passing to a finite-index subgroup like $\Mod_{g,p}^b(\ell)$.  Our main
theorem is about precisely this:

% Bound here
\begin{maintheorem}
\label{maintheorem:mod}
Let $g,p,b \geq 0$ and $\ell \geq 2$.  Then the map
$\HH_k(\Mod_{g,p}^b(\ell);\Q) \rightarrow \HH_k(\Mod_{g,p}^b;\Q)$ induced
by the inclusion $\Mod_{g,p}^b(\ell) \hookrightarrow \Mod_{g,p}^b$ is an isomorphism
if $g \geq 2k^2+7k+2$.
\end{maintheorem}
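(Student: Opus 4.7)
The plan is to use a Lyndon--Hochschild--Serre reduction to show that the theorem is equivalent to the triviality of a certain conjugation action, and then verify that triviality by combining homological stability for $\Mod_{g,p}^b(\ell)$ with a geometric support argument.

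First, consider the spectral sequence associated to the short exact sequence $1 \to \Mod_{g,p}^b(\ell) \to \Mod_{g,p}^b \to Q \to 1$, where $Q$ is the finite quotient (a subgroup or extension of $\Sp_{2g}(\Z/\ell)$ depending on $p,b$). Since $Q$ is finite and the coefficients are rational, $\HH_p(Q;V) = 0$ for $p>0$ and any $\Q[Q]$-module $V$, so the spectral sequence collapses to give
\[
\HH_k(\Mod_{g,p}^b;\Q) \;\cong\; \HH_k(\Mod_{g,p}^b(\ell);\Q)_Q,
\]
and the isomorphism is induced by the inclusion. Thus the theorem reduces to showing that $Q$ acts trivially on $\HH_k(\Mod_{g,p}^b(\ell);\Q)$ whenever $g \geq 2k^2 + 7k + 2$.

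Second, to establish this triviality, I would first prove that every class in $\HH_k(\Mod_{g,p}^b(\ell);\Q)$ lies in the image of the stabilization map from $\HH_k(\Mod_{g_0,p}^b(\ell);\Q)$ for some $g_0$ linear in $k$, via the inclusion of a subsurface $\Sigma_{g_0,p}^b \subset \Sigma_{g,p}^b$. Such a stability statement can be extracted either from the coefficient-system machinery of Randal-Williams--Wahl applied to $\Q[Q]$, or from direct arguments on arc and curve complexes as in the author's prior work. Given a stabilized class $\alpha = i_*\alpha_0$ and any $\phi \in \Mod_{g,p}^b$ representing $q \in Q$, the translate $q_*\alpha$ depends only on the coset $\phi \cdot \Mod_{g,p}^b(\ell)$. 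The key claim is then that this coset contains a representative $\phi'$ supported in the complementary subsurface $\Sigma_{g-g_0,1}$: any such $\phi'$ commutes with the image of $i_*$ and therefore fixes $\alpha$, giving $q_*\alpha = \alpha$.

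The main obstacle is this last realization claim: a general symplectic action on $\HH_1(\Sigma_{g,p}^b;\Z/\ell)$ need not preserve the splitting induced by the subsurface decomposition, so not every element of $Q$ can be represented by a mapping class supported on a prescribed complement. To overcome this one must either enlarge the complement, iterate the argument over the simplices of the combinatorial complex driving the stability proof, or simultaneously modify both $\alpha$ and $\phi'$ using elements of $\Mod_{g,p}^b(\ell)$ to align the support. I expect the quadratic bound $g \geq 2k^2+7k+2$ to emerge precisely from such iteration: the stability spectral sequence has depth $O(k)$, each level of the induction costs an additional $O(k)$ of genus for the buffer region in which one realizes the required symplectic action, and these costs compound to $O(k^2)$. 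Carrying out this bookkeeping — in particular, controlling which cosets in $Q$ are realizable inside each prescribed subsurface at each stage — will be the technical heart of the proof.
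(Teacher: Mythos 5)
Your opening reduction (the transfer/Hochschild--Serre collapse identifying $\HH_k(\Mod_{g,p}^b;\Q)$ with the $Q$-coinvariants of $\HH_k(\Mod_{g,p}^b(\ell);\Q)$) is exactly how the paper begins, and the geometric mechanism you propose --- killing the action of a mapping class by arranging it to commute with a stabilized subsurface --- is also the right one. The genuine gap is the step you treat as an available input: the claim that every class of $\HH_k(\Mod_{g,p}^b(\ell);\Q)$ lies in the image of a \emph{single} stabilization map from $\HH_k(\Mod_{g_0,p}^b(\ell);\Q)$, to be ``extracted'' from Randal-Williams--Wahl with coefficients $\Q[Q]$ or from existing curve-complex arguments. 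Neither source works: $\Q[Q]$ with $Q$ a quotient of $\Sp_{2g}(\Z/\ell)$ is not a finite-degree (polynomial) coefficient system, and $\Mod_g^1(\ell)$ does not act transitively on the vertices of the complexes driving stability for $\Mod_g^1$, which is precisely why the standard machine fails here --- and must fail for any argument that would also apply integrally, since the integral statement is false (Perron, Sato, Putman torsion classes). The surjectivity statement you want is essentially Theorem~\ref{maintheorem:modstability}, i.e.\ the main theorem in disguise, so as written your plan is circular at its core rather than resting on known stability results.

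What actually has to happen at this point, and what is missing from your proposal, is the following. The machine only yields that the direct sum over \emph{all} suitable embeddings $\Sigma_{g-1}^1 \hookrightarrow \Sigma_g^1$ surjects onto $\HH_k(\Mod_g^1(\ell);-)$, and one must then prove that the images attached to different embeddings coincide. This comparison cannot be carried out with constant coefficients alone: it forces an interlocking induction in which the isomorphism statement is proved simultaneously for coefficients in tensor powers of the standard representation and of Prym representations (homological representations compatible with partial level subgroups $\Mod_g^1(H)$), using the tethered-torus complexes and their high connectivity, strongly polynomial coefficient systems, the Reidemeister pairing to compute point-pushing coinvariants, and Looijenga's description of the Prym image together with Borel density for the $\HH_0$ base case; closed surfaces then require a separate argument (Randal-Williams' disc resolution), since the Prym/Birman framework needs a boundary component or marked point. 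Your suggested remedies (enlarging the complement, iterating over simplices, modifying $\alpha$ and $\phi'$) gesture at this, but without the twisted-coefficient apparatus there is no mechanism for comparing the images of different embeddings, so the realization problem you flagged is not a bookkeeping issue but the unresolved heart of the proof; in particular the quadratic bound arises from the size-$r$ twisted coefficients entering the induction, not from a genus buffer for realizing symplectic cosets inside a prescribed complement.
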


\subsection{Prior work}
The cases $k=1,2$ of Theorem \ref{maintheorem:mod} were already known.\footnote{Actually, 
the cited papers only handle
the kernel $\Mod_{g,p}^b[\ell]$ of the action of $\Mod_{g,p}^b$ on $\HH_1(\Sigma_g;\Z/\ell)$ coming
from the map $\Mod_{g,p}^b \rightarrow \Mod_g$ that fills in the punctures and glues discs
to the boundary components.  We also prove a
theorem for $\Mod_{g,p}^b[\ell]$ (see \S \ref{section:transfer}), so
even for $k=1,2$ our theorem is stronger than previous work.}
The case $k=1$ was proved by Hain \cite{HainSurvey} using
work of Johnson \cite{JohnsonAbel} on $\HH_1$ of the Torelli subgroup of
$\Mod(\Sigma_g)$.  Hain's proof gives the better stable range $g \geq 3$.
Little is known about
the higher homology groups of the Torelli group, so this approach does
not generalize (but see \S \ref{section:torelli} below).  The case $k=2$ was proved
by Putman \cite{PutmanH2Level}.  The paper \cite{PutmanH2Level} also gives a better bound, namely $g \geq 5$.
We will discuss the relationship between our proof and \cite{PutmanH2Level} below in \S \ref{section:proofsketch}.

\subsection{Necessity of hypotheses}
The hypotheses in Theorem \ref{maintheorem:mod} are necessary:
\begin{itemize}
\item No result like Theorem \ref{maintheorem:mod} can hold
for integral homology.  Indeed, Perron \cite{PerronH1},
Sato \cite{SatoH1}, and Putman \cite{PutmanPicard} identified
exotic torsion elements of $\HH_1(\Mod_{g,p}^b(\ell);\Z)$ that do not
come from $\HH_1(\Mod_{g,p}^b;\Z)$.  Presumably similar torsion
phenomena also occur for higher integral homology groups.  A representation-theoretic
form of stability for this torsion was proved in \cite[Theorem K]{PutmanSam}.
\item Theorem \ref{maintheorem:mod}'s conclusion is false
outside a stable range.  Indeed, Church--Farb--Putman
\cite{ChurchFarbPutmanModVCD} and Morita--Sakasai--Suzuki
\cite{MoritaSakasaiSuzukiModVCD} independently
proved that $\HH^{4g-5}(\Mod(\Sigma_g);\Q)=0$, but
Fullarton--Putman
\cite{FullartonPutman} proved that $\HH^{4g-5}(\Mod(\Sigma_g,\ell);\Q)$ is enormous.
Here $4g-5$ is the virtual 
cohomological dimension of $\Mod(\Sigma_g)$; see \cite{HarerVCD}.
Brendle--Broaddus--Putman \cite{BrendleBroaddusPutmanIsrael}
generalized \cite{FullartonPutman} to $\Mod_{g,p}^b(\ell)$; however, the cohomology of 
$\Mod_{g,p}^b$ in its virtual cohomology dimension is not known in general.
\end{itemize}
% Bound here
On the other hand, the stable range $g \geq 2k^2+7k+2$ can likely be improved.
New ideas are probably needed to get a linear range, but we have not
tried to optimize the constants.

\begin{remark}
Continuing the analogy with $\SL_n(\Z)$, both of the above caveats also apply
to its homology.  For exotic torsion in the stable homology of its finite-index subgroups,
see \cite[Theorem 1.1]{LeeSzczarba} and \cite[Theorem H]{PutmanSam}, and for nonstability outside the stable range see 
\cite{BMSW, ChurchFarbPutmanConjecture, ChurchPutmanCodimension1, LeeSzczarba}
for results at full level, and \cite{LeeSzczarba, MillerPatztPutman, Paraschivescu, Schwermer}
for results at level $\ell \geq 2$.
\end{remark}

\subsection{Other finite-index subgroups, I}
\label{section:transfer}

If $G$ is a finite-index subgroup of $\Gamma$, the transfer map (see, e.g., \cite[\S III.9]{BrownCohomology})
implies that the inclusion $G \hookrightarrow \Gamma$ induces a surjection
$\HH_k(G;\Q) \twoheadrightarrow \HH_k(\Gamma;\Q)$ for all $k$.  Therefore if we are in a regime where
the map
$\HH_k(\Mod_{g,p}^b(\ell);\Q) \rightarrow \HH_k(\Mod_{g,p}^b;\Q)$
is an isomorphism, then for any intermediate subgroup
\[\Mod_{g,p}^b(\ell) \subset G \subset \Mod_{g,p}^b\]
the map
$\HH_k(G;\Q) \rightarrow \HH_k(\Mod_{g,p}^b;\Q)$
is also an isomorphism.  Theorem \ref{maintheorem:mod} thus 
implies a similar theorem for subgroups of $\Mod_{g,p}^b$ containing
some $\Mod_{g,p}^b(\ell)$.  Here are two examples:

\begin{example}
The group $\Mod_{g,p}^b$ acts on $\HH_1(\Sigma_g;\Z/\ell)$ via the map $\Mod_{g,p}^b \rightarrow \Mod_g$ that
fills in the punctures and glues discs to the boundary components.
The kernel $\Mod_{g,p}^b[\ell]$ of this action satisfies
\[\Mod_{g,p}^b(\ell) \subset \Mod_{g,p}^b[\ell] \subset \Mod_{g,p}^b.\]
In the literature, ``level-$\ell$ subgroup'' often means
$\Mod_{g,p}^b[\ell]$ rather than $\Mod_{g,p}^b(\ell)$.
\end{example}

\begin{example}
A {\em spin structure} on $\Sigma_g$ is a $\Spin(2)$-structure on its tangent bundle, where
$\Spin(2)$ is the canonical double-cover of $\SO(2)$.
Let $\omega(-,-)$ be the algebraic intersection pairing on $\HH_1(\Sigma_g;\Field_2)$.
Johnson \cite{JohnsonSpin} showed that spin structures on $\Sigma_g$ can
be identified with quadratic forms $q$ on $\HH_1(\Sigma_g;\Field_2)$ that refine $\omega$, i.e., functions
$q\colon \HH_1(\Sigma_g;\Field_2) \rightarrow \Field_2$ such that
\[q(x+y) = q(x) + q(y) + \omega(x,y) \quad \text{for all $x,y \in \HH_1(\Sigma_g;\Field_2)$.}\]
Such $q$ are classified up to isomorphism by their $\Field_2$-valued Arf invariant.  
The group $\Mod_g$ acts transitively on the set of spin structures on $\Sigma_g$ with
a fixed Arf invariant.  If $\sigma$ is a spin structure on $\Sigma_g$, then
the stabilizer subgroup $\Mod_g(\sigma)$ of $\sigma$ in $\Mod_g$ is called a
{\em spin mapping class group}\footnote{Be warned that in the literature there is another
group that is often called the spin mapping class group.  This group is a $\Z/2$-extension of
$\Mod_g(\sigma)$, and does not lie in $\Mod_g$.  See, e.g., \cite{GalatiusSpin}.} (see, e.g., \cite{HarerStableSpin, HarerH2Spin}).
We have
\[\Mod_g(2) \subset \Mod_g(\sigma) \subset \Mod_g,\]
so our theorem implies a similar result for $\Mod_g(\sigma)$.
\end{example}

\subsection{Other finite-index subgroups, II}
\label{section:finiteindexii}

It is natural to wonder if something like Theorem \ref{maintheorem:mod} holds for all finite-index subgroups, not
just the level-$\ell$ ones.  For $\HH_1$, this is a conjecture of Ivanov \cite{IvanovConjecture} that has been the subject
of a large amount of work; see, e.g., \cite{ErshovHe, PutmanFiniteIndexNote, PutmanWieland}.  These papers prove this
in many cases, but Ivanov's conjecture remains open in general.  For $k \geq 2$, nothing is known about the stable
$\HH_k$ of finite-index subgroups of the mapping class group other than Theorem \ref{maintheorem:mod}.

\subsection{Torelli group}
\label{section:torelli}

The intersection of the $\Mod_{g,p}^b(\ell)$ as $\ell$ ranges over integers $\ell \geq 2$ is
the Torelli group, i.e., the kernel $\Torelli_{g,p}^b$ of the action of $\Mod_{g,p}^b$
on $\HH_1(\Sigma_{g,p}^b;\Z)$.  Little is known about the homology of $\Torelli_{g,p}^b$.  Indeed,
while Johnson \cite{JohnsonAbel} calculated\footnote{Johnson's work covers the cases where $p+b \leq 1$.  See
\cite{PutmanJohnsonKernel} for how to generalize this to surfaces with multiple punctures and boundary components
(at least rationally).} $\HH_1(\Torelli_{g,p}^b)$ and showed that it was
finitely generated for $g \geq 3$, aside from a few low-complexity cases
it is not known if $\HH_2(\Torelli_{g,p}^b)$ is finitely generated.  It is unclear
if Theorem \ref{maintheorem:mod} implies anything about the homology of $\Torelli_{g,p}^b$.

However, sufficient regularity results about the homology of Torelli would imply Theorem \ref{maintheorem:mod}.
To explain this, we restrict for simplicity to closed surfaces.  Let
\[\Sp_{2g}(\Z,\ell) = \ker(\Sp_{2g}(\Z) \rightarrow \Sp_{2g}(\Z/\ell))\]
be the level-$\ell$ subgroup of $\Sp_{2g}(\Z)$.  The commutative
diagram of short exact sequences
\[\begin{tikzcd}[row sep=scriptsize]
1 \arrow{r} & \Torelli_g \arrow{r} \arrow{d}{=} & \Mod_g(\ell) \arrow{r} \arrow{d} & \Sp_{2g}(\Z,\ell) \arrow{r} \arrow{d} & 1 \\
1 \arrow{r} & \Torelli_g \arrow{r}              & \Mod_g       \arrow{r}           & \Sp_{2g}(\Z)      \arrow{r}           & 1
\end{tikzcd}\]
induces a map between the corresponding Hochschild--Serre spectral sequences.
To prove Theorem \ref{maintheorem:mod} (though perhaps with a different bound), it is enough to prove that this map between
spectral sequences is an isomorphism in a range, i.e., that for $g$ large we
have
\[\HH_p(\Sp_{2g}(\Z,\ell);\HH_q(\Torelli_g;\Q)) \cong \HH_p(\Sp_{2g}(\Z);\HH_q(\Torelli_g;\Q)) \quad \text{for $p+q \leq k$}.\]
By the version of the Borel stability theorem with twisted coefficients \cite{BorelStability2}, this would
be true if the following folklore conjecture holds:

\begin{conjecture}
\label{conjecture:torellifinite}
For each $k$, there exists some $G_k$ such that for $g \geq G_k$, the homology group
$\HH_k(\Torelli_g;\Q)$ is finite-dimensional and the action of $\Sp_{2g}(\Z)$ on it
extends to a rational representation of the algebraic group $\Sp_{2g}(\Q)$.
\end{conjecture}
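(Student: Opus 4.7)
The plan is to reduce Conjecture~\ref{conjecture:torellifinite} to a finite-dimensionality statement and then derive the algebraic extension almost for free. Suppose $\HH_k(\Torelli_g;\Q)$ is known to be finite-dimensional for $g$ large. Since $\Sp_{2g}(\Z)$ is an irreducible higher-rank arithmetic lattice for $g \geq 2$, Margulis superrigidity implies that any finite-dimensional rational $\Sp_{2g}(\Z)$-representation with unbounded image virtually extends to an algebraic representation of $\Sp_{2g}(\Q)$, and a short additional argument would have to handle the full (not just virtual) extension. The real content of the conjecture is therefore the finite-dimensionality of $\HH_k(\Torelli_g;\Q)$.

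To attack this, I would work with Hain's relative Malcev completion of $\Mod_g$ with respect to the quotient $\Mod_g \twoheadrightarrow \Sp_{2g}(\Z) \hookrightarrow \Sp_{2g}(\Q)$. Hain has shown that the prounipotent radical of this completion -- call its Lie algebra $\fL_g$ -- carries a natural mixed Hodge structure whose weight-graded pieces are finite-dimensional algebraic $\Sp_{2g}(\Q)$-representations built from $H := \HH_1(\Sigma_g;\Q)$ via Schur functors. The continuous Chevalley--Eilenberg complex of $\fL_g$ then consists of algebraic $\Sp_{2g}(\Q)$-representations in each weight, so its cohomology inherits this structure. The next step is to argue that in a stable range the mixed Hodge structure on the relative completion matches the geometric one on $\Moduli_g$, so that Lie algebra cohomology computes $\HH^*(\Torelli_g;\Q)$ itself in a stable range.

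The main obstacle is a boundedness-of-weights statement: one must show that only finitely many weight-graded pieces of $\fL_g$ contribute to any fixed cohomological degree once $g$ is large. This is currently open even for $k=2$. Morita's trace classes and their higher analogues populate graded pieces of $\fL_g$ in unbounded weight, and it is unclear how to exclude nontrivial contributions from arbitrarily high weights in a fixed $\HH^k$. A promising auxiliary approach would combine representation stability for polynomial $\Sp_{2g}$-functors (in the sense of Sam--Snowden) with recent $E_k$-cellular methods applied to $\Moduli_g$, potentially inputting the Madsen--Weiss computation of $\HH^*(\Mod_g;\Q)$ through the Hochschild--Serre spectral sequence in reverse. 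Nonetheless, any complete resolution of Conjecture~\ref{conjecture:torellifinite} seems to demand a genuinely new structural understanding of $\HH_*(\Torelli_g)$ beyond what is presently available, which is why the conjecture has remained open.
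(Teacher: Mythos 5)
The statement you were asked about is not a theorem of the paper at all: it is Conjecture \ref{conjecture:torellifinite}, stated explicitly as a folklore conjecture that is known only for $k=1$ (via Johnson's computation of $\HH_1(\Torelli_g)$, with $G_1=3$) and open for all $k \geq 2$. The paper offers no proof; its Theorem \ref{maintheorem:mod} is presented merely as \emph{evidence} for the conjecture, since the conjecture (together with Borel's twisted stability theorem) would imply Theorem \ref{maintheorem:mod} via the Hochschild--Serre spectral sequence, not the other way around. So there is no proof in the paper to compare your write-up against, and your text--which candidly says the key finite-dimensionality and weight-boundedness steps are ``currently open even for $k=2$''--is a research sketch rather than a proof. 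As a submission for this statement it therefore contains a genuine gap by its own admission: nothing in it establishes finite-dimensionality of $\HH_k(\Torelli_g;\Q)$ in any range, which you correctly identify as the real content.

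Two smaller points about the reduction you do sketch. First, Margulis superrigidity applied to a finite-dimensional representation of $\Sp_{2g}(\Z)$ only yields that the representation agrees with a rational representation of $\Sp_{2g}(\Q)$ on a finite-index subgroup; the conjecture asks that the action itself extend, and representations factoring through finite congruence quotients $\Sp_{2g}(\Z/\ell)$ show that ``virtually algebraic'' does not formally imply ``algebraic.'' Ruling out such congruence contributions to $\HH_k(\Torelli_g;\Q)$ is additional content (indeed, it is closely related to what Theorem \ref{maintheorem:mod} of this paper verifies at the level of $\Sp_{2g}(\Z,\ell)$-coinvariants), so the phrase ``a short additional argument'' undersells a real issue. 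Second, the proposed route through Hain's relative Malcev completion computes (continuous) cohomology of the completed object; identifying this with $\HH^{\bullet}(\Torelli_g;\Q)$ in a stable range presupposes finiteness/nilpotence properties of $\Torelli_g$ that are themselves essentially equivalent to the conjecture, so the argument as outlined is circular at that step. None of this is a criticism of the mathematics you do state--it is a reasonable survey of why the conjecture is hard--but it should be labeled as such rather than as a proof.
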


Johnson's aforementioned work on $\HH_1(\Torelli_g)$ shows that this holds for $k=1$ with $G_1 = 3$, but it is
open for all $k \geq 2$.  One can view Theorem \ref{maintheorem:mod} as evidence for Conjecture
\ref{conjecture:torellifinite}.

\subsection{Automorphism groups of free groups}

For a free group $F_n$, its automorphism group $\Aut(F_n)$ shares many
features with $\Mod_{g,p}^b$, so it is also natural to hope that something like Theorem \ref{maintheorem:mod} holds
for $\Aut(F_n)$.  A deep theorem of Galatius \cite{GalatiusAut} says that
\[\HH_k(\Aut(F_n);\Q) = 0 \quad \text{for $n \gg k$},\]
so the natural conjecture is that in a stable range, the rational homology of at least the level-$\ell$ subgroup
of $\Aut(F_n)$ vanishes.  

This is known for $k=1$.  Indeed, a deep theorem of 
Kaluba--Kielak--Nowak (\cite{KKNAutT}, see also \cite{KNOAutT})
says that $\Aut(F_n)$ has Kazhdan's Property (T) for $n \geq 5$, which implies that
$\HH_1(\Gamma;\Q)=0$ for {\em all}
finite-index subgroups $\Gamma$ of $\Aut(F_n)$.  Recall from \S \ref{section:finiteindexii} that Ivanov
conjectured something similar for the mapping class group.\footnote{It is still not known if the mapping class group
has Kazhdan's Property (T).}
Day--Putman \cite[Theorem D]{DayPutmanH2IA} proved that the rational $\HH_2$
of the level-$\ell$ subgroup of $\Aut(F_n)$ is $0$.  We expect that the techniques used to prove
Theorem \ref{maintheorem:mod} could be useful for extending this to the higher
$\HH_k$.

\subsection{Sketch of proof}
\label{section:proofsketch}

We now sketch the proof of Theorem \ref{maintheorem:mod}, focusing for simplicity
on the key case of $\Mod_g^1(\ell)$.  The starting point is the following basic
fact about group homology, which strengthens the observation at the start
of \S \ref{section:transfer} above.  Let $G$ be a finite-index {\em normal} subgroup
of a group $\Gamma$.  Using the transfer
map (see, e.g., \cite[\S III.9]{BrownCohomology}), one can show that
\begin{equation}
\label{eqn:coinvariants}
\HH_k(\Gamma;\Q) = \left(\HH_k\left(G;\Q\right)\right)_{\Gamma},
\end{equation}
where $\Gamma$ denotes the coinvariants
of the action of $\Gamma$ on $\HH_k(G;\Q)$ induced by the conjugation action of $\Gamma$ on $G$. 
Thus $\HH_k(G;\Q) \cong \HH_k(\Gamma;\Q)$ precisely when $\Gamma$ acts trivially on
$\HH_k(G;\Q)$.

Applying this to the finite-index normal subgroup $\Mod_g^1(\ell)$ of $\Mod_g^1$, we see that
the following are equivalent:
\begin{itemize}
\item $\HH_k(\Mod_g^1(\ell);\Q) \cong \HH_k(\Mod_g^1;\Q)$.
\item $\Mod_g^1$ acts trivially on $\HH_k(\Mod_g^1(\ell);\Q)$.
\end{itemize}
We check the second condition for $g \gg k$.  Since $\Mod_g^1$ is generated by Dehn twists $T_{\gamma}$ about nonseparating simple
closed curves $\gamma$, it is enough to prove that these $T_{\gamma}$ act
trivially on $\HH_k(\Mod_g^1(\ell);\Q)$.  Embed $\Sigma_{g-1}^1$ into
$\Sigma_g^1$ as follows:\\
\centerline{\psfig{file=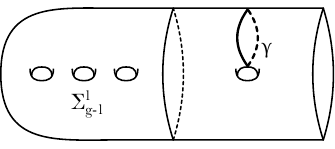,scale=1}}
Since $T_{\gamma}$ commutes with mapping classes supported on $\Sigma_{g-1}^1$,
it acts trivially on the image of $\HH_k(\Mod_{g-1}^1(\ell);\Q)$ in
$\HH_k(\Mod_g^1(\ell);\Q)$.  We deduce that it is enough to prove the
following weaker version of Theorem \ref{maintheorem:mod}:

% Bound here
\begin{maintheoremprime}
\label{maintheorem:modstability}
Let $g \geq 0$ and $\ell \geq 2$.  Then the map
$\HH_k(\Mod_{g-1}^1(\ell);\Q) \rightarrow \HH_k(\Mod_{g}^1(\ell);\Q)$ induced
by the above inclusion $\Sigma_{g-1}^1 \hookrightarrow \Sigma_g^1$ is a surjection
if $g \geq 2k^2+7k+2$.
\end{maintheoremprime}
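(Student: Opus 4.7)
The plan is a Quillen-style equivariant-homology argument on a highly connected simplicial complex carrying a $\Mod_g^1(\ell)$-action whose vertex stabilizers are controlled by $\Mod_{g-1}^1(\ell)$. For the full mapping class group $\Mod_g^1$, Harer uses the complex of isotopy classes of nonseparating simple closed curves, but working at level $\ell$ creates two complications: cutting along a vertex curve produces extra boundary components that must be re-capped compatibly with the level structure, and $\Mod_g^1(\ell)$ does not act transitively on vertices since orbits are indexed by the mod-$\ell$ homology class of the curve. Both issues are handled by working with a tethered curve complex $X_g$: vertices are pairs $(\gamma,\alpha)$ where $\gamma$ is a nonseparating simple closed curve on $\Sigma_g^1$ and $\alpha$ is an embedded arc from a fixed point on $\partial \Sigma_g^1$ to $\gamma$, and $p$-simplices are collections of $p+1$ such pairs realized disjointly. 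Cutting $\Sigma_g^1$ along $\gamma \cup \alpha$ restores a single boundary component and so produces $\Sigma_{g-1}^1$, giving a stabilizer in $\Mod_g^1(\ell)$ that agrees with $\Mod_{g-1}^1(\ell)$ under the inclusion of the statement.

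The argument then runs the equivariant spectral sequence
\[
E^1_{p,q} = \bigoplus_{\sigma \in X_g/\Mod_g^1(\ell),\; \dim\sigma = p} \HH_q(\operatorname{Stab}(\sigma);\Q) \;\Longrightarrow\; \HH_{p+q}(\Mod_g^1(\ell);\Q).
\]
The orbit splitting at each simplex level is indexed by the mod-$\ell$ homology class of the underlying tethered system, a finite set. Because all vertex stabilizers are $\Mod_g^1$-conjugate to a standard copy of $\Mod_{g-1}^1(\ell)$, the image in $\HH_k(\Mod_g^1(\ell);\Q)$ of each $E^1_{0,k}$ summand equals the image of the standard inclusion after translation by the finite group $\Mod_g^1/\Mod_g^1(\ell)$; combining this with the coinvariant reduction from \S\ref{section:proofsketch} and inducting on $g$ shows that all of these images coincide with the image of the fixed embedding $\Sigma_{g-1}^1 \hookrightarrow \Sigma_g^1$. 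Together with high connectivity of $X_g$, a standard Quillen-style cancellation of $d^1$-differentials then forces $\HH_k(\Mod_{g-1}^1(\ell);\Q) \to \HH_k(\Mod_g^1(\ell);\Q)$ to be surjective.

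The main obstacle is the connectivity estimate: one needs $X_g$ (possibly a refined variant incorporating enough ordering or labelling data to make the orbit analysis uniform) to be $(k-1)$-connected whenever $g \geq 2k^2 + 7k + 2$. The quadratic form of this bound signals that each increment of connectivity costs $O(k)$ in genus, heavier than the linear bound familiar from Harer's theorem; the extra cost reflects the tethering and orbit-control data needed to manage the $\Mod_g^1(\ell)$-action. Carrying out the connectivity induction in the style of Hatcher--Vogtmann and Hatcher--Wahl and matching the quadratic threshold quantitatively is where the technical heart of the argument lies; the subsequent spectral-sequence analysis is, by contrast, a careful but fairly routine unwinding.
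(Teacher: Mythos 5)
Your overall shape (a highly connected tethered complex with a $\Mod_g^1(\ell)$-action, an equivariant/stability-machine argument, and a reduction to showing that all the vertex-stabilizer summands have the same image) does match the strategy of the paper, but there are genuine gaps. A small one first: cutting $\Sigma_g^1$ along a tethered nonseparating curve $\gamma\cup\alpha$ does not give $\Sigma_{g-1}^1$. Cutting along $\gamma$ gives $\Sigma_{g-1}^3$, and cutting along the tether merges only two of the three boundary components, so the complement is $\Sigma_{g-1}^2$; the vertex stabilizers are therefore not copies of $\Mod_{g-1}^1(\ell)$. This is precisely why the paper works with tethered \emph{tori} (the complexes $\bbTT_g^1(I,H)$ of \S\ref{section:tetheredtori}), for which the stabilizer of a $p$-simplex really is $\Mod_{g-p-1}^1(\ell)$.

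The serious gap is the one-sentence treatment of the equality of images. If $v$ lies in a different $\Mod_g^1(\ell)$-orbit but the same $\Mod_g^1$-orbit as the standard vertex, then its stabilizer is $f\,\Mod_{g-1}^1(\ell)\,f^{-1}$ for some $f\in\Mod_g^1$, and the image of its homology in $\HH_k(\Mod_g^1(\ell);\Q)$ is the translate of the standard image under the conjugation action of $f$ on $\HH_k(\Mod_g^1(\ell);\Q)$. Asserting that all these translates coincide is essentially asserting that the standard image is invariant under the $\Sp_{2g}(\Z/\ell)$-action on $\HH_k(\Mod_g^1(\ell);\Q)$, which is (a piece of) exactly the triviality of the action that Theorem~\ref{maintheorem:modstability} is being used to prove; and induction on $g$ (triviality of the $\Mod_{g-1}^1$-action on $\HH_k(\Mod_{g-1}^1(\ell);\Q)$) says nothing about conjugation by elements of $\Mod_g^1$ that do not normalize the subsurface subgroup, so the argument as written is circular. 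In the paper this step is Claim~5.2 in the proof of Theorem~\ref{maintheorem:generalcase}: two compatible embeddings are connected by a chain of generators of a partial level subgroup $\Mod_g^1(H)$ (Lemma~\ref{lemma:partialgen}), one reduces to embeddings differing by a single Dehn twist, and one then needs the boundary-capping isomorphism of Step~4, whose proof runs through the Birman exact sequence and the homology of the point-pushing subgroup with coefficients, i.e.\ Prym and homological representations, together with twisted stability for the partial level subgroups (Theorem~\ref{theorem:stability}). Moreover, hypothesis (iii) of the machine (Proposition~\ref{proposition:stabilitymachine}) demands \emph{isomorphisms} for simplex stabilizers in lower homological degrees with these twisted coefficients, so an induction carrying only the untwisted surjectivity statement cannot close; this is also why the quadratic bound arises from the interplay of homological degree and coefficient size in the induction, not from the connectivity of the complex, which is linear in $g$ (Theorem~\ref{theorem:tetheredtoricon}). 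Far from being a routine unwinding, this is where the mathematical content of the paper lies.
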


This resembles a homological stability theorem (or at least the surjective half of one),
and it is natural to try to use the well-developed homological stability machine (see, e.g., \cite{RandalWilliamsWahl})
to prove it.  However, you immediately run into a fundamental problem: the input to
this machine is an action of $\Mod_g^1(\ell)$ on a highly-connected simplicial complex
$\bbX$, and one of the basic properties you need is that $\Mod_g^1(\ell)$ acts
transitively on the vertices.  If you use one of the simplicial complexes
used to prove homological stability for $\Mod_g^1$, this fails.\footnote{And
this cannot be easily avoided since the homological stability machine naturally gives
theorems about integral homology, while Theorem \ref{maintheorem:modstability} only holds rationally.}

However, the machine does give a weaker conclusion: rather than saying that a single
$\HH_k(\Mod_{g-1}^1(\ell);\Q)$ surjects onto $\HH_k(\Mod_g^1(\ell);\Q)$, it implies that
if you take the direct sum over {\em all} embeddings of $\Sigma_{g-1}^1$ into $\Sigma_g^1$, then
you do get a surjective map:\footnote{In fact, you can take the direct sum over orbit
representatives of the action of $\Mod_g^1(\ell)$ on the set of such embeddings.
If we were working with $\Mod_g^1$ instead of $\Mod_g^1(\ell)$, then the ``change
of coordinates'' principle from \cite[\S 1.3.2]{FarbMargalitPrimer} would show that there
is a single such orbit.} 
\[\bigoplus_{\Sigma_{g-1}^1 \hookrightarrow \Sigma_g^1} \HH_k(\Mod_{g-1}^1(\ell);\Q) \twoheadrightarrow \HH_k(\Mod_g^1(\ell);\Q).\]
It is therefore enough to show that each term in this direct sum has the same image.  This
requires an elaborate induction, and in particular requires proving not just
Theorem \ref{maintheorem:modstability}, but also a twisted analogue of Theorem \ref{maintheorem:modstability}
with coefficients in certain rather complicated coefficient systems (tensor powers of the standard representation
and Prym representations; see
\S \ref{section:twistedcoefficientsstd} and \S \ref{section:twistedcoefficientsprym} below).

\begin{remark}
The above outline resembles the proof of the case $k=2$ of Theorem \ref{maintheorem:mod} proved
by the author long ago in \cite{PutmanH2Level}.  Two new developments since then allowed
us to prove the general case:
\begin{itemize}
\item The author's work on twisted homological stability in \cite{PutmanTwistedStability}, which
gives a flexible tool for incorporating twisted coefficients into homological stability proofs.  There
is an earlier approach to this due to Dwyer \cite{DwyerTwisted}, but it seems hard to 
use it
in our proof.
\item The author's work on stability properties of ``partial Torelli groups'' in \cite{PutmanPartialTorelli}, 
which forms the basis for the elaborate induction discussed above as well as the simplicial complexes used in this paper.\qedhere
\end{itemize}
\end{remark}

\subsection{Standard representation}
\label{section:twistedcoefficientsstd}

The general twisted version of Theorem \ref{maintheorem:mod} that we will prove is a little
technical, so we close this introduction by stating two special cases of it that we think
are of independent interest.  The first involves representations built from $\HH_1(\Sigma_{g,p}^b;\Q)$:

% Bound here
\begin{maintheorem}
\label{maintheorem:modstd}
Let $g,p,b \geq 0$ and $\ell \geq 2$.  Then for $r \geq 0$, the map
\[\HH_k\left(\Mod_{g,p}^b\left(\ell\right);\HH_1(\Sigma_{g,p}^b;\Q)^{\otimes r}\right) \rightarrow \HH_k\left(\Mod_{g,p}^b;\HH_1(\Sigma_{g,p}^b;\Q)^{\otimes r}\right)\]
is an isomorphism if $g \geq 2(k+r)^2+7k+6r+2$.
\end{maintheorem}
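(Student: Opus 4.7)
The plan is to adapt the outline of Theorem~\ref{maintheorem:mod} given in \S \ref{section:proofsketch} to the twisted setting. Write $\Gamma = \Mod_{g,p}^b$, $\Gamma(\ell) = \Mod_{g,p}^b(\ell)$, $V = \HH_1(\Sigma_{g,p}^b;\Q)$, and $M = V^{\otimes r}$. Because $\Gamma/\Gamma(\ell)$ is finite and $M$ is a $\Q$-vector space, the rational Hochschild--Serre spectral sequence for $1 \to \Gamma(\ell) \to \Gamma \to \Gamma/\Gamma(\ell) \to 1$ collapses onto the $p=0$ column, yielding
\[\HH_k(\Gamma; M) \;\cong\; \HH_k(\Gamma(\ell); M)_{\Gamma/\Gamma(\ell)}.\]
The map in the theorem is the canonical quotient onto coinvariants, so it is an isomorphism exactly when the (well-defined) action of $\Gamma/\Gamma(\ell)$ on $\HH_k(\Gamma(\ell); M)$ is trivial. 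Since $\Gamma$ is generated by Dehn twists about nonseparating simple closed curves, all of which are $\Gamma$-conjugate, it suffices to exhibit a single well-chosen $T_\gamma$ acting trivially.

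Next, choose a subsurface $\Sigma' \hookrightarrow \Sigma_{g,p}^b$ obtained by cutting off one handle, so $\Sigma' \cong \Sigma_{g-1,p}^{b+1}$, and let $\gamma$ be a nonseparating curve lying in the discarded handle whose homology class is symplectically orthogonal to $V' := \HH_1(\Sigma';\Q) \subset V$. Then $T_\gamma$ commutes with every mapping class supported on $\Sigma'$, and the Picard--Lefschetz formula $T_\gamma(x) = x + \omega(x,[\gamma])[\gamma]$ shows that $T_\gamma$ fixes $V'$ pointwise. Consequently $T_\gamma$ acts trivially on $(V')^{\otimes r}$, and therefore also on the homology group $\HH_k(\Mod_{\Sigma'}(\ell); (V')^{\otimes r})$.

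It therefore suffices to establish a twisted surjective stability statement of the shape
\[\HH_k\bigl(\Mod_{\Sigma'}(\ell); (V')^{\otimes r}\bigr) \;\twoheadrightarrow\; \HH_k\bigl(\Gamma(\ell); V^{\otimes r}\bigr)\]
in the range $g \geq 2(k+r)^2 + 7k + 6r + 2$: surjectivity then forces the target to be fixed by $T_\gamma$, completing the argument. Writing $V = V' \oplus W$ with $W$ a rank-two symplectic complement preserved by $\Mod_{\Sigma'}$ and expanding $V^{\otimes r}$ as a $\Mod_{\Sigma'}$-representation, one further reduces this surjectivity to twisted surjective stability for $\Mod_{g,p}^b(\ell)$ with coefficients in the summands $(V')^{\otimes s}$ for $s \leq r$, after book-keeping the copies of $W$ by a small shift in the genus count.

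The main obstacle, and the source of the $(k+r)^2$ term in the stable range, is precisely this twisted stability. The intended approach is to feed a suitable variant of the partial-Torelli simplicial complex of \cite{PutmanPartialTorelli} into the author's twisted stability framework \cite{PutmanTwistedStability}. The key complication (already flagged for Theorem~\ref{maintheorem:mod} in \S \ref{section:proofsketch}) is that $\Gamma(\ell)$ does not act transitively on the vertices of the natural stability complex, so the machine must be run on each $\Gamma(\ell)$-orbit separately and the resulting images compared via an elaborate induction. In the twisted case the polynomial coefficient system of degree $r$ both necessitates this induction and forces the weakening $k \rightsquigarrow k+r$, thereby enlarging the stable range from $2k^2 + 7k + 2$ to $2(k+r)^2 + 7k + 6r + 2$.
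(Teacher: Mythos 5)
Your first three paragraphs are a correct (if slightly loose) reduction, but it is the same reduction the paper itself sketches in \S\ref{section:proofsketch}: transfer/coinvariants (Lemma \ref{lemma:transfer}), generation of $\Mod_{g,p}^b$ by twists about nonseparating curves, and the observation that $T_\gamma$ acts trivially on the image of the homology of a subsurface group it centralizes, so everything hinges on a twisted surjective stability statement for the level-$\ell$ groups. The genuine gap is that you do not prove that statement, and the one concrete claim you make about how the induction would close is not viable: you assert that the surjectivity can be reduced to twisted stability ``with coefficients in the summands $(V')^{\otimes s}$ for $s \leq r$,'' i.e.\ that the induction stays inside tensor powers of the standard representation. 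It cannot. The stability machine only yields a surjection from a direct sum over many $\Mod(\ell)$-orbits of embeddings, and comparing the images of different embeddings (and relating surfaces with different numbers of boundary components and punctures) forces one through Birman exact sequences for the level-$\ell$ and partial level-$\ell$ groups (Theorem \ref{theorem:birmanpartial}). There the relevant point-pushing subgroup is the fundamental group of a finite abelian cover, so the coefficient modules that appear in the Hochschild--Serre spectral sequences are Prym and partial Prym representations $\fH(H;\C)$, not tensor powers of $\HH_1(\Sigma;\Q)$. Moreover the naive statement for tensor powers of Prym representations is false already for $\HH_0$ (because of the Reidemeister pairing), which is why the paper's actual induction (Theorem \ref{maintheorem:generalcase}) is formulated for isotypic ``homological representations'' $\fH_{g,p}^b(\uchi)$ compatible with a symplectic subgroup $H$, proved simultaneously for all partial level-$\ell$ groups $\Mod_{g,p}^b(H)$, with the $\HH_0$ base case handled by Lemma \ref{lemma:pushtensorpowers} together with Looijenga's theorem and Borel density, and with the stabilizer comparisons supplied by the twisted stability theorem for partial Torelli groups (Theorem \ref{theorem:stability}). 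None of this apparatus is visible in, or replaceable by, your proposed induction on $(V')^{\otimes s}$.

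Two smaller points. First, for $p+b\geq 1$ the map $\HH_1(\Sigma_{g-1,p}^{b+1};\Q)\to\HH_1(\Sigma_{g,p}^b;\Q)$ is not injective (the new boundary class dies), so ``$V=V'\oplus W$'' should be stated for the image of $V'$; this is cosmetic. Second, your outline says nothing about closed surfaces, which are allowed in the statement ($p=b=0$): the induction machinery requires a boundary component (lifts to covers need a fixed basepoint, and the Birman/capping arguments need punctures or boundary), and the paper deduces the closed case by a separate argument (\S\ref{section:closed}, via Theorem \ref{maintheorem:modstd2} and Randal-Williams' disc resolution together with Earle--Eells). As written, your proposal records the easy reduction and defers exactly the part of the proof where the paper's new ideas live, while the specific route you indicate for that part would not close.
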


Note that for $r=0$ this reduces to Theorem \ref{maintheorem:mod}. In particular, setting $r=0$
we get the bound $g \geq 2k^2+7k+2$ from that theorem.  We will
also prove a version of Theorem \ref{maintheorem:modstd} with coefficients
in $\HH_1(\Sigma_g;\Q)^{\otimes r}$ rather than $\HH_1(\Sigma_{g,p}^b;\Q)^{\otimes r}$.
See Theorem \ref{maintheorem:modstd2} in \S \ref{section:closed}.

\subsection{Prym representations}
\label{section:twistedcoefficientsprym}

The other representation we need to handle is the Prym representation, which is defined
as follows.  Assume that $p+b \geq 1$.
Let $\pi\colon S \rightarrow \Sigma_{g,p}^b$
be the regular cover with deck group $\HH_1(\Sigma_g;\Z/\ell)$ coming from the group homomorphism
\[\pi_1(\Sigma_{g,p}^b) \rightarrow \HH_1(\Sigma_{g,p}^b;\Z/\ell) \rightarrow \HH_1(\Sigma_g;\Z/\ell),\]
where the second map glues discs to all the boundary components and fills in all the punctures.  
Since $\Mod_{g,p}^b(\ell)$ acts trivially on $\HH_1(\Sigma_g;\Z/\ell)$ and $p+b \geq 1$, covering
space theory allows us to lift elements of $\Mod_g^1(\ell)$ to mapping classes of $S$ fixing the punctures
and boundary components pointwise.\footnote{This requires $p+d \geq 1$ to ensure that
our lift is well-defined; otherwise, it would only be well-defined up to the action of the deck group.}

This gives us an action of $\Mod_{g,p}^b(\ell)$ on
$\fH_{g,p}^b(\ell;\Q) \coloneqq \HH_1(S;\Q)$.
These representations are called Prym representations.  They were first studied
by Looijenga \cite{LooijengaPrym}, who (essentially) determined their image.  The
map $\pi\colon S \rightarrow \Sigma_{g,p}^b$ induces a map
$\fH_{g,p}^b(\ell;\Q) \rightarrow \HH_1(\Sigma_{g,p}^b;\Q)$.  Our result
is as follows.  Note that our bound here is the same as in the case $r=1$ of Theorem \ref{maintheorem:modstd}:

% Bound here
\begin{maintheorem}
\label{maintheorem:modprym}
Let $g,p,b \geq 0$ and $\ell \geq 2$ be such that $p+b \geq 1$.  Then the map
\[\HH_k\left(\Mod_{g,p}^b\left(\ell\right);\fH_{g,p}^b\left(\ell;\Q\right)\right) \rightarrow \HH_k\left(\Mod_{g,p}^b;\HH_1(\Sigma_{g,p}^b;\Q)\right)\]
is an isomorphism if $g \geq 2(k+1)^2+7k+8$.
\end{maintheorem}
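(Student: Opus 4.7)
My plan is to reduce Theorem~\ref{maintheorem:modprym} to the case $r=1$ of Theorem~\ref{maintheorem:modstd}, whose stable range $g \geq 2(k+1)^{2}+7k+8$ matches the one we need here, by peeling off the ``standard'' part of the Prym representation and showing the remainder contributes nothing.

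First, the map of Theorem~\ref{maintheorem:modprym} factors as
\[
\HH_k\bigl(\Mod_{g,p}^b(\ell); \fH_{g,p}^b(\ell;\Q)\bigr)
\xrightarrow{\pi_*}
\HH_k\bigl(\Mod_{g,p}^b(\ell); \HH_1(\Sigma_{g,p}^b;\Q)\bigr)
\xrightarrow{\;\cong\;}
\HH_k\bigl(\Mod_{g,p}^b; \HH_1(\Sigma_{g,p}^b;\Q)\bigr),
\]
where the first arrow is induced by the covering pushforward $\pi_*\colon \HH_1(S;\Q) \to \HH_1(\Sigma_{g,p}^b;\Q)$ and the second is the Theorem~\ref{maintheorem:modstd} isomorphism in the same range. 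It thus suffices to prove the first arrow is an isomorphism.

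Let $A \coloneqq \HH_1(\Sigma_g;\Z/\ell)$ be the deck group of $\pi\colon S \to \Sigma_{g,p}^b$. Because $\Mod_{g,p}^b(\ell)$ acts trivially on $A$ and $p+b \geq 1$, the canonical lifts to $\Diff(S)$ (pinned down by the punctures/boundary) commute with the $A$-action, so the character decomposition of $\HH_1(S;\Q)$ under $A$ is $\Mod_{g,p}^b(\ell)$-equivariant. Grouping the non-trivial characters into their Galois orbits yields a splitting
\[
\fH_{g,p}^b(\ell;\Q) \;=\; \HH_1(S;\Q)^A \;\oplus\; K
\]
as $\Mod_{g,p}^b(\ell)$-modules, where $K$ is the Galois-stable $\Q$-span of the non-trivial character isotypic components. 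A standard transfer/averaging argument in characteristic zero identifies the first summand with $\HH_1(\Sigma_{g,p}^b;\Q)$ via $\pi_*$ and shows $\pi_*$ annihilates $K$. Theorem~\ref{maintheorem:modprym} is therefore equivalent to the vanishing
\[
\HH_k\bigl(\Mod_{g,p}^b(\ell); K\bigr) = 0 \qquad \text{for } g \geq 2(k+1)^{2}+7k+8.
\]

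For this remaining step, the plan is to treat $K$ as a Prym coefficient system in the sense of \S\ref{section:twistedcoefficientsprym} and apply the twisted homological stability framework of \cite{PutmanTwistedStability} combined with the partial Torelli methods of \cite{PutmanPartialTorelli}. Concretely: arrange the family $\{K\}_g$ as a coefficient system of bounded polynomial degree over a stabilization category of surfaces with level-$\ell$ structure, let $\Mod_{g,p}^b(\ell)$ act on a high-connectivity complex of non-separating curves or arcs drawn from the partial Torelli framework, and run the resulting spectral sequence to reduce the vanishing to a base case in which no non-trivial character of $A$ survives. The main obstacle is precisely this organizational step: since the deck group $A$ itself grows with $g$, packaging the Prym kernels $\{K\}_g$ into one stable coefficient system is delicate, and reconciling the varying deck groups with the stability formalism is exactly what the partial Torelli machinery of \cite{PutmanPartialTorelli} is engineered to do.
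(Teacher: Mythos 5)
Your initial reduction is fine: over $\Q$ the deck-group action splits $\fH_{g,p}^b(\ell;\Q)$ into the $A$-invariant part, which $\pi_*$ identifies with $\HH_1(\Sigma_{g,p}^b;\Q)$, and a complement $K$ spanned by the nontrivial isotypic pieces, and since the map of Theorem \ref{maintheorem:modprym} factors through $\pi_*$ followed by the Theorem \ref{maintheorem:modstd} ($r=1$) isomorphism, the theorem is indeed equivalent to the vanishing $\HH_k(\Mod_{g,p}^b(\ell);K)=0$ in the stated range. This is genuinely the same pivot as in the paper: with trivial coefficients the isotypic pieces $U_\chi$ appearing in Step \ref{step:capboundary} are exactly the $\fH_{g}^b(\chi)$, and Claim \ref{claim:42iso} amounts to the statement $\HH_{k-1}(\Mod_g^b(\ell);\fH_g^b(\chi))=0$ for $\chi$ nontrivial; the paper then extracts Theorem \ref{maintheorem:modprym} from this via the Birman-exact-sequence identification $\HH_1(\PP_{x_0}(\Sigma_g^b,\ell);\C)\cong \fH_g^b(\ell;\C)$ (the footnote before Claim \ref{claim:42iso}), rather than via your coefficient splitting, but that difference is cosmetic.

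The genuine gap is that you never prove the vanishing, and your plan for it would not work as described. That vanishing is the entire analytic content of the theorem beyond Theorem \ref{maintheorem:modstd}, yet your last paragraph only sketches ``package $\{K\}_g$ as a polynomial coefficient system and run the stability machine down to a base case,'' while conceding that packaging it is the main obstacle. Two problems: first, homological stability machinery produces comparison/surjectivity statements between consecutive genera, not vanishing; to conclude $\HH_k=0$ you would still need an independent argument that the stable value is zero, and ``a base case in which no nontrivial character of $A$ survives'' does not exist, because the deck group $A=(\Z/\ell)^{2g}$ and its character set change with $g$, so $\{K\}_g$ is not a single stable coefficient system with an identifiable base. (The paper's cautionary remark that the naive tensor-power analogue already fails at $\HH_0$ shows that one cannot expect such vanishing to come cheaply from formal stability considerations.) Second, the paper's actual mechanism is different and essential: it replaces the full Prym by the partial Prym representations $\fH(H;\C)$ attached to a \emph{fixed small} symplectic subgroup $H$ compatible with the characters in play (Lemmas \ref{lemma:intermediatecover} and \ref{lemma:extendhomological}), proves the general comparison Theorem \ref{maintheorem:generalcase} by an interlocking induction on $k$ and the size $r$ (using the partial Birman exact sequences, the Reidemeister-pairing computation of point-pushing coinvariants, stability for partial Torelli groups, and the tethered-torus complexes), and then obtains the needed vanishing in Claim \ref{claim:42iso} by enlarging $H$ to a slightly larger $H'$ on which the given nontrivial character dies and playing two already-established isomorphisms against each other. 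None of this is supplied, or even correctly anticipated, by your sketch, so as written the proposal establishes only the (correct) reduction, not the theorem.
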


\begin{remark}
We proved the case $k=1$ and $(b,p) = (1,0)$ of this by a brute force calculation in \cite[Theorem C]{PutmanAbelianCovers}, which allowed
us to prove the case $k=2$ of Theorem \ref{maintheorem:mod} in \cite{PutmanH2Level}.  One
of the main insights of the present paper is that one can simultaneously prove
Theorems \ref{maintheorem:mod} and \ref{maintheorem:modprym} with almost no calculations.
\end{remark}

\begin{remark}
The result \cite[Theorem B]{PutmanAbelianCovers} might appear to say that Theorem \ref{maintheorem:modprym} is false for
$\Mod_{g,1}(\ell)$.  However, the Prym representation covered by \cite[Theorem B]{PutmanAbelianCovers} is slightly
different from the one in Theorem \ref{maintheorem:modprym} since it involves the homology of the $\HH_1(\Sigma_g;\Z/\ell)$-cover
$S \rightarrow \Sigma_g$ rather than the cover $S \rightarrow \Sigma_{g,1}$.
\end{remark}

\begin{remark}
In fact, what we need is something similar to Theorem \ref{maintheorem:modprym} for tensor powers of the Prym representations.
Unfortunately, the naive analogue of Theorem \ref{maintheorem:modprym} for higher tensor powers of the Prym representations is
false even for $\HH_0$,\footnote{Recall that $\HH_0$ calculates the coinvariants.  The
$\Mod_{g,1}$-coinvariants of $\HH_1(\Sigma_{g,1};\Q)^{\otimes 2}$ are $\Q$, which is
detected by the algebraic intersection pairing $\HH_1(\Sigma_{g,1};\Q)^{\otimes 2} \rightarrow \Q$.
The Reidemeister pairing described below in \S \ref{section:reidemeister} gives a 
$\Mod_{g,1}(\ell)$-invariant surjective map 
$\fH_{g,1}(\ell;\Q)^{\otimes 2} \rightarrow \Q[H]$ with $H = \HH_1(\Sigma_{g,1};\Z/\ell)$,
so the $\Mod_{g,1}(\ell)$-coinvariants of $\fH_{g,1}(\ell;Q)^{\otimes 2}$ are 
larger than just $\Q$.  In fact, these coinvariants are exactly $\Q[H]$.}
and formulating the correct version is a bit subtle.  See Theorem \ref{maintheorem:generalcase}
in \S \ref{section:proof} for details.
\end{remark}

\subsection{Outline of paper}
The first part of the paper (\S \ref{section:levell}--\ref{section:stabilitymachine})
discusses background and establishes
some basic results.  We start in \S \ref{section:levell} with some group-theoretic properties of
the group $\Mod_{g,p}^b(\ell)$.  We then turn to the twisted homological
stability machine from \cite{PutmanTwistedStability}.  This is contained in two sections:
\S \ref{section:semisimplicial} is devoted to basic facts about simplicial complexes and
their homology, and \S \ref{section:stabilitymachine} isolates the part of the machine
that we need.  The input to this machine is a simplicial complex equipped with
a ``coefficient system''.  

The next part (\S \ref{section:tetheredtori}--\ref{section:partialstability}) 
discusses some tools needed to apply our machine to
$\Mod_{g,p}^b(\ell)$.
In \S \ref{section:tetheredtori}, we introduce the simplicial complex
we will use (the ``complex of tethered $H$-orthogonal tori''), and in \S \ref{section:prymrepresentations}
we discuss the Prym representations and show how to incorporate them into a coefficient
system on this complex.
The action of $\Mod_g^1(\ell)$
on the Prym representation preserves a bilinear form called the Reidemeister pairing, and
\S \ref{section:reidemeister} develops its properties.  

After an interlude in \S \ref{section:partialstability} on the author's results
from \cite{PutmanPartialTorelli} about the partial Torelli groups,
the final part (\S \ref{section:proof}--\ref{section:closed}) contains the proofs
of our main theorems.  We first handle non-closed surfaces in 
\S \ref{section:proof}, and we then derive our results for closed surfaces
in \S \ref{section:closed}.

\subsection{Acknowledgments.}
I would like to thank Tom Church, Dan Margalit, and Xiyan Zhong for helpful comments on previous drafts of this
paper.

\section{Basic properties of the level-\texorpdfstring{$\ell$}{l} subgroup}
\label{section:levell}

We start by discussing some basic facts about the mapping class group and its subgroups.

\subsection{Birman exact sequence I: mapping class group}
Let $x_0$ be a puncture of $\Sigma_{g,p+1}^b$.  Let $\phi\colon \Mod_{g,p+1}^b \rightarrow \Mod_{g,p}^b$
be the map that fills in $x_0$.  Except in some degenerate cases,
the kernel of $\phi$ is the point-pushing subgroup $\PP_{x_0}(\Sigma_{g,p}^b)$.  Elements
of $\PP_{x_0}(\Sigma_{g,p}^b)$ push the puncture $x_0$ around the surface.  Keeping track of
the path traced out by $x_0$ gives an isomorphism
\[\PP_{x_0}(\Sigma_{g,p}^b) \cong \pi_1(\Sigma_{g,p}^b,x_0).\]
To keep our notation from being unmanageable, we will often omit the basepoint and just
write $\pi_1(\Sigma_{g,p}^b)$.  This is all summarized in the following theorem.
See \cite[\S 4.2]{FarbMargalitPrimer} for a textbook reference.

\begin{theorem}[{Birman exact sequence \cite{BirmanThesis}}]
\label{theorem:birmanpuncture}
Fix some $g,p,b \geq 0$ such that $\pi_1(\Sigma_{g,p}^b)$ is nonabelian, and let $x_0$ be a puncture of $\Sigma_{g,p+1}^b$.  There
is then a short exact sequence
\[1 \longrightarrow \PP_{x_0}(\Sigma_{g,p}^b) \longrightarrow \Mod_{g,p+1}^b \stackrel{\phi}{\longrightarrow} \Mod_{g,p}^b \longrightarrow 1,\]
where $\PP_{x_0}(\Sigma_{g,p}^b) \cong \pi_1(\Sigma_{g,p}^b)$.
\end{theorem}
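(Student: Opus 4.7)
The plan is to realize the stated sequence as (a segment of) the long exact homotopy sequence of a fibration of diffeomorphism groups, following the classical strategy of Birman and Earle--Eells.

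First, I would set up the fibration. Fix a point $x_0$ in the interior of $\Sigma_{g,p}^b$, disjoint from the punctures and boundary, and let $\Diff^+(\Sigma_{g,p}^b)$ denote the topological group of orientation-preserving diffeomorphisms fixing each puncture and boundary component pointwise (with the $C^\infty$ topology). The evaluation map
\[
\mathrm{ev}_{x_0}\colon \Diff^+(\Sigma_{g,p}^b) \longrightarrow \Sigma_{g,p}^b, \qquad f \longmapsto f(x_0),
\]
is a Serre fibration (a standard application of isotopy extension). Its fiber over $x_0$ is the subgroup of diffeomorphisms fixing $x_0$, which can be identified with $\Diff^+(\Sigma_{g,p+1}^b)$ by regarding $x_0$ as the $(p{+}1)$st puncture.

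Second, I would feed this into the long exact sequence in homotopy and use the identification $\pi_0\,\Diff^+(\Sigma_{g,q}^b)=\Mod_{g,q}^b$ to get
\[
\pi_1\bigl(\Diff^+(\Sigma_{g,p}^b),\mathrm{id}\bigr)\longrightarrow \pi_1(\Sigma_{g,p}^b,x_0)\stackrel{\partial}{\longrightarrow} \Mod_{g,p+1}^b \stackrel{\phi}{\longrightarrow} \Mod_{g,p}^b\longrightarrow \pi_0(\Sigma_{g,p}^b)=1.
\]
Surjectivity of $\phi$ is immediate from path-connectedness of $\Sigma_{g,p}^b$. Unwinding the connecting homomorphism $\partial$ gives exactly the point-pushing construction: a loop $\gamma$ at $x_0$ lifts to a path $f_t$ in $\Diff^+(\Sigma_{g,p}^b)$ with $f_0=\mathrm{id}$ and $f_t(x_0)=\gamma(t)$, and $\partial[\gamma]\in\Mod_{g,p+1}^b$ is represented by $f_1$, which is a diffeomorphism dragging $x_0$ around $\gamma$. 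This identifies the image of $\partial$ with $\PP_{x_0}(\Sigma_{g,p}^b)$ and, by construction, provides the isomorphism $\PP_{x_0}(\Sigma_{g,p}^b)\cong \pi_1(\Sigma_{g,p}^b,x_0)$ up to checking injectivity.

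The step I expect to be the main obstacle is verifying exactness on the left, i.e.\ injectivity of $\partial$. This requires that the leftmost group $\pi_1(\Diff^+(\Sigma_{g,p}^b),\mathrm{id})$ maps trivially, and the cleanest way is to invoke the Earle--Eells--Schatz theorem, which states that when $\pi_1(\Sigma_{g,p}^b)$ is nonabelian the identity component of $\Diff^+(\Sigma_{g,p}^b)$ is contractible; this is precisely where the nondegeneracy hypothesis enters. With that input, $\pi_1\,\Diff^+(\Sigma_{g,p}^b)=0$ and the five-term exact sequence above becomes the desired four-term short exact sequence. For a textbook execution of this outline (including the verification that $\partial$ coincides with the point-push as described), I would cite \cite[\S 4.2]{FarbMargalitPrimer}.
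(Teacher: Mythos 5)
Your proposal is correct, but note that the paper does not prove this statement at all: it is quoted as background (Theorem \ref{theorem:birmanpuncture}) with a citation to Birman and a pointer to \cite[\S 4.2]{FarbMargalitPrimer} for a textbook treatment. The argument you give — the evaluation fibration $\Diff^+(\Sigma_{g,p}^b)\to\Sigma_{g,p}^b$, its long exact homotopy sequence, identification of the connecting map with point-pushing, and contractibility (in particular simple connectivity) of the identity component via Earle--Eells--Schatz, which is exactly where the nonabelian hypothesis on $\pi_1(\Sigma_{g,p}^b)$ enters — is precisely the standard proof given in that reference, so it matches the intended argument.
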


The following lemma describes the effect of $\PP_{x_0}(\Sigma_{g,p}^b)$ on $\HH_1(\Sigma_{g,p+1}^b)$:

\begin{lemma}
\label{lemma:pointpushh1}
Fix some $g,p,b \geq 0$ such that $\pi_1(\Sigma_{g,p}^b)$ is nonabelian, and let $x_0$ be a puncture of $\Sigma_{g,p+1}^b$.
Let $\bbk$ be a commutative ring.
Let $\rho_1\colon \HH_1(\Sigma_{g,p+1}^b;\bbk) \rightarrow \HH_1(\Sigma_{g,p}^{b};\bbk)$ be the map that fills
in $x_0$ and 
let $\rho_2\colon \PP_{x_0}(\Sigma_{g,p}^{b}) \rightarrow \HH_1(\Sigma_{g,p}^{b};\bbk)$
be the composition
\[\PP_{x_0}(\Sigma_{g,p}^b) \cong \pi_1(\Sigma_{g,p}^{b}) \longrightarrow \HH_1(\Sigma_{g,p}^{b};\bbk).\]
Let $\omega(-,-)$ be the algebraic intersection pairing on $\HH_1(\Sigma_{g,p}^{b};\bbk)$
and let $\zeta \in \HH_1(\Sigma_{g,p}^{b};\bbk)$ be the homology class of
a loop around $x_0$, oriented such that $x_0$ is to its right.
Then for $\gamma \in \PP_{x_0}(\Sigma_{g,p}^b)$ and $z \in \HH_1(\Sigma_{g,p+1}^b;\bbk)$, we
have
\[\gamma(z) = z + \omega(\rho_1(z),\rho_2(\gamma)) \cdot \zeta.\]
\end{lemma}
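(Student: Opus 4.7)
The plan is to first verify by formal manipulations that $\gamma$ acts on $z$ by adding a scalar multiple of $\zeta$, and then identify that scalar with $\omega(\rho_1(z),\rho_2(\gamma))$. Since $\PP_{x_0}(\Sigma_{g,p}^b) = \ker(\phi)$ by Theorem~\ref{theorem:birmanpuncture}, any $\gamma$ in it acts trivially on $\HH_1(\Sigma_{g,p}^{b};\bbk)$, so $\rho_1(\gamma(z)) = \rho_1(z)$ for every $z$. On the other hand, the long exact sequence of $(\Sigma_{g,p}^b, \{x_0\})$ (equivalently the Gysin sequence for filling in a puncture) identifies $\ker(\rho_1)$ with the $\bbk$-span of $\zeta$. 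Hence we may define $\Phi(\gamma,z)\in\bbk$ by the relation $\gamma(z) - z = \Phi(\gamma,z)\zeta$.

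I would next check that $\Phi$ is bilinear and factors through $\rho_1\otimes\rho_2$. Linearity in $z$ is automatic. For additivity in $\gamma$, the key observation is that $\gamma\cdot\zeta = \zeta$ in $\HH_1$: a small loop around $x_0$ is carried by any point-push to another small loop around $x_0$, and these are homologous. Therefore
\[P_{\gamma_1\gamma_2}(z) = P_{\gamma_1}\bigl(z + \Phi(\gamma_2,z)\zeta\bigr) = z + \bigl(\Phi(\gamma_1,z)+\Phi(\gamma_2,z)\bigr)\zeta.\]
The same observation gives $\Phi(\gamma,\zeta) = 0$. Combining these, $\Phi$ descends to a bilinear pairing $\overline\Phi\colon \HH_1(\Sigma_{g,p}^b;\bbk)^{\otimes 2} \to \bbk$ with $\gamma(z) = z + \overline\Phi(\rho_1(z),\rho_2(\gamma))\zeta$.

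The remaining task is to identify $\overline\Phi$ with $\omega$. By the additivity in $\gamma$ already established, it suffices to test $\Phi(\gamma,-)$ on generators where $\gamma$ is a simple loop through $x_0$. For such $\gamma$, the classical expression (see, e.g., \cite[\S 4.2]{FarbMargalitPrimer}) writes $P_\gamma = T_{\gamma_+} T_{\gamma_-}^{-1}$, where $\gamma_\pm$ are the two disjoint pushoffs of $\gamma$ into $\Sigma_{g,p+1}^b$ lying on opposite sides of $x_0$. Applying the Picard--Lefschetz formula $T_{\alpha}(z) = z + \omega(z,\alpha)\alpha$ twice, and using that the intersection pairing on $\HH_1(\Sigma_{g,p+1}^b;\bbk)$ factors through $\rho_1$ (so $\omega(z,[\gamma_+]) = \omega(z,[\gamma_-]) = \omega(\rho_1(z),\rho_2(\gamma))$), yields
\[P_\gamma(z) = z + \omega\bigl(\rho_1(z),\rho_2(\gamma)\bigr)\bigl([\gamma_+] - [\gamma_-]\bigr).\]
It then remains only to check that $[\gamma_+] - [\gamma_-] = \zeta$ in $\HH_1(\Sigma_{g,p+1}^b;\bbk)$, which one verifies by inspecting a disk neighborhood of $x_0$: the difference $\gamma_+ - \gamma_-$ bounds an annular region with $x_0$ removed, and our orientation convention on $\zeta$ pins down the sign.

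The only genuine obstacle is the sign bookkeeping in the last step; the rest is a formal consequence of $P_\gamma$ being trivial after filling in $x_0$ and fixing $\zeta$ homologically.
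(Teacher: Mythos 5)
Your argument is correct, but it takes a somewhat different route from the paper's. The paper reduces at once to the case where both $\gamma$ and $z$ are represented by simple closed curves and then reads the formula off a picture of a cycle representing $z$ that meets $\gamma$ transversely, each intersection point contributing a copy of $\pm\zeta$ as the puncture is dragged around $\gamma$. You instead extract the structure formally first — $\gamma$ acts trivially after filling in $x_0$, $\ker(\rho_1)$ is the $\bbk$-span of $\zeta$, and additivity in $\gamma$ together with $\gamma(\zeta)=\zeta$ reduces everything to simple based loops — and then compute the action for simple $\gamma$ from the factorization $P_\gamma = T_{\gamma_+}T_{\gamma_-}^{-1}$ and the twist formula $T_\alpha(z)=z+\omega(z,\alpha)\alpha$, together with the vanishing of $\omega([\gamma_+],[\gamma_-])$ and the fact that the intersection pairing is pulled back along $\rho_1$. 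This buys you two things: you never need $z$ to be (represented by) a simple closed curve, a reduction the paper leaves implicit, and the argument visibly works over any commutative $\bbk$. Two small points to tidy. First, when $p=b=0$ the class $\zeta$ vanishes, so the defining relation $\gamma(z)-z=\Phi(\gamma,z)\zeta$ does not determine $\Phi$; but in that case $\rho_1$ is an isomorphism and the statement is just $\gamma(z)=z$, so treat it separately (for $p+b\geq 1$ the class $\zeta$ is part of a $\Z$-basis, so $\Phi$ is well defined over any $\bbk$). Second, the sign check you defer — that $[\gamma_+]-[\gamma_-]=\zeta$ with the ``$x_0$ to its right'' convention, consistently with the handedness convention in the twist formula and the labelling of the two pushoffs — is exactly the picture-level verification the paper itself performs, so it should be recorded, but it is routine.
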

\begin{proof}
It is enough to check this on $\gamma \in \PP_{x_0}(\Sigma_{g,p}^b) \cong \pi_1(\Sigma_{g,p}^{b})$ and
$z \in \HH_1(\Sigma_{g,p+1}^b;\bbk)$ that can be
represented by simple closed curves.  For these, it is immediate from the following picture:\\
\centerline{\psfig{file=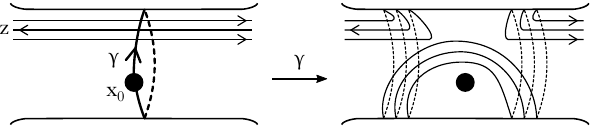,scale=1}}
Here $z \in \HH_1(\Sigma_{g,p+1}^b;\bbk)$ is represented by a cycle that intersects $\gamma$ transversely.
\end{proof}

Next, fix a boundary component $\partial$ of $\Sigma_{g,p}^{b+1}$.  Gluing a punctured disc to $\partial$ and
extending mapping classes over it by the identity, we get a homomorphism $\psi\colon \Mod_{g,p}^{b+1} \rightarrow \Mod_{g,p+1}^b$.
The following folklore result shows that except in degenerate cases the kernel of $\psi$ is the infinite
cyclic subgroup generated by $T_{\partial}$:

\begin{proposition}[{\cite[Proposition 3.19]{FarbMargalitPrimer}}]
\label{proposition:cap}
Fix some $g,p,b \geq 0$ such that $\pi_1(\Sigma_{g,p}^{b+1})$ is nonabelian, and let $\partial$ be a
boundary component of $\Sigma_{g,p}^{b+1}$.  Then there is a central extension
\[1 \longrightarrow \Z \longrightarrow \Mod_{g,p}^{b+1} \stackrel{\psi}{\longrightarrow} \Mod_{g,p+1}^b \longrightarrow 1,\]
where the central $\Z$ is generated by the Dehn twist $T_{\partial}$.
\end{proposition}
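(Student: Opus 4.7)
The plan is to follow the standard Birman--type argument, closely parallel to the proof of Theorem \ref{theorem:birmanpuncture}. Let $\Sigma = \Sigma_{g,p}^{b+1}$ and let $\Sigma' = \Sigma_{g,p+1}^b$ be the result of gluing a once-punctured disc $D$ to $\partial$, with new puncture $x_0 \in D$. The map $\psi$ sends a mapping class represented by a diffeomorphism fixing $\partial$ pointwise to its extension by the identity on $D$.

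First I would verify the easy assertions. The map $\psi$ is surjective: given $\phi \in \Mod_{g,p+1}^b$, an isotopy argument lets us represent $\phi$ by a diffeomorphism of $\Sigma'$ that is the identity on a collar of $\partial D$, and the restriction to $\Sigma' \setminus \Interior(D) \cong \Sigma$ gives a preimage in $\Mod_{g,p}^{b+1}$. To see that $T_\partial \in \ker \psi$, note that after capping, $\partial$ becomes a simple closed curve bounding a disc containing only the single puncture $x_0$; a Dehn twist about such a curve can be isotoped to the identity (relative to the other punctures and boundary components) by the full rotation of $D$ about $x_0$ (which fixes $x_0$ pointwise since $x_0$ is the center). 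Centrality of $T_\partial$ in $\Mod_{g,p}^{b+1}$ is clear because its support can be taken in an arbitrarily thin collar of $\partial$, and every element of $\Mod_{g,p}^{b+1}$ fixes $\partial$ pointwise, so any representative can be isotoped to have support disjoint from this collar. Infinite order of $T_\partial$ can be verified by intersection-number considerations: under the nonabelian hypothesis on $\pi_1$, there is a simple closed curve $\delta$ with $i(\delta,\partial) > 0$, and then $i(T_\partial^n(\delta),\delta) \to \infty$.

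The main obstacle is showing the reverse containment $\ker \psi \subseteq \Span{T_\partial}$. Here I would invoke the standard fibration technique of Earle--Eells. Consider the restriction-to-boundary fibration
\[ \Diff(\Sigma_{g,p}^{b+1},\partial) \longrightarrow \Diff(\Sigma_{g,p+1}^b,x_0) \longrightarrow \Emb(D,\Sigma_{g,p+1}^b;x_0), \]
where the base is the space of embeddings of a punctured-disc neighborhood of $x_0$ sending $0$ to $x_0$. This base is homotopy equivalent to the unit tangent frame bundle of $\Sigma_{g,p+1}^b$ at $x_0$, i.e., to a circle $S^1$. The nonabelian hypothesis on $\pi_1(\Sigma_{g,p}^{b+1})$ guarantees (via Earle--Eells and Earle--Schatz) that all the relevant identity components of diffeomorphism groups are contractible, so the long exact sequence of homotopy groups collapses to
\[ 1 \longrightarrow \pi_1(S^1) \longrightarrow \pi_0\Diff(\Sigma_{g,p}^{b+1},\partial) \stackrel{\psi}{\longrightarrow} \pi_0\Diff(\Sigma_{g,p+1}^b,x_0) \longrightarrow 1, \]
which is exactly our desired sequence. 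The generator of $\pi_1(S^1) = \Z$ corresponds to rotating a frame at $x_0$ once, and tracing through the connecting homomorphism shows that it maps to the Dehn twist $T_\partial$. This identifies $\ker \psi = \Span{T_\partial}$ and completes the argument.

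In practice, for the purposes of this paper it suffices to cite \cite[Proposition 3.19]{FarbMargalitPrimer}, where this argument is carried out in detail; the proposal above records the shape of that proof. The nonabelianness of $\pi_1(\Sigma_{g,p}^{b+1})$ is precisely what excludes the low-complexity exceptions (such as the disc, annulus, and once-punctured disc) where the contractibility inputs from Earle--Eells fail.
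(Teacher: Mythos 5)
The paper does not prove this proposition at all; it is quoted directly from Farb--Margalit \cite[Proposition 3.19]{FarbMargalitPrimer}, so there is no in-paper argument to compare against. Your sketch is essentially the standard proof of the cited result: the restriction fibration whose base is the space of embedded discs centered at $x_0$ (homotopy equivalent to the circle of frames at $x_0$), together with contractibility of the relevant identity components of diffeomorphism groups (Earle--Eells/Earle--Schatz, which is where the nonabelian hypothesis enters), yields the exact sequence with kernel $\pi_1(S^1)\cong\Z$ generated by the disc twist $T_\partial$. This is the same mechanism as in Farb--Margalit (there packaged as a comparison of the point-pushing and unit-tangent-bundle Birman exact sequences), so your proposal is consistent with what the paper relies on.

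One small correction: your justification of the infinite order of $T_\partial$ is wrong as stated. There is no simple closed curve $\delta$ with $i(\delta,\partial)>0$: a boundary-parallel curve cuts off an annulus, so every simple closed curve can be isotoped off it. The standard way to detect $T_\partial^n\neq 1$ directly is with arcs having endpoints on the boundary (or on other punctures/boundary components), whose intersection numbers with their images grow. But this side remark is also redundant in your argument: injectivity of $\Z\to\Mod_{g,p}^{b+1}$ is exactly what the long exact sequence gives you once $\pi_1$ of the identity component of $\Diff(\Sigma_{g,p+1}^b,x_0)$ is known to vanish, so the infinite order of $T_\partial$ comes for free from the fibration.
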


\subsection{Partial level-\texorpdfstring{$\ell$}{l} subgroups}
\label{section:partial}

Our proofs will use results about the ``partial Torelli groups'' introduced by the
author in \cite{PutmanPartialTorelli}.  To avoid technicalities, we will only discuss
the special cases of these results needed for our work.\footnote{Unfortunately, the proofs
in \cite{PutmanPartialTorelli} do not simplify much if you restrict to these cases.  We will later
discuss how
to relate the definition we give here to the one in \cite{PutmanPartialTorelli} (see the proof of Theorem \ref{theorem:stability}).}
A subgroup $H < \HH_1(\Sigma_{g,p}^b;\Z/\ell)$ is a symplectic subgroup if the algebraic intersection pairing
\[\omega\colon \HH_1(\Sigma_{g,p}^b;\Z/\ell) \times \HH_1(\Sigma_{g,p}^b;\Z/\ell) \longrightarrow \Z/\ell\]
restricts to a nondegenerate pairing\footnote{Here nondegenerate means that it identifies $H$ with its dual $H^{\vee} = \Hom(H,\Z/\ell)$.} on $H$.  Such an $H$ is of the form $H \cong (\Z/\ell)^{2h}$ for
some $h \geq 0$ called its genus.  We remark that if $p+b \geq 2$ then the algebraic intersection pairing
on $\HH_1(\Sigma_{g,p}^b;\Z/\ell)$ is degenerate, so in that case $\HH_1(\Sigma_{g,p}^b;\Z/\ell)$ is not a symplectic
subgroup of itself.  For a symplectic subgroup $H$ of $\HH_1(\Sigma_{g,p}^b;\Z/\ell)$, the associated
partial level-$\ell$ subgroup, denoted $\Mod_{g,p}^b(H)$, is the group of all
$f \in \Mod_{g,p}^b$ such that $f(x) = x$ for all $x \in H$.

\begin{example}
If $H = 0$, then $\Mod_{g,p}^b(H) = \Mod_{g,p}^b$.
\end{example}

\begin{example}
\label{example:wholething}
If $H$ is a genus-$g$ symplectic subgroup of $\HH_1(\Sigma_{g,p}^b;\Z/\ell)$, then $\Mod_{g,p}^b(H) = \Mod_{g,p}^b(\ell)$.
The point here is that $\Mod_{g,p}^b$ automatically acts trivially on the subgroup $\fB \cong (\Z/\ell)^{p+b-1}$ of $\HH_1(\Sigma_{g,p}^b;\Z/\ell)$
generated by loops surrounding the punctures and boundary components,\footnote{We have $\fB \cong (\Z/\ell)^{p+b-1}$ and not $(\Z/\ell)^{p+b}$ since if you orient them correctly, the sum
of the homology classes of all the loops surrounding the punctures and boundary components is zero.} and $\HH_1(\Sigma_{g,p}^b;\Z/\ell) = \fB \oplus H$.  Thus
acting trivially on $\HH_1(\Sigma_{g,p}^b;\Z/\ell)$ is equivalent to acting trivially on its subgroup $H$.
\end{example}

\subsection{Conventions about symplectic subgroups}
\label{section:symplecticconventions}

Let $H$ be a symplectic subgroup of $\HH_1(\Sigma_{g,p}^b;\Z/\ell)$.  We will often need to relate $\Mod_{g,p}^b(H)$
to the partial level-$\ell$ subgroup on other surfaces.  Technically, the symplectic subgroup on the other surface
is different from $H$; however, there is often a canonical way to identify them.
In this case, we will use the same letter $H$ for both subgroups.  Here are two examples.

\begin{example}
Let $\iota\colon \Sigma_{g,p}^b \hookrightarrow \Sigma_{g',p'}^{b}$ be an embedding.
The kernel of $\iota_{\ast}\colon \HH_1(\Sigma_{g,p}^b;\Z/\ell) \rightarrow \HH_1(\Sigma_{g',p'}^{b'};\Z/\ell)$
is contained in the subgroup generated by loops surrounding boundary components and punctures.  Thus
$\ker(\iota_{\ast}) \cap H = 0$, so $\iota_{\ast}$ maps $H$ isomorphically to a symplectic
subgroup of $\HH_1(\Sigma_{g',p'}^{b'};\Z/\ell)$ that we will also call $H$.  As
long as $\iota$ takes punctures to either points or punctures, 
we
have a map $\Mod_{g,p}^{b}(H) \rightarrow \Mod_{g',p'}^{b'}(H)$ that extends mapping classes
by the identity.
\end{example}

\begin{example}
Let $\iota\colon \Sigma_{g',p'}^{b'} \hookrightarrow \Sigma_{g,p}^b$ be an embedding.  Assume that that
$\iota_{\ast}\colon \HH_1(\Sigma_{g',p'}^{b'};\Z/\ell) \rightarrow \HH_1(\Sigma_{g,p}^{b};\Z/\ell)$ is
injective and the image of $\iota_{\ast}$ contains $H$.  Then using $\iota_{\ast}$,
 we can identify $H$ with a symplectic subgroup of
$\HH_1(\Sigma_{g',p'}^{b'};\Z/\ell)$ that we will also call $H$.  Again, as
long as $\iota$ takes punctures to either points or punctures we have a map
$\Mod_{g',p'}^{b'}(H) \rightarrow \Mod_{g,p}^{b}(H)$ that extends mapping classes
by the identity.
\end{example}

We will only use this convention when it is clear what it means. 

\subsection{Birman exact sequence II: partial level-\texorpdfstring{$\ell$}{l} subgroups}

A version of the Birman exact sequence for the groups $\Mod_{g,p}^b(\ell)$ was proved by
Brendle, Broaddus, and the author in
\cite[Theorem 3.1]{BrendleBroaddusPutmanIsrael}, building on work of the author for the Torelli group in \cite{PutmanCutPasteTorelli}.
For the partial level-$\ell$ subgroups, the appropriate theorem is as follows.  The statement
of this theorem uses the conventions from \S \ref{section:symplecticconventions}.

\begin{theorem}[{Partial mod-$\ell$ Birman exact sequence}]
\label{theorem:birmanpartial}
Fix some $g,p,b \geq 0$ such that $\pi_1(\Sigma_{g,p}^b)$ is nonabelian, and let
$x_0$ be a puncture of $\Sigma_{g,p+1}^b$.  Let $\ell \geq 2$ and let
$H$ be a symplectic subgroup of $\HH_1(\Sigma_{g,p+1}^b;\Z/\ell)$.
There is then a short exact sequence
\[1 \longrightarrow \PP_{x_0}(\Sigma_{g,p}^b,H) \longrightarrow \Mod_{g,p+1}^b(H) \longrightarrow \Mod_{g,p}^{b}(H) \longrightarrow 1,\]
where $\PP_{x_0}(\Sigma_{g,p}^b,H)$ is as follows:
\begin{itemize}
\item If $p=b=0$, then $\PP_{x_0}(\Sigma_{g,p}^b,H) = \PP_{x_0}(\Sigma_{g,p}^b) \cong \pi_1(\Sigma_{g,p}^{b})$.
\item If $p+b \geq 1$, then $\PP_{x_0}(\Sigma_{g,p}^b,H)$ is the kernel of the composition
\[\PP_{x_0}(\Sigma_{g,p}^b) \cong \pi_1(\Sigma_{g,p}^{b}) \rightarrow \HH_1(\Sigma_{g,p}^{b};\Z/\ell) = H \oplus H^{\perp} \stackrel{\text{proj}}{\rightarrow} H.\]
Here $H^{\perp}$ is the orthogonal complement of $H$ with respect to the algebraic intersection pairing.\qedhere
\end{itemize}
\end{theorem}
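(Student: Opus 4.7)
The plan is to intersect the ordinary Birman exact sequence of Theorem~\ref{theorem:birmanpuncture} with the partial level-$\ell$ subgroups, extracting surjectivity and computing the kernel from the explicit point-pushing formula in Lemma~\ref{lemma:pointpushh1}. First note that $H \subset \HH_1(\Sigma_{g,p+1}^b;\Z/\ell)$ meets $\ker(\rho_1) = \langle \zeta \rangle$ trivially (since $\zeta$ is isotropic for $\omega$ while $H$ is symplectic), so via the convention of \S~\ref{section:symplecticconventions} it identifies with a symplectic subgroup of $\HH_1(\Sigma_{g,p}^b;\Z/\ell)$ that we also call $H$. Since filling in $x_0$ intertwines the actions on $\HH_1$ via $\rho_1$, the map $\Mod_{g,p+1}^b \to \Mod_{g,p}^b$ restricts to $\Mod_{g,p+1}^b(H) \to \Mod_{g,p}^b(H)$, and its kernel is $K \coloneqq \PP_{x_0}(\Sigma_{g,p}^b) \cap \Mod_{g,p+1}^b(H)$.

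For the kernel, Lemma~\ref{lemma:pointpushh1} shows that $\gamma \in K$ iff $\omega(\rho_1(x), \rho_2(\gamma))\,\zeta = 0$ in $\HH_1(\Sigma_{g,p+1}^b;\Z/\ell)$ for every $x \in H$. When $p = b = 0$, the loop $\zeta$ is null-homologous in $\Sigma_{g,1}$ (the inclusion $\Sigma_{g,1} \hookrightarrow \Sigma_g$ is an $\HH_1$-isomorphism), so the condition is vacuous and $K = \PP_{x_0}(\Sigma_{g,p}^b)$. When $p+b \geq 1$, the class $\zeta$ is nonzero, so the condition becomes $\rho_2(\gamma) \in H^{\perp}$ inside $\HH_1(\Sigma_{g,p}^b;\Z/\ell)$. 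Under the symplectic decomposition $\HH_1(\Sigma_{g,p}^b;\Z/\ell) = H \oplus H^{\perp}$, this is precisely the vanishing of the $H$-component; pulling back along the mod-$\ell$ abelianization $\rho_2\colon \PP_{x_0}(\Sigma_{g,p}^b) \cong \pi_1(\Sigma_{g,p}^b) \to \HH_1(\Sigma_{g,p}^b;\Z/\ell)$ recovers the stated description.

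For surjectivity, given $f \in \Mod_{g,p}^b(H)$ lift to $\tf \in \Mod_{g,p+1}^b$ by Theorem~\ref{theorem:birmanpuncture}. For $x \in H$, $\rho_1(\tf(x)) = f(\rho_1(x)) = \rho_1(x)$ forces $\tf(x) - x \in \ker(\rho_1)$, so in the $p = b = 0$ case $\tf$ already lies in $\Mod_{g,p+1}^b(H)$. Otherwise we may write $\tf(x) = x + c(x)\zeta$ for a $\Z/\ell$-linear map $c\colon H \to \Z/\ell$; combining Lemma~\ref{lemma:pointpushh1} with $\gamma(\zeta) = \zeta$ (since $\rho_1(\zeta) = 0$) yields $(\gamma \tf)(x) = x + \bigl(\omega(\rho_1(x), \rho_2(\gamma)) + c(x)\bigr)\zeta$. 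Nondegeneracy of $\omega$ on the symplectic subgroup $\rho_1(H) \cong H$ furnishes $v \in \rho_1(H)$ with $\omega(\rho_1(x), v) = -c(x)$ for all $x \in H$, and surjectivity of $\rho_2$ produces $\gamma \in \PP_{x_0}(\Sigma_{g,p}^b)$ with $\rho_2(\gamma) = v$; then $\gamma \tf$ is the desired lift. The main subtlety is bookkeeping: one must carefully juggle the two incarnations of $H$ on either side of the Birman sequence, and isolate the degenerate $p = b = 0$ case where $\zeta$ becomes null-homologous so that point-pushes act trivially on $\HH_1$ and the kernel balloons to the full point-pushing subgroup.
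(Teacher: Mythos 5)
Your proposal is correct and follows essentially the same route as the paper's own (sketched) proof: restrict the Birman exact sequence of Theorem \ref{theorem:birmanpuncture} to the partial level-$\ell$ subgroups and identify the kernel via the point-pushing formula of Lemma \ref{lemma:pointpushh1}, with the dichotomy $\zeta=0$ when $p=b=0$ versus $\zeta\neq 0$ when $p+b\geq 1$; you additionally write out the surjectivity step (correcting an arbitrary lift by a point-push using nondegeneracy of $\omega$ on $H$), which the paper leaves as ``easy to see'' with a reference to \cite{BrendleBroaddusPutmanIsrael}. One small wording point: $H\cap\langle\zeta\rangle=0$ holds because $\zeta$ lies in the radical of the intersection pairing (it pairs trivially with every class), not merely because it is isotropic --- isotropic elements certainly can lie in a symplectic subgroup.
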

\begin{proof}
The proof is nearly identical to that of \cite[Theorem 3.1]{BrendleBroaddusPutmanIsrael}, so we will just sketch it.
Letting
\[\PP_{x_0}(\Sigma_{g,p}^b,H) = \PP_{x_0}(\Sigma_{g,p}^b) \cap \Mod_{g,p+1}^b(H),\]
it is easy to see that the Birman exact sequence
\[1 \longrightarrow \PP_{x_0}(\Sigma_{g,p}^b) \longrightarrow \Mod_{g,p+1}^b \longrightarrow \Mod_{g,p}^{b} \longrightarrow 1\]
from Theorem \ref{theorem:birmanpuncture} restricts
to a short exact sequence
\[1 \longrightarrow \PP_{x_0}(\Sigma_{g,p}^b,H) \longrightarrow \Mod_{g,p+1}^b(H) \longrightarrow \Mod_{g,p}^{b}(H) \longrightarrow 1.\]
The nontrivial thing is to identify $\PP_{x_0}(\Sigma_{g,p}^b,H)$, which
follows\footnote{The reason there is a difference between the cases $p=b=0$ and $p+b \geq 1$
is that if $\zeta$ the homology class of a loop surrounding $x_0$, then $\zeta = 0$ if $p=b=0$,
while $\zeta \neq 0$ if $p+b \geq 1$.} from Lemma \ref{lemma:pointpushh1}.
\end{proof}

\begin{remark}
\label{remark:birmanlevel}
If $H$ is a genus-$g$ symplectic subgroup of $\HH_1(\Sigma_{g,p}^n;\Z/\ell)$ and thus $\Mod_{g,p}^b(H) = \Mod_{g,p}^b(\ell)$ (see
Example \ref{example:wholething}), then we will write $\PP_{x_0}(\Sigma_{g,p}^b,\ell)$ for
$\PP_{x_0}(\Sigma_{g,p}^b,H)$.  Theorem \ref{theorem:birmanpartial} thus gives an exact sequence
\[1 \longrightarrow \PP_{x_0}(\Sigma_{g,p}^b,\ell) \longrightarrow \Mod_{g,p+1}^b(\ell) \longrightarrow \Mod_{g,p}^b(\ell) \longrightarrow 1\]
with $\PP_{x_0}(\Sigma_{g,p}^b,\ell)$ the kernel of the map
\[\PP_{x_0}(\Sigma_{g,p}^b) \cong \pi_1(\Sigma_{g,p}^b) \rightarrow \HH_1(\Sigma_{g,p}^b;\Z/\ell) \rightarrow \HH_1(\Sigma_g;\Z/\ell).\qedhere\]
\end{remark}

Since Dehn twists about boundary components always lie in $\Mod_{g,p}^{b+1}(H)$, Proposition
\ref{proposition:cap} immediately implies the following:

\begin{proposition}
\label{proposition:cappartial}
Fix some $g,p,b \geq 0$ such that $\pi_1(\Sigma_{g,p}^{b+1})$ is nonabelian, and let
$\partial$ be a boundary component of $\Sigma_{g,p}^{b+1}$.  Let $\ell \geq 2$ and let
$H$ be a symplectic subgroup of $\HH_1(\Sigma_{g,p}^{b+1};\Z/\ell)$. 
Then there is a central extension
\[1 \longrightarrow \Z \longrightarrow \Mod_{g,p}^{b+1}(H) \longrightarrow \Mod_{g,p+1}^b(H) \longrightarrow 1,\]
where the central $\Z$ is generated by the Dehn twist $T_{\partial}$.
\end{proposition}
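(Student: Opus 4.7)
The plan is to obtain this as a direct restriction of the central extension from Proposition \ref{proposition:cap}, namely
\[1 \longrightarrow \Z \longrightarrow \Mod_{g,p}^{b+1} \stackrel{\psi}{\longrightarrow} \Mod_{g,p+1}^b \longrightarrow 1,\]
to the partial level-$\ell$ subgroups on both sides. The first observation is that $T_{\partial}$ itself lies in $\Mod_{g,p}^{b+1}(H)$ (and in fact acts trivially on all of $\HH_1(\Sigma_{g,p}^{b+1};\Z/\ell)$): this is because a curve parallel to a boundary component has zero algebraic intersection with every cycle, so the standard Picard--Lefschetz formula for the action of a Dehn twist is trivial. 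Hence the kernel $\Z = \langle T_{\partial}\rangle$ already sits inside $\Mod_{g,p}^{b+1}(H)$.

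Next I would verify that $\psi$ restricts to a surjection $\Mod_{g,p}^{b+1}(H) \twoheadrightarrow \Mod_{g,p+1}^b(H)$, and that its kernel is exactly $\langle T_{\partial}\rangle$. The point is that gluing a once-punctured disc to $\partial$ induces an isomorphism
\[\psi_{\ast}\colon \HH_1(\Sigma_{g,p}^{b+1};\Z/\ell) \xrightarrow{\ \cong\ } \HH_1(\Sigma_{g,p+1}^b;\Z/\ell),\]
which simply sends the class $[\partial]$ to the class of the loop around the new puncture $x_0$ and fixes all other generators; a rank count confirms bijectivity. Per the conventions of \S\ref{section:symplecticconventions}, $\psi_{\ast}$ is the identification used to view $H$ as a symplectic subgroup on both surfaces. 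Since $\psi_{\ast}$ is $\Mod_{g,p}^{b+1}$-equivariant with respect to $\psi$, an element $f \in \Mod_{g,p}^{b+1}$ fixes $H$ pointwise if and only if $\psi(f)$ does. Thus $\psi^{-1}(\Mod_{g,p+1}^b(H)) = \Mod_{g,p}^{b+1}(H)$, and restricting the exact sequence above produces
\[1 \longrightarrow \langle T_{\partial}\rangle \longrightarrow \Mod_{g,p}^{b+1}(H) \stackrel{\psi}{\longrightarrow} \Mod_{g,p+1}^b(H) \longrightarrow 1.\]
The extension remains central because $T_{\partial}$ is already central in the larger group $\Mod_{g,p}^{b+1}$.

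There is essentially no hard step here; the only point that requires care is being clear about the conventional identification of $H$ on the two surfaces via $\psi_{\ast}$, which is precisely what \S\ref{section:symplecticconventions} was set up to handle.
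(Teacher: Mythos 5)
Your proof is correct and is essentially the paper's argument: the paper also deduces this immediately by restricting the central extension of Proposition \ref{proposition:cap}, noting that boundary Dehn twists act trivially on homology and hence lie in $\Mod_{g,p}^{b+1}(H)$, with the identification of $H$ on the two surfaces handled by the conventions of \S\ref{section:symplecticconventions}. Your write-up just makes explicit the verification (via the isomorphism $\psi_{\ast}$ on $\HH_1(\cdot;\Z/\ell)$) that the paper leaves implicit.
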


\begin{remark}
\label{remark:caplevel}
Again, taking $H$ to be a genus-$g$ symplectic subgroup we get a central extension
\[1 \longrightarrow \Z \longrightarrow \Mod_{g,p}^{b+1}(\ell) \longrightarrow \Mod_{g,p+1}^b(\ell) \longrightarrow 1.\qedhere\]
\end{remark}

\subsection{Generating the partial level-\texorpdfstring{$\ell$}{l} subgroups}

The following lemma describes the difference between the level-$\ell$ subgroup and the partial level-$\ell$ subgroup.
This lemma is true for all surfaces $\Sigma_{g,p}^b$, but we will only need the
case $\Sigma_g^1$, for which the proof is a bit easier.

\begin{lemma}
\label{lemma:partialgen}
Let $g \geq 0$ and $\ell \geq 2$, and
let $H$ be a symplectic subgroup of $\HH_1(\Sigma_{g}^1;\Z/\ell)$.
Then $\Mod_{g}^1(H)$ is generated by $\Mod_{g}^1(\ell)$ along with the set of all Dehn twists
$T_{\gamma}$ such that\footnote{Here we are abusing notation -- to define $[\gamma] \in \HH_1(\Sigma_{g}^1;\Z/\ell)$,
we must first orient $\gamma$.  Changing this orientation replaces $[\gamma]$ by $-[\gamma]$, and
thus does not affect whether $[\gamma] \in H^{\perp}$.  We will make similar abuses
of notation throughout the paper.} $[\gamma] \in H^{\perp}$.  In fact, such $T_{\gamma}$ act
on $H^{\perp}$, and it is enough to take any set of such $T_{\gamma}$ that map to a
generating set for $\Sp(H^{\perp})$.
\end{lemma}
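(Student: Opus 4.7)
The plan is to realize $\Mod_g^1(H)$ as fitting into a short exact sequence
\[1 \longrightarrow \Mod_g^1(\ell) \longrightarrow \Mod_g^1(H) \longrightarrow \Sp(H^\perp) \longrightarrow 1,\]
and then produce a generating set for the quotient $\Sp(H^\perp)$ consisting of images of the Dehn twists in the statement.

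First I would set up the sequence. The standard surjection $\Mod_g^1 \twoheadrightarrow \Sp_{2g}(\Z/\ell)$ has kernel $\Mod_g^1(\ell)$, so restricting to $\Mod_g^1(H)$ yields a sequence whose quotient is the subgroup of $\Sp_{2g}(\Z/\ell)$ fixing $H$ pointwise. Since $H$ is symplectic and $\HH_1(\Sigma_g^1;\Z/\ell) = H \oplus H^\perp$, any such $A$ automatically preserves $H^\perp$ (because for $v \in H^\perp$ and $h \in H$ one has $\omega(Av, h) = \omega(Av, Ah) = \omega(v, h) = 0$, forcing $Av \in H^\perp$), so $A \mapsto A|_{H^\perp}$ gives a canonical identification of this stabilizer with $\Sp(H^\perp)$.

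Second I would verify the easy half of the statement about Dehn twists. If $\gamma$ is a simple closed curve with $[\gamma] \in H^\perp$, the Picard--Lefschetz formula says $T_\gamma$ acts on $\HH_1(\Sigma_g^1;\Z/\ell)$ by $v \mapsto v + \omega(v,[\gamma])[\gamma]$. For $h \in H$ one has $\omega(h,[\gamma]) = 0$, so $T_\gamma$ fixes $H$ pointwise (hence lies in $\Mod_g^1(H)$), and its image in $\Sp(H^\perp)$ is the transvection $t_{[\gamma]}$ on $H^\perp$.

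The main step, which I expect to be the crux, is showing that the transvections obtained this way generate $\Sp(H^\perp)$. I would handle this by geometric realization: the group $\Sp_{2g}(\Z/\ell)$ acts transitively on symplectic subgroups of $\HH_1(\Sigma_g^1;\Z/\ell)$ of a fixed genus (by extending a symplectic basis of the subgroup to one of the ambient module), so combined with the surjection $\Mod_g^1 \twoheadrightarrow \Sp_{2g}(\Z/\ell)$ we can find $\phi \in \Mod_g^1$ moving a ``standard'' symplectic subgroup $H^\perp_{\mathrm{std}}$ to $H^\perp$. Here $H^\perp_{\mathrm{std}}$ is chosen as the mod-$\ell$ image of $\HH_1(S_{\mathrm{std}};\Z)$ for a concretely embedded subsurface $S_{\mathrm{std}} \cong \Sigma_{g-h}^1 \subset \Sigma_g^1$ (the last $g-h$ handles in some integer symplectic decomposition). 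Setting $S' = \phi(S_{\mathrm{std}})$, one has $\HH_1(S';\Z/\ell) = H^\perp$ inside $\HH_1(\Sigma_g^1;\Z/\ell)$, and mapping classes supported on $S'$ fix $H$ pointwise because $H$ is carried by the complementary subsurface. The classical surjection $\Mod(S') \twoheadrightarrow \Sp_{2(g-h)}(\Z/\ell) = \Sp(H^\perp)$, which is realized via Dehn twists supported on $S'$, then yields the required generating set of Dehn twists $T_\gamma$ with $[\gamma] \in H^\perp$. Combined with the short exact sequence, this proves both assertions of the lemma: $\Mod_g^1(\ell)$ together with any set of such Dehn twists whose images generate $\Sp(H^\perp)$ generates $\Mod_g^1(H)$.
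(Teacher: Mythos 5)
Your proposal is correct, and its skeleton (restrict the exact sequence $1 \to \Mod_g^1(\ell) \to \Mod_g^1 \to \Sp_{2g}(\Z/\ell) \to 1$ to $\Mod_g^1(H)$, note that twists $T_\gamma$ with $[\gamma] \in H^{\perp}$ land in $\Mod_g^1(H)$ and map to transvections, then prove surjectivity onto $\Sp(H^{\perp})$) matches the paper. Where you diverge is in the crux surjectivity step. The paper argues algebraically: every primitive $v \in H^{\perp}$ is represented by a simple closed curve (lifting to a primitive integral class and citing the realization result), and symplectic transvections about primitive vectors generate $\Sp(H^{\perp}) \cong \Sp_{2(g-h)}(\Z/\ell)$; this has the mild bonus of showing that \emph{every} transvection of $\Sp(H^{\perp})$ is hit by some $T_\gamma$. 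You instead argue geometrically: transport a standard subsurface by change of coordinates (using transitivity of $\Sp_{2g}(\Z/\ell)$ on symplectic subgroups of fixed genus and surjectivity of $\Mod_g^1 \to \Sp_{2g}(\Z/\ell)$) to get $S' \cong \Sigma_{g-h}^1$ with $\HH_1(S';\Z/\ell) = H^{\perp}$, and then use that $\Mod(S')$ is generated by Dehn twists and surjects onto $\Sp_{2(g-h)}(\Z/\ell)$. This is essentially how the paper's Remark 2.11 produces its explicit generating set, so your route is in the spirit of that remark rather than the proof itself; it trades the input ``transvections about primitive vectors generate $\Sp_{2(g-h)}(\Z/\ell)$'' for the standard surjectivity $\Mod_{g-h}^1 \twoheadrightarrow \Sp_{2(g-h)}(\Z/\ell)$ plus twist generation. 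Two small points you should spell out: that a symplectic automorphism fixing $H$ pointwise is determined by (and can be arbitrary on) $H^{\perp}$, which you do, and that the complement of $S'$ really carries $H$ — this follows since $\phi_\ast(H_{\mathrm{std}}) = \phi_\ast\bigl((H_{\mathrm{std}}^{\perp})^{\perp}\bigr) = (H^{\perp})^{\perp} = H$ and the boundary curve $\partial S'$ is nullhomologous, both routine but worth a line.
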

\begin{proof}
If $H$ has genus $h$, then $\Sp(H^{\perp}) \cong \Sp_{2(g-h)}(\Z/\ell)$.  This group can be embedded in
$\Sp_{2g}(\Z)$ as the subgroup of symplectic automorphisms acting trivially on $H$.
The short exact sequence
\[1 \longrightarrow \Mod_g^1(\ell) \longrightarrow \Mod_g^1 \longrightarrow \Sp_{2g}(\Z/\ell) \longrightarrow 1\]
restricts to an exact sequence of the form
\[1 \longrightarrow \Mod_g^1(\ell) \longrightarrow \Mod_g^1(H) \longrightarrow \Sp(H^{\perp}).\]
Dehn twists $T_{\gamma}$ such that $[\gamma] \in H^{\perp}$ map to symplectic transvections in
$\Sp(H^{\perp})$.
Moreover, for every $v \in H^{\perp}$ that is primitive\footnote{That is, such that there does not exist some $w \in H^{\perp}$
and a non-unit $\lambda \in \Z/\ell$ with $v = \lambda w$.} there exists an oriented simple closed curve
$\gamma$ on $\Sigma_g^1$ with $[\gamma] = v$; see\footnote{This reference proves that primitive elements of
$\HH_1(\Sigma_g^1) \cong \Z^{2g}$ can be represented by simple closed curves.  Since primitive elements
of $\HH_1(\Sigma_g^1;\Z/\ell) \cong (\Z/\ell)^{2g}$ can be lifted to primitive elements of $\HH_1(\Sigma_g^1)$, this
implies the corresponding fact for $\HH_1(\Sigma_g^1;\Z/\ell)$.}
\cite[Proposition 6.2]{FarbMargalitPrimer}.  Symplectic transvections about such elements generate
$\Sp(H^{\perp}) \cong \Sp_{2(g-h)}(\Z/\ell)$.  We conclude
that the map $\Mod_g^1(H) \rightarrow \Sp(H^{\perp})$ is surjective, and moreover that
$\Mod_{g}^1(H)$ is generated by $\Mod_{g}^1(\ell)$ along with the set of all Dehn twists
$T_{\gamma}$ such that $[\gamma] \in H^{\perp}$, as desired.
\end{proof}

\begin{remark}
\label{remark:specificgen}
In Lemma \ref{lemma:partialgen}, we can take
\[S = \{T_{\alpha_1},\ldots,T_{\alpha_{g-h}},T_{\beta_1},\ldots,T_{\beta_{g-h}},T_{\gamma_1},\ldots,T_{\gamma_{g-h-1}}\},\]
where the $\alpha_i$ and $\beta_i$ and $\gamma_i$ are as follows:\\
\centerline{\psfig{file=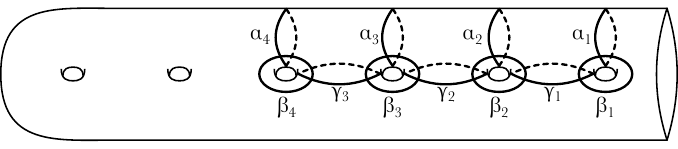,scale=1}}
Here $H$ consists of all elements of homology orthogonal to the
curves about whose twists are in $S$, so $H$ is supported on the handles on the left side of the figure
that have no $S$-curves around them.  This from the fact that
$\Mod_{g-h}$ surjects onto $\Sp_{2(g-h)}(\Z/\ell)$ and the fact that twists about the
curves in the following figure generate $\Mod_{g-h}$:\\
\centerline{\psfig{file=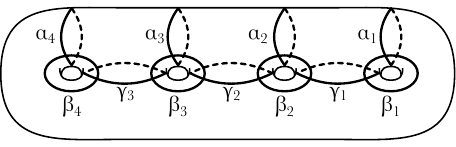,scale=1}}
See \cite[\S 4.4]{FarbMargalitPrimer}.
\end{remark}

\subsection{Subsurface stabilizers}
\label{section:subsurfacestabilizers}

Let $j\colon \Sigma_g^2 \hookrightarrow \Sigma_{g+1}^1$ be the following embedding:\\
\centerline{\psfig{file=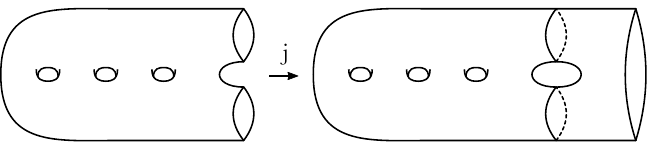,scale=1}}
There is an induced map $j_{\ast}\colon \Mod_g^2 \rightarrow \Mod_{g+1}^1$ that
extends mapping classes on $\Sigma_g^2$ to $\Sigma_{g+1}^1$ by the identity.  Define
\[\hMod_g^2(\ell) = \Set{$f \in \Mod_g^2$}{$j_{\ast}(f) \in \Mod_{g+1}^1(\ell)$}.\]
We have $\hMod_g^2(\ell) \subset \Mod_g^2(\ell)$, but it is not the case
that $\hMod_g^2(\ell) = \Mod_g^2(\ell)$.  For instance, if
$\partial$ is one of the boundary components of $\Sigma_g^2$ then $T_{\partial} \in \Mod_g^2(\ell)$
but $T_{\partial} \notin \hMod_g^2(\ell)$.  However, this is the only difference between
these two groups:

\begin{lemma}
\label{lemma:justboundary}
Let $g \geq 0$ and $\ell \geq 2$, and let $\partial$ be a boundary component
of $\Sigma_g^2$.  Then for all $f \in \Mod_g^2(\ell)$, there exists some $n \in \Z$
such that $T_{\partial}^n f \in \hMod_g^2(\ell)$.
\end{lemma}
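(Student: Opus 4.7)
The plan is to use the symplectic structure on $\HH_1(\Sigma_{g+1}^1;\Z/\ell)$ to show that extending a mapping class $f$ supported on $\Sigma_g^2$ to $\Sigma_{g+1}^1$ affects at most a one-dimensional direction in first homology, and that this direction is precisely the one controlled by $T_{\partial}$.

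First I would fix a symplectic basis $\ta_1,\tb_1,\dots,\ta_{g+1},\tb_{g+1}$ of $\HH_1(\Sigma_{g+1}^1;\Z/\ell)$ adapted to $j$. The geometric input is that the two boundary components $\partial_1,\partial_2$ of $\Sigma_g^2$ become nonseparating curves in $\Sigma_{g+1}^1$ satisfying $[\partial_1]=-[\partial_2]$ there, so that on setting $\ta_{g+1}=[\partial]$ we have
\[ j_{\ast}\HH_1(\Sigma_g^2;\Z/\ell)=[\partial]^{\perp}=\Span{\ta_1,\tb_1,\dots,\ta_g,\tb_g,\ta_{g+1}}, \]
while $\tb_{g+1}$ may be represented by a loop crossing $\partial$ exactly once.

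Next, given $f\in\Mod_g^2(\ell)$, I set $\tf=j_{\ast}(f)$. By construction $\tf$ fixes $[\partial]^{\perp}$ pointwise (including $\ta_{g+1}=[\partial]$ itself, since $f$ fixes $\partial$ pointwise). I would then use that $\tf$ is symplectic: the identity $\omega(\ta_i,\tf(\tb_{g+1}))=\omega(\tf(\ta_i),\tf(\tb_{g+1}))=\omega(\ta_i,\tb_{g+1})$ for $i\leq g+1$ and the analogous identities for $\tb_i$ with $i\leq g$ force every coefficient of $\tf(\tb_{g+1})$ to equal the corresponding coefficient of $\tb_{g+1}$, except possibly the coefficient of $\ta_{g+1}$, which is unconstrained. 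Hence $\tf(\tb_{g+1})=\tb_{g+1}+c\,\ta_{g+1}$ for some $c\in\Z/\ell$.

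Finally, $T_{\partial}$ acts on $\HH_1(\Sigma_{g+1}^1;\Z/\ell)$ as the transvection $x\mapsto x+\omega(x,\ta_{g+1})\,\ta_{g+1}$; this fixes every basis element except $\tb_{g+1}$, on which it acts by $\tb_{g+1}\mapsto \tb_{g+1}\pm\ta_{g+1}$. Lifting $c$ to an integer and applying the appropriate signed power of $T_{\partial}$ cancels the discrepancy, so that $T_{\partial}^n f$ acts trivially on the full symplectic basis and hence lies in $\hMod_g^2(\ell)$. The only non-formal step, and so the main (modest) obstacle, is the geometric identification of $j_{\ast}\HH_1(\Sigma_g^2;\Z/\ell)$ with $[\partial]^{\perp}$ via $j_{\ast}[\partial_1]=-j_{\ast}[\partial_2]\neq 0$; this follows from the cut-and-paste description of $j$ as attaching a pair of pants to the two boundary components of $\Sigma_g^2$.
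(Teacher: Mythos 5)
Your argument is correct and is essentially the paper's own proof: the paper fixes a subsurface $S \cong \Sigma_g^1 \subset \Sigma_g^2$ together with a dual curve $\alpha$ meeting $\partial$ once (playing the role of your $\tb_{g+1}$), observes that $f$ fixes $\HH_1(S;\Z/\ell)$ and $[\partial]$, uses symplecticity to conclude $f([\alpha]) = [\alpha] + n[\partial]$, and then corrects by $T_{\partial}^{-n}$. Your symplectic-basis bookkeeping, including the identification of $j_{\ast}\HH_1(\Sigma_g^2;\Z/\ell)$ with $[\partial]^{\perp}$, is just a repackaging of the same computation.
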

\begin{proof}
Let $S \cong \Sigma_g^1$ and $\alpha$ be as in the following figure:\\
\centerline{\psfig{file=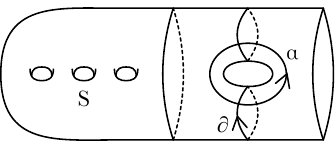,scale=1}}
Identify $\HH_1(S;\Z/\ell)$ with its image
in $\HH_1(\Sigma_{g+1}^1)$.  Since $\HH_1(S;\Z/\ell)$ injects into $\HH_1(\Sigma_g^2;\Z/\ell)$
and $f \in \Mod_g^2(\ell)$, the mapping class $f$ acts trivially on $\HH_1(S;\Z/\ell)$.  Since
$f$ fixes $\partial$, it also fixes $[\partial] \in \HH_1(\Sigma_{g+1}^1;\Z/\ell)$.  It follows that $f$ takes
$[\alpha]$ to an element of $\HH_1(\Sigma_{g+1}^1;\Z/\ell)$ that is orthogonal to $\HH_1(S;\Z/\ell)$ and
has algebraic intersection number $1$ with $[\partial]$.  This implies that $f([\alpha]) = [\alpha] + n [\partial]$
for some $n \in \Z$.  We deduce that $T_{\partial}^{-n} f$ fixes $[\alpha]$ as well as $[\partial]$ and $\HH_1(S;\Z/\ell)$.
These generate $\HH_1(\Sigma_{g+1}^1;\Z/\ell)$, so we conclude that $T_{\partial}^{-n} f \in \hMod_g^2(\ell)$.
\end{proof}

\begin{corollary}
\label{corollary:hatcap}
Let $g \geq 1$ and $\ell \geq 2$, and let $\partial$ be a boundary component of
$\Sigma_g^2$.  Let $V$ be a $\Q$-vector space equipped with an action of $\Mod_g^2(\ell)$ such that
$T_{\partial}$ acts trivially on $V$.  Then $\HH_k(\hMod_g^2(\ell);V) \cong \HH_k(\Mod_g^2(\ell);V)$
for all $k \geq 0$.
\end{corollary}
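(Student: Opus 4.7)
The first step is to observe that $\hMod_g^2(\ell)$ is a normal subgroup of $\Mod_g^2(\ell)$, since by its very definition it is the preimage under $j_\ast$ of $\Mod_{g+1}^1(\ell) \trianglelefteq \Mod_{g+1}^1$. Lemma \ref{lemma:justboundary} says that every element of $\Mod_g^2(\ell)$ has the form $T_\partial^n g$ with $g \in \hMod_g^2(\ell)$, so the quotient $Q \coloneqq \Mod_g^2(\ell)/\hMod_g^2(\ell)$ is cyclic, generated by the image of $T_\partial$. Under $j_\ast$, the curve $\partial$ becomes a non-separating simple closed curve in $\Sigma_{g+1}^1$, so $j_\ast(T_\partial)$ acts on $\HH_1(\Sigma_{g+1}^1;\Z/\ell)$ by the transvection $x \mapsto x + \omega(x,[\partial])[\partial]$; consequently $j_\ast(T_\partial^n) \in \Mod_{g+1}^1(\ell)$ iff $\ell \mid n$. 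Thus $Q \cong \Z/\ell$ is finite, and we have a short exact sequence
\[
1 \longrightarrow \hMod_g^2(\ell) \longrightarrow \Mod_g^2(\ell) \longrightarrow Q \longrightarrow 1.
\]

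The second step is to run the associated Lyndon--Hochschild--Serre spectral sequence in homology with coefficients in $V$:
\[
E^2_{p,q} = \HH_p\bigl(Q;\HH_q(\hMod_g^2(\ell);V)\bigr) \Longrightarrow \HH_{p+q}\bigl(\Mod_g^2(\ell);V\bigr).
\]
Since $Q$ is finite and the coefficients are $\Q$-vector spaces, $|Q|$ is invertible and hence $\HH_p(Q;-)$ vanishes for $p > 0$. The spectral sequence therefore collapses to the edge, yielding an isomorphism
\[
\HH_k\bigl(\Mod_g^2(\ell);V\bigr) \;\cong\; \bigl(\HH_k(\hMod_g^2(\ell);V)\bigr)_Q.
\]

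The final step is to check that $Q$ acts trivially on $\HH_k(\hMod_g^2(\ell);V)$, which will force the coinvariants to coincide with the whole group and produce the desired isomorphism induced by inclusion. The action of the generator $T_\partial$ on $\HH_k(\hMod_g^2(\ell);V)$ is induced by two pieces: conjugation by $T_\partial$ on $\hMod_g^2(\ell)$, and the action of $T_\partial$ on $V$. The first is trivial because $T_\partial$ is a Dehn twist about a boundary component of $\Sigma_g^2$ and is therefore central in $\Mod_g^2$. The second is trivial by hypothesis. Hence the induced action on $\HH_k(\hMod_g^2(\ell);V)$ is trivial, completing the argument.

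I do not anticipate any serious obstacle: the one nontrivial input is the identification of the finite cyclic quotient, which is a direct consequence of Lemma \ref{lemma:justboundary} and the transvection formula for a Dehn twist acting on $\HH_1$; everything else is formal (centrality of the boundary twist, vanishing of rational cohomology of a finite group, and collapse of the Hochschild--Serre spectral sequence).
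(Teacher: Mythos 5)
Your proof is correct and follows essentially the same route as the paper: identify $\Mod_g^2(\ell)/\hMod_g^2(\ell)$ as a finite cyclic group generated by the image of the central twist $T_{\partial}$ (via Lemma \ref{lemma:justboundary}), identify $\HH_k(\Mod_g^2(\ell);V)$ with the coinvariants of $\HH_k(\hMod_g^2(\ell);V)$, and conclude by observing that $T_{\partial}$, being central and acting trivially on $V$, acts trivially on this homology. The only cosmetic differences are that the paper cites the transfer map lemma (Lemma \ref{lemma:transfer}) where you invoke the collapsing Hochschild--Serre spectral sequence, and that you spell out the transvection computation showing the quotient has order exactly $\ell$ (which is not actually needed beyond finiteness).
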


The proof of this corollary uses the following standard lemma, which follows
from the existence of the transfer map and will be used many times in this paper.

\begin{lemma}[{Transfer map lemma, see, e.g., \cite[\S III.9]{BrownCohomology}}]
\label{lemma:transfer}
Let $G$ be a finite-index subgroup of $\Gamma$.  For a field $\bbk$ of characteristic $0$,
let $V$ be a $\bbk$-vector space equipped with an action of $\Gamma$.  Then for all $k$,
the map $\HH_k(G;V) \rightarrow \HH_k(\Gamma;V)$ is surjective.  If $G$ is also a normal
subgroup of $\Gamma$, then $\Gamma$ acts on $\HH_k(G;V)$ and $\HH_k(\Gamma;V) = \HH_k(G;V)_{\Gamma}$,
where the subscript means we are taking the $\Gamma$-coinvariants.\footnote{The action
of $\Gamma$ on $\HH_k(G;V)$ factors through $Q = \Gamma/G$, and sometimes it will
be more convenient to write this as $\HH_k(G;V)_Q$.}
\end{lemma}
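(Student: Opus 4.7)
The plan is to prove the two claims in sequence: first the surjectivity for arbitrary finite-index subgroups, then the coinvariants identification in the normal case. Both are standard, and the only nontrivial input beyond formal manipulation is the fact that $[\Gamma:G]$ is invertible in the coefficient field.

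For surjectivity, I would construct the \emph{transfer} map $\tau \colon \HH_k(\Gamma;V) \to \HH_k(G;V)$ at the chain level and verify that its composition with the corestriction $c \colon \HH_k(G;V)\to\HH_k(\Gamma;V)$ (i.e.\ the natural map induced by inclusion) equals $[\Gamma:G] \cdot \mathrm{id}$ on $\HH_k(\Gamma;V)$. Concretely, pick any projective resolution $P_\bullet \to \bbk$ of the trivial $\bbk\Gamma$-module; since $G$ has finite index, restriction of scalars makes $P_\bullet$ a projective $\bbk G$-resolution as well, and $\HH_k(\Gamma;V)$ and $\HH_k(G;V)$ are computed as the homology of $P_\bullet \otimes_{\bbk\Gamma} V$ and $P_\bullet \otimes_{\bbk G} V$ respectively. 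Choosing coset representatives $\{g_1,\ldots,g_n\}$ for $G\backslash \Gamma$, define $\tau$ at the chain level by $p \otimes v \mapsto \sum_i p g_i^{-1} \otimes g_i v$. Routine checks (independence of coset representatives, compatibility with the $\bbk G$-action) show that $\tau$ is well-defined, and the composition $c\circ\tau$ is seen to be multiplication by $n = [\Gamma:G]$ directly from the formula. Since $\Char(\bbk)=0$, $n$ is invertible in $\bbk$, so $c$ is surjective.

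For the normal case, set $Q = \Gamma/G$, which is finite, and consider the Hochschild--Serre spectral sequence associated to
\[
1 \longrightarrow G \longrightarrow \Gamma \longrightarrow Q \longrightarrow 1,
\]
whose $E_2$ page is $E_2^{p,q} = \HH_p(Q;\HH_q(G;V))$ and which converges to $\HH_{p+q}(\Gamma;V)$. The conjugation action of $\Gamma$ on $G$ induces the $Q$-action on $\HH_q(G;V)$ appearing in the $E_2$ term. Because $Q$ is a finite group and every module in sight is a $\bbk$-vector space with $\bbk$ of characteristic zero, Maschke's theorem ensures that $\bbk[Q]$ is semisimple, so $\HH_p(Q;M) = 0$ for every $p > 0$ and every $\bbk[Q]$-module $M$. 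The spectral sequence therefore collapses onto the column $p=0$, yielding
\[
\HH_k(\Gamma;V) \;\cong\; E_2^{0,k} \;=\; \HH_0(Q;\HH_k(G;V)) \;=\; \HH_k(G;V)_Q,
\]
and the latter agrees with $\HH_k(G;V)_\Gamma$ since the $G$-action on $\HH_k(G;V)$ (via inner automorphisms) is already trivial.

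The main obstacle is essentially bookkeeping: writing down the transfer explicitly and checking that $c \circ \tau$ is multiplication by the index. Once this identity is in hand, the first claim follows by inverting $[\Gamma:G]$, and the second is immediate from the collapse of Hochschild--Serre made possible by Maschke's theorem. Neither step requires anything specific to the mapping class group setting.
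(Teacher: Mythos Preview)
Your proof is correct and follows the standard argument; the paper itself does not give a proof at all, but simply cites \cite[\S III.9]{BrownCohomology} for this well-known fact. Your transfer-map construction for surjectivity and Hochschild--Serre collapse for the coinvariants statement are exactly the arguments one finds in that reference, so there is nothing to compare.
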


\begin{proof}[Proof of Corollary \ref{corollary:hatcap}]
Since $T_{\partial} \in \Mod_g^2(\ell)$ is central and $T_{\partial}^{\ell}$ is the smallest power of $T_{\partial}$ 
lying in $\hMod_g^2(\ell)$, Lemma \ref{lemma:justboundary} implies that $\hMod_g^2(\ell)$ is a finite-index normal subgroup
of $\Mod_g^2(\ell)$ and that $\Mod_g^2(\ell) / \hMod_g^2(\ell)$ is a cyclic group of order $\ell$ generated by
the image of $T_{\partial}$.  By the transfer map lemma (Lemma \ref{lemma:transfer}), we deduce that
\[\HH_k(\Mod_g^2(\ell);V) \cong \HH_k(\hMod_g^2(\ell);V)_{\Mod_g^2(\ell)}.\]
Since inner automorphisms act
trivially on homology (see, e.g., \cite[Proposition III.8.1]{BrownCohomology}), the group $\hMod_g^2(\ell)$
acts trivially on $\HH_k(\hMod_g^2(\ell);V)$.  Since $T_{\partial}$ is a central element of $\Mod_g^2(\ell)$
acting trivially on $V$, it also acts trivially on $\HH_k(\hMod_g^2(\ell);V)$.  We conclude that
$\Mod_g^2(\ell)$ acts trivially on $\HH_k(\hMod_g^2(\ell);V)$, so $\HH_k(\hMod_g^2(\ell);V) \cong \HH_k(\Mod_g^2(\ell);V)$.
\end{proof}

\section{Ordered simplicial complexes and coefficient systems}
\label{section:semisimplicial}

This section contains some topological background material needed to describe the twisted homological
stability machine of \cite{PutmanTwistedStability} in the next section \S \ref{section:stabilitymachine}.  
We will not need to be as general as \cite{PutmanTwistedStability},
so instead of stating things in terms of semisimplicial sets we will work in the easier category
of ``ordered simplicial complexes''.

\subsection{Ordered simplicial complexes}
An {\em ordered simplicial complex} is a CW complex $\bbX$ whose cells are simplices such that
the following hold:
\begin{itemize}
\item The vertices $\bbX^0$ are an arbitrary discrete set.
\item For $p \geq 0$, the set $\bbX^p$ of $p$-simplices consists of
certain ordered sequences $\sigma = [v_0,\ldots,v_p]$,
with the $v_i$ distinct vertices.  The faces of $\sigma$ are obtained by deleting some of the
$v_i$, so for instance the codimension-$1$ faces are of the form 
$[v_0,\ldots,\widehat{v_i},\ldots,v_p]$.
\end{itemize}
These differ from ordinary simplicial complexes in two ways:
\begin{itemize}
\item The vertices making up a simplex have an order.
\item There can be up to $(p+1)!$ simplices of dimension $p$ with the same set of vertices, corresponding
to different orderings.  For instance, there might be distinct edges $[v_0,v_1]$ and
$[v_1,v_0]$ between vertices $v_0$ and $v_1$.
\end{itemize}
A group $G$ acting on $\bbX$ is required
to respect the ordering of the vertices on a simplex, so 
if $\sigma = [v_0,\ldots,v_p]$ is a $p$-simplex
and $g \in G$, then $g \sigma = [g v_0,\ldots,g v_p]$.

\subsection{Cohen--Macaulay complexes}

Let $\bbX$ be an ordered simplicial complex.
For simplices $\sigma = [v_0,\ldots,v_p]$ and $\tau = [w_0,\ldots,w_{q}]$ of $\bbX$,
let 
\[\sigma \cdot \tau = [v_0,\ldots,v_p,w_0,\ldots,w_{q}].\]
This might not be a simplex.
The {\em forward link} of a $p$-simplex $\sigma$ of $\bbX$,
denoted $\FLink_{\bbX}(\sigma)$, is the ordered simplicial complex whose $q$-simplices
are $q$-simplices $\tau$ of $\bbX$ such that $\sigma \cdot \tau$ is a $(p+q+1)$-simplex.
We say that $\bbX$ is {\em weakly forward Cohen--Macaulay} of dimension $n$
if $\bbX$ is $(n-1)$-connected and for all $p$-simplices $\sigma$ of $\bbX$, the forward
link $\FLink_{\bbX}(\sigma)$ is $(n-p-2)$-connected.\footnote{For this, we must
decide how to handle the empty set.  We say that a space $X$ is {\em $n$-connected} if for all $k \leq n$, all continuous maps
$S^k \rightarrow X$ can be extended to $\mathbb{D}^{k+1}$.  We have
$S^{-1} = \emptyset$ but $\mathbb{D}^0 = \{\text{pt}\}$, so for $n \geq -1$ a space
that is $n$-connected must be nonempty.  However, all spaces 
are $n$-connected for $n \leq -2$.}

\subsection{Coefficient systems}
Let $\bbk$ be a commutative ring.  Our next goal is to define coefficient 
systems on ordered simplicial complexes $\bbX$, which informally are natural
associations of $\bbk$-modules to each simplex.  Let $\Simp(\bbX)$ be the poset of simplices of $\bbX$ and let
$\tSimp(\bbX)$ be the poset obtained by adjoining an initial object $\emptysimp$
to $\Simp(\bbX)$.  We will call $\emptysimp$ the $(-1)$-simplex of $\bbX$.

A {\em coefficient system} over $\bbk$ on an ordered simplicial complex $\bbX$ is a contravariant functor
$\cF$ from $\Simp(\bbX)$ to the category of $\bbk$-modules.  We will frequently
omit the $\bbk$ and just talk about coefficient systems on $\bbX$. 
Unpacking this, $\cF$ consists of the following data:
\begin{itemize}
\item For each simplex $\sigma$ of $\bbX$, a $\bbk$-module $\cF(\sigma)$.
\item For each simplex $\sigma$ and each face $\sigma'$ of $\sigma$, 
a $\bbk$-module morphism $\cF(\sigma) \rightarrow \cF(\sigma')$.
\end{itemize}
These must satisfy the evident compatibility conditions. Similarly, an {\em augmented coefficient system} on
$\bbX$ is a contravariant functor $\cF$ from $\tSimp(\bbX)$ to the category of $\bbk$-modules.
The collection of coefficient systems (resp.\ augmented coefficient systems) over $\bbk$ on $\bbX$ forms an 
abelian category whose morphisms are natural transformations.

\begin{notation}
For a simplex $[v_0,\ldots,v_p]$, we will denote $\cF([v_0,\ldots,v_p])$ by $\cF[v_0,\ldots,v_p]$.
In particular, the value of $\cF$ on the $(-1)$-simplex $\emptysimp$ will be
written as $\cF\emptysimp$.
\end{notation}

\begin{example}
We can define a constant coefficient system $\ubbk$ on $\bbX$ with
$\ubbk(\sigma) = \bbk$ for all simplices $\sigma$.  This can
be extended to an augmented coefficient system by setting $\ubbk\emptysimp = \bbk$ for the $(-1)$-simplex $\emptysimp$.
\end{example}

\subsection{Homology}
Let $\bbX$ be an ordered simplicial complex and let
$\cF$ be a coefficient system on $\bbX$.  Define the
simplicial chain complex of $\bbX$ with coefficients in $\cF$ to be the chain complex
$\CC_{\bullet}(\bbX;\cF)$ defined as follows:
\begin{itemize}
\item For $p \geq 0$, we have
\[\CC_p(\bbX;\cF) = \bigoplus_{\sigma \in \bbX^p} \cF(\sigma).\]
\item The boundary map $d\colon \CC_p(\bbX;\cF) \rightarrow \CC_{p-1}(\bbX;\cF)$ is
$d = \sum_{i=0}^p (-1)^i d_i$, where the map $d_i\colon \CC_p(\bbX;\cF) \rightarrow \CC_{p-1}(\bbX;\cF)$ is
as follows.  Consider $\sigma \in \bbX^p$.  Write $\sigma = [v_0,\ldots,v_p]$, and
let $\sigma_i = [v_0,\ldots,\widehat{v_i},\ldots,v_p]$.  Then on the $\cF(\sigma)$ factor of $\CC_n(\bbX;\cF)$,
the map $d_i$ is
\[\cF(\sigma) \longrightarrow \cF(\sigma_i) \hookrightarrow \bigoplus_{\sigma' \in \bbX^{p-1}} \cF(\sigma') = \CC_{p-1}(\bbX;\cF).\]
\end{itemize}
Define
\[\HH_k(\bbX;\cF) = \HH_k(\CC_{\bullet}(\bbX;\cF)).\]
For an augmented coefficient system $\cF$ on $\bbX$, define
$\RC_{\bullet}(\bbX;\cF)$ to be the augmented chain complex defined just like we did above but with
$\RC_{-1}(\bbX;\cF) = \cF\emptysimp$ and define
\[\RH_k(\bbX;\cF) = \HH_k(\RC_{\bullet}(\bbX;\cF)).\]

\begin{example}
For the constant coefficient system $\ubbk$, the homology groups $\HH_k(\bbX;\ubbk)$ and
$\RH_k(\bbX;\ubbk)$ agree with the usual simplicial homology groups of $\bbX$.
\end{example}

\begin{remark}
With our definition, $\RH_{-1}(\bbX;\cF)$ is a quotient of $\cF\emptysimp$.  This quotient can sometimes be
nonzero.  It vanishes precisely when the map
\[\bigoplus_{v \in \bbX^0} \cF[v] \rightarrow \cF\emptysimp\]
is surjective.
\end{remark}

\subsection{Equivariant coefficient systems}
\label{section:equivariant}

Let $\bbX$ be an ordered simplicial complex, let $G$ be a group acting on $\bbX$, and
let $\cF$ be an augmented coefficient system on $\bbX$.
We want to equip $\cF$ with an ``action'' of 
$G$ that is compatible with the $G$-action on $\bbX$.  For simplicity,\footnote{This avoids
the complicated definition in terms of natural transformations from \cite{PutmanTwistedStability}.} 
we will restrict ourselves to $\cF$ such that for all $\sigma,\sigma' \in \tSimp(\bbX)$
with $\sigma' \subset \sigma$, the map $\cF(\sigma) \rightarrow \cF(\sigma')$ is injective.  We
call these {\em injective augmented coefficient systems}.  For such an $\cF$,
the map $\cF(\sigma) \rightarrow \cF\emptysimp$ is injective 
for all $\sigma \in \tSimp(\bbX)$, so we can regard $\cF(\sigma)$ as a
submodule of $\cF\emptysimp$.

A {\em $G$-equivariant} injective augmented coefficient system on $\bbX$ is an injective
augmented coefficient system $\cF$ along with an action of $G$ on $\cF\emptysimp$ such that
for all $\sigma \in \tSimp(\bbX)$, we have
\[g \cF(\sigma) = \cF(g \cdot \sigma) \quad \text{for all $g \in G$}.\]
Here we are regarding $\cF(\sigma)$ as a submodule of $\cF\emptysimp$, so
$g \cF(\sigma)$ is the image of $\cF(\sigma)$ under the action of $g$ on $\cF\emptysimp$.
Letting $G_{\sigma}$ be the stabilizer of $\sigma$, this implies that the
action of $G$ on $\cF\emptysimp$ restricts to an action of $G_{\sigma}$ on $\cF(\sigma)$.

\begin{example}
\label{example:equivariant}
Let $\bbX$ be an ordered simplicial complex with vertex set $V = \bbX^0$.  For a set
$S$, write $\bbk[S]$ for the free $\bbk$-module with basis $S$.  We can then define
an injective augmented coefficient system $\cF$ on $\bbX$ via the formulas
\[\cF[v_0,\ldots,v_p] = \bbk[V \setminus \{v_0,\ldots,v_p\}] \quad \text{and} \quad \cF\emptysimp = \bbk[V].\]
If a group $G$ acts on $\bbX$, then its action on $V$ induces an action on $\cF\emptysimp$, making
$\cF$ into a $G$-equivariant injective augmented coefficient system.
\end{example}

\section{The stability machine}
\label{section:stabilitymachine}

We now discuss some aspects of the homological stability machine with twisted coefficients from \cite{PutmanTwistedStability}.  There is an earlier approach to this due
to Dwyer \cite{DwyerTwisted}, but it seems hard to use it to prove our theorems.

\subsection{Motivation}
What we need is not the homological stability machine itself, but a result that encapsulates
one part of how the inductive step in the machine works.  Consider a group
$G$ acting on an ordered simplicial complex $\bbX$.  The goal is to relate the homology of $G$ to the homology
of stabilizers of simplices of $\bbX$.  The most basic thing one might want is that $\HH_k(G)$ is
``carried'' on the vertex stabilizers in the sense that the map
\begin{equation}
\label{eqn:gsurjective}
\bigoplus_{v \in \bbX^0} \HH_k(G_v) \longrightarrow \HH_k(G)
\end{equation}
is surjective.  For $v \in \bbX^0$ and $g \in G$, we have $g G_v g^{-1} = G_{g v}$, so since inner
automorphisms act trivially on homology the images of $\HH_k(G_v)$ and $\HH_k(G_{g v})$ in $\HH_k(G)$
are equal.  Thus for the sake of verifying surjectivity the above direct sum can be taken to be over representatives
of the $G$-orbits of $\bbX^0$.  

In a typical homological stability proof, the group $G$ acts
transitively on the vertices of $\bbX$ and there is a vertex $v_0$ such that $G_{v_0}$ is the previous
group in our sequence of groups.  In that case, if \eqref{eqn:gsurjective} is surjective then
the map $\HH_k(G_{v_0}) \rightarrow \HH_k(G)$ is surjective, which
is a weak form of homological stability.

\subsection{Fragment of machine}
In our situation, the group $G$ will {\em not} act transitively on the vertices of $\bbX$.  Moreover,
we want to incorporate twisted coefficients, which we do using a $G$-equivariant coefficient system
on $\bbX$.  The result we need is as follows:

\begin{proposition}
\label{proposition:stabilitymachine}
Let $G$ be a group acting on an ordered simplicial complex $\bbX$ and
let $\cM$ be a $G$-equivariant augmented coefficient system on $\bbX$.  For some $k \geq 0$, assume
that the following hold:
\begin{itemize}
\item[(i)]   We have $\RH_i(\bbX;\cM) = 0$ for $-1 \leq i \leq k-1$.
\item[(ii)]  We have $\RH_i(\bbX/G) = 0$ for $-1 \leq i \leq k$.
\item[(iii)] Let $\sigma$ be a simplex of $\bbX$.  Then for $i \geq 1$ the map
\[\HH_{k-i}(G_{\sigma};\cM(\sigma)) \longrightarrow \HH_{k-i}(G;\cM\emptysimp)\]
is an isomorphism if $i-1 \leq \dim(\sigma) \leq i+1$.
\end{itemize}
Then the map
\[\bigoplus_{v \in \bbX^0} \HH_k(G_v;\cM[v]) \longrightarrow \HH_k(G;\cM\emptysimp)\]
is a surjection.
\end{proposition}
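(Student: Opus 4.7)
The plan is a hyperhomology spectral sequence argument in the spirit of \cite{PutmanTwistedStability}. Fix a free resolution $F_\bullet \to \Z$ of the trivial $\Z G$-module and form the double complex $D_{p,q} = F_q \otimes_{\Z G} \RC_p(\bbX;\cM)$ with $p \geq -1$ and $q \geq 0$, using the augmented simplicial chains so that $\RC_{-1} = \cM\emptysimp$. Both spectral sequences associated with the two filtrations of the totalization converge to a common hyperhomology, which I will call $\bbH_\bullet$.

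First I would run the spectral sequence that takes horizontal (simplicial) homology first. Since each $F_q$ is flat over $\Z G$, its $E^2$ page is $\HH_q(G;\RH_p(\bbX;\cM))$, and hypothesis (i) kills every column with $-1 \leq p \leq k-1$. Thus $\bbH_n = 0$ for all $n \leq k-1$, and in particular $\bbH_{k-1} = 0$. I would then turn to the opposite (vertical-first) spectral sequence, where Shapiro's lemma applied one $G$-orbit at a time gives
\[E^1_{p,q} = \begin{cases} \HH_q(G;\cM\emptysimp) & \text{if } p=-1, \\ \bigoplus_{[\sigma] \in \bbX^p/G} \HH_q(G_\sigma;\cM(\sigma)) & \text{if } p \geq 0,\end{cases}\]
with $d^1$ the alternating-sum face map. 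The map in the proposition is exactly this $d^1 \colon E^1_{0,k} \to E^1_{-1,k}$, because direct-summing over vertex orbits and over all vertices yield the same image (inner automorphisms act trivially on homology); so the desired surjectivity is equivalent to $E^2_{-1,k} = 0$. No higher differential leaves $(-1,k)$, so it suffices to show that every incoming differential $d^r \colon E^r_{r-1,k-r+1} \to E^r_{-1,k}$ with $r \geq 2$ has vanishing source. Once that holds, $E^2_{-1,k} = E^\infty_{-1,k}$, and the latter is a subquotient of $\bbH_{k-1} = 0$.

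To kill those sources I would set $i = r-1 \geq 1$ and inspect the stretch $E^1_{i+1,k-i} \to E^1_{i,k-i} \to E^1_{i-1,k-i}$. Hypothesis (iii) makes the natural map $j_\sigma \colon \HH_{k-i}(G_\sigma;\cM(\sigma)) \to \HH_{k-i}(G;\cM\emptysimp)$ an isomorphism for every $\sigma$ with $\dim \sigma \in \{i-1,i,i+1\}$, and assembling these orbit-by-orbit identifies each $E^1_{p,k-i}$ in the stretch with $\CC_p(\bbX/G;\bbk) \otimes_\bbk \HH_{k-i}(G;\cM\emptysimp)$. The identity $j_{g\sigma}\circ g_\ast = j_\sigma$, which follows from the triviality of inner automorphisms on homology, checks that the $d^1$-differentials become the simplicial boundary of $\bbX/G$ tensored with the identity, so
\[E^2_{i,k-i} \;\cong\; \HH_i(\bbX/G;\bbk) \otimes_\bbk \HH_{k-i}(G;\cM\emptysimp).\]
This vanishes for $1 \leq i \leq k$ by hypothesis (ii) (reduced and unreduced homology agree in positive degrees) and vanishes for $i > k$ trivially, since then $k-i < 0$. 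That finishes the proof.

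The step I expect to require the most care is the compatibility check in the third paragraph: after choosing orbit representatives $\tau$ and, for each face of a chosen representative, a $g \in G$ carrying that face back to $\tau$, one must verify that the translation factors $g_\ast$ disappear upon composition with the $j_\tau$'s, reducing $d^1$ to the honest simplicial boundary of $\bbX/G$. The rest is a routine convergence/degeneration check.
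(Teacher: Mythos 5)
Your proposal is correct and is essentially the paper's argument: the paper simply cites the equivariant-homology spectral sequence of \cite[Theorem 5.6]{PutmanTwistedStability} and the proof of \cite[Theorem 5.8]{PutmanTwistedStability}, which is precisely the double-complex/Shapiro's lemma setup you reconstruct, with hypothesis (i) killing the total homology through degree $k-1$, hypothesis (iii) identifying the rows $E^1_{\bullet,k-i}$ near $p=i$ with chains on $\bbX/G$ tensored with $\HH_{k-i}(G;\cM\emptysimp)$, and hypothesis (ii) killing the sources of the differentials into position $(-1,k)$. The compatibility check you flag (translations $g_\ast$ cancelling against the maps $j_\sigma$ because $G$ acts trivially on its own homology) is exactly the point handled in that reference, so no gap remains.
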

\begin{proof}
This can be proved using the spectral sequence \cite[Theorem 5.6]{PutmanTwistedStability} exactly like \cite[Theorem 5.8]{PutmanTwistedStability}.
\end{proof}

\subsection{Vanishing theorem}
For Proposition \ref{proposition:stabilitymachine} to be useful, we need a way to 
verify its first hypothesis, which says that $\RH_i(\bbX;\cM) = 0$ in some range.  
The paper \cite{PutmanTwistedStability} gives a criterion for this.  Letting $\bbX$
be an ordered simplicial complex, it applies to augmented coefficient systems $\cF$ on
$\bbX$ that are polynomial of degree $d \geq -1$.  This is defined
inductively in $d$ as follows:\footnote{The reference \cite{PutmanTwistedStability}
defines what it means to be polynomial of degree $d$ up to dimension $e$.  What
we define here corresponds to $e = \infty$.}
\begin{itemize}
\item A coefficient system $\cF$ is polynomial of degree $-1$ if $\cF(\sigma) = 0$ for all
simplices $\sigma$.  In particular, $\cF\emptysimp = 0$ for the $(-1)$-simplex $\emptysimp$.
\item A coefficient system $\cF$ is polynomial of degree $d \geq 0$ if it satisfies
the following two conditions:
\begin{itemize}
\item The coefficient system $\cF$ is injective in the sense of \S \ref{section:equivariant}.  Recall
that this means that if $\sigma$ is a simplex and $\sigma'$ is a face of $\sigma$, then the map
$\cF(\sigma) \rightarrow \cF(\sigma')$ is injective.
\item Let $w$ be a vertex of $\bbX$.  Let $D_w \cF$ be the coefficient
system on the forward link $\FLink_{\bbX}(w)$ defined by the formula
\[\quad\quad\quad\quad D_w \cF(\sigma) = \frac{\cF(\sigma)}{\Image\left(\cF\left(w \cdot \sigma\right) \rightarrow \cF\left(\sigma\right)\right)} \quad \text{for a simplex $\sigma$ of $\FLink_{\bbX}(w)$}.\]
Then $D_w \cF$ must be polynomial of degree $d-1$.
\end{itemize}
\end{itemize}

\begin{example}
A coefficient system $\cF$ is polynomial of degree $0$ if and only
if it is constant.
\end{example}

\begin{example}
Let $\bbX$ be an ordered simplicial complex with vertex set $V = \bbX^0$.  Let $\cF$
be the augmented coefficient system on $\bbX$ from Example \ref{example:equivariant}, so
\[\cF[v_0,\ldots,v_p] = \bbk[V \setminus \{v_0,\ldots,v_p\}] \quad \text{and} \quad \cF\emptysimp = \bbk[V].\]
We claim that $\cF$ is polynomial of degree $1$.  Since $\cF$ is injective, for all $w \in V$ we must
prove that $D_w \cF$ is polynomial of degree $0$, i.e., constant.  For a simplex
$[v_0,\ldots,v_p]$ of the forward link of $w$, we have
\begin{align*}
D_w \cF[v_0,\ldots,v_p] &= \frac{\bbk[V \setminus \{v_0,\ldots,v_p\}]}{\bbk[V \setminus \{w,v_0,\ldots,v_p\}]}
\cong \bbk[\{w\}] \cong \bbk.
\end{align*}
Thus $D_w \cF \cong \ubbk$, as desired.
\end{example}

The vanishing theorem from \cite{PutmanTwistedStability} is then as follows:

\begin{theorem}[{Vanishing theorem, \cite[Theorem 6.4]{PutmanTwistedStability}\footnote{The statement of \cite[Theorem 6.4]{PutmanTwistedStability} requires $\cF$ to be polynomial of degree $d$ up to dimension $N$.  As we said when we defined polynomiality, what we defined is being polynomial of degree $d$ up to dimension $\infty$, so the ``up to dimension'' part of \cite[Theorem 6.4]{PutmanTwistedStability} is superfluous.}}]
\label{theorem:vanishing}
For some $N \geq -1$ and $d \geq -1$, let $\bbX$ be an ordered simplicial complex 
that is weakly forward Cohen--Macaulay of dimension $N+d+1$. 
Let $\cF$ be an augmented coefficient system on $\bbX$ that is 
polynomial of degree $d$. Then $\RH_i(\bbX;\cF)=0$ for $-1 \leq i \leq N$.
\end{theorem}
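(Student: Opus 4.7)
The proof is by induction on the polynomial degree $d$. The base case $d = -1$ is trivial, since then $\cF \equiv 0$ forces $\RC_\bullet(\bbX;\cF) = 0$.

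For the inductive step, fix $d \geq 0$ and pick a vertex $w \in \bbX^0$. First check that $\FLink_\bbX(w)$ is weakly forward Cohen--Macaulay of dimension $N+d$: the $(N+d-1)$-connectedness is immediate from the hypothesis on $\bbX$, while for a $p$-simplex $\tau$ of the link the identity $\FLink_{\FLink_\bbX(w)}(\tau) = \FLink_\bbX(w \cdot \tau)$ propagates the right connectivity to its forward links. Since $D_w\cF$ is polynomial of degree $d-1$ by hypothesis, the inductive hypothesis applied with parameters $(N,\,d-1)$ gives $\RH_i(\FLink_\bbX(w); D_w\cF) = 0$ for $-1 \leq i \leq N$.

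The main work is to transport this vanishing back to $\bbX$. I would construct a short exact sequence of augmented coefficient systems on $\bbX$,
\[
0 \longrightarrow \cK \longrightarrow \cF \longrightarrow \cQ \longrightarrow 0,
\]
with $\cK(\sigma) = \Image\bigl(\cF(w\cdot\sigma)\to\cF(\sigma)\bigr)$ when $w\cdot\sigma$ is a simplex and $\cK(\sigma) = \cF(\sigma)$ otherwise, so that $\cQ$ is supported on the forward link of $w$ and agrees there with $D_w\cF$. Identifying $\RC_\bullet(\bbX;\cQ)$ with $\RC_\bullet(\FLink_\bbX(w); D_w\cF)$ and feeding this into the associated long exact sequence in $\RH$ reduces everything to the vanishing of $\RH_i(\bbX;\cK)$ in the same range, which the plan closes by iterating the construction (peeling off further vertices) or, more cleanly, by organizing the whole process into a spectral sequence whose columns involve iterated forward links. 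The principal obstacle is precisely this bookkeeping: one must verify that $\cK$ and its iterates remain polynomial of the correct degree and that the Cohen--Macaulay dimension $N+d+1$ is exactly sufficient at each stage, so that the double induction on $(d, N)$ closes up without off-by-one errors.
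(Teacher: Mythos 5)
Note first that the paper does not prove this statement at all: it is imported as a black box from \cite[Theorem 6.4]{PutmanTwistedStability}, so there is no in-paper proof to compare against. Judged on its own terms, your sketch gets the easy part right (the forward link $\FLink_{\bbX}(w)$ is indeed weakly forward Cohen--Macaulay of dimension $N+d$, since $\FLink_{\FLink_{\bbX}(w)}(\tau)=\FLink_{\bbX}(w\cdot\tau)$, so the inductive hypothesis does apply to $D_w\cF$ on the link), but the reduction back to $\bbX$ has two genuine gaps.

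First, your $\cK$ is not a coefficient system. Take a simplex $\sigma$ such that $w\cdot\sigma$ is \emph{not} a simplex but some face $\sigma'$ of $\sigma$ has $w\cdot\sigma'$ a simplex (this happens routinely). Then $\cK(\sigma)=\cF(\sigma)$ while $\cK(\sigma')=\Image\bigl(\cF(w\cdot\sigma')\to\cF(\sigma')\bigr)$, and the restriction map $\cF(\sigma)\to\cF(\sigma')$ has no reason to carry $\cK(\sigma)$ into $\cK(\sigma')$. So the short exact sequence $0\to\cK\to\cF\to\cQ\to 0$ does not exist as defined, and with it the identification $\RC_{\bullet}(\bbX;\cQ)\cong\RC_{\bullet}(\FLink_{\bbX}(w);D_w\cF)$ on which the whole step rests. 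Second, even granting a corrected version of such a sequence, the long exact sequence only shows that $\RH_i(\bbX;\cF)$ is a quotient of $\RH_i(\bbX;\cK)$ for $i\leq N$; the entire content of the theorem has now been shifted onto the vanishing of the $\cK$-homology, and your plan for that (``peel off further vertices'' or ``organize into a spectral sequence'') has no decreasing induction parameter: the vertex set is in general infinite, you never show that $\cK$ or its iterates are polynomial of any controlled degree, and the Cohen--Macaulay hypothesis you have controls forward links of simplices, not the auxiliary subcomplexes produced by successively deleting vertices. This bookkeeping, which you explicitly flag as ``the principal obstacle,'' is precisely the substance of the proof of \cite[Theorem 6.4]{PutmanTwistedStability}; as written, the proposal defers it rather than supplying it, so the argument does not close.
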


\subsection{Strong polynomiality}
\label{section:strongpolynomial}

Let $\bbX$ be an ordered simplicial complex.  
Our next goal (accomplished below in \S \ref{section:tensorproduct} after two sections of
preliminaries) is to study tensor products of polynomial augmented coefficient
systems on $\bbX$.
What we would like to prove is that if $\cF$ and $\cG$ are polynomial of degrees
$d \geq 0$ and $e \geq 0$, then $\cF \otimes \cG$ is polynomial of degree $d+e$.
For this to be true, we need some additional hypotheses. 

An augmented coefficient system $\cF$ is {\em strongly polynomial} of degree $d \geq -1$
if it satisfies the following inductive definition:
\begin{itemize}
\item It is strongly polynomial of degree $-1$ if $\cF(\sigma) = 0$ for all
simplices $\sigma$.  To simplify handling edge cases in inductive proofs,
we will also say such coefficient systems are strongly polynomial of degree
$d$ for all $d \leq -1$. 
\item It is strongly polynomial of degree $d \geq 0$ if it satisfies
the following two conditions:
\begin{itemize}
\item The coefficient system $\cF$ is injective in the sense of \S \ref{section:equivariant}.
\item Let $\tau = [w_0,\ldots,w_{q}]$ be a simplex of $\bbX$.  Set
$\tau' = [w_0,\ldots,w_{q-1}]$, interpreted as the empty $(-1)$-simplex
if $q = 0$.  Let $D_{\tau} \cF$ be the coefficient
system on the forward link $\FLink_{\bbX}(\tau)$ defined by the formula
\[\quad\quad\quad\quad D_{\tau} \cF(\sigma) = \frac{\cF(\tau' \cdot \sigma)}{\Image\left(\cF\left(\tau \cdot \sigma\right) \rightarrow \cF\left(\tau' \cdot \sigma\right)\right)} \quad \text{for a simplex $\sigma$ of $\FLink_{\bbX}(\tau)$}.\]
Then $D_{\tau} \cF$ must be strongly polynomial of degree $d-1$.
\end{itemize}
\end{itemize}

\begin{remark}
\label{remark:negativedegree}
If $\cF$ is strongly polynomial of degree $d$, then it is also strongly polynomial
of degree $d'$ for all $d' \geq d$.  In particular, if $\cF$ vanishes identically
then it is strongly polynomial of degree $d$ for all $d \in \Z$.
\end{remark}

\begin{remark}
This is stronger than simply being polynomial, whose definition only involves
$D_{\tau}$ for $0$-dimensional simplices $\tau = [w]$.
\end{remark}

\subsection{Insertion functors}
\label{section:ataupoly}

Let $\cF$ be an augmented coefficient system on an ordered simplicial complex $\bbX$.  For a simplex
$\tau$ of $\bbX$, let $A_{\tau} \cF$ be the augmented coefficient system on the forward
link $\bbL = \FLink_{\bbX}(\tau)$ defined via the formula
\[A_{\tau} \cF (\sigma) = \cF(\tau \cdot \sigma) \quad\quad \text{for a simplex $\sigma$ of $\bbL$}.\]
Write $\tau = [w_0,\ldots,w_{q}]$, and let $\tau' = [w_0,\ldots,w_{q-1}]$.
If $\cF$ is injective, we have a short exact sequence
\[0 \longrightarrow A_{\tau} \cF \longrightarrow \left(A_{\tau'} \cF\right)|_{\bbL} \longrightarrow D_{\tau} \cF \longrightarrow 0\]
of augmented coefficient systems on $\bbL$.  The following lemma implies that if $\cF$ is strongly
polynomial of degree $d$, then so are $A_{\tau} \cF$ and $(A_{\tau'} \cF)|_{\bbL}$.  This can fail
if $\cF$ is only polynomial.

\begin{lemma}
\label{lemma:ataupoly}
Let $\bbX$ be an ordered simplicial complex and let $\cF$ be an augmented coefficient system on $\bbX$
that is strongly polynomial of degree $d \geq -1$.  The following hold:
\begin{itemize}
\item[(i)] For all subcomplexes $\bbY$ of $\bbX$, the coefficient system $\cF|_{\bbY}$ is strongly
polynomial of degree $d$.
\item[(ii)] Let $\tau$ be a simplex of $\bbX$ and let $\bbL = \FLink_{\bbX}(\tau)$.  Then $A_{\tau} \cF$ is
strongly polynomial of degree $d$.
\end{itemize}
\end{lemma}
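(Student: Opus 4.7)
The plan is to prove both parts simultaneously by induction on the polynomial degree $d$, with the base case $d = -1$ being immediate: if $\cF$ vanishes on all simplices, then so do $\cF|_{\bbY}$ (clearly) and $A_{\tau}\cF$ (since $A_{\tau}\cF(\sigma) = \cF(\tau \cdot \sigma)$ and $\tau \cdot \sigma$ is a simplex of $\bbX$), so both are strongly polynomial of degree $-1$. By Remark \ref{remark:negativedegree} we may assume $d \geq 0$ in the inductive step, so in particular $\cF$ is injective.

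For part (i), injectivity of $\cF|_{\bbY}$ is inherited from $\cF$. Given a simplex $\tau$ of $\bbY$, I would unwind the definition to observe that $D_{\tau}(\cF|_{\bbY})$ is literally the restriction of $D_{\tau}\cF$ to the subcomplex $\FLink_{\bbY}(\tau) \subseteq \FLink_{\bbX}(\tau)$. By the strong polynomiality of $\cF$, the system $D_{\tau}\cF$ is strongly polynomial of degree $d-1$, and the inductive hypothesis applied to part (i) then shows that its restriction is strongly polynomial of degree $d-1$.

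For part (ii), injectivity of $A_{\tau}\cF$ is immediate from injectivity of $\cF$, since if $\sigma' \subseteq \sigma$ are simplices of $\bbL$ then $\tau \cdot \sigma' \subseteq \tau \cdot \sigma$ are simplices of $\bbX$. The key step is to establish the identity
\[
D_{\sigma}(A_{\tau}\cF) = D_{\tau \cdot \sigma}\cF
\]
as augmented coefficient systems on the common forward link $\FLink_{\bbL}(\sigma) = \FLink_{\bbX}(\tau \cdot \sigma)$. Writing $\sigma = [u_0,\dots,u_p]$ and $\sigma' = [u_0,\dots,u_{p-1}]$ (with $\sigma' = \emptysimp$ if $p = 0$), the crucial combinatorial observation is that $(\tau \cdot \sigma)' = \tau \cdot \sigma'$, where primes denote deletion of the last vertex; the edge case $p = 0$ uses the convention $\tau \cdot \emptysimp = \tau$. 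Plugging this into the definition of $D_{-}(-)$ applied to a simplex $\rho$ of the forward link makes both sides equal to
\[
\cF(\tau \cdot \sigma' \cdot \rho) \Big/ \Image\bigl(\cF(\tau \cdot \sigma \cdot \rho) \to \cF(\tau \cdot \sigma' \cdot \rho)\bigr),
\]
and the face maps agree tautologically. Since $\cF$ is strongly polynomial of degree $d$, the coefficient system $D_{\tau \cdot \sigma}\cF$ is strongly polynomial of degree $d-1$, finishing the inductive step.

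The argument is essentially bookkeeping, and the only real obstacle is tracking the correct interpretation of $\sigma'$ in the boundary cases (a vertex, or $\tau$ a vertex) so that the identity $(\tau \cdot \sigma)' = \tau \cdot \sigma'$ makes sense; once this is verified, parts (i) and (ii) both reduce to invoking the inductive hypothesis on a coefficient system of lower polynomial degree.
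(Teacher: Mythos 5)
Your proof is correct and follows essentially the same route as the paper: part (ii) is exactly the paper's argument (injectivity plus the identity $D_{\sigma}(A_{\tau}\cF) = D_{\tau\cdot\sigma}\cF$ on $\FLink_{\bbL}(\sigma) = \FLink_{\bbX}(\tau\cdot\sigma)$, which in fact needs no induction since $D_{\tau\cdot\sigma}\cF$ is strongly polynomial of degree $d-1$ directly from the hypothesis on $\cF$). For part (i) the paper simply cites \cite[Lemma 6.3]{PutmanTwistedStability} and notes the same proof works for strong polynomiality, whereas you spell out that argument via $D_{\tau}(\cF|_{\bbY}) = (D_{\tau}\cF)|_{\FLink_{\bbY}(\tau)}$ and induction on $d$; this is the same underlying reasoning, just made self-contained.
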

\begin{proof}
Both (i) and (ii) are trivial if $d=-1$, so we can assume that $d \geq 0$.
For (i), in \cite[Lemma 6.3]{PutmanTwistedStability} it is proved that if
$\cF$ is assumed merely to be polynomial of degree $d$, then so is $\cF|_{\bbY}$.  The
same proof works for strong polynomiality.

For (ii), since $\cF$ is injective, so is $A_{\tau} \cF$.  Also, if
$\kappa$ is a simplex of $\bbL$, then $D_{\kappa} A_{\tau} \cF = D_{\tau \cdot \kappa} \cF$.
Since $\cF$ is strongly polynomial of degree $d$, this is strongly polynomial of degree
$(d-1)$.  Together, these two observations imply that $A_{\tau} \cF$ is strongly
polynomial of degree $d$.
\end{proof}

\subsection{Filtrations of coefficient systems}

It is clear that the collection of augmented coefficient systems that are strongly polynomial
is closed under direct sums.
More generally, we have the following.  To make
its statement easier to parse, we only state it for strongly polynomial coefficient
systems, but it also holds for polynomial ones with a similar proof.

\begin{lemma}
\label{lemma:filtrations}
Let $\bbX$ be an ordered simplicial complex and let $\cF$ be an augmented
coefficient system on $\bbX$.  Assume that $\cF$ has a filtration
\[\cF = \cF_r \supset \cF_{r-1} \supset \cdots \supset \cF_0 = 0\]
such that $\cF_i / \cF_{i+1}$ is strongly polynomial of degree $d \geq -1$ for all
$1 \leq i \leq r$.  Then $\cF$ is strongly polynomial of degree $d$.
\end{lemma}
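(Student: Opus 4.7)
The plan is to prove this by a double induction, reducing first to the case of a two-step filtration (short exact sequence) and then inducting on the polynomial degree $d$.

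\textbf{Step 1: Reduction to length-two filtrations.} I would first induct on $r$ to reduce to the following claim: if $0 \to \cF' \to \cF \to \cF'' \to 0$ is a short exact sequence of augmented coefficient systems on $\bbX$ with both $\cF'$ and $\cF''$ strongly polynomial of degree $d$, then so is $\cF$. Granting this claim, the inductive step for $r \geq 2$ first applies the lemma to the length-$(r-1)$ filtration $0 = \cF_0 \subset \cF_1 \subset \cdots \subset \cF_{r-1}$ to conclude that $\cF_{r-1}$ is strongly polynomial of degree $d$, and then applies the short-exact case to $0 \to \cF_{r-1} \to \cF_r \to \cF_r/\cF_{r-1} \to 0$, since $\cF_r/\cF_{r-1}$ is strongly polynomial of degree $d$ by hypothesis. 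The case $r = 1$ is tautological.

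\textbf{Step 2: The short-exact case, by induction on $d$.} When $d = -1$, both $\cF'$ and $\cF''$ vanish identically, so $\cF$ does too. Assume now $d \geq 0$ and the statement holds for $d-1$. I must verify the two clauses of strong polynomiality for $\cF$. For \emph{injectivity}, fix a face relation $\sigma' \subset \sigma$ (possibly with $\sigma' = \emptysimp$) and apply the five-lemma to
\[
\begin{tikzcd}[row sep=small, column sep=small]
0 \arrow{r} & \cF'(\sigma) \arrow{r} \arrow{d} & \cF(\sigma) \arrow{r} \arrow{d} & \cF''(\sigma) \arrow{r} \arrow{d} & 0 \\
0 \arrow{r} & \cF'(\sigma') \arrow{r}           & \cF(\sigma') \arrow{r}           & \cF''(\sigma') \arrow{r}           & 0.
\end{tikzcd}
\]
The outer vertical maps are injective by the injectivity clauses of strong polynomiality for $\cF'$ and $\cF''$, forcing the middle restriction map to be injective as well.

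For \emph{strong polynomiality of $D_\tau \cF$}, fix a simplex $\tau = [w_0, \ldots, w_q]$ with $\tau' = [w_0, \ldots, w_{q-1}]$ (interpreted as $\emptysimp$ when $q = 0$), and a simplex $\sigma$ of $\bbL = \FLink_{\bbX}(\tau)$. Apply the snake lemma to
\[
\begin{tikzcd}[row sep=small, column sep=small]
0 \arrow{r} & \cF'(\tau \cdot \sigma) \arrow{r} \arrow{d} & \cF(\tau \cdot \sigma) \arrow{r} \arrow{d} & \cF''(\tau \cdot \sigma) \arrow{r} \arrow{d} & 0 \\
0 \arrow{r} & \cF'(\tau' \cdot \sigma) \arrow{r}           & \cF(\tau' \cdot \sigma) \arrow{r}           & \cF''(\tau' \cdot \sigma) \arrow{r}           & 0.
\end{tikzcd}
\]
All three vertical maps are injective by the injectivity just established for $\cF$ and the hypothesis for $\cF'$ and $\cF''$, so the snake sequence collapses to a short exact sequence of cokernels $0 \to D_\tau \cF'(\sigma) \to D_\tau \cF(\sigma) \to D_\tau \cF''(\sigma) \to 0$. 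These assemble into a short exact sequence of augmented coefficient systems on $\bbL$ whose outer terms are strongly polynomial of degree $d - 1$, and the induction hypothesis on $d$ then gives that $D_\tau \cF$ is strongly polynomial of degree $d - 1$, as desired.

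\textbf{Main subtlety.} The only non-bookkeeping point is recognizing that the injectivity clause built into strong polynomiality is exactly the hypothesis that lets the snake lemma yield a \emph{short} exact sequence of derivatives $D_\tau$, rather than merely a right-exact one. This is precisely why the analogous statement for merely polynomial systems is more delicate and why the ``strong'' variant was introduced; the induction on $d$ would otherwise have nothing to chew on.
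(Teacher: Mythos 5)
Your proof is correct and follows essentially the same route as the paper: reduce to a short exact sequence of coefficient systems, use the five-lemma for injectivity and the snake lemma (with the injectivity clauses killing the kernels) to get a short exact sequence of the $D_\tau$'s, then invoke the inductive hypothesis in degree $d-1$. The only difference is the nesting order of the two inductions (you reduce to the two-step case first, the paper puts the induction on $d$ outermost), which is immaterial.
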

\begin{proof}
The proof is by induction on $d$.  The base case $d=-1$ is clear, so
assume that $d \geq 0$ and that the lemma is true for
degree $d-1$.  Using another induction on the length of a filtration,
we see that it is enough to prove that if
\[0 \longrightarrow \cK \longrightarrow \cF \longrightarrow \cQ \longrightarrow 0\]
is a short exact sequence of augmented coefficient systems such that $\cK$ and
$\cQ$ are strongly polynomial of degree $d$, then $\cF$ is strongly polynomial of degree $d$.

To see that $\cF$ is injective, let $\sigma$ be a simplex and $\sigma'$ be a face
of $\sigma$.  We then have a commutative diagram
\[\begin{tikzcd}[row sep=small]
0 \arrow{r} & \cK(\sigma)  \arrow{r} \arrow{d} & \cF(\sigma)  \arrow{r} \arrow{d} & \cQ(\sigma)  \arrow{r} \arrow{d} & 0 \\
0 \arrow{r} & \cK(\sigma') \arrow{r}           & \cF(\sigma') \arrow{r}           & \cQ(\sigma') \arrow{r}           & 0
\end{tikzcd}\]
with exact rows.
The maps $\cK(\sigma) \rightarrow \cK(\sigma')$ and $\cQ(\sigma) \rightarrow \cQ(\sigma')$
are injective by assumption, so by the five-lemma $\cF(\sigma) \rightarrow \cF(\sigma')$ is
also injective, as desired.

Now consider a simplex $\tau = [w_0,\ldots,w_{q}]$ of $\bbX$.  Set
$\tau' = [w_0,\ldots,w_{q-1}]$.  We know that $D_{\tau} \cK$ and $D_{\tau} \cQ$ are strongly polynomial
of degree $d-1$, and we must prove that $D_{\tau} \cF$ is as well.  For a simplex $\sigma$
of $\FLink_{\bbX}(\tau)$, we have a commutative diagram
\[\begin{tikzcd}[row sep=small]
            & 0 \arrow{d}                                 & 0 \arrow{d}                                 & 0 \arrow{d}                                 &   \\
0 \arrow{r} & \cK(\tau \cdot \sigma)  \arrow{r} \arrow{d} & \cF(\tau \cdot \sigma)  \arrow{r} \arrow{d} & \cQ(\tau \cdot \sigma)  \arrow{r} \arrow{d} & 0 \\
0 \arrow{r} & \cK(\tau' \cdot \sigma) \arrow{r} \arrow{d} & \cF(\tau' \cdot \sigma) \arrow{r} \arrow{d} & \cQ(\tau' \cdot \sigma) \arrow{r} \arrow{d} & 0 \\
0 \arrow{r} & D_{\tau} \cK(\sigma)    \arrow{r} \arrow{d} & D_{\tau} \cF(\sigma)    \arrow{r} \arrow{d} & D_{\tau} \cQ(\sigma)    \arrow{r} \arrow{d} & 0 \\
            & 0                                           & 0                                           & 0                                           &
\end{tikzcd}\]
The columns are exact since $\cK$ and $\cF$ and $\cQ$ are injective, and the first two
rows are also exact by assumption.  A quick diagram chase (or alternatively,
the snake lemma) shows that the third row is
also exact.  This implies that we have a short exact sequence
\[0 \longrightarrow D_{\tau} \cK \longrightarrow D_{\tau} \cF \longrightarrow D_{\tau} \cQ \longrightarrow 0\]
of augmented coefficient systems.  Since $D_{\tau} \cK$ and $D_{\tau} \cQ$ are strongly polynomial
of degree $d-1$, our inductive hypothesis implies that $D_{\tau} \cF$ is as well, as desired.
\end{proof}

\subsection{Tensor products of coefficient systems}
\label{section:tensorproduct}

Using Lemma \ref{lemma:filtrations}, we will prove the following.  We will apply
it with $\bbk$ a field, in which case its flatness assumptions are automatic.
When $d+e < -1$, the statement of this lemma uses the conventions about
strong polynomiality in negative degrees from \S \ref{section:strongpolynomial} (c.f.
Remark \ref{remark:negativedegree}).

\begin{lemma}
\label{lemma:tensorpolynomial}
Let $\bbX$ be an ordered simplicial complex and 
let $\cF$ and $\cG$ be augmented coefficient systems on $\bbX$ over a commutative
ring $\bbk$.  Assume the following hold:
\begin{itemize}
\item For all simplices $\sigma$ of $\bbX$, the $\bbk$-modules $\cF(\sigma)$
and $\cG(\sigma)$ are flat.
\item The augmented coefficient systems $\cF$ and $\cG$ are strongly polynomial
of degrees $d \geq -1$ and $e \geq -1$, respectively.
\end{itemize}
Then $\cF \otimes \cG$ is strongly polynomial of degree $d+e$.
\end{lemma}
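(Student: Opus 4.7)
The natural approach is induction on $d+e$, mimicking the Leibniz-rule pattern that strong polynomiality is designed to support. The base case is $d \leq -1$ or $e \leq -1$, where one factor vanishes identically, hence so does $\cF \otimes \cG$, which is then strongly polynomial of every degree by the convention recalled in Remark \ref{remark:negativedegree}.

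For the inductive step, first verify injectivity. Given a simplex $\sigma$ and a face $\sigma'$ of $\sigma$, factor the map $\cF(\sigma) \otimes \cG(\sigma) \to \cF(\sigma') \otimes \cG(\sigma')$ through $\cF(\sigma') \otimes \cG(\sigma)$. The first leg is $(\cF(\sigma) \hookrightarrow \cF(\sigma')) \otimes \cG(\sigma)$, injective since $\cG(\sigma)$ is flat; the second leg is symmetric. So $\cF \otimes \cG$ is injective.

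The main computation is the analysis of $D_\tau(\cF \otimes \cG)$ for a simplex $\tau = [w_0,\dots,w_q]$ with $\tau' = [w_0,\dots,w_{q-1}]$ and $\bbL = \FLink_{\bbX}(\tau)$. Using the insertion functor notation of \S\ref{section:ataupoly}, the short exact sequences
\[0 \to A_\tau \cF \to A_{\tau'}\cF|_\bbL \to D_\tau \cF \to 0, \qquad 0 \to A_\tau \cG \to A_{\tau'}\cG|_\bbL \to D_\tau \cG \to 0\]
remain exact after tensoring with flat coefficient systems termwise. Consider the two-step filtration of $A_{\tau'}\cF|_\bbL \otimes A_{\tau'}\cG|_\bbL$ given by
\[A_\tau \cF \otimes A_\tau \cG \;\subset\; A_\tau \cF \otimes A_{\tau'}\cG|_\bbL \;\subset\; A_{\tau'}\cF|_\bbL \otimes A_{\tau'}\cG|_\bbL,\]
where each inclusion uses flatness to preserve the injection. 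Quotienting out by the bottom term gives $D_\tau(\cF \otimes \cG)$, and the intermediate subquotients, again by flatness, are identified as
\[\frac{A_\tau \cF \otimes A_{\tau'}\cG|_\bbL}{A_\tau \cF \otimes A_\tau \cG} \;\cong\; A_\tau \cF \otimes D_\tau \cG, \qquad \frac{A_{\tau'}\cF|_\bbL \otimes A_{\tau'}\cG|_\bbL}{A_\tau \cF \otimes A_{\tau'}\cG|_\bbL} \;\cong\; D_\tau \cF \otimes A_{\tau'}\cG|_\bbL.\]
By Lemma \ref{lemma:ataupoly}, $A_\tau \cF$ is strongly polynomial of degree $d$ and $A_{\tau'}\cG|_\bbL$ is strongly polynomial of degree $e$; the factors $D_\tau \cF$ and $D_\tau \cG$ have degrees $d-1$ and $e-1$ by definition. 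The inductive hypothesis (the quantity $d+e$ strictly decreases in each case) then shows that both subquotients are strongly polynomial of degree $d+e-1$, and Lemma \ref{lemma:filtrations} applied on $\bbL$ yields that $D_\tau(\cF \otimes \cG)$ is strongly polynomial of degree $d+e-1$, completing the inductive step.

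The only real delicacy is checking that each inclusion in the two-step filtration remains an injection after tensoring and that the quotients are correctly identified; both are consequences of flatness applied to the short exact sequences above. Once this bookkeeping is set up, the result falls out of the inductive hypothesis and Lemma \ref{lemma:filtrations}.
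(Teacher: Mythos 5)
Your proof is correct and follows essentially the same route as the paper: same base case, same flatness argument for injectivity, and the same two-step filtration of $(A_{\tau'}\cF)|_{\bbL} \otimes (A_{\tau'}\cG)|_{\bbL}$ analyzed via Lemma \ref{lemma:ataupoly} and Lemma \ref{lemma:filtrations}. The only differences are cosmetic: you run the induction on $d+e$ rather than a double induction, and your intermediate filtration term is $A_\tau\cF \otimes (A_{\tau'}\cG)|_{\bbL}$ instead of the paper's $(A_{\tau'}\cF)|_{\bbL} \otimes A_\tau\cG$, which is just the mirror-image choice.
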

\begin{proof}
The proof will be by induction on $d$ and $e$.  The base cases are when
either $d = -1$ or $e = -1$ (or both).  In other words, at least one of
$\cF$ and $\cG$ is identically $0$.  Therefore $\cF \otimes \cG$
is also identically $0$, and is thus strongly polynomial of degree
$-1$.  This implies that it is strongly polynomial of any degree
whatsoever, and in particular is strongly polynomial of degree $d+e$.

Assume now that $d \geq 0$ and $e \geq 0$, and that the lemma
is true whenever one of them is smaller.  
We first prove that $\cF \otimes \cG$ is an injective augmented coefficient system.  Let $\sigma$
be a simplex and let $\sigma'$ be a face of $\sigma$.  The map
$(\cF \otimes \cG)(\sigma) \rightarrow (\cF \otimes \cG)(\sigma')$ can be factored
as
\[(\cF \otimes \cG)(\sigma) = \cF(\sigma) \otimes \cG(\sigma) \rightarrow \cF(\sigma') \otimes \cG(\sigma) \rightarrow \cF(\sigma') \otimes \cG(\sigma') = (\cF \otimes \cG)(\sigma').\]
The first arrow is injective since $\cF$ is injective and $\cG(\sigma)$ is flat, and the second arrow
is injective since $\cF(\sigma')$ is flat and $\cG$ is injective.  It follows that
the map $(\cF \otimes \cG)(\sigma) \rightarrow (\cF \otimes \cG)(\sigma')$ is injective,
as desired.

Now consider a simplex $\tau = [w_0,\ldots,w_{q}]$ of $\bbX$.  Set $\tau' = [w_0,\ldots,w_{q-1}]$.
We must prove that the augmented coefficient
system $D_{\tau} (\cF \otimes \cG)$ on $\bbL = \FLink_{\bbX}(\tau)$ is strongly polynomial of degree $d+e-1$.
Using the notation from \S \ref{section:ataupoly}, we have short exact sequences of augmented coefficient
systems
\[0 \longrightarrow A_{\tau} \cF \longrightarrow (A_{\tau'} \cF)|_{\bbL} \longrightarrow D_{\tau} \cF \longrightarrow 0\]
and
\[0 \longrightarrow A_{\tau} \cG \longrightarrow (A_{\tau'} \cG)|_{\bbL} \longrightarrow D_{\tau} \cG \longrightarrow 0\]
on $\bbL$.  By Lemma \ref{lemma:ataupoly}, the augmented coefficient systems $A_{\tau} \cF$ and $(A_{\tau'} \cF)|_{\bbL}$
(resp.\ $A_{\tau} \cG$ and $(A_{\tau'} \cG)|_{\bbL}$) are strongly polynomial of degree $d$ (resp.\ $e$).
Using our flatness assumptions, we have a filtration
\[0 
\subset 
\left(A_{\tau} \cF\right) \otimes \left(A_{\tau} \cG\right)
\subset
(A_{\tau'} \cF)|_{\bbL} \otimes \left(A_{\tau} \cG\right)
\subset
(A_{\tau'} \cF)|_{\bbL} \otimes (A_{\tau'} \cG)|_{\bbL}\]
of coefficient systems.  The associated graded of this filtration consists of the following:
\begin{itemize}
\item $\left(A_{\tau} \cF\right) \otimes \left(A_{\tau} \cG\right)$.  Since $A_{\tau} \cF$ is strongly polynomial
of degree $d$ and $A_{\tau} \cG$ is strongly polynomial of degree $e$, we cannot apply our inductive
hypothesis to this (but we will soon quotient it out, so this will not matter).
\item $\left(D_{\tau} \cF\right) \otimes \left(A_{\tau} \cG\right)$.  Since $D_{\tau} \cF$ is strongly polynomial
of degree $(d-1)$ and $A_{\tau} \cG$ is strongly polynomial of degree $e$, our inductive hypothesis
says that this is strongly polynomial of degree $d+e-1$.
\item $(A_{\tau'} \cF)|_{\bbL} \otimes \left(D_{\tau} \cG\right)$.  Since $(A_{\tau'} \cF)|_{\bbL}$ is strongly polynomial of degree $d$
and $D_{\tau} \cG$ is strongly polynomial of degree $e-1$, our inductive hypothesis says that
this is strongly polynomial of degree $d+e-1$.
\end{itemize}
From this, we see that
\[D_{\tau} \left(\cF \otimes \cG\right) = \left((A_{\tau'} \cF)|_{\bbL} \otimes (A_{\tau'} \cG)|_{\bbL}\right) / \left(\left(A_{\tau} \cF\right) \otimes \left(A_{\tau} \cG\right)\right)\]
has a filtration whose associated graded terms are strongly polynomial of degree $d+e-1$.  By
Lemma \ref{lemma:filtrations}, we deduce that $D_{\tau} \left(\cF \otimes \cG\right)$
is strongly polynomial of degree $d+e-1$, as desired.
\end{proof}

\section{The complex of tethered tori}
\label{section:tetheredtori}

In this section, we introduce an ordered simplicial complex upon which $\Mod_{g}^1(\ell)$ acts and study
its basic properties.  In the next section, we will introduce a $\Mod_g^1(\ell)$-equivariant
coefficient system on it and prove it is strongly polynomial.

\subsection{Tori and tethered tori.}
Let $\tau(\Sigma_1^1)$ be the result of gluing $[0,1]$ to $\Sigma_1^1$ by identifying $1 \in [0,1]$ with a
point of $\partial \Sigma_1^1$.  The subset $[0,1] \subset \tau(\Sigma_1^1)$ will be called the {\em tether}
and the point $0 \in [0,1] \subset \tau(\Sigma_1^1)$ will be called the {\em initial point} of the tether.
For an open interval $I \subset \partial \Sigma_{g}^1$, an {\em $I$-tethered torus} in $\Sigma_{g}^1$ is
an embedding $\iota\colon \tau(\Sigma_1^1) \rightarrow \Sigma_{g}^1$ taking the initial point of the tether
to a point of $I$ such that the restriction of $\iota$ to $\Sigma_1^1$ is orientation-preserving:\\
\centerline{\psfig{file=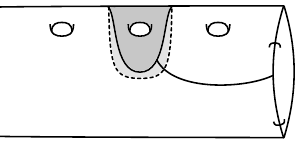,scale=1}}
We will always consider $I$-tethered tori up to isotopy.\footnote{These are isotopies through $I$-tethered
tori, so the initial point of the tether can move within $I$.}
An $I$-tethered torus $\iota\colon \tau(\Sigma_1^1) \rightarrow \Sigma_{g}^1$ is said to be {\em orthogonal}
to a symplectic subgroup $H \subset \HH_1(\Sigma_{g}^1;\Z/\ell)$
if all elements of the image of
\[\HH_1(\Sigma_1^1;\Z/\ell) \stackrel{\cong}{\longrightarrow} \HH_1(\tau(\Sigma_1^1);\Z/\ell) \stackrel{\iota_{\ast}}{\longrightarrow} \HH_1(\Sigma_{g}^1;\Z/\ell)\]
are orthogonal to $H$ under the algebraic intersection pairing.

\subsection{Complex of tethered tori.}
\label{section:tetheredtoruscomplex}

Fix a symplectic subgroup $H \subset \HH_1(\Sigma_{g}^1;\Z/\ell)$ and an open interval $I \subset \partial \Sigma_{g}^1$.
The {\em complex of $I$-tethered $H$-orthogonal tori} in $\Sigma_{g}^1$, denoted $\bbTT_g^1(I,H)$, is the ordered
simplicial complex whose $p$-simplices are ordered sequences $[\iota_0,\ldots,\iota_p]$ as follows:
\begin{itemize}
\item Each $\iota_i$ is the isotopy class of an $I$-tethered torus that is orthogonal to $H$.
\item The $\iota_i$ can be isotoped so as to be disjoint.
\item The $\iota_i$ are ordered using the order in which their tethers leave $I$, which is oriented such that
the surface is to its right.
\end{itemize}
For instance, a $2$-simplex might look like this:\\
\centerline{\psfig{file=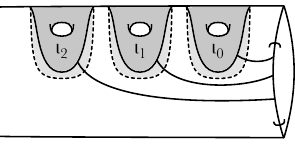,scale=1}}
If $H=0$, then we will sometimes omit it from our notation and simply write $\bbTT_g^1(I)$.
The complex $\bbTT_g^1(I)$ was introduced by Hatcher--Vogtmann \cite{HatcherVogtmannTethers}, who
proved that it was $\frac{g-3}{2}$-connected.  The author generalized this to
$\bbTT_g^1(I,H)$ as follows:

\begin{theorem}[{\cite[Theorem 3.8]{PutmanPartialTorelli}}]
\label{theorem:tetheredtoricon}
Fix $g,h \geq 0$ and $\ell \geq 2$.  Let $I$ be an open interval in $\partial \Sigma_{g}^1$ and let
$H$ be a genus-$h$ symplectic subgroup of $\HH_1(\Sigma_{g}^1;\Z/\ell)$.  Then
$\bbTT_g^1(I,H)$ is $\frac{g-(4h+3)}{2h+2}$-connected.
\end{theorem}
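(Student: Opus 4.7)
My plan is to proceed by induction on the genus $h$ of $H$, using a secondary induction on $g$ to feed the local link analysis at each step. The base case $h=0$ is precisely Hatcher--Vogtmann's theorem on $\bbTT_g^1(I)$: when $H=0$ the orthogonality condition is vacuous, the complex reduces to $\bbTT_g^1(I)$, and the bound $\frac{g-(4h+3)}{2h+2}$ specializes to $\frac{g-3}{2}$, matching theirs exactly. For small $g$ the claimed connectivity is negative and the statement is trivial, which handles the $g$-induction base as well.

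For the inductive step with $h \geq 1$, fix a symplectic direct sum decomposition $H = H' \oplus P$, where $H'$ has genus $h-1$ and $P$ is a symplectic plane. Given a map $\phi \colon S^k \to \bbTT_g^1(I,H)$ with $k \leq \frac{g-(4h+3)}{2h+2}$, the inductive hypothesis on $h$ gives that $\bbTT_g^1(I, H')$ is $\frac{g-(4h-1)}{2h}$-connected, which exceeds $k$ in our range. Hence $\phi$ extends to a simplicial map $\Phi \colon D^{k+1} \to \bbTT_g^1(I, H')$, and the remaining task is to homotope $\Phi$ to land in the subcomplex $\bbTT_g^1(I,H) \subset \bbTT_g^1(I, H')$.

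For this final step I would invoke Hatcher's bad-simplex technique. Declare a simplex $\sigma = [\iota_0, \ldots, \iota_p]$ of $\bbTT_g^1(I, H')$ \emph{bad} if some $\iota_i$ fails to be orthogonal to $P$, and eliminate bad simplices from $\Phi(D^{k+1})$ by replacing their stars with cones over their links in the ``good'' subcomplex of simplices fully orthogonal to $H$. Cutting $\Sigma_g^1$ along the tethers and tori of a bad simplex $\sigma$ yields a surface of genus $g-p-1$ with a modified boundary interval, and after identifying how $H$ restricts to its homology the link is naturally identified with a complex $\bbTT_{g-p-1}^1(I'', \widetilde H)$ for an appropriate symplectic subgroup $\widetilde H$. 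Induction on $g$ then supplies the connectivity needed to carry out each replacement, and a numerical check shows that $\frac{g - (4h+3)}{2h+2}$ is exactly calibrated so that the required link connectivity $k-p-1$ is met in the worst case $p=0$.

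The main obstacle I expect is the geometric bookkeeping in the bad-simplex step: identifying the link of a bad simplex with a smaller tethered-tori complex, and verifying that the residual symplectic subgroup $\widetilde H$ in the cut surface has the correct genus. The delicate point is that a bad $\iota_i$ is not orthogonal to $P$, so excising it distorts how $P$ sits in the complementary homology; one must argue that $P$ can be replaced by an equivalent symplectic plane supported in the complement by exploiting the surplus handles guaranteed by $g \gg h$. This surplus is precisely what drives the denominator $2h+2$ in the connectivity bound --- each of the $h$ genus units in $H$ costs two handles of reserve, plus an additional overhead of two from accommodating the tether and boundary interval.
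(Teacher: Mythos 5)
The paper does not actually prove this statement---it imports it from \cite[Theorem 3.8]{PutmanPartialTorelli}---so the relevant comparison is with that proof. Your opening moves are fine: the base case $h=0$ is indeed Hatcher--Vogtmann, and since $\frac{g-(4h-1)}{2h}\geq\frac{g-(4h+3)}{2h+2}$, a $k$-sphere in $\bbTT_g^1(I,H)$ does bound a disc in $\bbTT_g^1(I,H')$. The genuine gap is in the only step carrying real content: the analysis of the (good) links of bad simplices. If $\sigma=[\iota_0,\ldots,\iota_p]$ contains a torus not orthogonal to $P$, then $P$ does not lie in the image of $\HH_1(X_\sigma;\Z/\ell)$ (that image is the orthogonal complement of the span $W$ of the excised handle classes, and some element of $P$ pairs nontrivially with $W$), so there is no ``restriction of $H$ to the complement.'' What the good part of the link actually consists of is the tethered tori in $X_\sigma$ orthogonal to $H'\oplus\widetilde{P}$, where $\widetilde{P}$ is the projection of $P$ to $\HH_1(X_\sigma;\Z/\ell)$ along $W$, and this projection need not be symplectic: with symplectic basis $a_1,b_1,a_2,b_2$ and $P=\langle a_1,\,b_1+a_2\rangle$, excising a torus realizing the handle $(a_1,b_1)$ projects $P$ to the isotropic line $\langle a_2\rangle$. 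So the family of complexes $\bbTT_g^1(I,H)$ with $H$ symplectic is not closed under the link operation your induction needs, and the induction does not close; saturating $\widetilde{P}$ to a symplectic subgroup can increase the genus, so the constraint governing the link may have genus larger than $h$, sending the induction on $h$ in the wrong direction. Your proposed remedy---replacing $P$ by an ``equivalent'' symplectic plane supported in the complement---cannot work as stated, because the subcomplex you must land in is defined by orthogonality to the \emph{fixed} $H$; trading $P$ for another plane changes the target.

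Two further signs that the sketch is not seeing the real difficulty. First, the claim that the bound is ``exactly calibrated'' is empty: if the good links really were complexes $\bbTT_{g-p-1}^1(J,\widetilde{H})$ with $\widetilde{H}$ of genus at most $h$, the needed inequality would reduce to $\frac{p+1}{2h+2}\leq p+1$, which holds for \emph{every} denominator, so nothing in your argument accounts for the factor $2h+2$; that cost is generated precisely by the repairs your sketch elides. Second, in the Hatcher--Wahl bad-simplex formalism the bad simplices should be those all of whose vertices violate the condition (together with the ``good for $\sigma$'' refinement of the link), not simplices with some bad vertex, so even the formal setup needs adjustment. These issues are exactly why the cited proof in \cite{PutmanPartialTorelli} is formulated for general $A$-homology markings $\mu\colon\HH_1\rightarrow A$---a class of constraints closed under the relevant cutting operations---and proceeds through a chain of auxiliary vanishing-loop and vanishing-torus complexes with genus bookkeeping that produces the $2h+2$, rather than by a single push off the non-orthogonal simplices.
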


This has the following corollary:

\begin{corollary}
\label{corollary:tetheredtoricm}
Fix $g,h \geq 0$ and $\ell \geq 2$.  Let $I$ be an open interval in $\partial \Sigma_{g}^1$ and let
$H$ be a genus-$h$ symplectic subgroup of $\HH_1(\Sigma_{g}^1;\Z/\ell)$.  Then
$\bbTT_g^1(I,H)$ is weakly forward Cohen--Macaulay of dimension $\frac{g-(4h+3)}{2h+2}+1$.
\end{corollary}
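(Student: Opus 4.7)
The plan is to combine Theorem \ref{theorem:tetheredtoricon} with an identification of forward links as tethered-tori complexes on cut-down surfaces. Setting $n = \frac{g-(4h+3)}{2h+2}+1$, the $(n-1)$-connectivity of $\bbTT_g^1(I,H)$ is precisely what Theorem \ref{theorem:tetheredtoricon} provides, so the only remaining task is to bound the connectivity of $\FLink_{\bbTT_g^1(I,H)}(\sigma)$ for each $p$-simplex $\sigma = [\iota_0,\ldots,\iota_p]$.

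First I would cut $\Sigma_g^1$ along a regular neighborhood $N$ of the union of the pairwise disjoint tethered tori $\iota_0,\ldots,\iota_p$. Each component of $N$ is a genus-$1$ surface whose boundary is a single circle meeting $I$ in a short subarc, so excising all $p+1$ components leaves a subsurface $\Sigma'$ canonically diffeomorphic to $\Sigma_{g-(p+1)}^1$. The boundary $\partial \Sigma'$ contains the portion $I' \subset I$ lying to the right of the last tether, which plays the role of the new tethering interval.

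Next I would transfer $H$ to $\Sigma'$. Since each $\iota_i$ is $H$-orthogonal, $H$ is contained in the symplectic complement inside $\HH_1(\Sigma_g^1;\Z/\ell)$ of the rank-$2(p+1)$ subgroup spanned by the classes of the $\iota_i$, and this complement coincides with the image of $\HH_1(\Sigma';\Z/\ell) \to \HH_1(\Sigma_g^1;\Z/\ell)$. Hence $H$ lifts to a genus-$h$ symplectic subgroup of $\HH_1(\Sigma';\Z/\ell)$ (retaining the name $H$ via the conventions of \S \ref{section:symplecticconventions}). By construction, a $q$-simplex of $\FLink_{\bbTT_g^1(I,H)}(\sigma)$ is the same datum as a $q$-simplex of $\bbTT_{g-(p+1)}^1(I',H)$, and the correspondence is compatible with the face maps, yielding a natural isomorphism
\[
\FLink_{\bbTT_g^1(I,H)}(\sigma) \cong \bbTT_{g-(p+1)}^1(I',H).
\]
This identification is the one place where care is required, though in the end it is essentially bookkeeping; the main subtlety is the homological argument that $H$ descends to a symplectic subgroup of the correct genus on the cut surface.

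Finally, Theorem \ref{theorem:tetheredtoricon} applied to the right-hand side gives connectivity $\frac{(g-p-1)-(4h+3)}{2h+2}$, and I would verify that this is at least $n - p - 2 = \frac{g-(4h+3)}{2h+2} - p - 1$. Clearing the positive denominator $2h+2$, this reduces to $-(p+1) \geq -(p+1)(2h+2)$, i.e.\ $1 \leq 2h+2$, which holds for every $h \geq 0$. No further obstacle arises, so the weakly forward Cohen--Macaulay condition is verified at dimension $n$.
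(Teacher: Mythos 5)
Your proposal is correct and follows essentially the same route as the paper: cite Theorem \ref{theorem:tetheredtoricon} for the global connectivity, identify the forward link of a $p$-simplex with $\bbTT_{g-p-1}^1(J,H)$ on the cut-open surface, and apply the theorem again together with the inequality $\frac{p+1}{2h+2} \leq p+1$. The only difference is that you spell out the homological bookkeeping for transferring $H$ to the cut surface, which the paper leaves implicit.
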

\begin{proof}
Theorem \ref{theorem:tetheredtoricon} says that $\bbTT_g^1(I,H)$ is $\frac{g-(4h+3)}{2h+2}$-connected.
Let $\sigma = [\iota_0,\ldots,\iota_p]$ be a $p$-simplex and let $\bbL$ be the forward link of $\sigma$.  As
in the following figure, let $S$ be the result of deleting the interiors of the $\iota_i(\Sigma_1^1)$ from $\Sigma_{g}^1$
and then cutting open the resulting surface along the tethers:\\
\centerline{\psfig{file=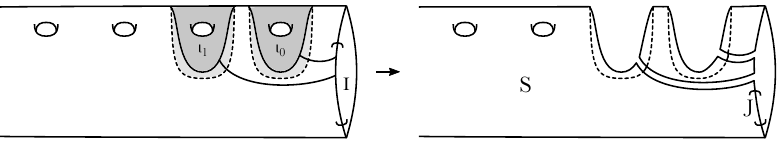,scale=1}}
We thus have $S \cong \Sigma_{g-p-1}^1$, and $\HH_1(S;\Z/\ell)$ can be identified with a subgroup of $\HH_1(\Sigma_{g}^1;\Z/\ell)$ containing
$H$.  Identify $S$ with $\Sigma_{g-p-1}^1$ and $H$ with a subgroup of $\HH_1(\Sigma_{g-p-1}^1;\Z/\ell)$.  Letting $J \subset \partial \Sigma_{g-p-1}^1$ be the interval indicated in
the above figure, the forward link $\bbL$ is isomorphic to $\bbTT_{g-p-1}^1(J,H)$, see here:\\
\centerline{\psfig{file=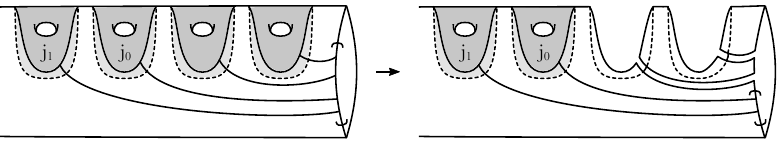,scale=1}}
It thus follows from Theorem \ref{theorem:tetheredtoricon} that $L$ is
\[\frac{(g-p-1)-(4h+3)}{2h+2} = \frac{g-(4h+3)}{2h+2} - \frac{p+1}{2h+2} \geq \frac{g-(4h+3)}{2h+2} - p - 1\]
connected, as desired.
\end{proof}

\subsection{Realizing symplectic bases}
We next describe the quotient of $\bbTT_g^1(I,H)$ by $\Mod_{g}^1(\ell)$,
which requires some preliminaries.  Let $\omega(-,-)$ be the algebraic intersection pairing on
$\HH_1(\Sigma_g^1;\Z/\ell)$.  A {\em symplectic basis} of $\HH_1(\Sigma_{g}^1;\Z/\ell)$ is a set of elements
$\{a_1,b_1,\ldots,a_g,b_g\}$ of $\HH_1(\Sigma_{g}^1;\Z/\ell)$ such that
\[\omega(a_i,a_j) = \omega(b_i,b_j) = 0 \quad \text{and} \quad \omega(a_i,b_j) = \delta_{ij} \quad \text{for $1 \leq i,j \leq g$}.\]
This implies that the set $\{a_1,b_1,\ldots,a_g,b_g\}$
is a basis of the free $\Z/\ell$-module $\HH_1(\Sigma_{g}^1;\Z/\ell)$.  
A {\em geometric realization} of a symplectic basis $\{a_1,b_1,\ldots,a_{g},b_{g}\}$
is a collection of oriented simple closed curves $\{\alpha_1,\beta_1,\ldots,\alpha_{g},\beta_{g}\}$ on
$\Sigma_{g}^1$ satisfying
\[[\alpha_i] = a_i \quad \text{and} \quad [\beta_i] = b_i \quad \text{for $1 \leq i \leq g$}\]
such that the curves $\{\alpha_1,\beta_1,\ldots,\alpha_{g},\beta_{g}\}$ are all pairwise disjoint
except that each $\alpha_i$ intersects $\beta_i$ exactly once.  See here:\\
\centerline{\psfig{file=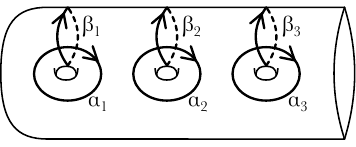,scale=1}}
The following lemma shows that geometric realizations always exist:

\begin{lemma}
\label{lemma:extendrealization}
Fix some $g \geq 0$ and $\ell \geq 2$.  Then every symplectic basis $\{a_1,b_1,\ldots,a_g,b_g\}$
of $\HH_1(\Sigma_{g}^1;\Z/\ell)$ has a geometric realization.
\end{lemma}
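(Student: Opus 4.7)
The plan is to reduce this to realizing a single \emph{standard} symplectic basis and then transport it by the action of the mapping class group. First I would fix once and for all the standard curves $\{\alpha_1^0,\beta_1^0,\ldots,\alpha_g^0,\beta_g^0\}$ on $\Sigma_g^1$ shown in the picture immediately preceding the lemma. Setting $e_i = [\alpha_i^0]$ and $f_i = [\beta_i^0]$, the pairwise intersection data of these curves shows that $\{e_1,f_1,\ldots,e_g,f_g\}$ is a symplectic basis of $\HH_1(\Sigma_g^1;\Z/\ell)$ that is geometrically realized by construction.

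Next, given an arbitrary symplectic basis $\{a_1,b_1,\ldots,a_g,b_g\}$, I would produce an element $\phi \in \Sp_{2g}(\Z/\ell)$ with $\phi(e_i)=a_i$ and $\phi(f_i)=b_i$; such a $\phi$ exists and is unique since symplectic groups act simply transitively on symplectic bases. The key input is now that the action of $\Mod_g^1$ on $\HH_1(\Sigma_g^1;\Z/\ell)$ realizes the full symplectic group: the homomorphism $\Mod_g^1 \twoheadrightarrow \Sp_{2g}(\Z)$ is surjective (a classical fact; see, e.g., \cite[\S 6.3]{FarbMargalitPrimer}), and the reduction map $\Sp_{2g}(\Z)\twoheadrightarrow \Sp_{2g}(\Z/\ell)$ is also surjective (strong approximation, or elementary row/column reduction). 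Composing, I would lift $\phi$ to an orientation-preserving diffeomorphism $F$ of $\Sigma_g^1$.

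Finally, I would set $\alpha_i = F(\alpha_i^0)$ and $\beta_i = F(\beta_i^0)$. Since $F$ is a diffeomorphism, the intersection pattern of these curves is identical to that of the standard curves: they are pairwise disjoint except that each $\alpha_i$ meets $\beta_i$ transversely in a single point. By construction $[\alpha_i] = F_{\ast}(e_i) = \phi(e_i) = a_i$ and similarly $[\beta_i]=b_i$, so $\{\alpha_1,\beta_1,\ldots,\alpha_g,\beta_g\}$ is the required geometric realization.

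There is essentially no obstacle here, just bookkeeping: the only substantive ingredient is the surjectivity of $\Mod_g^1 \to \Sp_{2g}(\Z/\ell)$, which is already used implicitly (e.g., in the short exact sequence defining $\Mod_g^1(\ell)$ and in the proof of Lemma \ref{lemma:partialgen}). If one preferred not to appeal to this surjectivity, an alternative induction would first realize the primitive class $a_1$ by a simple closed curve (lifting to a primitive class in $\HH_1(\Sigma_g^1;\Z)$ via \cite[Proposition 6.2]{FarbMargalitPrimer}), then pair it with a dual curve representing $b_1$, cut along a regular neighborhood of $\alpha_1 \cup \beta_1$ to obtain a copy of $\Sigma_{g-1}^1$, and recurse on the induced symplectic basis of the complementary subspace; but this is less efficient than the transport argument above.
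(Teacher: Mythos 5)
Your argument is correct, but it is a genuinely different route from the one the paper takes. The paper's proof is a one-line deferral: it cites \cite[Lemma A.3]{PutmanCutPasteTorelli}, which constructs geometric realizations of symplectic bases of $\HH_1(\Sigma_g;\Z)$ by a direct geometric/inductive argument (essentially the ``realize a primitive class by a nonseparating curve, find a geometric dual, cut, and recurse'' scheme you sketch as your fallback), and observes that the same proof goes through verbatim for $\HH_1(\Sigma_g^1;\Z/\ell)$. You instead fix one standard geometric realization and transport it by a diffeomorphism: the unique $\phi \in \Sp_{2g}(\Z/\ell)$ carrying the standard symplectic basis to the given one is hit by some $F \in \Mod_g^1$ because $\Mod_g^1 \twoheadrightarrow \Sp_{2g}(\Z) \twoheadrightarrow \Sp_{2g}(\Z/\ell)$, and $F$ preserves the disjointness/single-intersection pattern while acting on mod-$\ell$ homology by $\phi$. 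This is clean and complete; its only substantive input, the surjectivity of $\Mod_g^1 \to \Sp_{2g}(\Z/\ell)$, is itself proved via Dehn twists mapping to transvections and is already assumed and used elsewhere in the paper (e.g.\ the exact sequence in the proof of Lemma \ref{lemma:partialgen}), so there is no circularity. The trade-off is that your transport argument leans on that group-theoretic surjectivity, whereas the paper's (cited) argument is purely constructive and works uniformly in the integral and mod-$\ell$ settings without invoking it; on the other hand, your version is self-contained within standard facts from \cite{FarbMargalitPrimer} and avoids checking that the integral proof really does adapt to $\Z/\ell$ coefficients.
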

\begin{proof}
A similar statement was proved in \cite[Lemma A.3]{PutmanCutPasteTorelli} for
symplectic bases of $\HH_1(\Sigma_g;\Z) \cong \Z^{2g}$, and the same proof works for
$\HH_1(\Sigma_g^1;\Z/\ell)$.
\end{proof}

\subsection{Identifying the quotient}
Let $V$ be a free $\Z/\ell$-module equipped with a symplectic form $\omega(-,-)$.
Define $\bbSB(V)$ to be the ordered simplicial complex whose $(p-1)$-simplices are
ordered tuples $[(a_1,b_1),\ldots,(a_p,b_p)]$ with $a_i,b_j \in V$
such that
\[\omega(a_i,a_j) = \omega(b_i,b_j) = 0 \quad \text{and} \quad \omega(a_i,b_j) = \delta_{ij} \quad \text{for $1 \leq i,j \leq p$}.\]
We then have the following.

\begin{lemma}
\label{lemma:identifyquotient}
Fix some $g \geq 0$ and $\ell \geq 2$.  Let $I$ be an open interval in $\partial \Sigma_{g}^1$ and let
$H$ be a symplectic subgroup of $\HH_1(\Sigma_{g}^1;\Z/\ell)$.  Then
$\bbTT_g^1(I,H) / \Mod_{g}^1(\ell) \cong \bbSB(H^{\perp})$.
\end{lemma}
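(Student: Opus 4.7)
The plan is to construct an explicit bijection $\bar\Phi\colon \bbTT_g^1(I,H)/\Mod_g^1(\ell) \to \bbSB(H^\perp)$ induced by reading off the homology classes carried by each tethered torus. Fix once and for all a symplectic basis $(\alpha,\beta)$ of $\HH_1(\Sigma_1^1;\Z/\ell)$ with $\omega(\alpha,\beta)=1$. For a tethered torus $\iota\colon \tau(\Sigma_1^1) \to \Sigma_g^1$, set $\Phi(\iota)=(\iota_\ast\alpha,\iota_\ast\beta)$; the $H$-orthogonality of $\iota$ is exactly the condition $\iota_\ast\alpha,\iota_\ast\beta \in H^\perp$. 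For a $p$-simplex $[\iota_0,\ldots,\iota_p]$ of $\bbTT_g^1(I,H)$, disjointness of the tethered tori makes the images have vanishing algebraic intersection across distinct indices, so $[(\iota_{0\ast}\alpha,\iota_{0\ast}\beta),\ldots,(\iota_{p\ast}\alpha,\iota_{p\ast}\beta)]$ is a $p$-simplex of $\bbSB(H^\perp)$. Since $\Mod_g^1(\ell)$ acts trivially on $\HH_1(\Sigma_g^1;\Z/\ell)$, the map $\Phi$ is $\Mod_g^1(\ell)$-invariant, so it descends to $\bar\Phi$. It remains to show $\bar\Phi$ is a bijection.

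For surjectivity, given a $p$-simplex $[(a_0,b_0),\ldots,(a_p,b_p)]$ of $\bbSB(H^\perp)$, extend the partial symplectic basis $\{a_i,b_i\}$ of $H^\perp$ to a full symplectic basis of $H^\perp$ by standard symplectic linear algebra and then adjoin a symplectic basis of $H$ to obtain a symplectic basis of all of $\HH_1(\Sigma_g^1;\Z/\ell)$. Apply Lemma \ref{lemma:extendrealization} to realize this as a disjoint collection of oriented simple closed curves $\{\alpha_i,\beta_i\}$. Thicken the first $p+1$ pairs into disjoint copies of $\Sigma_1^1$, and attach tethers to $I$ in such a way that they exit $I$ in the prescribed order (this is possible by routing the tethers through a neighborhood of $\partial\Sigma_g^1$). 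This produces a preimage under $\Phi$.

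For injectivity, suppose $\sigma,\sigma' \in \bbTT_g^1(I,H)$ have $\Phi(\sigma)=\Phi(\sigma')$. The classical ``change of coordinates'' principle (applicable because $\sigma$ and $\sigma'$ are topologically equivalent configurations of disjoint ordered tethered tori) produces some $f \in \Mod_g^1$ with $f(\sigma)=\sigma'$. Let $V \subset H^\perp$ denote the rank-$2(p+1)$ symplectic subgroup spanned by the homology classes appearing in $\Phi(\sigma)=\Phi(\sigma')$. The hypothesis forces $f_\ast|_V = \mathrm{id}$, but $f_\ast$ on the complementary symplectic subspace $V^\perp \subset \HH_1(\Sigma_g^1;\Z/\ell)$ is some element $\phi \in \Sp(V^\perp)$ which need not be trivial. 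Let $S'$ be the complement of $\sigma'$ obtained by deleting the tethered tori and cutting along the tethers, so $S' \cong \Sigma_{g-p-1}^1$ and the inclusion induces an isomorphism $\HH_1(S';\Z/\ell) \xrightarrow{\cong} V^\perp$. Since $\Mod(\Sigma_{g-p-1}^1) \twoheadrightarrow \Sp_{2(g-p-1)}(\Z/\ell) = \Sp(V^\perp)$ is surjective, choose a mapping class $h$ supported on $S'$ whose homology action equals $\phi^{-1}$. Being supported on $S'$, the element $h$ fixes $\sigma'$ as an ordered simplex, so $hf(\sigma)=h(\sigma')=\sigma'$; meanwhile, by construction $(hf)_\ast = \mathrm{id}$ on both $V$ and $V^\perp$, i.e.\ $hf \in \Mod_g^1(\ell)$.

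The main technical issue is the last step: arranging that the correction $h$ can be chosen to lie in $\Mod_g^1$ supported on $S'$ and to achieve any prescribed element of $\Sp(V^\perp)$ on $V^\perp$ while acting trivially on $V$. This follows from the two standard facts that $\HH_1(S';\Z/\ell)$ maps isomorphically onto $V^\perp$ and that the homology representation of the mapping class group of a surface with boundary surjects onto its symplectic group; the remaining arguments are standard geometric manipulations.
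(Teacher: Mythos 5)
Your proposal is correct, and its skeleton matches the paper's: the same map reading off the pair of homology classes carried by each tethered torus, and the same surjectivity argument (extend to a symplectic basis, geometrically realize it via Lemma \ref{lemma:extendrealization}, thicken and tether). The difference is in the transitivity-on-fibers step. The paper extends $\{a_1,b_1,\ldots,a_p,b_p\}$ to a full symplectic basis, geometrically realizes the remaining basis vectors by curves inside \emph{both} complementary subsurfaces $S_1,S_2 \cong \Sigma_{g-p-1}^1$, and then applies change of coordinates once to an enlarged configuration (tethered tori together with the realizing curves), so that the resulting $f$ fixes an entire symplectic basis of $\HH_1(\Sigma_g^1;\Z/\ell)$ by construction and hence lies in $\Mod_g^1(\ell)$. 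You instead apply change of coordinates only to the tethered tori to get $f \in \Mod_g^1$ with $f(\sigma)=\sigma'$, observe that $f_\ast$ is automatically the identity on the span $V$ of the torus classes and hence preserves $V^{\perp} \cong \HH_1(S';\Z/\ell)$, and then correct $f_\ast|_{V^{\perp}}$ by a mapping class $h$ supported on $S'$, using surjectivity of $\Mod(\Sigma_{g-p-1}^1) \rightarrow \Sp_{2(g-p-1)}(\Z/\ell)$; since $h$ fixes $\sigma'$ and acts trivially on $V$, the product $hf$ lies in $\Mod_g^1(\ell)$ and carries $\sigma$ to $\sigma'$. Both arguments are sound: the paper's version keeps everything geometric and needs only the realization lemma plus change of coordinates, while yours trades the simultaneous realization in the complement for an algebraic correction, at the cost of invoking surjectivity of the mod-$\ell$ symplectic representation (a fact the paper does use elsewhere, e.g.\ in Lemma \ref{lemma:partialgen}) and the standard identification of $\HH_1(S';\Z/\ell)$ with $V^{\perp}$.
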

\begin{proof}
Fix simple closed oriented curves $A$ and $B$ in $\Sigma_1^1$ that intersect once with a positive sign.
For an $I$-tethered torus $\iota\colon \tau(\Sigma_1^1) \rightarrow \Sigma_{g}^1$ that is orthogonal
to $H$, we have oriented simple closed curves $\iota(A)$ and $\iota(B)$, and the tuple $([\iota(A)],[\iota(B)])$
of mod-$\ell$ homology classes is a vertex of $\bbSB(H^{\perp})$.  
Define a map of ordered simplicial complexes $\Psi\colon \bbTT_g^1(I,H) \rightarrow \bbSB(H^{\perp})$ as follows.
Consider a $(p-1)$-simplex
$[\iota_1,\ldots,\iota_p]$ of $\bbTT_g^1(I,H)$.  We then define
\[\Psi[\iota_1,\ldots,\iota_p] = [([\iota_1(A)],[\iota_1(B)]),\ldots,([\iota_p(A)],[\iota_p(B)])].\]
The map $\Psi$ is $\Mod_{g}^1(\ell)$-invariant, and to prove that the resulting map
\[\bbTT_g^1(I,H) / \Mod_{g}^1(\ell) \longrightarrow \bbSB(H^{\perp})\]
is an isomorphism it is enough to prove the following two facts.

\begin{claim}{1}
For all simplices $\sigma$ of $\bbSB(H^{\perp})$, there exists a simplex $\tau$ of $\bbTT_g^1(I,H)$ with
$\Psi(\tau) = \sigma$.
\end{claim}

Write $\sigma = [(a_1,b_1),\ldots,(a_p,b_p)]$.  The set $\{a_1,b_1,\ldots,a_p,b_p\}$ can be extended to a
symplectic basis for $\HH_1(\Sigma_{g}^1;\Z/\ell)$, and Lemma \ref{lemma:extendrealization} implies
that this symplectic basis has a geometric realization.  Throwing away some of the curves in this
geometric realization, we 
find simple closed oriented curves $\{\alpha_1,\beta_1,\ldots,\alpha_p,\beta_p\}$ on $\Sigma_{g}^1$
satisfying
\[[\alpha_i] = a_i \quad \text{and} \quad [\beta_i] = b_i \quad \text{for $1 \leq i \leq p$}\]
such that the curves $\{\alpha_1,\beta_1,\ldots,\alpha_{p},\beta_{p}\}$ are all pairwise disjoint
except that each $\alpha_i$ intersects $\beta_i$ exactly once.  As in the following
figure, we can then find a simplex $\tau = \{\iota_1,\ldots,\iota_p\}$ of
$\bbTT_g^1(I)$ such that $\iota_i(A) = \alpha_i$ and $\iota_i(B) = \beta_i$ for
$1 \leq i \leq p$:\\
\centerline{\psfig{file=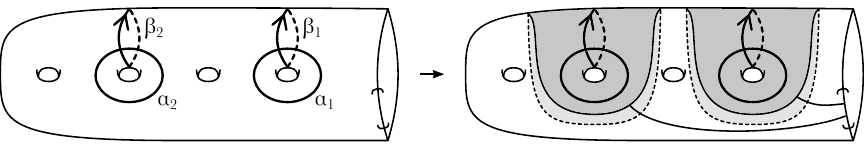,scale=1}}
Since the $a_i$ and $b_i$ all lie in $H^{\perp}$, the simplex $\tau$ lies
in $\bbTT_g^1(I,H)$, and by construction we have $\Psi(\tau) = \sigma$.

\begin{claim}{2}
For all simplices $\tau_1$ and $\tau_2$ of $\bbTT_g^1(I,H)$ such that $\Psi(\tau_1) = \Psi(\tau_2)$, there
exists some $f \in \Mod_{g}^1(\ell)$ such that $f(\tau_1) = \tau_2$.
\end{claim}

The dimensions of $\tau_1$ and $\tau_2$ are the same, say $(p-1)$.
For $r=1,2$ let $\tau_r = [\iota^r_1,\ldots,\iota^r_p]$.  Write
\[\Psi(\tau_1) = \Psi(\tau_2) = [(a_1,b_1),\ldots,(a_p,b_p)].\]
We can extend $\{a_1,b_1,\ldots,a_p,b_p\}$ to a symplectic basis
$\{a_1,b_1,\ldots,a_g,b_g\}$ for $\HH_1(\Sigma_{g}^1;\Z/\ell)$.
For $r=1,2$ let $S_r$ be the result of deleting the interiors of the $\iota^r_i(\Sigma_1^1)$ from $\Sigma_{g}^1$
and then cutting open the resulting surface along the tethers:\\
\centerline{\psfig{file=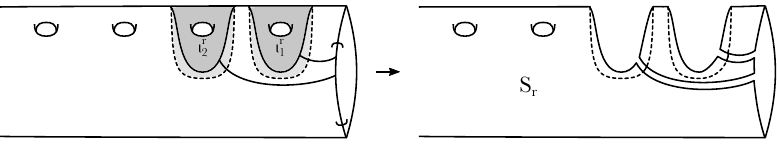,scale=1}}
We thus have $S_r \cong \Sigma_{g-p}^1$.  Identifying $S_r$ with a subsurface of $\Sigma_{g}^1$ in the obvious way
identifies $\HH_1(S_r;\Z/\ell)$ with a subgroup of $\HH_1(\Sigma_{g}^1;\Z/\ell)$, and
$\{a_{p+1},b_{p+1},\ldots,a_g,b_g\}$ is a symplectic basis for $\HH_1(S_r;\Z/\ell)$.  By Lemma
\ref{lemma:extendrealization}, we can geometrically realize this with
curves $\{\alpha^r_{p+1},\beta^r_{p+1},\ldots,\alpha^r_g,\beta^r_g\}$ lying in $S_r$.
Using the ``change of coordinates'' principle from \cite[\S 1.3.2]{FarbMargalitPrimer}, we can
find some $f \in \Mod_{g}^1$ with the following properties:
\begin{itemize}
\item $f(\iota^1_i) = \iota^2_i$ for $1 \leq i \leq p$.  In particular, $f$ fixes
$a_i \in \HH_1(\Sigma_{g}^1;\Z/\ell)$ and $b_i \in \HH_1(\Sigma_{g}^1;\Z/\ell)$ for $1 \leq i \leq p$.
\item $f(\alpha^1_i) = \alpha^2_i$ and $f(\beta^1_i) = \beta^2_i$ for $p+1 \leq i \leq g$.
In particular, $f$ fixes $a_i \in \HH_1(\Sigma_{g}^1;\Z/\ell)$ and $b_i \in \HH_1(\Sigma_{g}^1;\Z/\ell)$
for $p+1 \leq i \leq g$.
\end{itemize}
The first of these properties implies that $f(\tau_1) = \tau_2$.  Since $f$ fixes the
symplectic basis $\{a_1,b_1,\ldots,a_g,b_g\}$ for $\HH_1(\Sigma_{g}^1;\Z/\ell)$,
it lies in $\Mod_{g}^1(\ell)$.  The claim follows.
\end{proof}

\subsection{High connectivity of quotient}
Building on work of Charney \cite{CharneyVogtmann}, Mirzaii--van der Kallen \cite{MirzaiiVanderKallen}
proved the following:

\begin{theorem}[{Mirzaii--van der Kallen \cite[Lemma 7.4]{MirzaiiVanderKallen}}]
\label{theorem:highconnectivitysb}
Fix some $g \geq 0$ and $\ell \geq 2$.  Let $V = \HH_1(\Sigma_{g}^1;\Z/\ell)$.  Then
$\bbSB(V)$ is $\frac{g-5}{2}$-connected.
\end{theorem}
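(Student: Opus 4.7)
The plan is to follow the Mirzaii--van der Kallen strategy, which is a variant of the Charney--van der Kallen approach to connectivity of complexes arising in stability for symplectic groups. The key is to relate $\bbSB(V)$ to a simpler auxiliary complex whose connectivity can be established by induction on $g$.

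First, I would introduce the auxiliary ordered simplicial complex $\bbI(V)$ whose $(p-1)$-simplices are ordered tuples $[a_1,\ldots,a_p]$ of vectors in $V$ with $\omega(a_i,a_j)=0$ for all $i,j$ and such that $\{a_1,\ldots,a_p\}$ extends to a symplectic basis of $V$ (equivalently, it is an isotropic ``unimodular'' sequence in the symplectic sense). Then prove by induction on $g$ that $\bbI(V)$ is $\frac{g-3}{2}$-connected. The induction is set up in the usual Cohen--Macaulay style: one proves simultaneously that links of $(p-1)$-simplices are connected one degree less, using that the forward link of $[a_1,\ldots,a_p]$ in $\bbI(V)$ is isomorphic to $\bbI(W)$ where $W=\langle a_1,\ldots,a_p\rangle^{\perp}/\langle a_1,\ldots,a_p\rangle$ is a symplectic $\Z/\ell$-module of rank $2(g-p)$. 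The base step uses nonemptiness of $\bbI(V)$ for small $g$.

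Next, consider the natural forgetful map of ordered simplicial complexes
\[
\pi\colon \bbSB(V)\longrightarrow \bbI(V),\qquad [(a_1,b_1),\ldots,(a_p,b_p)]\longmapsto [a_1,\ldots,a_p].
\]
Over each simplex $[a_1,\ldots,a_p]$ of $\bbI(V)$, the set of completions to a simplex of $\bbSB(V)$ consists of tuples $(b_1,\ldots,b_p)$ with $\omega(a_i,b_j)=\delta_{ij}$ and $\omega(b_i,b_j)=0$. Choosing the $b_i$ inductively, each $b_i$ is well defined modulo an explicit isotropic summand, so the fiber of $\pi$ (thought of as a poset or as the appropriate subcomplex of $\bbSB(V)$ lying over a star) is highly connected, in fact contractible after suitable discretization. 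A spectral sequence or direct deformation argument then transfers the $\frac{g-3}{2}$-connectivity of $\bbI(V)$ into $\frac{g-5}{2}$-connectivity of $\bbSB(V)$, the loss of one coming from the standard dimension shift in such a fiber argument for ordered simplicial complexes.

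The main obstacle will be Step~2, the high connectivity of $\bbI(V)$. The standard technique (the ``bad simplex argument'' of Hatcher--Vogtmann, or equivalently van der Kallen's poset argument) reduces extending a sphere to solving a local problem: given an isotropic unimodular configuration, find a new unimodular isotropic vector orthogonal to a prescribed piece. Over $\Z/\ell$ rather than $\Z$, one must verify that enough unimodular vectors exist in the relevant symplectic quotient; this is where the rank hypothesis enters, and it is the combinatorial heart of the proof. Once this is set up, Theorem~\ref{theorem:tetheredtoricon} and Lemma~\ref{lemma:identifyquotient} are not used here --- the proof is purely internal to linear algebra over $\Z/\ell$ --- but the resulting statement is exactly what is needed to identify the quotient complex structure in those earlier results.
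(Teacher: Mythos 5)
You should first be aware that the paper contains no proof of Theorem \ref{theorem:highconnectivitysb}: it is imported wholesale from Mirzaii--van der Kallen \cite{MirzaiiVanderKallen} (which in turn builds on Charney \cite{CharneyVogtmann}), and is only consumed as a black box through Lemma \ref{lemma:identifyquotient} and Corollary \ref{corollary:highconnectivityquotient}. Your outline does follow the general shape of the argument in \cite{MirzaiiVanderKallen} --- first a complex of isotropic unimodular sequences, then the passage to hyperbolic pairs --- but as a proof it has genuine gaps at both load-bearing steps, so it does not yet substitute for the citation.

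Concretely: (i) the inductive step for $\bbI(V)$ rests on the claim that the forward link of $[a_1,\ldots,a_p]$ is \emph{isomorphic} to $\bbI(W)$ with $W=\langle a_1,\ldots,a_p\rangle^{\perp}/\langle a_1,\ldots,a_p\rangle$, and that is false: vertices of the link are honest vectors of $V$, and $c$ and $c+a_1$ are distinct vertices with the same image in $W$, so the link only surjects onto $\bbI(W)$ with nontrivial ``affine'' fibers (and one must also check that extendability to a symplectic basis over $\Z/\ell$ passes correctly between $V$ and $W$). Handling exactly this kind of discrepancy, together with the unimodularity constraints over a ring like $\Z/\ell$, is where van der Kallen's stable-range machinery enters the proof in \cite{MirzaiiVanderKallen}; you explicitly defer this as ``the combinatorial heart,'' so the central content of the theorem is asserted rather than proved. (ii) The transfer from $\bbI(V)$ to $\bbSB(V)$ is also not an argument as it stands: the preimages of simplices under the forgetful map do not assemble into anything fibration-like, a poset fiber theorem needs more than ``each fiber is contractible after suitable discretization,'' and your own bookkeeping is inconsistent --- if the fibers were contractible in a sense to which a Quillen Theorem~A type statement applies, you would retain $\tfrac{g-3}{2}$-connectivity with no loss, so the advertised ``standard dimension shift'' of one degree is doing unexplained work. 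In \cite{MirzaiiVanderKallen} the isotropic-to-hyperbolic step is a separate, delicate poset argument, and the loss in connectivity comes out of that analysis rather than from a generic fiber argument. Unless you intend to reconstruct those two ingredients in detail, the efficient course is to do what the paper does and quote \cite{MirzaiiVanderKallen} directly.
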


This has the following corollary.

\begin{corollary}
\label{corollary:highconnectivityquotient}
Fix some $g \geq 0$ and $\ell \geq 2$.  Let $I$ be an open interval in $\partial \Sigma_{g}^1$ and let
$H$ be a genus-$h$ symplectic subgroup of $\HH_1(\Sigma_{g}^1;\Z/\ell)$.  Then
$\bbTT_g^1(I,H) / \Mod_{g}^1(\ell)$ is $\frac{g-h-5}{2}$-connected.
\end{corollary}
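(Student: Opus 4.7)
The plan is to combine the two main results already established: the identification of the quotient from Lemma \ref{lemma:identifyquotient} and the high connectivity of the symplectic basis complex from Theorem \ref{theorem:highconnectivitysb}.

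First, I would apply Lemma \ref{lemma:identifyquotient} to identify $\bbTT_g^1(I,H)/\Mod_g^1(\ell)$ with $\bbSB(H^{\perp})$. Since $H$ is a genus-$h$ symplectic subgroup of $\HH_1(\Sigma_g^1;\Z/\ell)$, its orthogonal complement $H^{\perp}$ is a symplectic subgroup of genus $g-h$, so as a symplectic $\Z/\ell$-module $H^{\perp} \cong \HH_1(\Sigma_{g-h}^1;\Z/\ell)$ equipped with the standard symplectic form.

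Second, I would invoke Theorem \ref{theorem:highconnectivitysb} (Mirzaii--van der Kallen) applied to the module $H^{\perp}$. Since $\bbSB(V)$ depends only on the abstract symplectic $\Z/\ell$-module structure of $V$, the theorem gives that $\bbSB(H^{\perp})$ is $\frac{(g-h)-5}{2}$-connected. Combining with the previous step yields the desired connectivity bound for $\bbTT_g^1(I,H)/\Mod_g^1(\ell)$.

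The main (minor) subtlety to verify is that Theorem \ref{theorem:highconnectivitysb}, though stated for $V = \HH_1(\Sigma_g^1;\Z/\ell)$, depends only on the isomorphism class of $V$ as a symplectic $\Z/\ell$-module; this is immediate from the definition of $\bbSB(V)$, which is purely a function of $V$ and its symplectic form. There is no real obstacle here, since this corollary is essentially a two-line consequence of the lemmas already proved; the work has all been done in establishing Lemma \ref{lemma:identifyquotient} (which requires the geometric realization Lemma \ref{lemma:extendrealization} and the change of coordinates principle) and Theorem \ref{theorem:highconnectivitysb}.
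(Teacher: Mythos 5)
Your proposal is correct and matches the paper's proof exactly: identify the quotient with $\bbSB(H^{\perp})$ via Lemma \ref{lemma:identifyquotient}, note $H^{\perp} \cong \HH_1(\Sigma_{g-h}^1;\Z/\ell)$ as a symplectic $\Z/\ell$-module, and apply Theorem \ref{theorem:highconnectivitysb}. Your remark that $\bbSB(V)$ depends only on the symplectic module structure of $V$ is the same (implicit) observation the paper uses.
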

\begin{proof}
Lemma \ref{lemma:identifyquotient} says that $\bbTT_g^1(I,H) / \Mod_{g}^1(\ell) \cong \bbSB(H^{\perp})$,
and $H^{\perp} \cong \HH_1(\Sigma_{g-h}^1;\Z/\ell)$.  Theorem \ref{theorem:highconnectivitysb} thus
implies that $\bbTT_g^1(I,H) / \Mod_{g}^1(\ell)$ is $\frac{(g-h)-5}{2}$-connected.
\end{proof}

\section{Prym representations}
\label{section:prymrepresentations}

We now discuss the definition and some basic properties of the Prym representations and show
how to encode them by equivariant augmented coefficient systems on the tethered torus complexes $\bbTT_g^1(I,H)$.  
Throughout this section, $\bbk$ is a commutative ring.  Fix some $g \geq 1$ and $\ell \geq 2$, and let\footnote{Here $\cD$ stands for ``deck group''.}
\[\cD = \HH_1(\Sigma_g^1;\Z/\ell) = \HH_1(\Sigma_g;\Z/\ell) \cong (\Z/\ell)^{2g}.\]

\subsection{Surfaces with one boundary component, definition}
We start with surfaces $\Sigma_g^1$ with one boundary component.  In this case, the Prym
representation is defined as follows.  
Let $S_{\cD} \rightarrow \Sigma_g^1$ be the finite regular cover corresponding to
the homomorphism\footnote{Since the target of this homomorphism
is abelian, there is no need to specify a basepoint of $\pi_1(\Sigma_g^1)$; however, if the reader
prefers to be careful about basepoints then they should fix one on $\partial \Sigma_g^1$.}
$\pi_1(\Sigma_g^1) \rightarrow \cD$.  The deck group of this cover
is $\cD$.  By definition, the Prym representation with coefficients in $\bbk$ is 
\[\fH_g^1(\ell;\bbk) = \HH_1(S_{\cD};\bbk).\]
The level-$\ell$ mapping class group $\Mod_g^1(\ell)$ acts on $\fH_g^1(\ell;\bbk)$ via the action
on homology of lifts of mapping classes on $\Sigma_g^1$ to $S_{\cD}$ that fix $\partial S_{\cD}$
pointwise.  

\begin{remark}
It is important that $\Sigma_g^1$ has nonempty boundary.  Otherwise, due to basepoint
issues there would not be a canonical way to lift elements of $\Mod_g^1(\ell)$ to the
cover $S_{\cD}$.
\end{remark}

\begin{remark}
We could extend the action of $\Mod_g^1(\ell)$ on $\fH_g^1(\ell;\bbk)$ to $\Mod_g^1$ since the cover
$S_{\cD} \rightarrow \Sigma_g^1$ is a characteristic cover.\footnote{That is, it corresponds
to a subgroup of $\pi_1(\Sigma_g^1)$ that is preserved by all automorphisms.}  However, the
lifts in that case would only fix a single component of $\partial S_{\cD}$.
\end{remark}

\subsection{Partial Prym representation}
It is unclear how to incorporate the $\fH_g^1(\ell;\bbk)$ into augmented coefficient systems
on $\bbTT_g^1(I)$, and it seems unlikely that any such coefficient system
would be polynomial.  To fix this, we restrict ourselves to the partial Prym
representations,\footnote{In \S \ref{section:intermediate}, we will explain how to relate
the partial Prym representations to the Prym representation.} which are defined as follows.  

Let $H$ be a symplectic
subgroup of $\HH_1(\Sigma_g^1;\Z/\ell)$.  Recall from \S \ref{section:partial} that 
the associated partial level-$\ell$ subgroup, denoted $\Mod_{g}^1(H)$, is the group of all 
$f \in \Mod_{g}^1$ such that $f(x) = x$ for all $x \in H$.  Let $S_H \rightarrow \Sigma_g^1$
be the finite regular cover corresponding to the homomorphism
\[\pi_1(\Sigma_g^1) \rightarrow \HH_1(\Sigma_g^1) = H \oplus H^{\perp} \stackrel{\text{proj}}{\longrightarrow} H.\]
The deck group of this cover is $H$.  Setting $\fH_g^1(H;\bbk) = \HH_1(S_H;\bbk)$, just like for
$\fH_g^1(\ell;\bbk)$ we can define an action of $\Mod_g^1(H)$ on $\fH_g^1(H;\bbk)$ by lifting\footnote{Unlike
for $\fH_g^1(\ell;\bbk)$, this action cannot be extended to $\Mod_g^1$ since this is not a characteristic cover.}
mapping classes to $S_H$.  We will call $\fH_g^1(H;\bbk)$ a partial Prym representation.

\subsection{Coefficient system}
\label{section:coefficientsystem}

Continue to let $H$ be a symplectic subgroup of $\HH_1(\Sigma_g^1;\Z/\ell)$, and let
$\pi\colon S_H \rightarrow \Sigma_g^1$ be the regular cover discussed above.  Fix an
open interval $I$ in $\partial \Sigma_g^1$, and consider a simplex $\sigma = [\iota_0,\ldots,\iota_k]$
of $\bbTT_g^1(I,H)$.  Set
\[X_{\sigma} = \Sigma_g^1 \setminus \text{Nbhd}\left(\partial \Sigma_g^1 \cup \Image\left(\iota_0\right) \cup \cdots \cup \Image\left(\iota_k\right)\right),\]
where $\text{Nbhd}(-)$ denotes an open regular neighborhood of the indicated subset of $\Sigma_g^1$.  See here:\\
\centerline{\psfig{file=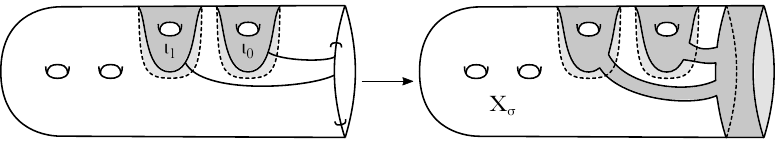,scale=1}}
We thus have $X_{\sigma} \cong \Sigma_{g-k-1}^1$.
Since $\sigma$ is a simplex of $\bbTT_g^1(I,H)$, the map $\pi_1(\Sigma_g^1) \rightarrow H$ used
to define $\pi\colon S_H \rightarrow \Sigma_g^1$ restricts to a surjective map $\pi_1(X_{\sigma}) \rightarrow H$.
It follows that $\tX_{\sigma} = \pi^{-1}(X_{\sigma})$ is a connected submanifold of $S_H$ and
$\tX_{\sigma} \rightarrow X_{\sigma}$ is a finite regular $H$-cover.  Define
an augmented coefficient system $\cH_g^1(H;\bbk)$ on $\bbTT_g^1(I,H)$ via the formula
\[\cH_g^1(H;\bbk)(\sigma) = \HH_1(\tX_{\sigma};\bbk).\]
Our convention is that if $\sigma = \emptysimp$ is the $(-1)$-simplex, then $X_{\sigma} = \Sigma_g^1 \setminus \text{Nbhd}\left(\partial \Sigma_g^1\right)$,
so
\[\cH_g^1(H;\bbk)\emptysimp  = \HH_1(X_{\sigma};\bbk) \cong \HH_1(S_H;\bbk) = \fH_g^1(H;\bbk).\]
Our main result about this coefficient system is that it is strongly polynomial of degree
$1$ (see \S \ref{section:strongpolynomial} for the definition of a strongly polynomial coefficient system):

\begin{lemma}
\label{lemma:hstrongpoly}
Let $g \geq 0$ and $\ell \geq 2$.  Fix a symplectic subgroup $H$ of $\HH_1(\Sigma_g^1;\Z/\ell)$ and an open interval
$I$ in $\partial \Sigma_g^1$.  Then for all commutative rings $\bbk$ the augmented coefficient system $\cH_g^1(H;\bbk)$ on $\bbTT_g^1(I,H)$ is strongly
polynomial of degree $1$.
\end{lemma}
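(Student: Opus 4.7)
The plan is to verify the two conditions in the definition of strongly polynomial of degree $1$: first, that $\cH_g^1(H;\bbk)$ is injective; second, that for every simplex $\tau$ of $\bbTT_g^1(I,H)$, the derivative $D_\tau \cH_g^1(H;\bbk)$ on $\FLink_{\bbTT_g^1(I,H)}(\tau)$ is constant (hence strongly polynomial of degree $0$). Both will reduce to a single Mayer--Vietoris computation, whose key input is the following observation I would establish first: if $\iota$ is an $I$-tethered torus orthogonal to $H$ and $T$ is a regular neighborhood of $\iota(\tau(\Sigma_1^1))$, then the restriction $\pi^{-1}(T) \to T$ of the cover $\pi\colon S_H \to \Sigma_g^1$ is trivial. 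Indeed, this cover is classified by the composition $\pi_1(T) \to \pi_1(\Sigma_g^1) \to \HH_1(\Sigma_g^1;\Z/\ell) = H \oplus H^{\perp} \to H$; the image of $\pi_1(T)$ in $\HH_1(\Sigma_g^1;\Z/\ell)$ lies in $H^{\perp}$ by orthogonality, hence maps to zero in $H$. Therefore $\pi^{-1}(T) \cong \bigsqcup_{h \in H} T_h$, a disjoint union of $|H|$ copies of $T$ indexed by the deck group.

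Next, for a simplex $\sigma$ of $\bbTT_g^1(I,H)$ and a vertex $v$ of its forward link, I would decompose $X_{[v]\cdot\sigma} \cup T \simeq X_{\sigma}$ where $T = T(v)$, with the two pieces meeting in a single arc. Passing to the cover, this becomes $\tX_{[v]\cdot\sigma} \cup \bigsqcup_{h \in H} T_h \simeq \tX_{\sigma}$, with the intersection being $|H|$ disjoint arcs (one per sheet). The Mayer--Vietoris sequence then gives
\begin{equation*}
0 \longrightarrow \HH_1(\tX_{[v]\cdot\sigma};\bbk) \oplus \bigoplus_{h \in H} \HH_1(T_h;\bbk) \longrightarrow \HH_1(\tX_{\sigma};\bbk) \longrightarrow K \longrightarrow 0,
\end{equation*}
where $K$ is the kernel of $\HH_0(\text{arcs};\bbk) \to \HH_0(\tX_{[v]\cdot\sigma};\bbk) \oplus \HH_0\bigl(\bigsqcup_h T_h;\bbk\bigr)$. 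Since the arc in the $h$-sheet maps to $(1,-e_h)$ in $\bbk \oplus \bbk[H]$, one checks that $K = 0$, so we obtain a natural splitting
\begin{equation*}
\HH_1(\tX_{\sigma};\bbk) \;\cong\; \HH_1(\tX_{[v]\cdot\sigma};\bbk) \;\oplus\; \bigoplus_{h \in H} \HH_1(T_h;\bbk).
\end{equation*}

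From this the two required conditions will follow. For injectivity, given a face $\sigma' \subset \sigma$ one peels off the vertices of $\sigma \setminus \sigma'$ one at a time, each step being a direct summand inclusion by the above splitting. For the derivative, applied with $\tau = [w_0,\ldots,w_q]$, $v = w_q$, and $\sigma \in \FLink_{\bbTT_g^1(I,H)}(\tau)$, the splitting gives
\begin{equation*}
D_\tau \cH_g^1(H;\bbk)(\sigma) \;\cong\; \bigoplus_{h\in H} \HH_1(T(w_q)_h;\bbk) \;\cong\; \bbk[H] \otimes_{\bbk} \HH_1(\Sigma_1^1;\bbk),
\end{equation*}
which is manifestly independent of $\sigma$; naturality under restriction $\sigma' \subset \sigma$ in the forward link is inherited from the naturality of Mayer--Vietoris (the summand $\bigoplus_h \HH_1(T(w_q)_h;\bbk)$ depends only on the fixed vertex $w_q$, not on the ambient simplex). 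Hence $D_\tau \cH_g^1(H;\bbk)$ is the constant coefficient system with value $\bbk[H] \otimes_{\bbk} \HH_1(\Sigma_1^1;\bbk)$, so $\cH_g^1(H;\bbk)$ is strongly polynomial of degree $1$. I expect the main obstacle to be the initial observation that the cover trivializes over each $H$-orthogonal tethered torus; once that is in hand, the remainder is routine Mayer--Vietoris bookkeeping together with a small check that the resulting splitting is genuinely natural on the forward link.
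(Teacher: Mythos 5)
Your proposal is correct and takes essentially the same route as the paper: the paper's proof also rests on the fact that the cover $S_H \rightarrow \Sigma_g^1$ is trivial over an $H$-orthogonal tethered torus, identifies $D_{\tau}\cH_g^1(H;\bbk)(\sigma)$ with $\HH_1$ of the preimage of the last tethered torus of $\tau$ (a disjoint union of $|H|$ tethered tori, giving $\bbk[H]\otimes_{\bbk}\HH_1(\Sigma_1^1;\bbk)$), and gets constancy from the fact that this homology injects into $\HH_1(S_H;\bbk)$ independently of $\sigma$. Your Mayer--Vietoris computation simply fills in the steps (injectivity and the identification of the quotient) that the paper declares immediate.
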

\begin{proof}
It is immediate from the definitions that $\cH_g^1(H;\bbk)$ is injective.  The other condition in
the definition of being strongly polynomial of degree $1$ is as follows.
Let $\tau = [\iota_0,\ldots,\iota_{\ell}]$ be a simplex of $\bbTT_g^1(I,H)$.  Set $\tau' = [\iota_0,\ldots,\iota_{\ell-1}]$, and let
$D_{\tau} \cH_g^1(H;\bbk)$ be the coefficient system on the forward link $\bbL = \FLink_{\bbTT_g^1(I,H)}(\tau)$
defined by the formula
\[D_{\tau} \cH_g^1(H;\bbk)(\sigma) = \frac{\cH_g^1(H;\bbk)(\tau' \cdot \sigma)}{\Image\left(\cH_g^1\left(H;\bbk\right)\left(\tau \cdot \sigma\right) \rightarrow \cH_g^1\left(H;\bbk\right)\left(\tau' \cdot \sigma\right)\right)} \quad \text{for a simplex $\sigma$ of $\bbL$}.\]
We must prove that $D_{\tau} \cH_g^1(H;\bbk)$ is strongly polynomial of degree $0$, i.e., constant.  Expanding out
the above formula for $D_{\tau} \cH_g^1(H;\bbk)(\sigma)$, we see that
\[D_{\tau} \cH_g^1(H;\bbk)(\sigma) = \frac{\HH_1(\tX_{\tau' \cdot \sigma};\bbk)}{\Image\left(\HH_1\left(\tX_{\tau \cdot \sigma};\bbk\right) \rightarrow \HH_1\left(\tX_{\tau' \cdot \sigma};\bbk\right)\right)}.\]
Letting $\pi\colon S_H \rightarrow \Sigma_g^1$ be the regular cover used to define $\cH_g^1(H;\bbk)$, it is immediate that
this is isomorphic to
\[\HH_1\left(\pi^{-1}\left(\Image\left(\iota_{\ell}\right)\right);\bbk\right).\]
The subspace $\pi^{-1}(\Image(\iota_{\ell}))$ of $S_H$ is the disjoint union of $|H|$ copies of a tethered torus $\tau(\Sigma_1^1)$, one
tethered to each component of $\partial S_H$.  Its first homology group injects into $\HH_1(S_H;\bbk)$.  That $D_{\tau} \cH_g^1(H;\bbk)$ is constant follows.
\end{proof}

\subsection{General surfaces, definition}
Our next goal is to relate $\fH_g^1(\ell;\bbk)$ and $\fH_g^1(H;\bbk)$.  For later use, we put these
results in a broader context.
Throughout the rest of this section, fix some $g,b,p \geq 0$ and $\ell \geq 2$ with $b+p \geq 1$.  
Let $S_{\cD} \rightarrow \Sigma_{g,p}^b$ be
the regular cover with deck group $\cD = \HH_1(\Sigma_g;\Z/\ell)$
corresponding to the group homomorphism
\[\pi_1(\Sigma_{g,p}^b) \rightarrow \HH_1(\Sigma_{g,p}^b;\Z/\ell) \rightarrow \HH_1(\Sigma_g;\Z/\ell) = \cD,\]
where the second map fills in the punctures and glues discs to the boundary components.
Define $\fH_{g,p}^b(\ell;\bbk) = \HH_1(S_{\cD};\bbk)$.  The group $\Mod_{g,p}^b(\ell)$ acts
on $\fH_{g,p}^b(\ell;\bbk)$ as before.

\begin{remark}
\label{remark:boundarypunctures}
At the level of homology, there is no difference between boundary components and punctures, so $\fH_{g,p}^b(\ell;\bbk) \cong \fH_{g,p+b}(\ell;\bbk)$.
\end{remark}

\subsection{Decomposition}
\label{section:decompositionprym}

We now specialize $\bbk$ to the field $\C$ of complex numbers.
Our goal is to decompose $\fH_{g,p}^b(\ell;\C)$ into subrepresentations and show that each of
these subrepresentations appears in $\fH_{g,p}^b(H;\C)$ for an appropriate symplectic subgroup $H$
of $\HH_1(\Sigma_{g,p}^b;\Z/\ell)$.

The vector space $\fH_{g,p}^b(\ell;\C)$ has actions of the following groups:
\begin{itemize}
\item The group $\cD \cong (\Z/\ell)^{2g}$, which acts on $S_{\cD}$ as the group of deck transformations.
\item The group $\Mod_{g,p}^b(\ell)$, which acts via the action obtained by lifting diffeomorphisms of $\Sigma_{g,p}^b$ to
diffeomorphisms of $S_{\cD}$ fixing all boundary components and punctures.\footnote{This is where we use the fact
that $p+b \geq 1$, so there is a fixed basepoint.  Otherwise, our lifts would only be defined up to the action of the deck group.}
\end{itemize}
These two actions commute, so the action of $\Mod_{g,p}^b$ on $\fH_{g,p}^b(\ell;\C)$ preserves
the $\cD$-isotypic components of $\fH_{g,p}^b(\ell;\C)$.

Since $\cD \cong (\Z/\ell)^{2g}$ is a finite abelian group, its irreducible $\C$-representations
are all $1$-dimensional and in bijection with characters $\chi\colon \cD \rightarrow \C^{\times}$.
Letting $\hcD$ be the abelian group of characters of $\cD$, the
irreducible representation corresponding to $\chi \in \hcD$ is a $1$-dimensional $\C$-vector space $\C_{\chi}$
with the action
\[d \cdot \vec{v} = \chi(d) \vec{v} \quad \text{for all $\vec{v} \in \C_{\chi}$ and $d \in \cD$}.\]
Let $\fH_{g,p}^b(\chi)$ be the $\C_{\chi}$-isotypic component of $\fH_{g,p}^b(\ell;\C)$.  By definition, this is
the set of all $\vec{w} \in \fH_{g,p}^b(\ell;\C)$ such that $d \cdot \vec{w} = \chi(d) \vec{w}$ for all
$d \in \cD$.  The action of $\Mod_{g,p}^b(\ell)$ on $\fH_{g,p}^b(\ell;\C)$ preserves
$\fH_{g,p}^b(\chi)$, and we have a direct sum decomposition
\[\fH_{g,p}^b(\ell;\C) = \bigoplus_{\chi \in \hcD} \fH_{g,p}^b(\chi)\]
of representations of $\Mod_{g,p}^b(\ell)$.  

\subsection{Intermediate covers}
\label{section:intermediate}

Let $H$ be a symplectic subgroup of $\HH_1(\Sigma_{g,p}^b;\Z/\ell)$, and let $S_H \rightarrow \Sigma_{g,p}^b$ 
be the cover corresponding to the surjective homomorphism
\[\pi_1(\Sigma_{g,p}^b) \rightarrow \HH_1(\Sigma_{g,p}^b;\Z/\ell) = H \oplus H^{\perp} \stackrel{\text{proj}}{\longrightarrow} H.\]
Since the homology classes of loops surrounding boundary components and punctures lie in $H^{\perp}$, this map factors through $\cD$, so this cover lies between $S_{\cD}$ and $\Sigma_{g,p}^b$ in the sense 
that we have a factorization
\[S_{\cD} \longrightarrow S_H \longrightarrow \Sigma_{g,p}^b.\]
Define $\fH_{g,p}^b(H;\C) = \HH_1(S_H;\C)$.  The partial mod-$\ell$ subgroup $\Mod_{g,p}^b(H)$ acts
on $\fH_{g,p}^b(H;\C)$ as before.  The deck group of $S_H \rightarrow \Sigma_{g,p}^b$ is $H$,
so again $\fH_{g,p}^b(H;\C)$ decomposes into a direct sum of $H$-isotypic components, indexed
by elements of the dual group $\hH$ of characters.

As we said above, the map $\pi_1(\Sigma_{g,p}^b) \rightarrow H$ factors through $\cD$, giving a surjection
$\cD \rightarrow H$.  This induces an inclusion $\hH \hookrightarrow \hcD$, and
we will identify $\hH$ with its image in $\hcD$.  An element of $\hcD$ lying
in $\hH$ is said to be compatible with $H$.  We then have the following:

\begin{lemma}
\label{lemma:intermediatecover}
Fix $g,p,b \geq 0$ and $\ell \geq 2$ with $p+b \geq 1$.  Let $H$ be a symplectic subgroup of $\HH_1(\Sigma_{g,p}^b;\Z/\ell)$.  Then
for all $\chi \in \hcD$ that are compatible with $H$, the $\C_{\chi}$-isotypic component of $\fH_{g,p}^b(H;\C)$ is naturally isomorphic\footnote{The meaning of ``natural'' here is
that the covering map $S_{\cD} \rightarrow S_H$ takes $\fH_{g,p}^b(\chi)$ isomorphically to the $\C_{\chi}$-isotypic
component of $\fH_{g,p}^b(H;\C)$.  In particular, the isomorphism is $\Mod_{g,p}^b(\ell)$-equivariant.} to $\fH_{g,p}^b(\chi)$, so in particular
\[\fH_{g,p}^b(H;\C) = \bigoplus_{\chi \in \hH} \fH_{g,p}^b(\chi).\]
\end{lemma}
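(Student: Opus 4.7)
The plan is to exploit the tower $S_{\cD} \to S_H \to \Sigma_{g,p}^b$: the intermediate map $\pi\colon S_{\cD} \to S_H$ is itself a regular finite cover whose deck group is $K \coloneqq \ker(\cD \twoheadrightarrow H)$. Working with $\C$-coefficients, a standard transfer argument for the finite free $K$-action on $S_{\cD}$ produces a natural isomorphism
\[
\pi_{*}\colon \HH_1(S_{\cD};\C)^K \stackrel{\cong}{\longrightarrow} \HH_1(S_H;\C),
\]
i.e.\ $\fH_{g,p}^b(\ell;\C)^K \cong \fH_{g,p}^b(H;\C)$. Since all the lifts used to define the various actions are compatible (the $\Mod_{g,p}^b(\ell)$-action on $\fH_{g,p}^b(\ell;\C)$ commutes with the $\cD$-action, so it preserves $K$-invariants, and lifts of a mapping class $f \in \Mod_{g,p}^b(\ell)$ to $S_H$ factor the corresponding lifts to $S_{\cD}$), this isomorphism is $\Mod_{g,p}^b(\ell)$-equivariant.

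Next, I would identify the $K$-invariants using the given isotypic decomposition. Because $\cD$ is finite and abelian, for any character $\chi \in \hcD$ the subspace $\fH_{g,p}^b(\chi)$ is $K$-fixed if and only if $K \subset \ker(\chi)$, i.e.\ precisely when $\chi$ factors through $\cD \twoheadrightarrow H$, which is the definition of $\chi$ being compatible with $H$. Hence
\[
\fH_{g,p}^b(\ell;\C)^K \;=\; \bigoplus_{\chi \in \hH} \fH_{g,p}^b(\chi),
\]
and combining this with the transfer isomorphism gives the natural $\Mod_{g,p}^b(\ell)$-equivariant identification
\[
\fH_{g,p}^b(H;\C) \;\cong\; \bigoplus_{\chi \in \hH} \fH_{g,p}^b(\chi).
\]

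Finally, I would check that the summand indexed by $\chi$ is exactly the $\C_{\chi}$-isotypic component for the $H = \cD/K$ deck action on $\fH_{g,p}^b(H;\C)$: under $\pi_{*}$ the $H$-action corresponds to the residual action of $\cD/K$ on the $K$-invariants, and on $\fH_{g,p}^b(\chi)$ this residual action is multiplication by the character $\chi|_H = \chi$. The only nontrivial ingredient is the first-paragraph transfer isomorphism; the rest is pure representation theory of the finite abelian group $\cD$. I expect the main thing to be careful about is the compatibility of the two different lifting conventions (lifting a mapping class to $S_{\cD}$ versus to $S_H$, each fixing the boundary components and punctures pointwise), but this is automatic because the unique such lift to $S_{\cD}$ descends to the unique such lift to $S_H$ under $\pi$.
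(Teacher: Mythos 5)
Your proposal is correct and takes essentially the same route as the paper: both pass to the intermediate cover $S_H = S_{\cD}/K$ with $K = \ker(\cD \twoheadrightarrow H)$, identify $\HH_1(S_H;\C)$ with the $K$-(co)invariants of $\HH_1(S_{\cD};\C)$, and then sort the characters of $\cD$ according to whether they vanish on $K$. The only cosmetic difference is that you work with $K$-invariants via the transfer while the paper works with $K$-coinvariants via the pushforward of the covering map, which over $\C$ is the same statement.
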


Before proving Lemma \ref{lemma:intermediatecover}, we highlight one special case of it:

\begin{example}
\label{example:trivialcover}
Fix $g,p,b \geq 0$ and $\ell \geq 2$ with $p+b \geq 1$.  Let $H = 0$, so
$\fH_{g,p}^b(H;\C) = \HH_1(\Sigma_{g,p}^b;\C)$.  Letting $1$ be the trivial character
of $\cD$, we have $\hH = \{1\}$.  Lemma \ref{lemma:intermediatecover} therefore
implies that
\[\HH_1(\Sigma_{g,p}^b;\C) = \fH_{g,p}^b(H;\C) = \fH_{g,p}^b(1;\C). \qedhere\]
\end{example}

\begin{proof}[Proof of Lemma \ref{lemma:intermediatecover}]
Let $K$ be the kernel of the quotient map $\cD \twoheadrightarrow H$, so 
$S_H = S_{\cD} / K$.  A standard property of group actions (see, e.g., \cite[Theorem III.2.4]{BredonTransformation} 
or \cite[Proposition 1.1]{PutmanHomologyActionNote}) says that if a finite group $G$ acts smoothly on a smooth compact 
manifold with boundary\footnote{Or, more generally, a compact simplicial complex.}
$X$, then the $G$-coinvariants of the action of $G$ on $\HH_k(X;\C)$ are $\HH_k(X/G;\C)$.  Applying this
to the action of $K$ on $S_{\cD}$, we deduce\footnote{Strictly speaking, this does not apply if $p \geq 1$ since
then $\Sigma_{g,p}^b$ is not compact.  However, replacing each puncture with a boundary component does not
change the homology groups of the surface, so we can assume without loss of generality that $p=0$.}
that
\[\fH_{g,p}^b(H;\C) = \HH_1(S_H;\C) = \HH_1(S_{\cD};\C)_K = \fH_{g,p}^b(\ell;\C)_K,\]
where the subscripts indicate that we are taking the $K$-coinvariants.

Applying this to the decomposition
\[\fH_{g,p}^b(\ell;\C) = \bigoplus_{\chi \in \hcD} \fH_{g,p}^b(\chi),\]
we deduce that 
\[\fH_{g,p}^b(H;\C) = \bigoplus_{\chi \in \hcD} \fH_{g,p}^b(\chi)_K.\]
We claim that for $\chi \in \hcD$ we have
\[\fH_{g,p}^b(\chi)_K = \begin{cases}
\fH_{g,p}^b(\chi) & \text{if $\chi \in \hH$},\\
0                    & \text{if $\chi \notin \hH$}.
\end{cases}\]
For $k \in K$, the element $k$ acts on $\fH_{g,p}^b(\chi)$ as multiplication by $\chi(k)$.  If this is ever not $1$,
then taking the $K$-coinvariants of $\fH_{g,p}^b(\chi)$ reduces it to $0$.  Otherwise, if it is always $1$ then
taking the $K$-coinvariants of $\fH_{g,p}^b(\chi)$ does not change it.  Since $\hH$ is precisely the subgroup
of $\hcD$ consisting of characters that are identically $1$ on $K$, the claim follows.

We conclude that
\[\fH_{g,p}^b(H;\C) = \bigoplus_{\chi \in \hH} \fH_{g,p}^b(\chi).\]
It is immediate from the above that for $\chi \in \hH$, the action of $\cD$ on $\fH_{g,p}^b(\chi)$ factors through
$H = \cD/K$ and that for $h \in H$ and $\vec{v} \in \fH_{g,p}^b(\chi)$ we have
$h \cdot \vec{v} = \chi(h) \vec{v}$.  We conclude that this is exactly the decomposition into $H$-isotypic components, as desired.
\end{proof}

\begin{corollary}
\label{corollary:extendchi}
Fix $g,p,b \geq 0$ and $\ell \geq 2$ with $p+b \geq 1$.  Let $H$ be a symplectic subgroup of $\HH_1(\Sigma_{g,p}^b;\Z/\ell)$
and let $\chi \in \hcD$.  If $\chi$ is compatible with $H$, then 
the action of $\Mod_{g,p}^b(\ell)$ on $\fH_{g,p}^b(\chi)$ extends to an action of $\Mod_{g,p}^b(H)$.
\end{corollary}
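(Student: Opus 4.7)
The plan is to deduce this directly from Lemma \ref{lemma:intermediatecover} by transporting the action on the intermediate cover $S_H$ through the natural isomorphism supplied by that lemma.

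First I would observe that $\Mod_{g,p}^b(\ell) \subset \Mod_{g,p}^b(H)$, since acting trivially on all of $\HH_1(\Sigma_{g,p}^b;\Z/\ell)$ is stronger than acting trivially on its subgroup $H$. Because elements of $\Mod_{g,p}^b(H)$ fix $H$ pointwise and the homomorphism $\pi_1(\Sigma_{g,p}^b) \to H$ defining the cover $S_H \to \Sigma_{g,p}^b$ factors through $\HH_1(\Sigma_{g,p}^b;\Z/\ell)$, every $f \in \Mod_{g,p}^b(H)$ preserves the kernel of this homomorphism and hence lifts to $S_H$. The hypothesis $p+b \geq 1$ fixes a basepoint (on a boundary component or at a puncture) that we use to select a canonical lift, giving a well-defined action of $\Mod_{g,p}^b(H)$ on $\fH_{g,p}^b(H;\C) = \HH_1(S_H;\C)$.

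Next, this action commutes with the deck group action of $H$ on $S_H$, since both are induced by diffeomorphisms of $S_H$ and the deck group is central in the group of lifts. Consequently the $\Mod_{g,p}^b(H)$-action preserves each $H$-isotypic summand of $\fH_{g,p}^b(H;\C)$. By Lemma \ref{lemma:intermediatecover}, whenever $\chi$ is compatible with $H$ the $\C_{\chi}$-isotypic summand is naturally isomorphic to $\fH_{g,p}^b(\chi)$ via the covering map $S_{\cD} \to S_H$. Transporting the $\Mod_{g,p}^b(H)$-action through this isomorphism produces the desired extension.

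The only genuinely nontrivial point, which I would verify as the final step, is that the extension really restricts on $\Mod_{g,p}^b(\ell)$ to the original action. This follows from the naturality clause of Lemma \ref{lemma:intermediatecover}: for $f \in \Mod_{g,p}^b(\ell)$, its canonical lift $\tilde f$ to $S_{\cD}$ (fixing all boundary components and punctures) descends to the canonical lift of $f$ to $S_H$, since both lifts are uniquely determined by the basepoint condition and are compatible with the covering map $S_{\cD} \to S_H$. Therefore the map $\HH_1(S_{\cD};\C) \to \HH_1(S_H;\C)$ induced by this covering map is $\Mod_{g,p}^b(\ell)$-equivariant, and its restriction to $\fH_{g,p}^b(\chi)$ is the identification appearing in Lemma \ref{lemma:intermediatecover}. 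I expect no real obstacle beyond carefully tracking the basepoint conventions in the construction of the canonical lifts.
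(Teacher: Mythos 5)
Your proposal is correct and matches the paper's intended argument: the paper's proof is simply ``Immediate,'' meaning exactly the unpacking you give — $\Mod_{g,p}^b(H)$ acts on $\fH_{g,p}^b(H;\C)$ by lifting (using the basepoint from $p+b\geq 1$), this action commutes with the deck group $H$ (since elements of $\Mod_{g,p}^b(H)$ fix $H$) and so preserves isotypic components, and Lemma \ref{lemma:intermediatecover}'s natural, $\Mod_{g,p}^b(\ell)$-equivariant identification of the $\C_{\chi}$-isotypic piece with $\fH_{g,p}^b(\chi)$ transports the extended action. Your final compatibility check via the covering map $S_{\cD}\rightarrow S_H$ is precisely the ``naturality'' footnote in that lemma, so nothing further is needed.
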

\begin{proof}
Immediate.
\end{proof}

\subsection{Deleting punctures}
The following relates $\fH_{g,p+1}^b(\chi)$ and $\fH_{g,p}^b(\chi)$.  It uses
the convention from \S \ref{section:symplecticconventions}.

\begin{lemma}
\label{lemma:capboundary}
Fix $g,b,p \geq 0$ and $\ell \geq 2$ with $p+b \geq 1$.  Let $H$ be a symplectic subgroup of $\HH_1(\Sigma_{g,p+1}^b;\Z/\ell)$ and let
$\chi \in \hH$.  Let $x_0$ be a puncture of $\Sigma_{g,p+1}^b$.
We then have a short exact sequence
\[0 \longrightarrow \C \longrightarrow \fH_{g,p+1}^b(\chi) \longrightarrow \fH_{g,p}^b(\chi) \longrightarrow 0\]
of $\Mod_{g,p+1}^b(H)$-representations.  Here $\C$ is the trivial representation and
$\Mod_{g,p+1}^b(H)$ acts on $\fH_{g,p}^b(\chi)$ via the homomorphism $\Mod_{g,p+1}^b(H) \rightarrow \Mod_{g,p}^b(H)$
that deletes $x_0$.
\end{lemma}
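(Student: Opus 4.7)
The plan is to derive this short exact sequence by directly comparing the two Prym covers and then extracting $\chi$-isotypic components. Let $\pi\colon S_\cD \to \Sigma_{g,p+1}^b$ and $\pi'\colon S'_\cD \to \Sigma_{g,p}^b$ be the covers with deck group $\cD = \HH_1(\Sigma_g;\Z/\ell)$ used to define the Prym representations in \S\ref{section:decompositionprym}. Since a loop around the puncture $x_0$ has trivial image in $\cD$, the preimage $\pi^{-1}(x_0)$ is a free $\cD$-orbit of $|\cD|$ points, and filling in these points turns $S_\cD$ into $S'_\cD$. Applying the long exact sequence of the pair $(S'_\cD, S_\cD)$ together with excision (so that the relative homology concentrates in degree two as $\bigoplus_{d\in\cD}\C$) and using $\HH_2(S'_\cD;\C)=0$, which holds because $p+b\geq 1$ guarantees nonempty boundary or punctures in $S'_\cD$, the sequence collapses to
\begin{equation*}
0 \longrightarrow \bigoplus_{d\in\cD}\C\cdot[\gamma_d] \longrightarrow \fH_{g,p+1}^b(\ell;\C) \longrightarrow \fH_{g,p}^b(\ell;\C) \longrightarrow 0,
\end{equation*}
where $\gamma_d$ is a small loop around the $d$-th preimage of $x_0$, and the kernel is the regular representation of $\cD$.

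Next I would pass to $\chi$-isotypic components, an exact operation since $\cD$ is finite abelian and $\C$ has characteristic zero. The $\chi$-isotypic subspace of $\C[\cD]$ is one-dimensional, giving the desired sequence
\begin{equation*}
0 \longrightarrow \C \longrightarrow \fH_{g,p+1}^b(\chi) \longrightarrow \fH_{g,p}^b(\chi) \longrightarrow 0.
\end{equation*}
Corollary \ref{corollary:extendchi} ensures the $\Mod_{g,p+1}^b(H)$-action is defined on both $\fH_{g,p+1}^b(\chi)$ and $\fH_{g,p}^b(\chi)$, and the right-hand map is equivariant because normalized lifts of mapping classes to $S_\cD$ and $S'_\cD$ commute with the puncture-filling projection.

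Finally I would verify that the kernel is the trivial $\Mod_{g,p+1}^b(H)$-representation. Since the kernel is one-dimensional the action factors through a character of $\Mod_{g,p+1}^b(H)$, so it suffices to check triviality on a generating set. Using the partial Birman exact sequence of Theorem \ref{theorem:birmanpartial} together with Lemma \ref{lemma:partialgen}, these generators split into point-pushing elements in $\PP_{x_0}(\Sigma_{g,p}^b,H)$, which act trivially on $\zeta=[\lambda_{x_0}]$ by Lemma \ref{lemma:pointpushh1} (as $\rho_1(\zeta)=0$), and elements from $\Mod_{g,p}^b(H)$ together with Dehn twists $T_\alpha$ with $[\alpha]\in H^{\perp}$, which can be represented by diffeomorphisms supported away from a small disk around $x_0$ and hence have lifts fixing each $\gamma_d$. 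The main obstacle is this last equivariance check: one must carefully track how the chosen lifts of mapping classes act on the preimages of $x_0$ and confirm that they fix each $\gamma_d$ individually rather than permuting them by a nontrivial deck transformation.
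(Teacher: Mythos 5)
Your core construction is the same as the paper's: fill in the punctures of the Prym cover lying over $x_0$, note that the kernel of the induced map on $\HH_1$ is spanned by the loops around those punctures and is a copy of the regular representation of the deck group, and then take $\chi$-isotypic components, which are one-dimensional in the regular representation.  One structural difference: you work with the $\cD$-cover $S_{\cD}$, while the paper works with the intermediate $H$-cover $S_H$ and uses Lemma \ref{lemma:intermediatecover} to identify $\fH_{g,p+1}^b(\chi)$ with the $\C_{\chi}$-isotypic part of $\HH_1(S_H;\C)$.  This matters for equivariance: the group $\Mod_{g,p+1}^b(H)$ does not act on $\HH_1(S_{\cD};\C)$ at all (only $\Mod_{g,p+1}^b(\ell)$ lifts to $S_{\cD}$), so your remark about normalized lifts to the two $\cD$-covers commuting with puncture-filling only gives $\Mod_{g,p+1}^b(\ell)$-equivariance; to get $\Mod_{g,p+1}^b(H)$-equivariance you must pass through $S_H$, via the naturality in Lemma \ref{lemma:intermediatecover}, exactly as the paper does.

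The genuine gap is the point you flag yourself at the end: showing that the chosen lifts fix each puncture over $x_0$ (equivalently each class $[\gamma_d]$) rather than translating the fiber by a deck element.  Your justification for the non--point-pushing generators --- that they are supported away from a small disc about $x_0$ and ``hence have lifts fixing each $\gamma_d$'' --- does not work as stated: a diffeomorphism that is the identity near $x_0$ has lifts whose effect on the discs over $x_0$ is an arbitrary deck translation, and which translation the normalized lift induces is governed by the normalization at the basepoint, not by the support near $x_0$.  Concretely, for $f$ in the partial level subgroup the translation is the $H$-component of the class $[f(\alpha)\cdot\alpha^{-1}]$ for an arc $\alpha$ from the basepoint to $x_0$; for a twist $T_c$ with $[c]$ orthogonal to $H$ this is a multiple of the $H$-component of $[c]$ and hence vanishes, but that is a computation about $[c]$, not a consequence of the support being disjoint from $x_0$.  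Alternatively one can simply invoke the convention in \S \ref{section:prymrepresentations} that the Prym action is defined by lifts fixing all punctures and boundary components pointwise, which is why the paper can call this identification of the kernel immediate.  Two smaller citation issues: Lemma \ref{lemma:partialgen} is stated and proved only for $\Sigma_g^1$, so you cannot quote it for a general $\Sigma_{g,p+1}^b$; and for the point-pushing elements the relevant fact is the cover-level one (the lift of $\gamma \in \PP_{x_0}(\Sigma_{g,p}^b,H)$ pushes each puncture of the fiber along a lifted \emph{loop}, as in the proof of Lemma \ref{lemma:pointpushreidemeister}), not Lemma \ref{lemma:pointpushh1}, which only concerns the homology of the base surface.
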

\begin{proof}
Let $S'_H$ and $S_{H}$ be the covers used to define $\fH_{g,p+1}^b(H;\C)$ and $\fH_{g,p}^b(H;\C)$, respectively.  Let
$P$ be the set of punctures of $S'_H$ that project to $x_0$, so $S_{H}$ is obtained from $S'_H$ by deleting
all the punctures in $P$.  Since $b+p \geq 1$, deleting all the punctures in $P$ does not yield a closed surface.  Letting
$\C[P]$ be the set of formal $\C$-linear combinations of elements of $P$, we therefore get an injection
$\C[P] \hookrightarrow \HH_1(S'_H;\C)$ taking $p \in P$ to the homology class of a loop surrounding $p$, oriented
such that $p$ is to its right.  This fits into a short exact sequence
\begin{equation}
\label{eqn:fillinpunctures}
0 \longrightarrow \C[P] \longrightarrow \HH_1(S'_H;\C) \longrightarrow \HH_1(S_{H};\C) \longrightarrow 0.
\end{equation}
The deck group $H$ acts simply transitively on $P$, so as a representation of $H$ we have $\C[P] \cong \C[H]$.  It
follows that the $\C_{\chi}$-isotypic component of $\C[P]$ is $1$-dimensional.  Taking $\C_{\chi}$-isotypic
components in \eqref{eqn:fillinpunctures}, we therefore get a short exact sequence
\[0 \longrightarrow \C \longrightarrow \fH_{g,p+1}^b(\chi) \longrightarrow \fH_{g,p}^b(\chi) \longrightarrow 0\]
of $\Mod_{g,p+1}^b(H)$-representations.  That the actions of $\Mod_{g,p+1}^b(H)$ on the kernel and cokernel are
as described in the lemma is immediate.
\end{proof}

\subsection{Homological representations}

Let $\uchi = (\chi_1,\ldots,\chi_r)$ be an $r$-tuple of elements of $\hcD$.  We define the associated {\em homological representation}
of $\Mod_{g,p}^b(\ell)$ to be
\begin{equation}
\label{eqn:homologicalrep}
\fH_{g,p}^b(\uchi) = \fH_{g,p}^b(\chi_1) \otimes \cdots \otimes \fH_{g,p}^b(\chi_r).
\end{equation}
The number $r$ is the {\em size} of $\fH_{g,p}^b(\uchi)$.  If $H$ is a symplectic subgroup of $\HH_1(\Sigma_{g,p}^b;\Z/\ell)$ and each
$\chi_i$ is compatible with $H$, then we say that $\fH_{g,p}^b(\uchi)$ is {\em compatible} with $H$.  By
Corollary \ref{corollary:extendchi}, this implies that the action of $\Mod_{g,p}^b(\ell)$ on $\fH_{g,p}^b(\uchi)$
extends to an action of $\Mod_{g,p}^b(H)$.  This is a stronger statement if $H$ is smaller, and the following lemma
will allow us to bound how large of an $H$ we must take:

\begin{lemma}
\label{lemma:extendhomological}
Fix $g,p,b \geq 0$ and $\ell \geq 2$ with $p+b \geq 1$, and let $\fH_{g,p}^b(\uchi)$ be a homological representation of $\Mod_{g,p}^b(\ell)$
of size $r$ that is compatible with a symplectic subgroup $H$ of $\HH_1(\Sigma_{g,p}^b;\Z/\ell)$.
Then there exists a symplectic subgroup $H'$ of $\HH_1(\Sigma_{g,p}^b;\Z/\ell)$ of genus at most $r$ with $H' \subset H$ such
that $\fH_{g,p}^b(\uchi)$ is compatible with $H'$.
\end{lemma}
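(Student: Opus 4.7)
The strategy is to linearize the statement by turning characters into vectors. Fix a primitive $\ell$-th root of unity $\zeta \in \C$. Since $\cD$ is $\ell$-torsion, every character in $\hcD$ takes values in $\mu_\ell \subset \C^\times$, and $\zeta$ gives an isomorphism $\hcD \cong \Hom(\cD, \Z/\ell)$. The non-degenerate intersection form $\omega$ on $\cD$ in turn provides an isomorphism $\Hom(\cD, \Z/\ell) \cong \cD$ via $v \mapsto \omega(v, -)$. The composite is the identification $\cD \cong \hcD$, $v \mapsto \chi_v$, characterized by $\chi_v(x) = \zeta^{\omega(v,x)}$. Under this, $\chi_v$ is compatible with a symplectic subgroup $H$ (i.e.\ lies in $\hH \subset \hcD$) if and only if $\chi_v$ is trivial on the kernel $H^\perp$ of $\cD \twoheadrightarrow H$, if and only if $v \in (H^\perp)^\perp = H$. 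Identifying $H \subset \HH_1(\Sigma_{g,p}^b;\Z/\ell)$ with its image in $\cD$ as in \S\ref{section:symplecticconventions} and letting $v_1,\ldots,v_r \in H$ correspond to $\chi_1,\ldots,\chi_r$, the lemma reduces to the following purely algebraic claim: there is a symplectic subgroup $H' \subset H$ of genus at most $r$ containing $v_1,\ldots,v_r$.

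I would prove this claim by induction on $r$. The base case $r = 0$ is $H' = 0$. For the inductive step, apply the induction hypothesis to $v_1,\ldots,v_{r-1}$ to obtain a symplectic subgroup $H'' \subset H$ of genus at most $r - 1$ containing them. If $v_r \in H''$ take $H' = H''$; otherwise use the orthogonal decomposition $H = H'' \oplus (H'')^\perp$ to write $v_r = a + b$ with $a \in H''$ and $0 \neq b \in (H'')^\perp$. It then suffices to produce a genus-$1$ symplectic submodule $W \subset (H'')^\perp$ containing $b$, because then $H' = H'' \oplus W$ is symplectic of genus at most $r$ and contains every $v_i$.

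The heart of the argument, and the main obstacle when $\ell$ is not prime, is the local statement that any nonzero $b$ in a symplectic $\Z/\ell$-module $M$ lies in a genus-$1$ symplectic submodule. When $\ell$ is prime this is immediate: $M$ is a symplectic vector space, the functional $\omega(b,-)$ is surjective by non-degeneracy, and any $c$ with $\omega(b,c) = 1$ gives $W = \Span{b,c}$. For general $\ell$ one must handle the possibility that $b$ is not primitive. The key auxiliary fact is that every element $b$ of a free $\Z/\ell$-module $M$ admits a decomposition $b = d \cdot b'$ with $d \in \Z/\ell$ and $b'$ primitive (part of a $\Z/\ell$-basis of $M$). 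This is proved prime-by-prime via the Chinese remainder decomposition $\Z/\ell \cong \prod_p \Z/p^{e_p}$, simply factoring out in each local summand the minimum $p$-adic valuation of the coordinates of $b$. Granting this, extend $b'$ to a basis of $M$; non-degeneracy of $\omega$ forces some $\omega(b', e_j)$ to be a unit, so a rescaling yields $c \in M$ with $\omega(b',c) = 1$, and $W = \Span{b',c} \cong (\Z/\ell)^2$ is the desired genus-$1$ symplectic submodule, containing $b = d\cdot b'$.
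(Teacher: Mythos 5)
Your proposal is correct in substance and takes a genuinely different route from the paper.  The paper's proof is a two-step reduction to an external result: it packages the restrictions of the characters to $H$ into a single homomorphism $\mu = \chi_1 \times \cdots \times \chi_r \colon H \rightarrow A$ with $A$ abelian of rank at most $r$, invokes \cite[Lemma 3.5]{PutmanPartialTorelli} to produce a genus $h-r$ symplectic subgroup $U$ of $H$ on which $\mu$ vanishes, and takes $H'$ to be the orthogonal complement of $U$ in $H$.  You instead dualize: the intersection form on $\cD$ identifies $\hcD$ with $\cD$, compatibility of $\chi_{v}$ with a symplectic subgroup becomes the condition $v \in (\text{that subgroup})$ (your identification of $\ker(\cD \twoheadrightarrow H)$ with $H^{\perp}$ in $\cD$ is correct and worth the one-line check), and you then construct directly, by induction on $r$, a genus at most $r$ symplectic subgroup of $H$ containing $v_1,\ldots,v_r$, with the inductive step resting on the factorization $b = d\cdot b'$ with $b'$ primitive (via CRT) and a hyperbolic pair through $b'$.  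Your argument is self-contained and in effect reproves, in dual form, the special case of the cited lemma that is needed; the paper's argument is shorter but leans on the reference.

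One intermediate assertion is false as written, though the repair is immediate and stays inside your framework: for composite $\ell$, nondegeneracy does \emph{not} force some $\omega(b',e_j)$ to be a unit for a basis $(e_j)$ extending $b'$.  The values $\omega(b',e_j)$ need only generate the unit ideal, and no single one need be invertible (e.g.\ they could be $2$ and $3$ modulo $\ell = 6$).  But you do not need a basis vector: since $b'$ is unimodular there is a functional $f$ on $M$ with $f(b') = 1$, and nondegeneracy of $\omega$ represents $f$ as $\omega(-,c)$ for some $c \in M$; equivalently, $\omega(b',-)$ is surjective, exactly as in your prime case.  So $c$ with $\omega(b',c) = 1$ exists directly, and the rest of your construction of $W = \Span{b',c}$ goes through, since the Gram matrix of $(b',c)$ has unit determinant and the relation $sb' + tc = 0$ pairs against $b'$ and $c$ to give $s = t = 0$.
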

\begin{proof}
Let $h$ be the genus of $H$.  If $h \leq r$ then there is nothing to prove, so assume that $h > r$.  Write
\[\fH_{g,p}^b(\uchi) = \fH_{g,p}^b(\chi_1) \otimes \cdots \otimes \fH_{g,p}^b(\chi_r).\]
Let $C_i \subset \C^{\times}$ 
be the image of $\chi_i$, so $C_i$ is a possibly trivial finite cyclic group.  Regard each $\chi_i$ as a map
$H \rightarrow C_i$.  Set $A = C_1 \times \cdots \times C_r$, so $A$ is an abelian group
of rank\footnote{By definition, the rank of an abelian group is the minimal cardinality of a generating set for it.} 
at most $r$.  Let $\mu\colon H \rightarrow A$ be $\mu = \chi_1 \times \cdots \times \chi_r$.  
By\footnote{This reference is about maps $\Z^{2h} \rightarrow A$ rather than $(\Z/\ell)^{2h} \rightarrow A$,
but the same proof works in our situation.  Alternatively, apply it to the composition $\Z^{2h} \rightarrow (\Z/\ell)^{2h} \stackrel{\mu}{\rightarrow} A$ and then map the resulting symplectic subspace of $\Z^{2h}$ to $(\Z/\ell)^{2h}$.} \cite[Lemma 3.5]{PutmanPartialTorelli}, we can find a genus 
$h-r$ symplectic subgroup $U$ of $H$ such that $\mu$ vanishes on $U$.  Let $H' \subset H$ be the orthogonal
complement of $U$ in $H$, so $H'$ is a genus $r$ symplectic subspace of $H$ such that
each $\chi_i$ factors through the projection of $H$ to $H'$.  This implies that $\fH_{g,p}^b(\uchi)$ is compatible with $H'$.
\end{proof}

\section{The Reidemeister pairing and the point-pushing subgroup}
\label{section:reidemeister}

This section describes an important bilinear pairing on the Prym representation.  It goes back
to work of Reidemeister \cite{Reidemeister1, Reidemeister2}, and has since appeared in
many places.  

\subsection{Reidemeister pairing}
\label{section:reidemeisterdefinition}

Fix some $g,b,p \geq 0$ with $b+p \geq 1$.  Let $\ell \geq 2$ and let $H$ be a symplectic subgroup of $\HH_1(\Sigma_{g,p+1}^b;\Z/\ell)$.
Let $\bbk$ be a commutative ring and let $\omega_H(-,-)$ be the algebraic intersection pairing on $\fH_{g,p}^b(H;\bbk) = \HH_1(S_H;\bbk)$.  The group
$H$ acts on $\fH_{g,p}^b(H;\bbk)$ via its action on $S_H$ by deck transformations.
The Reidemeister pairing on $\fH_{g,p}^b(H;\bbk)$ is the map
\[\Romega{H}\colon \fH_{g,p}^b(H;\bbk) \times \fH_{g,p}^b(H;\bbk) \longrightarrow \bbk[H]\]
defined by the formula
\[\Romega{H}(x,y) = \sum_{d \in H} \omega_{H}(x,d y) d \quad \quad \text{for all $x,y \in \fH_{g,p}^b(H;\bbk)$}.\]

\subsection{Point-pushing subgroup}
We now connect the Reidemeister pairing to the point-pushing subgroup.
Fix some $g,p,b \geq 0$ with $\pi_1(\Sigma_{g,p}^b)$ nonabelian and $p+b \geq 1$.  Let $x_0$ be a puncture of $\Sigma_{g,p+1}^b$.
Let $\ell \geq 2$ and let $H$ be a symplectic subgroup of $\HH_1(\Sigma_{g,p+1}^b;\Z/\ell)$.
Using the conventions from \S \ref{section:symplecticconventions},
Theorem \ref{theorem:birmanpartial} gives a Birman exact sequence
\[1 \longrightarrow \PP_{x_0}(\Sigma_{g,p}^b,H) \longrightarrow \Mod_{g,p+1}^b(H) \longrightarrow \Mod_{g,p}^b(H) \longrightarrow 1.\]
Here the point-pushing subgroup $\PP_{x_0}(\Sigma_{g,p}^b,H)$ is the kernel of the map
\[\PP_{x_0}(\Sigma_{g,p}^b) \cong \pi_1(\Sigma_{g,p}^b) \longrightarrow \HH_1(\Sigma_{g,p}^b) = H \oplus H^{\perp} \stackrel{\text{proj}}{\longrightarrow} H.\]
Let $S'_H \rightarrow \Sigma_{g,p+1}^b$ and $S_H \rightarrow \Sigma_{g,p}^b$ be the covers used to define $\fH_{g,p+1}^b(H;\bbk)$ and $\fH_{g,p}^b(H;\bbk)$,
respectively.
Fixing a commutative ring $\bbk$, we want to understand the action of $\PP_{x_0}(\Sigma_{g,p}^b,H)$ on $\fH_{g,p+1}^b(H;\bbk) = \HH_1(S'_H;\bbk)$.
By the above, $S_H$ is the cover of $\Sigma_{g,p}^b$ corresponding to the subgroup $\PP_{x_0}(\Sigma_{g,p}^b,H)$ of
$\PP_{x_0}(\Sigma_{g,p}^b) \cong \pi_1(\Sigma_{g,p}^b)$.  We can thus identify $\PP_{x_0}(\Sigma_{g,p}^b,H)$ with $\pi_1(S_H)$.  The following lemma
shows how the action we are trying to understand is encoded by the Reidemeister pairing on
$\fH_{g,p}^b(H;\bbk) = \HH_1(S_H;\bbk)$.

\begin{lemma}
\label{lemma:pointpushreidemeister}
Let the notation be as above.  Let $\rho_1\colon \fH_{g,p+1}^b(H;\bbk) \rightarrow \fH_{g,p}^b(H;\bbk)$ be the map induced by
filling in $x_0$ and $\rho_2\colon \PP_{x_0}(\Sigma_{g,p}^{b},H) \rightarrow \fH_{g,p}^b(H;\bbk)$ be the composition
\[\PP_{x_0}(\Sigma_{g,p}^b,H) \cong \pi_1(S_{H}) \longrightarrow \fH_{g,p}^b(H;\bbk).\]
Let $\zeta \in \fH_{g,p+1}^b(H;\bbk)$ be the homology class of
a loop around the puncture of $S'_H$ that is used as the basepoint
in the identification of $\PP_{x_0}(\Sigma_{g,p}^b,H)$ with $\pi_1(S_{H})$, oriented such that the puncture is to its right.
Then for $\gamma \in \PP_{x_0}(\Sigma_{g,p}^b,H)$ and $z \in \fH_{g,p+1}^b(H;\bbk)$, we have
\[\gamma(z) = z + \Romega{H}(\rho_1(z),\rho_2(\gamma)) \cdot \zeta.\]
\end{lemma}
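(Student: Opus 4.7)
The plan is to bootstrap from Lemma \ref{lemma:pointpushh1} applied on the cover $S'_H$. As in the proof of Lemma \ref{lemma:pointpushh1}, by bilinearity it suffices to verify the identity when $\gamma$ is represented by a simple closed based loop on $\Sigma_{g,p}^b$ and $z$ is represented by a simple closed curve on $\Sigma_{g,p+1}^b$ transverse to the lifts of $\gamma$.

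Next I would unpack the lift to $S'_H$. Because $\gamma \in \PP_{x_0}(\Sigma_{g,p}^b,H)$ has vanishing image in $H$, the identification $\PP_{x_0}(\Sigma_{g,p}^b,H) \cong \pi_1(S_H)$ presents $\gamma$ as a loop $\widetilde{\gamma}$ in $S_H$ based at a chosen preimage $\widetilde{x}_0$ of $x_0$, with $\rho_2(\gamma) = [\widetilde{\gamma}] \in \HH_1(S_H;\bbk)$. The full preimage of $\gamma$ in $S'_H$ is a disjoint union of $|H|$ loops, one through each preimage $\widetilde{x}_0^{(d)}$ of $x_0$ (indexed by $d \in H$), and the loop through $\widetilde{x}_0^{(d)}$ has homology class $d \cdot \rho_2(\gamma)$ in $\HH_1(S_H;\bbk)$ by the deck action. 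The lift of the point-pushing mapping class $\gamma$ to $S'_H$ simultaneously point-pushes each $\widetilde{x}_0^{(d)}$ around its corresponding lifted loop.

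Since these lifted loops are pairwise disjoint in $S_H$, the simultaneous point-push factors in $\Mod(S'_H)$ as a commuting product of $|H|$ point-push mapping classes $\widetilde{\gamma}_d$, each supported in a regular neighborhood that pushes only $\widetilde{x}_0^{(d)}$. Applying Lemma \ref{lemma:pointpushh1} to each $\widetilde{\gamma}_d$ (treating $\widetilde{x}_0^{(d)}$ as the distinguished puncture and filling it in to recover the rest of $S'_H$) yields a contribution $\omega_H\bigl(\rho_1(z), d \cdot \rho_2(\gamma)\bigr) \cdot (d \cdot \zeta)$ to $\widetilde{\gamma}(z) - z$, where $d \cdot \zeta \in \HH_1(S'_H;\bbk)$ is the class of a small loop around $\widetilde{x}_0^{(d)}$ by deck-equivariance of $\zeta$. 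Summing over $d \in H$ gives
\[\widetilde{\gamma}(z) = z + \sum_{d \in H} \omega_H\bigl(\rho_1(z), d \cdot \rho_2(\gamma)\bigr) \cdot (d \cdot \zeta),\]
and the right-hand side is precisely $z + \Romega{H}(\rho_1(z), \rho_2(\gamma)) \cdot \zeta$ once one unpacks the definition of $\Romega{H}$ and the $\bbk[H]$-action on $\fH_{g,p+1}^b(H;\bbk)$ by deck transformations.

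The main obstacle will be the careful bookkeeping with orientations, basepoints, and signs: one must verify that the homology classes of the various lifts of $\gamma$ really are the $H$-translates of $\rho_2(\gamma)$ under the chosen basepoint identifying $\PP_{x_0}(\Sigma_{g,p}^b,H)$ with $\pi_1(S_H)$, that the fill-in maps appearing in each application of Lemma \ref{lemma:pointpushh1} are compatible with $\rho_1$, and that the orientation convention for $\zeta$ as a loop around the basepoint puncture propagates correctly to $d \cdot \zeta$ via the deck action.
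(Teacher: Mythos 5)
Your strategy is the same as the paper's (lift the point-push to the simultaneous push of the $H$-orbit of punctures along the $H$-orbit of the lifted loop, apply the computation of Lemma \ref{lemma:pointpushh1} at each pushed puncture, and sum to recognize $\Romega{H}$), but your opening reduction contains a genuine gap. In Lemma \ref{lemma:pointpushh1} the reduction to simple $\gamma$ is legitimate because there $\gamma$ ranges over all of $\PP_{x_0}(\Sigma_{g,p}^b) \cong \pi_1(\Sigma_{g,p}^b)$, which is generated by embedded based loops, and the formula is multiplicative in $\gamma$. Here $\gamma$ ranges over the subgroup $\PP_{x_0}(\Sigma_{g,p}^b,H)$, and that subgroup is in general \emph{not} generated by elements representable by simple closed curves. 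Indeed, take $H$ of genus $g$, so that $\PP_{x_0}(\Sigma_{g,p}^b,H) = \PP_{x_0}(\Sigma_{g,p}^b,\ell)$ as in Remark \ref{remark:birmanlevel} --- exactly the case in which the lemma is later used (e.g.\ in Lemma \ref{lemma:pushtensorpowers}). A nonseparating simple closed curve admits a dual curve meeting it once, so its class in $\HH_1(\Sigma_g;\Z/\ell)$ is nonzero and the curve does not lie in the subgroup; the simple elements of the subgroup are therefore all separating or boundary-parallel, and their images in $\HH_1(\Sigma_{g,p}^b;\Z)$ lie in the span of the puncture and boundary classes. They cannot generate the subgroup, whose image in $\HH_1(\Sigma_{g,p}^b;\Z)$ contains $\ell$ times every class (e.g.\ $\ell$-th powers of nonseparating loops lie in it). So ``it suffices to verify the identity for simple $\gamma$'' fails precisely in the main case of interest. (A smaller slip: $z$ lives in $\HH_1(S'_H;\bbk)$, so it should be represented by a cycle in the cover $S'_H$, not by a curve on $\Sigma_{g,p+1}^b$.)

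The gap is repairable, and the repair is what the paper actually does: drop the simplicity hypothesis entirely. For an arbitrary $\gamma \in \PP_{x_0}(\Sigma_{g,p}^b,H)$, the canonical lift of the point-push is still the simultaneous push of the punctures $d\widetilde{x}_0$, $d \in H$, along the lifted loops $d\widetilde{\gamma}$; these lifted loops automatically miss the other preimages of $x_0$, though they may intersect each other and themselves, so your factorization into disjointly supported commuting pushes is not available. Instead, compute the action on $\HH_1(S'_H;\bbk)$ directly by the transverse-crossing picture from the proof of Lemma \ref{lemma:pointpushh1}, which uses only transversality and not embeddedness of the pushing path: a cycle in $S'_H$ representing $z$, made transverse to the lifted paths, gets a signed copy of $d\zeta$ inserted at each crossing with the path through $d\widetilde{x}_0$, and the signed count of such crossings is $\omega_H(\rho_1(z), d\,\rho_2(\gamma))$. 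Summing over $d \in H$ gives $\gamma(z) - z = \Romega{H}(\rho_1(z),\rho_2(\gamma))\cdot \zeta$, with the basepoint and orientation bookkeeping exactly as you describe. With that change your argument matches the paper's proof.
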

\begin{proof}
The action of $\gamma \in \PP_{x_0}(\Sigma_{g,p}^b,H)$ on $\fH_{g,p+1}^b(H;\bbk) = \HH_1(S'_H;\bbk)$
comes from simultaneously pushing all the punctures projecting to $x_0$ around paths
in $S_{H}$.  These punctures and paths are all $H$-orbits of the basepoint puncture
and the lift of $\gamma$ to that basepoint puncture.  The lemma is thus immediate
from Lemma \ref{lemma:pointpushh1}.
\end{proof}

\begin{remark}
For $\chi \in \hH$, the action of $\Mod_{g,p+1}^b(H)$ on $\fH_{g,p+1}^b(H;\C)$ preserves the subspace
$\fH_{g,p+1}^b(\chi)$.  It thus follows from Lemma \ref{lemma:pointpushreidemeister} that
for all $x,y \in \fH_{g,p}^b(H;\C)$ with $x \in \fH_{g,p}^b(\chi)$, the element
$\Romega{H}(x,y) \in \C[H]$ lies in the $\C_{\chi}$-isotypic subspace of $\C[H]$.  It is
enlightening to prove this directly.
\end{remark}

\subsection{Point-pushing coinvariants}
We next study the action of the point-pushing subgroup 
$\PP_{x_0}(\Sigma_{g,p}^b,H)$ from Theorem \ref{theorem:birmanpartial} on tensor
powers of $\fH_{g,p+1}^b(H;\bbk)$.  In the following lemma, the subscript indicates that we are taking
coinvariants.  The statement uses the conventions from \S \ref{section:symplecticconventions}

\begin{lemma}
\label{lemma:pushtensorpowers}
Fix some $g,p,b \geq 0$ such that $\pi_1(\Sigma_{g,p}^b)$ is nonabelian and $p+b \geq 1$, and let
$x_0$ be a puncture of $\Sigma_{g,p+1}^b$.  Let $\ell \geq 2$ and let
$H$ be a genus-$h$ symplectic subgroup of $\HH_1(\Sigma_{g,p+1}^b;\Z/\ell)$.  
Let $r \geq 0$ be such that $g \geq h+r$.  Then for all finite-index subgroups
$G$ of $\PP_{x_0}(\Sigma_{g,p}^b,H)$ and all fields $\bbk$ of characteristic $0$, we have
\[\left(\fH_{g,p+1}^b\left(H;\bbk\right)^{\otimes r}\right)_G \cong \fH_{g,p}^b\left(H;\bbk\right)^{\otimes r}.\]
\end{lemma}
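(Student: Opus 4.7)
Write $V = \fH_{g,p+1}^b(H;\bbk)$ and $W = \fH_{g,p}^b(H;\bbk)$, let $K = \ker(\rho_1\colon V \to W)$, and let $\kappa\colon \bbk[H] \to K$ be the isomorphism sending $h \in H$ to $h \cdot \zeta$ (for $\zeta$ the class of a loop around a chosen preimage of $x_0$). By Lemma \ref{lemma:pointpushreidemeister}, for $\gamma \in G$ and $z \in V$,
\[(\gamma-1)(z) = \kappa\bigl(\Romega{H}(\rho_1(z), \rho_2(\gamma))\bigr) \in K.\]
Consequently $G$ acts trivially on both $K$ (since $\rho_1|_K = 0$) and on the quotient $W$, so the natural surjection $\rho_1^{\otimes r}\colon V^{\otimes r} \twoheadrightarrow W^{\otimes r}$ is $G$-invariant and descends to a surjection $(V^{\otimes r})_G \twoheadrightarrow W^{\otimes r}$. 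Proving this is an isomorphism amounts to showing $F^1 V^{\otimes r} \subseteq I_G V^{\otimes r}$, where $F^j V^{\otimes r}$ denotes the $G$-invariant filtration by tensors involving at least $j$ factors from $K$, and $I_G$ is the augmentation ideal of $\bbk[G]$.

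I proceed by downward induction on $j$, starting from the trivial base $F^{r+1} = 0$ and iterating $F^{j+1} \subseteq I_G V^{\otimes r} \Rightarrow F^j \subseteq I_G V^{\otimes r}$ down to $j = 1$. The associated graded $F^j/F^{j+1} = \bigoplus_{|S|=j} K^{\otimes S} \otimes W^{\otimes S^c}$ carries trivial $G$-action. I aim to realize a generator $k_{S} \otimes w_{S^c}$ of the $S$-component modulo $F^{j+1}$, for a chosen $i_0 \in S$, by starting with a lift $k_{S \setminus \{i_0\}} \otimes v_{S^c \cup \{i_0\}} \in F^{j-1}$ (where each $v_\bullet \in V$ is a lift of $w_\bullet \in W$) and computing $(\gamma - 1)$ applied to it. Expanding, this gives modulo $F^{j+1}$ a sum of single-replacement terms $\sum_{i \in S^c \cup \{i_0\}} k_{S \setminus \{i_0\}} \otimes v_{S^c \cup \{i_0\} \setminus \{i\}} \otimes c_\gamma(w_i)$, one term for each $i$. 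Since $\rho_2(G)$ spans $W$ over $\bbk$ (finite index plus characteristic zero) and $I_G V^{\otimes r}$ is $\bbk$-linear, I may use $\bbk$-linear combinations of $(\gamma-1)$-actions; this reduces matters to the following linear-algebra question: given $w_i \in W$ for $i \in S^c \cup \{i_0\}$, find $u \in W$ with $\Romega{H}(w_i, u) = 0$ for $i \neq i_0$ and $\Romega{H}(w_{i_0}, u)$ equal to any prescribed element of $\bbk[H]$. A yes answer isolates the $i_0$-term, and letting $i_0$, the $K$-factors, and the lifts vary spans all of $F^j/F^{j+1}$.

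The main obstacle is this linear-algebra question. Passing to an algebraic closure if needed (coinvariants commute with flat base change in characteristic zero), the isotypic decomposition $W = \bigoplus_{\chi \in \hH} W_\chi$ from Lemma \ref{lemma:intermediatecover} and the $H$-equivariance of $\Romega{H}$ force $\Romega{H}(W_\chi, W_{\chi'}) = 0$ unless $\chi' = \chi^{-1}$, and $\Romega{H}(x,y) = \omega_H(x,y) \cdot e_{\chi^{-1}}$ for $x \in W_\chi, y \in W_{\chi^{-1}}$, where the idempotents $e_{\chi^{-1}}$ form a $\bbk$-basis of $\bbk[H]$. The problem therefore decouples across $\chi$ into: find $u^{\chi^{-1}} \in W_{\chi^{-1}}$ with $\omega_H(w_i^\chi, u^{\chi^{-1}}) = 0$ for $i \in S^c$ and $\omega_H(w_{i_0}^\chi, u^{\chi^{-1}})$ arbitrary. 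Non-degeneracy of $\omega_H$ on $W_\chi \times W_{\chi^{-1}}$, a consequence of Poincar\'e--Lefschetz duality on $S_H$, reduces solvability to $w_{i_0}^\chi$ lying outside $\Span_{\bbk}\{w_i^\chi\}_{i \in S^c}$. Choosing $v_{i_0}$ generically arranges this, provided $\dim W_\chi > |S^c| \leq r - 1$ for every $\chi \in \hH$. The hypothesis $g \geq h + r$ guarantees this via the dimension estimates for $\fH_{g,p}^b(\chi)$ coming from Lemma \ref{lemma:capboundary} and Looijenga's computation of Prym isotypic components: each $W_\chi$ has dimension on the order of $2(g-h)$, comfortably exceeding $r$.
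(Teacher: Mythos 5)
Your overall strategy is the same as the paper's: realize the kernel of $\rho_1^{\otimes r}$ inside the augmentation ideal by applying point-pushing elements via Lemma \ref{lemma:pointpushreidemeister}, and absorb the passage to the finite-index subgroup $G$ using characteristic zero. The mechanism differs, though: the paper writes down explicit \emph{integral} generators of the kernel adapted to a subsurface decomposition and constructs the auxiliary classes $\vec{a},\vec{b}$ geometrically on a genus-one piece disjoint from a subsurface carrying all the other tensor factors, so that a single honest point-pushing element $\gamma$ with $\rho_2(\gamma)=\rho_1(\vec{b})$ (and its powers lying in $G$) produces each generator on the nose, with no filtration needed. You instead use the filtration by the number of $K$-factors together with $\bbk$-linear combinations of $(\gamma-1)$'s (legitimate, since the $\bbk$-span of $\rho_2(G)$ is all of $\fH_{g,p}^b(H;\bbk)$ in characteristic zero) and solve the orthogonality constraints by linear algebra in the $\hH$-isotypic decomposition.

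There is, however, a genuine error in your key solvability step. The intersection form $\omega_H$ on $W=\HH_1(S_H;\bbk)$ is \emph{not} nondegenerate on the isotypic pieces: since $p+b\geq 1$, the cover $S_H$ has nonempty boundary/punctures (in fact $|H|(p+b)$ of them), and Poincar\'e--Lefschetz duality pairs $\HH_1(S_H)$ with $\HH_1(S_H,\partial S_H)$, not with itself. The radical of $\omega_H$ is spanned by the boundary classes, and because the deck group permutes these boundary components freely, the radical meets every isotypic component $W_\chi$ (in dimension $p+b$ for each nontrivial $\chi$). Consequently your criterion ``solvable if and only if $w_{i_0}^\chi$ lies outside the $\bbk$-span of the $w_i^\chi$'' is false as stated, and the dimension estimate ``$\dim W_\chi$ on the order of $2(g-h)$'' is also off: for nontrivial $\chi$ one has $\dim W_\chi = 2g-2+p+b$, of which only $2g-2$ survives modulo the radical. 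The step is repairable: what you need is that the solution space of the at most $r-1$ linear conditions on $u^{\chi^{-1}}$ is not contained in the radical, which holds as soon as $2g-2>r-1$ for nontrivial characters (these occur only when $h\geq 1$, so $g\geq h+r\geq r+1$) and $2g>r-1$ for the trivial character; so the hypothesis $g\geq h+r$ still suffices. But as written, the justification of this crucial step fails, and it is precisely the point the paper handles differently, by choosing $\vec{a},\vec{b}$ as integral classes on a handle disjoint from everything else so that all the required Reidemeister pairings vanish or equal $1$ by construction.
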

\begin{proof}
The representations (and hence the lemma) are trivial if $r=0$, so we can assume that $r \geq 1$.  Let $\rho_1\colon \fH_{g,p+1}^b(H;\bbk) \rightarrow \fH_{g,p}^b(H;\bbk)$
be the map induced by filling in $x_0$.  The map 
$\rho_1^{\otimes r}\colon \fH_{g,p+1}^b(H;\bbk)^{\otimes r} \rightarrow \fH_{g,p}^b(H;\bbk)^{\otimes r}$
is surjective and factors through the $G$-coinvariants.  What we must show is that
all elements of the kernel of $\rho_1^{\otimes r}$ die in the $G$-coinvariants.  We divide
the proof of this into three steps.

\begin{step}{1}
We find generators for the kernel of 
$\rho_1^{\otimes r}\colon \fH_{g,p+1}^b(H;\bbk)^{\otimes r} \rightarrow \fH_{g,p}^b(H;\bbk)^{\otimes r}$.
\end{step}

This requires carefully constructing the relevant covers.
Let $T$ be a subsurface of $\Sigma_{g,p+1}^b$ with the following two properties:
\begin{itemize}
\item $T \cong \Sigma_{h,p}^{b+1}$ and does not contain the puncture $x_0$, and
\item $H$ is contained in the image of the map $\HH_1(T;\Z/\ell) \rightarrow \HH_1(\Sigma_{g,p+1}^b;\Z/\ell)$.
\end{itemize}
It follows that $T' = \Sigma_{g,p+1}^b \setminus \Interior(T)$ satisfies
$T' \cong \Sigma_{g-h,1}^1$.  Let $T''$ be a subsurface of $T'$ with $T'' \cong \Sigma_{g-h}^1$.  See the following figure, which depicts the surface $\Sigma_{g,p+1}^b = \Sigma_{8,4}^2$ with $h=5$:\\
\centerline{\psfig{file=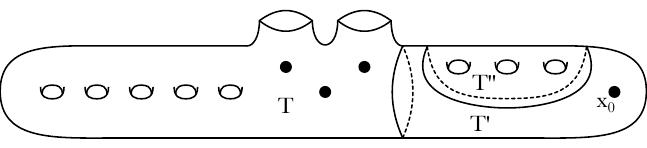,scale=1}}

Let $\pi\colon S'_H \rightarrow \Sigma_{g,p+1}^b$ be the cover used to define $\fH_{g,p+1}^b(H)$, and
let $\tT = \pi^{-1}(T)$ and $\tT' = \pi^{-1}(T')$.  Both $\tT \rightarrow T$ and $\tT' \rightarrow T'$
are finite regular covers with deck group $H$.  The second condition above implies that
$\tT$ is connected and that $\tT'$ is the disjoint union of $|H|$ components
each of which projects homeomorphically
to $T'$.  Letting $\tT'_0$ be one of these components, we have\footnote{Here the reader should think that the $d$
used to denote elements of $H$ stands for ``deck group''.}
\[\tT' = \bigsqcup_{d \in H} d \tT'_0.\]
Let $\tT''_0$ be the component of $\pi^{-1}(T'')$ lying in $\tT'_0$.  Both $\HH_1(\tT;\bbk)$ and
$\HH_1(\tT_0'';\bbk)$ inject into $\HH_1(S'_H;\bbk)$, and
\[\fH_{g,p+1}^b(H;\bbk) = \HH_1(S'_H;\bbk) = \HH_1(\tT;\bbk) \oplus \bigoplus_{d \in H} d \HH_1(\tT_0'';\bbk).\]
It follows that the $\bbk$-module $\fH_{g,p+1}^b(H;\bbk)^{\otimes r}$ is spanned 
by elements of the form
$\vec{v}_1 \otimes \cdots \otimes \vec{v}_r$, where each $\vec{v}_i$ lies in either
$\HH_1(\tT;\Z)$ or in $d \HH_1(\tT_0'';\Z)$ for some $d \in H$.  We emphasize
that the $\vec{v}_i$ are integral classes

Let $\zeta$ be the homology class of a loop around the puncture in $\tT'_0$, oriented such that the puncture lies to its right.
Note that $\zeta \in \HH_1(\tT;\Z)$; indeed, $\zeta$ is homologous to one of the boundary components
of $\tT$.  More generally, for $d \in H$ we have $d \zeta \in \HH_1(\tT;\Z)$.
To construct the cover $S_{H} \rightarrow \Sigma_{g,p}^b$ used to define $\fH_{g,p}^b(H;\bbk)$, you delete
the puncture lying in $d \tT'_0$ for each $d \in H$.  Letting $S_H \rightarrow \Sigma_{g,p}^b$ be the cover
used to define $\fH_{g,p}^b(H)$, it follows that the kernel of 
\[\rho_1\colon \fH_{g,p+1}^b(H;\bbk) = \HH_1(S'_H;\bbk) \rightarrow \HH_1(S_{H};\bbk) = \fH_{g,p}^b(H;\bbk)\]
is generated by the $d \zeta$ for $d \in H$.  Taking
the $r^{\text{th}}$ tensor power, we deduce that the kernel of the map
$\rho_1^{\otimes r}\colon \fH_{g,p+1}^b(H;\bbk)^{\otimes r} \rightarrow \fH_{g,p}^b(H;\bbk)^{\otimes r}$ is generated by elements of the form
$\vec{v}_1 \otimes \cdots \otimes \vec{v}_r$, where the $\vec{v}_i$ satisfy the following:
\begin{itemize}
\item Each $\vec{v}_i$ lies in either $\HH_1(\tT;\Z)$ or in $d \HH_1(\tT_0'';\Z)$ for some $d \in H$.
\item At least one of the $\vec{v}_i$ equals $d \zeta$ for some $d \in H$.
\end{itemize}
To prove the lemma, we must show that such elements die in the $G$-coinvariants.  

\begin{step}{2}
Consider one of the generators $\vec{v}_1 \otimes \cdots \otimes \vec{v}_r \in \fH_{g,p+1}^b(H;\bbk)^{\otimes r}$ for the kernel of $\rho_1^{\otimes r}$ identified in
Step 1, so the following hold:
\begin{itemize}
\item Each $\vec{v}_i$ lies in either $\HH_1(\tT;\Z)$ or in $d \HH_1(\tT_0'';\Z)$ for some $d \in H$.
\item At least one of the $\vec{v}_i$ equals $d \zeta$ for some $d \in H$.
\end{itemize}
Let $\Romega{H}(-,-)$ be the Reidemeister pairing on $\fH_{g,p}^b(H;\bbk)$.
We construct elements $\vec{a},\vec{b} \in \fH_{g,p+1}^b(H;\Z)$ such that the following hold.
\begin{itemize}
\item $\Romega{H}(\rho_1(\vec{a}),\rho_1(\vec{b})) = 1$.
\item $\Romega{H}(\rho_1(\vec{v}_i),\rho_1(\vec{b})) = 0$ for $1 \leq i \leq r$.
\end{itemize}
\end{step}

Let $\omega_{H}(-,-)$ be the algebraic intersection pairing on $\fH_{g,p+1}^b(H;\bbk)$.  The conditions above
on $\vec{a}$ and $\vec{b}$ are equivalent to the following:\footnote{In this formula, the element $d = 0$ is the identity in $H$,
so for this $d$ we have $d \vec{b} = \vec{b}$.  In other words, do not confuse $0 \in H$ with $0 \in \bbk$.}
\begin{itemize}
\item For all $d \in H$, we have 
\[\omega_H(\vec{a},d \vec{b}) = \begin{cases}
1 & \text{if $d = 0$},\\
0 & \text{if $d \neq 0$}.
\end{cases}\]
\item For all $d \in H$ and $1 \leq i \leq r$, we have $\omega_H(\vec{v}_i,d \vec{b}) = 0$.
\end{itemize}
This is the form in which we will verify them.

Let $\vec{w}_1,\ldots,\vec{w}_s \in \HH_1(\tT_0'';\Z)$ and $d_1,\ldots,d_s \in H$ be such
that the elements of $\{\vec{v}_1,\ldots,\vec{v}_r\}$ that lie in some $H$-translate of $\HH_1(\tT_0'';\Z)$
are precisely $\{d_1 \vec{w}_1,\ldots,d_s \vec{w}_s\}$.  
Since at least one of the $\vec{v}_i$ is of the form $d \zeta$ for some $d \in H$ and
thus does not lie in some $H$-translate of $\HH_1(\tT_0'';\Z)$, 
we have $s \leq r-1$.  Since $g \geq h+r$ this implies that $s < g-h$.  Recalling that
$\tT_0'' \cong T'' \cong \Sigma_{g-h}^1$, we can thus find\footnote{This is standard.  One source that
proves something equivalent is \cite[Proposition 3.4]{PutmanPartialTorelli}.  
This is where it is important that we are working with integral classes.}
a subsurface $\tT_0'''$ of $\tT_0''$ such that the following hold. 
\begin{itemize}
\item Each $\vec{w}_i$ lies in $\HH_1(\tT_0''';\Z)$.
\item $\tT_0''' \cong \Sigma_s^1$.
\end{itemize}
Since $\tT_0''$ has genus $g-h$ and $\tT_0'''$ has genus $s$ and $g-h > s$, we can find
$\vec{a} \in \HH_1(\tT_0'';\Z)$ and $\vec{b} \in \HH_1(\tT_0'';\Z)$ with the following two properties:
\begin{itemize}
\item $\omega_H(\vec{a},\vec{b}) = 1$.
\item For all $z \in \HH_1(\tT_0''';\bbk)$ we have
 $\omega_H(z,\vec{a}) = \omega_H(z,\vec{b}) = 0$.  In particular,
$\omega_H(\vec{w}_i,\vec{b}) = 0$ for all $1 \leq i \leq s$.
\end{itemize}
See here:\\
\centerline{\psfig{file=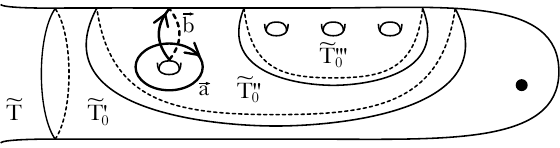,scale=1}}
Since each $\vec{v}_j$ either lies in $\HH_1(\tT;\Z)$ or is of the form
$d_i \vec{w}_i$ with $d_i \in H$, the second condition above implies that
$\omega_H(\vec{v}_i,d \vec{b}) = 0$ for all $d \in H$ and $1 \leq i \leq r$, as desired.

\begin{step}{3}
Consider one of the generators $\vec{v}_1 \otimes \cdots \otimes \vec{v}_r \in \fH_{g,p+1}^b(H;\bbk)^{\otimes r}$ for the kernel of $\rho_1^{\otimes r}$ identified in
Step 1, so the following hold:
\begin{itemize}
\item Each $\vec{v}_i$ lies in either $\HH_1(\tT;\Z)$ or in $d \HH_1(\tT_0'';\Z)$ for some $d \in H$.
\item At least one of the $\vec{v}_i$ equals $d \zeta$ for some $d \in H$.
\end{itemize}
We prove that $\vec{v}_1 \otimes \cdots \otimes \vec{v}_r$ dies in the $G$-coinvariants, where we recall that $G$ is a finite-index subgroup of $\PP_{x_0}(\Sigma_{g,p}^b,H)$.
\end{step}

To simplify our notation, we will give the details for when
$\vec{v}_1 = d_0 \zeta$ for some $d_0 \in H$.  The other cases are handled similarly.  By the previous step,
we can find $\vec{a},\vec{b} \in \fH_{g,p+1}^b(H;\Z)$ such that the following hold:
\begin{itemize}
\item $\Romega{H}(\rho_1(\vec{a}),\rho_1(\vec{b})) = 1$.
\item $\Romega{H}(\rho_1(\vec{v}_i),\rho_1(\vec{b})) = 0$ for $1 \leq i \leq r$.
\end{itemize}
Since $\vec{b}$ is an integral homology class, we can pick $\gamma \in \PP_{x_0}(\Sigma_{g,p}^b,H)$ that projects to $\rho_1(\vec{b})$ under the map
\[\rho_2\colon \PP_{x_0}(\Sigma_{g,p}^b,H) \cong \pi_1(S_{H}) \longrightarrow \HH_1(S_{H};\bbk) = \fH_{g,p}^b(H;\bbk),\]
where the $\cong$ uses the basepoint on $S_{H}$ that is surrounded by the loop in whose homology class
$\zeta \in \fH_{g,p+1}^b(H;\bbk)$ is.  Define
\[\kappa = \left(d_0 \vec{a} \right) \otimes \vec{v}_2 \otimes \cdots \otimes \vec{v}_r \in \fH_{g,p+1}^b(H;\bbk)^{\otimes r}.\]
Using Lemma \ref{lemma:pointpushreidemeister}, we have
\begin{align*}
\gamma\left(\kappa\right) &= \gamma\left(d_0 \vec{a}\right) \otimes \gamma\left(\vec{v}_2\right) \otimes \cdots \otimes \gamma\left(\vec{v}_r\right) \\
                          &= \left(d_0 \vec{a} + \Romega{H}\left(\rho_1(d_0 \vec{a}), \rho_2(\gamma) \right) \zeta\right)
                             \otimes 
                             \left(\vec{v}_2 + \Romega{H}\left(\rho_1(\vec{v}_2), \rho_2(\gamma) \right) \zeta\right)\\
                          &\quad\quad\quad   \otimes \cdots \otimes
                             \left(\vec{v}_r + \Romega{H}\left(\rho_1(\vec{v}_r), \rho_2(\gamma) \right) \zeta\right) \\
                          &= \left(d_0 \vec{a} + d_0 \zeta\right) \otimes \vec{v}_2 \otimes \cdots \otimes \vec{v}_r.
\end{align*}
Iterating this, we see that for all $m \geq 1$ we have
\[\gamma^m\left(\kappa\right) = \left(d_0 \vec{a} + m d_0 \zeta\right) \otimes \vec{v}_2 \otimes \cdots \otimes \vec{v}_r,\]
and thus
\[\gamma^m\left(\kappa\right) - \kappa = m \left(d_0 \zeta\right) \otimes \vec{v}_2 \otimes \cdots \otimes \vec{v}_r = m \vec{v}_1 \otimes \cdots \otimes \vec{v}_r.\]
Since $G$ is a finite-index subgroup of $\PP_{x_0}(\Sigma_{g,p}^b,H)$, we can pick $m \geq 1$ such that $\gamma^m \in G$, so
$m \vec{v}_1 \otimes \cdots \otimes \vec{v}_r$ dies in the $G$-coinvariants.
  Since $\bbk$ is a field of characteristic $0$, the element 
$\vec{v}_1 \otimes \cdots \otimes \vec{v}_r$ also dies
in the $G$-coinvariants, as desired.
\end{proof}

This has the following corollary:

\begin{corollary}
\label{corollary:pushtensorpowerschi}
Fix some $g,p,b \geq 0$ such that $\pi_1(\Sigma_{g,p}^b)$ is nonabelian and $p+b \geq 1$, and let
$x_0$ be a puncture of $\Sigma_{g,p+1}^b$.  Let $\ell \geq 2$ and let
$\fH_{g,p+1}^b(\uchi)$ be a homological representation of $\Mod_{g,p+1}^b(\ell)$
of size $r$ that is compatible with a genus-$h$ symplectic subgroup $H$ of $\HH_1(\Sigma_{g,p+1}^b;\Z/\ell)$.
Assume that $g \geq h+r$.  Then for all finite-index subgroups
$G$ of $\PP_{x_0}(\Sigma_{g,p}^b,H)$, we have
\[\left(\fH_{g,p+1}^b\left(\uchi\right)\right)_G \cong \fH_{g,p}^b\left(\uchi\right).\]
\end{corollary}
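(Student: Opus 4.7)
The plan is to deduce Corollary \ref{corollary:pushtensorpowerschi} directly from Lemma \ref{lemma:pushtensorpowers} by tracking how everything decomposes under the deck group $H$. First, Lemma \ref{lemma:intermediatecover} identifies each $\fH_{g,p+1}^b(\chi_i)$ with the $\C_{\chi_i}$-isotypic component of $\fH_{g,p+1}^b(H;\C)$ for the deck group $H$ (and similarly for $\fH_{g,p}^b(\chi_i) \subset \fH_{g,p}^b(H;\C)$), so
\[
\fH_{g,p+1}^b(\uchi) = \fH_{g,p+1}^b(\chi_1) \otimes \cdots \otimes \fH_{g,p+1}^b(\chi_r)
\]
appears as the $(\chi_1,\ldots,\chi_r)$-isotypic summand of $\fH_{g,p+1}^b(H;\C)^{\otimes r}$ under the natural action of $H^r$, where the $i$-th copy of $H$ acts on the $i$-th tensor factor by deck transformations. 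The analogous decomposition holds for $\fH_{g,p}^b(H;\C)^{\otimes r}$.

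Next, I would verify that the action of $\Mod_{g,p+1}^b(H)$ on $\fH_{g,p+1}^b(H;\C)$ commutes with the deck group action. The key point is that any lift $\phi\colon S'_H \to S'_H$ of a mapping class $f \in \Mod_{g,p+1}^b(H)$ conjugates deck transformations via the induced action of $f$ on $H \subset \HH_1(\Sigma_{g,p+1}^b;\Z/\ell)$, which by definition of $\Mod_{g,p+1}^b(H)$ is trivial. Consequently, the action of $G \subset \PP_{x_0}(\Sigma_{g,p}^b,H) \subset \Mod_{g,p+1}^b(H)$ on $\fH_{g,p+1}^b(H;\C)^{\otimes r}$ preserves the $H^r$-isotypic decomposition, and the same argument applies on $\Sigma_{g,p}^b$. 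Moreover, the map $\rho_1\colon \fH_{g,p+1}^b(H;\C) \to \fH_{g,p}^b(H;\C)$ induced by filling in $x_0$ is itself $H$-equivariant (since it comes from a map of covers compatible with deck transformations), so $\rho_1^{\otimes r}$ respects the $H^r$-grading.

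Now apply Lemma \ref{lemma:pushtensorpowers} with $\bbk = \C$: the hypothesis $g \geq h+r$ gives an isomorphism
\[
\left(\fH_{g,p+1}^b(H;\C)^{\otimes r}\right)_G \xrightarrow{\;\cong\;} \fH_{g,p}^b(H;\C)^{\otimes r}
\]
induced by $\rho_1^{\otimes r}$. Since taking $G$-coinvariants commutes with the $H^r$-isotypic decomposition (both operations commute with each other because $G$ and $H^r$ actions commute) and the isomorphism is $H^r$-equivariant, we may restrict to the $(\chi_1,\ldots,\chi_r)$-isotypic summand on both sides to obtain
\[
\left(\fH_{g,p+1}^b(\uchi)\right)_G \cong \fH_{g,p}^b(\uchi),
\]
which is the desired conclusion.

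The real work is all absorbed into Lemma \ref{lemma:pushtensorpowers}; the only thing to check carefully here is the commutativity of the $G$- and $H$-actions, which reduces to the fact that $\Mod_{g,p+1}^b(H)$ acts trivially on $H$ by definition. Thus no new geometric input is required, and the corollary is essentially a formal consequence of the lemma together with the isotypic decomposition supplied by Lemma \ref{lemma:intermediatecover}.
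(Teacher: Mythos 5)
Your proposal is correct and follows essentially the same route as the paper: decompose both sides into isotypic pieces via Lemma \ref{lemma:intermediatecover}, observe that the $G$-action and the map $\rho_1^{\otimes r}$ respect this decomposition, and then restrict the isomorphism of Lemma \ref{lemma:pushtensorpowers} (which is indeed induced by $\rho_1^{\otimes r}$, as its proof shows) to the summand $\fH_{g,p+1}^b(\uchi)$. Your extra verification that lifts of elements of $\Mod_{g,p+1}^b(H)$ commute with deck transformations is exactly the fact the paper uses in \S \ref{section:decompositionprym} and Corollary \ref{corollary:extendchi}, so nothing is missing.
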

\begin{proof}
Let $\rho_1\colon \fH_{g,p+1}^b(H;\C) \rightarrow \fH_{g,p}^b(H;\C)$ be the map induced by filling in $x_0$.
Lemma \ref{lemma:intermediatecover} gives decompositions
\[\fH_{g,p+1}^b(H;\C) = \bigoplus_{\chi \in \hH} \fH_{g,p+1}^b(\chi;\C) \quad \text{and} \quad \fH_{g,p}^b(H;\C) = \bigoplus_{\chi \in \hH} \fH_{g,p}^b(\chi;\C),\]
and $\rho_1$ respects these direct sum decompositions.  It follows that $\fH_{g,p+1}^b(H;\C)^{\otimes r}$ and $\fH_{g,p}^b(H;\C)^{\otimes r}$ are
direct sums of the different $H$-compatible size-$r$ homological representations of $\Mod_{g,p+1}^b(\ell)$ and $\Mod_{g,p}^b(\ell)$, respectively, and
$\rho_1^{\otimes r}$ respects these direct sum decompositions.  This reduces the corollary to 
Lemma \ref{lemma:pushtensorpowers}.
\end{proof}

\section{Stability for the partial mod-\texorpdfstring{$\ell$}{l} subgroups}
\label{section:partialstability}

In \cite{PutmanPartialTorelli}, the author proved a homological stability theorem that applies
to the partial level-$\ell$ subgroups.  In this section, we explain how to generalize this to
incorporate tensor powers of the partial Prym representations.
Our theorem is as follows.  Its statement uses the conventions from \S \ref{section:symplecticconventions}. 

\begin{theorem}
\label{theorem:stability}
Let $\iota\colon \Sigma_g^b \rightarrow \Sigma_{g'}^{b'}$ be an orientation-preserving embedding
between surfaces with nonempty boundary.  For some $\ell \geq 2$, let $H$ be a genus-$h$ symplectic
subgroup of $\HH_1(\Sigma_g^b;\Z/\ell)$.  Fix some $k,r \geq 0$, and assume that $g \geq (2h+2)(k+r)+(4h+2)$.  Then
for all commutative rings $\bbk$ the induced map
\[\HH_k(\Mod_g^b(H);\fH_g^b(H;\bbk)^{\otimes r}) \rightarrow \HH_k(\Mod_{g'}^{b'}(H);\fH_{g'}^{b'}(H;\bbk)^{\otimes r})\]
is an isomorphism.
\end{theorem}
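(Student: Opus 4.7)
The plan is to extend the $r=0$ argument of \cite{PutmanPartialTorelli} by feeding the tensor power of the partial Prym coefficient system of \S\ref{section:coefficientsystem} into the twisted homological stability machine of \S\ref{section:stabilitymachine}. The input complex is $\bbX = \bbTT_{g+1}^1(I,H)$ and the equivariant augmented coefficient system is $\cM = (\cH_{g+1}^1(H;\bbk))^{\otimes r}$, whose value at the empty simplex is the target representation $\fH_{g+1}^1(H;\bbk)^{\otimes r}$.

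First I would reduce to the elementary stabilization $\Sigma_g^1 \hookrightarrow \Sigma_{g+1}^1$. Any orientation-preserving embedding $\Sigma_g^b \hookrightarrow \Sigma_{g'}^{b'}$ between surfaces with nonempty boundary factors through handle attachments and boundary-component identifications; Proposition \ref{proposition:cappartial} combined with Corollary \ref{corollary:hatcap} (the relevant boundary Dehn twist acts on the Prym representation via a deck transformation, hence trivially on the tensor power after suitable bookkeeping) shows that the identification move is invisible to the twisted homology in question. So the theorem reduces to the claim that $\HH_k(\Mod_g^1(H); \fH_g^1(H;\bbk)^{\otimes r}) \to \HH_k(\Mod_{g+1}^1(H); \fH_{g+1}^1(H;\bbk)^{\otimes r})$ is an isomorphism whenever $g$ lies in the stable range, which we then prove by induction on $g$.

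For the inductive step I would verify the hypotheses of Proposition \ref{proposition:stabilitymachine}. Lemma \ref{lemma:hstrongpoly} gives that $\cH_{g+1}^1(H;\bbk)$ is strongly polynomial of degree $1$, and since each value $\cH_{g+1}^1(H;\bbk)(\sigma) = \HH_1(\tX_\sigma;\bbk)$ is a free $\bbk$-module, Lemma \ref{lemma:tensorpolynomial} upgrades this to strong polynomiality of degree $r$ for $\cM$. Corollary \ref{corollary:tetheredtoricm} combined with Theorem \ref{theorem:vanishing} then yields $\RH_i(\bbX;\cM)=0$ in the required range, and Corollary \ref{corollary:highconnectivityquotient} supplies the analogous vanishing for $\bbX/\Mod_{g+1}^1(H)$, using that $\Mod_{g+1}^1(\ell) \subset \Mod_{g+1}^1(H)$ share orbits on simplices. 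For a $p$-simplex $\sigma$, the cut surface $X_\sigma$ is a copy of $\Sigma_{g-p}^1$ into which $H$ embeds symplectically (using the conventions of \S\ref{section:symplecticconventions}), and by construction $\cM(\sigma) \cong \fH_{g-p}^1(H;\bbk)^{\otimes r}$; the stabilizer of $\sigma$ projects to $\Mod_{g-p}^1(H)$ with kernel generated by boundary Dehn twists, again killed homologically by Corollary \ref{corollary:hatcap}. The inductive hypothesis then verifies hypothesis (iii), and the machine yields surjectivity. A standard follow-up argument running the same complex in degree $k+1$ and invoking the full spectral sequence \cite[Theorem 5.6]{PutmanTwistedStability} underlying Proposition \ref{proposition:stabilitymachine} upgrades surjectivity to injectivity.

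The hardest part will be the honest identification of simplex stabilizers with $\Mod_{g-p}^1(H)$ modulo the harmless boundary-twist piece, together with the identification of $\cM(\sigma)$ with the Prym representation $\fH_{g-p}^1(H;\bbk)^{\otimes r}$ of the cut surface in a manner compatible with these actions and with the inductive hypothesis; this requires tracking how the ambient cover $S_H \to \Sigma_{g+1}^1$ restricts to a cover of $X_\sigma$ isomorphic to the one defining the partial Prym representation of $\Sigma_{g-p}^1$, and then verifying that the resulting actions match. The numerical bound $g \geq (2h+2)(k+r)+(4h+2)$ drops out of the interplay between the connectivity slope $\frac{1}{2h+2}$ from Theorem \ref{theorem:tetheredtoricon} and the polynomial degree $r$ in Theorem \ref{theorem:vanishing}, once the stabilizers are pinned down.
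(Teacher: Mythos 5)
Your proposal diverges from the paper's actual proof of Theorem \ref{theorem:stability}. The paper does not run the machine on the tethered-torus complex at all: it proves the $r=0$ case by quoting \cite[Theorem F]{PutmanPartialTorelli}, and for $r\geq 1$ it follows that proof word-for-word --- the reduction in \cite[\S 5.2--5.4]{PutmanPartialTorelli} to ``double boundary stabilizations'' (which uses no machine), and then \cite[\S 6.8]{PutmanPartialTorelli} applied to the complex of order-preserving double-tethered vanishing loops, substituting the \emph{full} twisted stability theorem \cite[Theorem 5.2]{PutmanTwistedStability} and equipping that complex with a coefficient system defined exactly as in \S \ref{section:coefficientsystem}, shown strongly polynomial as in Lemma \ref{lemma:hstrongpoly} and Lemma \ref{lemma:tensorpolynomial}. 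You instead propose to feed $\bbTT_{g+1}^1(I,H)$ with $G=\Mod_{g+1}^1(H)$ into Proposition \ref{proposition:stabilitymachine}. That is a genuinely different route, and as written it has real gaps.

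The central gap is the isomorphism statement itself. Proposition \ref{proposition:stabilitymachine} is only the surjectivity fragment of the machine, and your ``standard follow-up argument running the same complex in degree $k+1$'' is not a proof of injectivity: getting an isomorphism for a single stabilization map is exactly what \cite[Theorem 5.2]{PutmanTwistedStability} provides, and its hypotheses go well beyond (i)--(iii) of Proposition \ref{proposition:stabilitymachine} --- one needs a compatible family of complexes and stabilization maps across the genus range, with simplex stabilizers and conjugation data satisfying the machine's axioms. Verifying those axioms is the hard technical content, and it is precisely what \cite[\S 6]{PutmanPartialTorelli} does for the order-preserving double-tethered vanishing loop complex (this is why that more elaborate complex is used rather than $\bbTT$); you have not verified any of it for $\bbTT_{g+1}^1(I,H)$ acted on by $\Mod_{g+1}^1(H)$. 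Two further problems: your justification of hypothesis (ii) is false, since $\Mod_{g+1}^1(\ell)$ and $\Mod_{g+1}^1(H)$ emphatically do not share orbits on $\bbTT$ (Lemma \ref{lemma:identifyquotient} identifies the $\Mod(\ell)$-quotient with $\bbSB(H^{\perp})$, whereas $\Mod(H)$ is transitive on vertices), so Corollary \ref{corollary:highconnectivityquotient} says nothing about $\bbTT/\Mod_{g+1}^1(H)$ and a separate transitivity/connectivity argument would be needed; and the reduction of a general embedding $\Sigma_g^b\hookrightarrow\Sigma_{g'}^{b'}$ to the single stabilization is glossed --- Corollary \ref{corollary:hatcap} concerns $\hMod_g^2(\ell)$ versus $\Mod_g^2(\ell)$, not the groups $\Mod(H)$, and capping a boundary component changes the Prym representation itself ($\fH_g^{b+1}(H;\bbk)\to\fH_g^b(H;\bbk)$ has a kernel), so this step requires genuine work (compare the Gysin/Birman analysis in Step \ref{step:capboundary} of the proof of Theorem \ref{maintheorem:generalcase}, which moreover relies on statements of the kind you are trying to prove, so circularity must be avoided); in the paper this reduction is the content of \cite[\S 5.2--5.4]{PutmanPartialTorelli}.
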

\begin{proof}
For $r=0$, this just asserts that the map
\[\HH_k(\Mod_g^b(H);\bbk) \rightarrow \HH_k(\Mod_{g'}^{b'}(H);\bbk)\]
is an isomorphism if $g \geq (2h+2)k+(4h+2)$, which is a special case of \cite[Theorem F]{PutmanPartialTorelli}.  To connect
our notation to that of \cite[Theorem F]{PutmanPartialTorelli}, we make the following remarks:
\begin{itemize}
\item First, the statement of \cite[Theorem F]{PutmanPartialTorelli} involves partitions $\cP$ and $\cP'$ of the components
of $\partial \Sigma_g^b$ and $\partial \Sigma_{g'}^{b'}$, respectively.  
Our result corresponds to the partition where all components of the boundary lie in a single partition
element.  With this convention, the map $(\Sigma_g^b,\cP) \rightarrow (\Sigma_{g'}^{b'},\cP')$ is a
``${\tt PSurf}$-morphism'', and the fact that $\Sigma_{g'}^{b'}$ has nonempty boundary implies that it
is ``partition bijective''.  Every time we refer to something in \cite{PutmanPartialTorelli} in
this proof, we implicitly use this choice of partition.
\item The statement of \cite[Theorem F]{PutmanPartialTorelli} also refers to an $A$-homology marking
$\mu\colon \HH_1^{\cP}(\Sigma_g^b) \rightarrow A$.  With the choice of partition from the previous bullet point,
we have $\HH_1^{\cP}(\Sigma_g^b) = \HH_1(\Sigma_g^b,\partial \Sigma_g^b)$.  Our marking has $A = H$, and
is the homomorphism $\mu\colon \HH_1(\Sigma_g^b,\partial \Sigma_g^b) \rightarrow H$ that equals the composition
\[\HH_1(\Sigma_g^b,\partial \Sigma_g^b) \cong \HH_1(\Sigma_g^b) \longrightarrow \HH_1(\Sigma_g^b;\Z/\ell) = H \oplus H^{\perp} \stackrel{\text{proj}}{\longrightarrow} H.\]
Here the first map comes from Poincar\'{e} duality.  With this marking, in the notation of
\cite[Theorem F]{PutmanPartialTorelli} we have
\[\Torelli(\Sigma_g^b,\cP,\mu) = \Mod_g^b(H).\]
The marking $\mu'$ on $\Sigma_{g'}^{b'}$ in \cite[Theorem F]{PutmanPartialTorelli}
is defined similarly.  The fact that $H$ is a symplectic subgroup implies that
our marking is ``supported on a symplectic subsurface''.
\end{itemize}
When $r \geq 1$, our theorem can be proven by following the proof of \cite[Theorem F]{PutmanPartialTorelli} word-for-word,
substituting the twisted homological theorem \cite[Theorem 5.2]{PutmanTwistedStability} for the ordinary
homological stability theorem, which appears as \cite[Theorem 3.1]{PutmanPartialTorelli}.

We briefly discuss some of the details of this.  We remark that the proof structure here is
inspired by a beautiful approach to homological stability for the whole mapping class
group due to Hatcher--Vogtmann \cite{HatcherVogtmannTethers}.  The proof of \cite[Theorem F]{PutmanPartialTorelli} has
two parts.  The first appears in \cite[\S 5.2-5.4]{PutmanPartialTorelli}.  These sections 
reduce the proof to what are called ``double boundary stabilizations'', i.e., where the
map $\Sigma_{g}^b \rightarrow \Sigma_{g'}^{b'}$ is as pictured here:\\
\centerline{\psfig{file=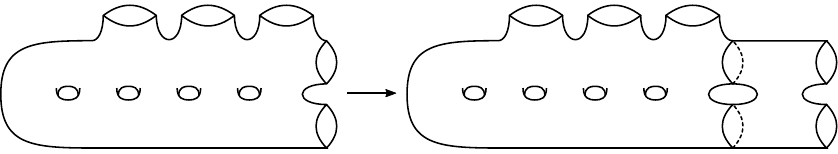,scale=1}}
This reduction does not use the homological stability machine, and no changes are needed
for the twisted version of it.

The double boundary stabilizations are handled in \cite[\S 6.8]{PutmanPartialTorelli} using
the homological stability machine.  This requires a semisimplicial set\footnote{Actually, in
the language of \S \ref{section:semisimplicial} it is an ordered simplicial complex} called
the ``complex of order-preserving double-tethered vanishing loops''.  We refer
to \cite[\S 6]{PutmanPartialTorelli} for the lengthy definition of this.  The changes that
need to be made here are as follows:
\begin{itemize}
\item As we said, the twisted homological stability theorem \cite[Theorem 5.2]{PutmanTwistedStability}
should be substituted for the ordinary homological stability theorem \cite[Theorem 3.1]{PutmanPartialTorelli}.
\item This requires constructing $\Mod_g^b(H)$-equivariant augmented coefficient systems $\cM_g^b(H;\bbk)$
on the complex of order-preserving double-tethered vanishing loops with
\[\cM_g^b(H;\bbk)\emptysimp = \fH_g^b(H;\bbk).\]
The definition of $\cM_g^b(H)$
is identical to the definition of the coefficient system $\cH_g^1(H;\bbk)$ we discussed in \S \ref{section:coefficientsystem},
and the proof that it is strongly polynomial of degree $1$ is essentially identical to the proof
of Lemma \ref{lemma:hstrongpoly}.  Using Lemma \ref{lemma:tensorpolynomial}, its tensor
power $\cM_g^b(H;\bbk)^{\otimes r}$ is strongly polynomial of degree $r$.
\item This allows you to apply Theorem \ref{theorem:vanishing} above (which is
\cite[Theorem 6.4]{PutmanTwistedStability}) to $\cM_g^b(H;\bbk)^{\otimes r}$ and
deduce that the homology of the complex of order-preserving double-tethered vanishing loops
with coefficients in $\cM_g^b(H;\bbk)^{\otimes r}$ vanishes in a range.  The needed Cohen--Macaulay
result is \cite[Theorem 6.13]{PutmanPartialTorelli}.
\item This verifies the one condition of \cite[Theorem 5.2]{PutmanTwistedStability} that
is different from \cite[Theorem 3.1]{PutmanPartialTorelli}.  The remainder of the proof
in \cite[\S 6.8]{PutmanPartialTorelli} needs no changes.\qedhere
\end{itemize}
\end{proof}

\section{Proof of main theorem for non-closed surfaces}
\label{section:proof}

We finally turn to proving our main theorems.  The following will be our main result, at
least for non-closed surfaces.  We will deal with closed surfaces later in \S \ref{section:closed}.

\begin{maintheorem}
\label{maintheorem:generalcase}
Let $g,p,b \geq 0$ and $\ell \geq 2$ be such that $p+b \geq 1$.
Let $\fH_{g,p}^b(\uchi)$ be a size-$r$ homological representation
of $\Mod_{g,p}^b(\ell)$ and let $H$ be a symplectic subgroup of $\HH_1(\Sigma_{g,p}^b(\ell);\Z/\ell)$
that is compatible with $\fH_{g,p}^b(\uchi)$.  Assume that $g \geq 2(k+r)^2+7k+6r+2$.  Then the map
\[\HH_k\left(\Mod_{g,p}^b\left(\ell\right);\fH_{g,p}^b(\uchi)\right) \rightarrow \HH_k\left(\Mod_{g,p}^b(H);\fH_{g,p}^b(\uchi)\right)\]
induced by the inclusion $\Mod_{g,p}^b(\ell) \hookrightarrow \Mod_{g,p}^b(H)$ is an isomorphism.
\end{maintheorem}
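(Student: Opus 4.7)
I will induct on $k$, treating all $r$ such that $g \geq 2(k+r)^2 + 7k + 6r + 2$ simultaneously within each inductive step. The base case $k=0$ reduces to a direct coinvariant calculation whose nontrivial content is captured by the Reidemeister pairing of \S\ref{section:reidemeister}. For the inductive step $k \geq 1$, the strategy combines a transfer-map reduction with the stability machine of Proposition \ref{proposition:stabilitymachine}.

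\textbf{Reduction to Dehn twist triviality and to the case of $\Sigma_g^1$.} Since $\Mod_{g,p}^b(\ell)$ is a finite-index normal subgroup of $\Mod_{g,p}^b(H)$ with quotient $Q$, Lemma \ref{lemma:transfer} identifies the target of our map with $\HH_k(\Mod_{g,p}^b(\ell); \fH_{g,p}^b(\uchi))_Q$, so it suffices to show $Q$ acts trivially on $\HH_k(\Mod_{g,p}^b(\ell); \fH_{g,p}^b(\uchi))$. Generalizing Lemma \ref{lemma:partialgen} to surfaces with punctures and multiple boundary components, $Q$ is generated by images of Dehn twists $T_\gamma$ for simple closed curves $\gamma$ with $[\gamma] \in H^\perp$, so the task reduces to showing that each such $T_\gamma$ acts trivially. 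Using the Birman and capping sequences (Theorems \ref{theorem:birmanpuncture}, \ref{theorem:birmanpartial} and Propositions \ref{proposition:cap}, \ref{proposition:cappartial}) together with Lemma \ref{lemma:capboundary} and Corollary \ref{corollary:pushtensorpowerschi} describing the behavior of tensor powers of Prym representations under filling punctures and capping boundaries, I reduce further to the case $(p,b) = (0,1)$, that is to $\Sigma_g^1$.

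\textbf{Applying the stability machine.} For $\Sigma_g^1$, apply Proposition \ref{proposition:stabilitymachine} to $G = \Mod_g^1(\ell)$ acting on $\bbTT_g^1(I, H)$, equipped with a $G$-equivariant augmented coefficient system $\cM$ whose value on the empty simplex is $\fH_g^1(\uchi)$. Construct $\cM$ as the $\uchi$-indexed $\hH$-isotypic summand of $\cH_g^1(H; \C)^{\otimes r}$. Hypothesis (i) follows from the vanishing theorem (Theorem \ref{theorem:vanishing}): Lemma \ref{lemma:hstrongpoly} gives that $\cH_g^1(H; \C)$ is strongly polynomial of degree $1$, so Lemma \ref{lemma:tensorpolynomial} yields that $\cM$ has polynomial degree at most $r$, and Corollary \ref{corollary:tetheredtoricm} supplies the required Cohen--Macaulay dimension. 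Hypothesis (ii) is Corollary \ref{corollary:highconnectivityquotient}. Hypothesis (iii) is handled by the inductive hypothesis in $k$, combined with Theorem \ref{theorem:stability} to match the simplex-stabilizer homologies with partial level-$\ell$ subgroups of smaller genus surfaces. The machine then yields surjectivity of
\[
\bigoplus_{v \in \bbTT_g^1(I,H)^0} \HH_k(G_v; \cM[v]) \twoheadrightarrow \HH_k(G; \fH_g^1(\uchi)).
\]
By the change of coordinates principle, within the $G$-orbit of any vertex $v$ one can find a tethered torus disjoint from $\gamma$, so $T_\gamma$ lies in the centralizer of such a $G_v$ and therefore acts as an inner automorphism on $\HH_k(G_v; \cM[v])$. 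Hence $T_\gamma$ acts trivially on the image of each summand, and by the surjection above on all of $\HH_k(G; \fH_g^1(\uchi))$.

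\textbf{Main obstacle.} The most delicate part is verifying hypothesis (iii). A vertex stabilizer in $\bbTT_g^1(I,H)$ is essentially a partial level-$\ell$ subgroup of $\Mod_{g-1}^1$, and the coefficient system $\cM[v]$ is a tensor product of a smaller partial Prym representation with the $\C[H]$-valued $\HH_1$ of the excised tethered torus, which effectively raises the tensor power to $r+1$ at vertices (and higher for higher-dimensional simplices). Matching these up cleanly requires threading the needle between the inductive hypothesis (applied at lower $k$ but possibly higher effective $r$), Theorem \ref{theorem:stability} (whose range $g \geq (2h+2)(k+r) + (4h+2)$ must be respected at every vertex), and the numerical bound $g \geq 2(k+r)^2 + 7k + 6r + 2$; the precise choice of exponent in that bound is dictated by the accumulated cost of each reduction step.
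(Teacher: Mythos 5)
There is a genuine gap, and it sits exactly at the point the paper's introduction flags as the fundamental difficulty: $\Mod_g^1(\ell)$ does \emph{not} act transitively on the vertices of $\bbTT_g^1(I,H)$ (the quotient is $\bbSB(H^{\perp})$ by Lemma \ref{lemma:identifyquotient}), so Proposition \ref{proposition:stabilitymachine} only gives a surjection from the direct sum over \emph{all} vertex orbits, and you must then prove that every orbit's summand has the same image before you can conclude that $T_{\gamma}$ acts trivially. Your disjointness argument does not do this, and as stated it is false. A vertex orbit is labelled by a pair of mod-$\ell$ classes $(a,b)$ orthogonal to $H$; if $\omega(a,[\gamma]) \neq 0$ then every tethered torus in that orbit meets $\gamma$ essentially, so no representative "disjoint from $\gamma$" exists. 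Worse, the logic is inverted even when disjointness holds: the stabilizer $G_v$ consists of mapping classes supported on the complementary subsurface $X_v \cong \Sigma_{g-1}^1$, which \emph{contains} $\gamma$ when the torus is disjoint from $\gamma$, so $T_{\gamma}$ does not centralize $G_v$. The configuration you actually need is $\gamma$ contained in the torus region (so that $T_{\gamma}$ commutes with everything supported on $X_v$), and that forces $[\gamma] \in \langle a,b\rangle$ mod $\ell$ — available for only one orbit type, not all of them. The paper closes this gap with its Claim 5.2: any two $H$-compatible embeddings are connected by a chain of moves through an explicit generating set of $\Mod_g^1(H)$, and the one nontrivial move ($j_1 = T_{\gamma_1}\circ j_0$) is handled via the subsurface stabilizer $\hMod_{g-1}^2(\ell)$ (Corollary \ref{corollary:hatcap}) together with the "capping a boundary component" isomorphism, which is itself a full step of the proof (Step 4) resting on the point-pushing homology computation with extended homological representations (Step 3, Claim 3.1) and an isotypic-component analysis (Claim 4.2). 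None of this machinery appears in your proposal, and it is precisely where the twisted (Prym) coefficients become unavoidable even if one only wants the untwisted theorem.

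Two further points. First, your base case is understated: for $k=0$ the Reidemeister pairing (via Lemma \ref{lemma:pushtensorpowers}) only removes punctures; the actual comparison of $\Mod(\ell)$- and $\Mod(H)$-coinvariants on $\fH_g(H;\C)^{\otimes r}$ uses Looijenga's theorem that the Prym representation has lattice image in a semisimple group together with the Borel density theorem. Second, your reduction "to $(p,b)=(0,1)$" is not a routine consequence of Lemma \ref{lemma:capboundary} and Corollary \ref{corollary:pushtensorpowerschi}: removing punctures works by Gysin sequences as you suggest, but removing boundary components is the Step 4 isomorphism just mentioned, proved at homological degree $k$ with a bound one better than the target bound, and it cannot be quoted before the inductive scaffolding (including the same-$k$, smaller-$r$ hypothesis that your single induction on $k$ does not set up) is in place.
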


This implies Theorems \ref{maintheorem:mod} and \ref{maintheorem:modstd} for non-closed surfaces
in the following way:
\begin{itemize}
\item A size-$0$ homological representation of $\Mod_{g,p}^b(\ell)$ is simply the trivial representation
$\C$.  This is compatible with the symplectic subgroup $H=0$, for which $\Mod_{g,p}^b(H) = \Mod_{g,p}^b$.
Theorem \ref{maintheorem:generalcase} thus says that the map
\[\HH_k\left(\Mod_{g,p}^b\left(\ell\right);\C\right) \rightarrow \HH_k\left(\Mod_{g,p}^b;\C\right)\]
is an isomorphism for $g \geq 2k^2+7k+2$.  The universal coefficients theorem now implies that this
is also true with $\C$ replaced by $\Q$, which is exactly Theorem \ref{maintheorem:mod}.
\item Letting $V = \HH_1(\Sigma_{g,p}^b;\C)$, the tensor power $V^{\otimes r}$ is a size-$r$
homological representation that is compatible with $H = 0$ (see Example \ref{example:trivialcover}).  
Theorem \ref{maintheorem:generalcase} thus says that the map
\[\HH_k\left(\Mod_{g,p}^b\left(\ell\right);V^{\otimes r}\right) \rightarrow \HH_k\left(\Mod_{g,p}^b;V^{\otimes r}\right)\]
is an isomorphism for $g \geq 2(k+r)^2+7k+6r+2$.  The universal coefficients theorem now implies that this
is also true with the $\C$ in $V = \HH_1(\Sigma_{g,p}^b;\C)$ replaced by $\Q$, which is exactly Theorem \ref{maintheorem:modstd}.
\end{itemize}
We remark that Theorem \ref{maintheorem:modprym} will be a consequence of part of our proof, and we
will point out when this happens in a footnote (see the footnote on the paragraph right before Claim \ref{claim:42iso}). 

\begin{proof}[Proof of Theorem \ref{maintheorem:generalcase}]
We divide the proof into five steps.  Since the proof is organized around several interlocking inductions, we had
to write it in a certain order to make sure it was clear that the reasoning was not circular.  However, 
some of the intermediate steps might seem unmotivated upon first reading.  We thus suggest reading the steps
in the following order:
\begin{itemize}  
\item Steps \ref{step:induction}--\ref{step:reducegenuspunctures} set up the induction and make
some reductions.  They should be read first.  
\item Step \ref{step:main} is the main step 
that was sketched in \S \ref{section:proofsketch}.  We suggest reading it next.  
\item Doing this
will motivate Step \ref{step:capboundary}, whose proof depends on a calculation in Step \ref{step:pointpushing}.  It
is in Step \ref{step:pointpushing} that it becomes essential to work with general homological representations, even
though ultimately we are most interested in the trivial one.  
\end{itemize}
We remark that throughout the proof, we will
constantly use the conventions regarding symplectic subspaces from \S \ref{section:symplecticconventions}.  Also,
the number $\ell \geq 2$ will never change, but all the other parameters ($g,p,b,r,\uchi,H$, etc) will change
constantly, so we will try to be explicit about what is allowed for them at each stage of the proof.

\begin{step}{1}[Set up induction]
\label{step:induction}
We show that as an inductive hypothesis we can assume the following:
\begin{itemize}
\item[(a)] $r \geq 0$ and $k \geq 1$.
\item[(b)] We have already proved the theorem for $\HH_{i}$ for all $i < k$.
\item[(c)] For $\HH_k$, we have already proved the theorem for all homological representations of size less than $r$.  This
is vacuous if $r=0$.
\end{itemize}
\end{step}

Our proof is by induction\footnote{We could start our
induction with the trivial case $k=-1$, which would avoid having to prove any base case at
all.  However, this would lead to worse bounds.} 
on $k \geq 0$ and $r \geq 0$.  To be able to assume 
(a)-(c) as inductive hypotheses, we need to prove the theorem
for $k=0$ and general $r \geq 0$.  
So for some $g,p,b \geq 0$ with $p+b \geq 1$ let $\fH_{g,p}^b(\uchi)$ be a size-$r$ homological representation
of $\Mod_{g,p}^b(\ell)$ and let $H$ be a symplectic subgroup of $\HH_1(\Sigma_{g,p}^b(\ell);\Z/\ell)$
that is compatible with $\fH_{g,p}^b(\uchi)$.  Assume that $g \geq 2r^2+6r+2$ (which
is our claimed bound for $k=0$).
For a group $G$ acting on an abelian group $M$, the group $\HH_0(G;M)$ is the coinvariants
$M_G$, so what we have to prove is that
\[\left(\fH_{g,p}^b(\uchi)\right)_{\Mod_{g,p}^b(\ell)} \cong \left(\fH_{g,p}^b(\uchi)\right)_{\Mod_{g,p}^b(H)}.\]
Since $\Mod_{g,p}^b(\ell)$ is a subgroup of $\Mod_{g,p}^b(H)$, the right hand side is a quotient of the left hand side.
Decreasing $H$ makes $\Mod_{g,p}^b(H)$ larger, so our claim is stronger if $H$ is smaller.
By Lemma \ref{lemma:extendhomological}, the symplectic subgroup $H$ contains a symplectic subgroup that is
compatible with $\uchi$ and has genus at most $r$, so we can shrink $H$ and assume that it has genus at most $r$.

If $r=0$, then $\fH_{g,p}^b(\uchi)$ is the trivial representation and there is nothing to prove.
We can thus assume that $r \geq 1$, in which case our bound on $g$ implies that
$g \geq \max(2r,3)$, which is the bound we will actually use.
Lemma \ref{lemma:intermediatecover} implies that $\fH_{g,p}^b(\uchi)$ is a direct summand of
$\fH_{g,p}^b(H;\C)^{\otimes r}$, so it is enough to prove that
\[\left(\fH_{g,p}^b(H;\C)^{\otimes r}\right)_{\Mod_{g,p}^b(\ell)} \cong \left(\fH_{g,p}^b(H;\C)^{\otimes r}\right)_{\Mod_{g,p}^b(H)}.\]
Since $\fH_{g,p}^b(H;\C) \cong \fH_{g,p+b}(H;\C)$ (see Remark \ref{remark:boundarypunctures}) and 
the action of $\Mod_{g,p}^b(H)$ on these representations factors through $\Mod_{g,p+b}(H)$, we can assume
without loss of generality that $b = 0$, so since $b+p \geq 1$ we have $p \geq 1$.  

Assume first that $p \geq 2$, and let $x_0$ be one of the punctures.  Since $g \geq \max(2r,3)$, we have in
particular that $g \geq r+r$.  Lemma \ref{lemma:pushtensorpowers} then implies that
\[\left(\fH_{g,p}(H;\C)^{\otimes r}\right)_{\PP_{x_0}(\Sigma_{g,p-1},\ell)} \cong \fH_{g,p-1}(H;\C)^{\otimes r}.\]
This implies that
\[\left(\fH_{g,p}(H;\C)^{\otimes r}\right)_{\Mod_{g,p}(\ell)} \cong \left(\fH_{g,p-1}(H;\C)^{\otimes r}\right)_{\Mod_{g,p-1}(\ell)}.\]
A similar identity holds for $\Mod_{g,p}(H)$.  Applying this repeatedly, we reduce ourselves to the case $p=1$.  

Let $S_H \rightarrow \Sigma_g$ be the regular cover with deck group $H$ corresponding to the homomorphism
\begin{equation}
\label{eqn:commutator}
\pi_1(\Sigma_g) \longrightarrow \HH_1(\Sigma_g) = H \oplus H^{\perp} \stackrel{\text{proj}}{\longrightarrow} H.
\end{equation}
Define $\fH_g(H;\C) = \HH_1(S_H;\C)$.  What we would like to do is apply the above argument again and reduce
ourselves to the case $p=0$.  However, there is a problem: the group $\Mod_g(H)$ does not act on
$\fH_g(H;\C)$ since there is not a fixed basepoint to allow us to consistently choose a lift of a mapping
class on $\Sigma_g$ to $S_H$.  This is related to the fact that Lemma \ref{lemma:pushtensorpowers} does
not apply to the case $p=0$, and also to the fact that by Theorem \ref{theorem:birmanpartial} we have
\[\PP_{x_0}(\Sigma_g,\ell) = \PP_{x_0}(\Sigma_g,H) = \pi_1(\Sigma_g).\]
However, let $K \lhd \pi_1(\Sigma_g)$ be the kernel of the map \eqref{eqn:commutator}.  Regard
$K$ as a subgroup of $\PP_{x_0}(\Sigma_g,\ell)$.
The proof of Lemma \ref{lemma:pushtensorpowers} goes through without changes\footnote{The 
only place in the proof where the assumption $p \geq 1$ is used is in the invocation
of Lemma \ref{lemma:pointpushreidemeister}.  This lemma works for elements of $K$, which are exactly
the loops in the base space that lift to loops in the cover.}
that
\[\left(\fH_{g,1}(H;\C)^{\otimes r}\right)_{K} \cong \fH_{g}(H;\C)^{\otimes r}.\]
The groups $\Gamma(\ell) = \Mod_{g,1}(\ell)/K$ and $\Gamma(H) = \Mod_{g,1}(H)/K$ thus act on $\fH_{g}(H;\C)$,
and we are reduced to proving that
\[\left(\fH_{g}(H;\C)^{\otimes r}\right)_{\Gamma(\ell)} \cong \left(\fH_{g}(H;\C)^{\otimes r}\right)_{\Gamma(H)}.\]
Since $\PP_{x_0}(\Sigma_g,\ell) / K \cong H$ and $\Mod_{g,1}(\ell)$ acts trivially on $H$, 
the Birman exact sequence for $\Mod_{g,1}(\ell)$ from
Theorem \ref{theorem:birmanpartial} quotients down to a central extension
\[1 \longrightarrow H \longrightarrow \Gamma(\ell) \longrightarrow \Mod_g(\ell) \longrightarrow 1.\]
The action of the central subgroup $H$ on $\fH_g(H;\C)$ come from the action of $H$ on
$S_H$ as deck transformations.  There is a similar exact sequence for $\Gamma(H)$.

Since $g \geq \max(2r,3)$, we in particular have $g \geq 3$.  In that case,
Looijenga \cite{LooijengaPrym} proved that the action
of $\Gamma(H)$ on $\fH_g(H;\C)$ comes from a representation of $\Gamma(H)$ into a connected semisimple
$\R$-algebraic group $\bG$ without compact factors, and the image of $\Gamma(H)$ in $\bG(\R)$ is a lattice.
The Borel density theorem \cite{BorelDensity} says that lattices in such Lie groups are Zariski
dense, which implies that
\[\left(\fH_{g}(H;\C)^{\otimes r}\right)_{\Gamma(H)} \cong \left(\fH_{g}(H;\C)^{\otimes r}\right)_{\bG(\R)}.\]
The group $\Gamma(\ell)$ is a finite-index subgroup of $\Gamma(H)$, and thus its image in
$\bG(\R)$ is also a lattice and
\[\left(\fH_{g}(H;\C)^{\otimes r}\right)_{\Gamma(\ell)} \cong \left(\fH_{g}(H;\C)^{\otimes r}\right)_{\bG(\R)}.\]
The desired result follows.

\begin{step}{2}[Initial reductions]
\label{step:reducegenuspunctures}
For some $k$ and $r$, make the inductive hypotheses (a)-(c) from Step \ref{step:induction}:
\begin{itemize}
\item[(a)] $r \geq 0$ and $k \geq 1$.
\item[(b)] We have already proved the theorem for $\HH_{i}$ for all $i < k$.
\item[(c)] For $\HH_k$, we have already proved the theorem for all homological representations of size less than $r$.  This
is vacuous if $r=0$.
\end{itemize}
We show that to prove the theorem for $\HH_k$ and homological representations of size $r$, it is enough to prove it under the following simplifying assumptions:
\begin{itemize}
\item[($\dagger$)] The surface has no punctures.
\item[($\dagger\dagger$)] The symplectic subgroup $H$ has genus at most $r$.
\end{itemize}
\end{step}

Throughout this step, all surfaces $\Sigma_{g,p}^b$ satisfy the hypothesis $p+b \geq 1$ from the statement of the theorem.
We start by showing that we can assume ($\dagger$):

\begin{claim}{2.1}
\label{claim:reducepunctures}
Assume that for $\HH_k$ and homological representations of size $r$, the theorem is true for surfaces without punctures.  Then for $\HH_k$ and
homological representations of size $r$ it is true in general.
\end{claim}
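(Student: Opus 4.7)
The plan is to proceed by induction on $p$, with base case $p = 0$ being exactly the hypothesis of the claim. For the inductive step with $p \geq 1$, I would split into two cases depending on whether filling in a puncture keeps us inside the regime $p + b \geq 1$ required by the theorem.

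When $p + b \geq 2$: pick a puncture $x_0$ and invoke the partial mod-$\ell$ Birman exact sequence (Theorem \ref{theorem:birmanpartial}) for both $\Mod_{g,p}^b(\ell)$ and $\Mod_{g,p}^b(H)$, so that these groups are extensions of $\Mod_{g,p-1}^b(\ell)$ respectively $\Mod_{g,p-1}^b(H)$ by a common kernel $\PP := \PP_{x_0}(\Sigma_{g,p-1}^b, H)$. Since $(p-1) + b \geq 1$, the group $\pi_1(\Sigma_{g,p-1}^b)$ is free, so $\PP$ is a finite-index subgroup of a free group and itself free; hence each Hochschild--Serre spectral sequence has only the rows $q = 0, 1$. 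On the $q = 0$ row, Corollary \ref{corollary:pushtensorpowerschi} identifies the $\PP$-coinvariants of $\fH_{g,p}^b(\uchi)$ with $\fH_{g,p-1}^b(\uchi)$, and the inductive hypothesis on $p$ makes this row an isomorphism. For the $q = 1$ row, I would use the standard free resolution of $\PP$ together with Lemma \ref{lemma:pointpushreidemeister} to describe the differential via the Reidemeister pairing; this should present $\HH_1(\PP; \fH_{g,p}^b(\uchi))$ as a $\Mod_{g,p-1}^b$-module built from homological representations of size at most $r + 1$, so that the inductive hypothesis (c) on $r$ from Step~\ref{step:induction} applies. A comparison-of-spectral-sequences argument then upgrades the $E^2$-isomorphism to the abutment.

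When $(p, b) = (1, 0)$: filling in the puncture would exit the regime $p + b \geq 1$, so instead I would use Proposition \ref{proposition:cappartial} to obtain central extensions
\[
1 \longrightarrow \Z \longrightarrow \Mod_g^1(\ell) \longrightarrow \Mod_{g,1}(\ell) \longrightarrow 1
\quad \text{and} \quad
1 \longrightarrow \Z \longrightarrow \Mod_g^1(H) \longrightarrow \Mod_{g,1}(H) \longrightarrow 1,
\]
each generated by a boundary Dehn twist $T_\partial$. Because $T_\partial$ lifts to a product of twists about boundary components of the cover $S_H$, it acts trivially on $V := \fH_g^1(\uchi) \cong \fH_{g,1}(\uchi)$ (using Remark \ref{remark:boundarypunctures}), so the Serre spectral sequence reduces to a Wang-type long exact sequence
\[
\cdots \to \HH_{i-1}(\Mod_{g,1}; V) \to \HH_i(\Mod_g^1; V) \to \HH_i(\Mod_{g,1}; V) \to \HH_{i-2}(\Mod_{g,1}; V) \to \cdots.
\]
Applying the five-lemma in degree $k$ to the comparison of the $\ell$- and $H$-versions uses isomorphism at $\HH_k$ and $\HH_{k-1}$ of $\Mod_g^1$ (from the hypothesis of the claim together with Step~\ref{step:induction}(b)) and isomorphism at $\HH_{k-1}$ and $\HH_{k-2}$ of $\Mod_{g,1}$ (from Step~\ref{step:induction}(b)), producing the desired isomorphism at $\HH_k(\Mod_{g,1}; V)$.

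The main obstacle is Case 1's $q = 1$ row: a careful Reidemeister-pairing computation is needed to identify $\HH_1(\PP; \fH_{g,p}^b(\uchi))$ as a $\Mod_{g,p-1}^b$-module of controlled size, so that the induction on $r$ kicks in. This is precisely why the theorem is organized as a simultaneous induction on both $k$ and $r$, rather than on $k$ alone.
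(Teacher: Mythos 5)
Your second case is essentially the paper's argument, but you have missed that it works uniformly in $(p,b)$: for any surface with at least one puncture you can trade a puncture for a boundary component. Proposition \ref{proposition:cappartial} and Remark \ref{remark:caplevel} give compatible central extensions $1 \to \Z \to \Mod_{g,p}^{b+1}(\ell) \to \Mod_{g,p+1}^b(\ell) \to 1$ (and likewise for $H$), with the boundary twist $T_{\partial}$ acting trivially on $V = \fH_{g,p+1}^b(\uchi) \cong \fH_{g,p}^{b+1}(\uchi)$; the resulting Gysin sequences and the five-lemma then need only the theorem in degree $k$ for $\Sigma_{g,p}^{b+1}$ (which has one fewer puncture, so the induction on $p$ applies) and in degrees below $k$ (hypothesis (b)). No Birman exact sequence with free kernel, and hence no point-pushing homology, is needed anywhere in this claim; this capping argument for all $(p,b)$ is exactly the paper's proof of Claim \ref{claim:reducepunctures}.

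Your Case 1 ($p+b \geq 2$) has a genuine gap. The two Birman sequences do not have a common kernel: for the level-$\ell$ group the kernel is $\PP_{x_0}(\Sigma_{g,p-1}^b,\ell)$, a proper finite-index subgroup of $\PP_{x_0}(\Sigma_{g,p-1}^b,H)$ (compare Remark \ref{remark:birmanlevel} with Theorem \ref{theorem:birmanpartial}). So on the $q=1$ rows you must compare $\HH_{k-1}\bigl(\Mod_{g,p-1}^b(\ell);\HH_1(\PP_{x_0}(\Sigma_{g,p-1}^b,\ell);V)\bigr)$ with $\HH_{k-1}\bigl(\Mod_{g,p-1}^b(H);\HH_1(\PP_{x_0}(\Sigma_{g,p-1}^b,H);V)\bigr)$, where the coefficient modules themselves differ. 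This is not a routine Reidemeister-pairing computation: it is precisely the hard content of Steps \ref{step:pointpushing} and \ref{step:capboundary} of the main proof (Claim \ref{claim:31ppu}, whose own induction on ``extended'' homological representations is needed because $\HH_1(\PP_{x_0}(\Sigma_{g,p-1}^b,H);V)$ is not visibly a sum of homological representations of size $r+1$ when $\PP$ acts nontrivially on $V$, and Claim \ref{claim:42iso}, which decomposes $\HH_1(\PP_{x_0}(\Sigma_{g,p-1}^b,\ell);V)$ into isotypic components and kills the extra ones using Theorem \ref{theorem:stability} and the Gysin identifications). Those extra isotypic pieces are not compatible with any small symplectic subgroup, so hypotheses (b)/(c) do not apply to them directly; moreover the relevant piece sits in degree $k-1$ with size $r+1$, so it would be hypothesis (b), not (c), doing the work. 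Since the paper develops that machinery only at a later stage of the induction, and only for unpunctured surfaces — which is exactly what Claim 2.1 is meant to arrange — importing it here is either circular or forces you to redo Steps 3--5 in the presence of punctures. The fix is to discard Case 1 and run your Case 2 argument for all $(p,b)$.
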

\begin{proof}[Proof of claim]
The proof is by induction on the number of punctures.  The base case $p=0$ is our assumption.
For the inductive step, assume that for $\HH_k$ the theorem is true for surfaces with $p$ punctures.  We will prove it for surfaces with $(p+1)$ punctures
as follows.  

Consider a size-$r$ homological representation $\fH_{g,p+1}^b(\uchi)$ and a symplectic subgroup
$H$ of $\HH_1(\Sigma_{g,p+1}^b;\Z/\ell)$ that is compatible with $\fH_{g,p+1}^b(\uchi)$.  Assume that
$g \geq 2(k+r)^2+7k+6r+2$.  Since $\HH_1(\Sigma_{g,p+1}^b;\Z/\ell) = \HH_1(\Sigma_{g,p}^{b+1};\Z/\ell)$,
we can identify $H$ with a symplectic subgroup of $\HH_1(\Sigma_{g,p}^{b+1};\Z/\ell)$.  Using
Proposition \ref{proposition:cappartial} and Remark \ref{remark:caplevel}, we have a commutative diagram of central extensions
\begin{equation}
\label{eqn:cappartial}
\begin{tikzcd}[row sep=scriptsize]
1 \arrow{r} & \Z \arrow{r} \arrow{d}{=} & \Mod_{g,p}^{b+1}(\ell) \arrow{r} \arrow{d} & \Mod_{g,p+1}^b(\ell) \arrow{r} \arrow{d} & 1 \\
1 \arrow{r} & \Z \arrow{r}              & \Mod_{g,p}^{b+1}(H)    \arrow{r}           & \Mod_{g,p+1}^b(H)    \arrow{r}           & 1
\end{tikzcd}
\end{equation}
whose central $\Z$ is generated by the Dehn twist
about a boundary component $\partial$ of $\Sigma_{g,p}^{b+1}$.  We have
$\fH_{g,p+1}^b(\uchi) \cong \fH_{g,p}^{b+1}(\uchi)$ (c.f.\ Remark \ref{remark:boundarypunctures}), and
$T_{\partial}$ acts trivially on $\fH_{g,p}^{b+1}(\uchi)$.  Let $V = \fH_{g,p+1}^b(\uchi) = \fH_{g,p}^{b+1}(\uchi)$.  The two-row Hochschild--Serre spectral
sequences associated to the short exact sequences in \eqref{eqn:cappartial} turn into long
exact Gysin sequences, and we have a map between these Gysin sequences containing the following.
To save horizontal space we have written $M$ rather than $\Mod$ and also omitted the coefficients, which should all be $V$.
\[\begin{tikzcd}[font=\footnotesize, column sep=small, row sep=scriptsize]
\HH_{k-1}(M_{g,p+1}^b(\ell)) \arrow{r} \arrow{d}{f_1} & \HH_{k}(M_{g,p}^{b+1}(\ell)) \arrow{r} \arrow{d}{f_2} & \HH_{k}(M_{g,p+1}^{b}(\ell)) \arrow{r} \arrow{d}{f_3} & \HH_{k-2}(M_{g,p+1}^b(\ell)) \arrow{r} \arrow{d}{f_4} & \HH_{k-1}(M_{g,p}^{b+1}(\ell)) \arrow{d}{f_5} \\
\HH_{k-1}(M_{g,p+1}^b(H))    \arrow{r}                & \HH_{k}(M_{g,p}^{b+1}(H))    \arrow{r}                & \HH_{k}(M_{g,p+1}^{b}(H))                   \arrow{r} & \HH_{k-2}(M_{g,p+1}^b(H))    \arrow{r}                & \HH_{k-1}(M_{g,p}^{b+1}(H))
\end{tikzcd}\]
Our inductive hypothesis (b) implies that $f_1$ and $f_4$ and $f_5$ are isomorphisms, and our induction on $p$ implies that $f_2$ is an isomorphism.
The five-lemma now implies that $f_3$ is an isomorphism, as desired.
\end{proof}

We next show that we can assume ($\dagger\dagger$):

\begin{claim}{2.2}
\label{claim:reducegenus}
Assume that for $\HH_k$ and homological representations of size $r$, the theorem is true when the symplectic subgroup $H$ has genus at most $r$.  
Then for $\HH_k$ and homological representations of size $r$ it is true in general.
\end{claim}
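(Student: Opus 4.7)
The plan is to reduce the general case to the small-genus case by replacing $H$ with a smaller symplectic subgroup and then deducing that the claim passes through the intermediate level via the transfer map.

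First I would apply Lemma \ref{lemma:extendhomological} to the homological representation $\fH_{g,p}^b(\uchi)$ of size $r$, which is by hypothesis compatible with $H$. This produces a symplectic subgroup $H'\subseteq H$ of genus at most $r$ such that $\fH_{g,p}^b(\uchi)$ is still compatible with $H'$. Corollary \ref{corollary:extendchi} then guarantees that the $\Mod_{g,p}^b(\ell)$-action on $V \coloneqq \fH_{g,p}^b(\uchi)$ extends to a $\Mod_{g,p}^b(H')$-action, and we obtain a tower of subgroups
\[\Mod_{g,p}^b(\ell) \;\subset\; \Mod_{g,p}^b(H) \;\subset\; \Mod_{g,p}^b(H').\]

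Next I would observe that $\Mod_{g,p}^b(\ell)$, being the kernel of the action of $\Mod_{g,p}^b$ on $\HH_1(\Sigma_{g,p}^b;\Z/\ell)$, is a finite-index normal subgroup of $\Mod_{g,p}^b$, and hence also of $\Mod_{g,p}^b(H')$. The transfer map lemma (Lemma \ref{lemma:transfer}) therefore gives surjections
\[\HH_k\!\left(\Mod_{g,p}^b(\ell);V\right) \stackrel{f}{\twoheadrightarrow} \HH_k\!\left(\Mod_{g,p}^b(H);V\right) \stackrel{g}{\twoheadrightarrow} \HH_k\!\left(\Mod_{g,p}^b(H');V\right).\]
The bound $g \geq 2(k+r)^2+7k+6r+2$ depends only on $k$ and $r$, so the hypothesis of the claim applies to $H'$ and says that the composition $g\circ f$ is an isomorphism.

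Finally, since $g\circ f$ is injective, so is $f$; combined with the surjectivity already supplied by the transfer map, this shows that $f$ is itself an isomorphism, which is exactly the conclusion of the claim. I do not anticipate any serious obstacle here: the argument is a pure two-out-of-three statement once the right intermediate group $\Mod_{g,p}^b(H')$ has been identified, and all the needed ingredients (existence of $H'$, extension of the action, normality and finite index of $\Mod_{g,p}^b(\ell)$) are either immediate or already available in the preceding sections.
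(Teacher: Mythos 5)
Your proposal is correct and matches the paper's own proof: shrink $H$ to a genus-at-most-$r$ subgroup $H'$ via Lemma \ref{lemma:extendhomological}, use the transfer map lemma to get surjectivity of the maps in the chain $\HH_k(\Mod_{g,p}^b(\ell);V) \to \HH_k(\Mod_{g,p}^b(H);V) \to \HH_k(\Mod_{g,p}^b(H');V)$, and combine this with the hypothesis that the composite is an isomorphism. The only cosmetic difference is that the paper does not bother to note normality of $\Mod_{g,p}^b(\ell)$, since finite index alone suffices for the transfer argument.
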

\begin{proof}[Proof of claim]
Consider a size-$r$ homological representation $\fH_{g,p}^b(\uchi)$ of $\Mod_{g,p}^b(\ell)$ that is compatible with a symplectic subgroup $H$ of $\HH_1(\Sigma_{g,p}^b;\Z/\ell)$.
Assume that $g \geq 2(k+r)^2+7k+6r+2$.
By Lemma \ref{lemma:extendhomological}, we can find a genus at most $r$ symplectic subgroup $H'$ of $\HH_1(\Sigma_{g,p}^b;\Z/\ell)$
with $H' < H$ that is compatible with $\fH_{g,p}^b(\uchi)$.  The group $\Mod_{g,p}^b(H)$ is a finite-index subgroup of $\Mod_{g,p}^b(H')$, so by the transfer map lemma (Lemma \ref{lemma:transfer})
we see that each map in
\[\HH_k(\Mod_{g,p}^b(\ell);\fH_{g,p}^b(\uchi)) \rightarrow \HH_k(\Mod_{g,p}^b(H);\fH_{g,p}^b(\uchi)) \rightarrow \HH_k(\Mod_{g,p}^b(H');\fH_{g,p}^b(\uchi))\]
is a surjection.  Our assumption says that $\HH_k(\Mod_{g,p}^b(\ell);\fH_{g,p}^b(\uchi)) \rightarrow \HH_k(\Mod_{g,p}^b(H');\fH_{g,p}^b(\uchi))$ is an isomorphism,
so we deduce that $\HH_k(\Mod_{g,p}^b(\ell);\fH_{g,p}^b(\uchi)) \rightarrow \HH_k(\Mod_{g,p}^b(H);\fH_{g,p}^b(\uchi))$ is an isomorphism.  The
claim follows.
\end{proof}

\begin{step}{3}[Point-pushing coefficients]
\label{step:pointpushing}
For some $k$ and $r$, make the inductive hypotheses (a)-(c) from Step \ref{step:induction}:
\begin{itemize}
\item[(a)] $r \geq 0$ and $k \geq 1$.
\item[(b)] We have already proved the theorem for $\HH_{i}$ for all $i < k$.
\item[(c)] For $\HH_k$, we have already proved the theorem for all homological representations of size less than $r$.  This
is vacuous if $r=0$.
\end{itemize}
We study their consequences for the point-pushing subgroup.
\end{step}

It will take a bit of work to state the result we will derive from (a)--(c).
Fix some $g \geq 0$ and $b \geq 1$ such that $\pi_1(\Sigma_g^b)$ is nonabelian, and let $x_0$ be the puncture
of $\Sigma_{g,1}^b$.  Let $H$ be a symplectic subgroup of $\HH_1(\Sigma_{g,1}^b;\Z/\ell)$.  
Theorem \ref{theorem:birmanpartial} gives a Birman exact sequence
\[1 \longrightarrow \PP_{x_0}(\Sigma_g^b,H) \longrightarrow \Mod_{g,1}^b(H) \longrightarrow \Mod_g^b(H) \longrightarrow 1.\]
If $U$ is a representation of $\Mod_{g,1}^b(H)$, then
the action of $\Mod_{g,1}^b(H)$ on $U$ along with the conjugation action of $\Mod_{g,1}^b(H)$ on $\PP_{x_0}(\Sigma_g^b,H)$
gives an action of $\Mod_{g,1}^b(H)$ on $\hU = \HH_1(\PP_{x_0}(\Sigma_g^b,H);U)$.  Since inner automorphisms
act trivially on homology (see, e.g., \cite[Proposition III.8.1]{BrownCohomology}), this descends to 
an action of $\Mod_g^b(H)$ on $\hU$.  This action can be restricted
to $\Mod_g^b(\ell)$.  Our goal in this step is
to prove that our inductive hypotheses can be applied to show that for certain $U$, the map
\[\HH_i(\Mod_g^b(\ell);\hU) \rightarrow \HH_i(\Mod_g^b(H);\hU)\]
is an isomorphism once $g$ is sufficiently large.  What we prove is as follows.  We state it in terms of $\HH_{i-1}$ rather
than $\HH_i$ since that will be how we use it in the next step, and this will make it easier
to verify the genus bounds in its hypotheses.  The numbers $k \geq 1$ and $r \geq 0$ in the statement
of this claim are from the inductive hypotheses (a)-(c).

\begin{claim}{3.1}
\label{claim:31ppu}
For some $g \geq 0$ and $b \geq 1$, let $U$ be a homological representation of $\Mod_{g,1}^b(\ell)$ of size $r' \geq 0$ that is compatible with a symplectic subgroup
$H$ of $\HH_1(\Sigma_{g,1}^b;\Z/\ell)$.  Fix some $1 \leq i \leq k$, and if $i=k$ then assume that $r' \leq r$.  Let $x_0$ be the puncture
of $\Sigma_{g,1}^b$ and let $\hU = \HH_1(\PP_{x_0}(\Sigma_g^b,H);U)$.  Assume
that $g \geq 2(i+r')^2+7i+6r'+1$, and also that $g \geq r' + h$ with $h$ the genus of $H$.  Then the map
\[\HH_{i-1}(\Mod_g^b(\ell);\hU) \rightarrow \HH_{i-1}(\Mod_g^b(H);\hU)\]
is an isomorphism.
\end{claim}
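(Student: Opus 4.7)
The key structural fact is that, since $b \geq 1$, the cover $S_H \to \Sigma_g^b$ has nonempty boundary, so $\PP = \PP_{x_0}(\Sigma_g^b,H) \cong \pi_1(S_H)$ is free and has cohomological dimension at most $1$. Hence $\HH_q(\PP;U) = 0$ for $q \geq 2$, and the plan is to decompose $\hat{U} = \HH_1(\PP;U)$ into a finite iterated extension of homological representations of $\Mod_g^b(\ell)$ of size at most $r'+1$ compatible with $H$, and then invoke inductive hypothesis (b).

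First I would filter $U$ as a $\Mod_{g,1}^b(H)$-module.  Tensoring the short exact sequences $0 \to \C \to \fH_{g,1}^b(\chi_j) \to \fH_g^b(\chi_j) \to 0$ from Lemma \ref{lemma:capboundary} (one for each $j = 1,\ldots,r'$) produces a decreasing filtration $U = U_0 \supset U_1 \supset \cdots \supset U_{r'+1} = 0$ whose associated graded pieces $\mathrm{gr}_p(U) = U_p/U_{p+1}$ are direct sums of homological representations of $\Mod_g^b(\ell)$ of size $r'-p$ compatible with $H$.  Lemma \ref{lemma:pointpushreidemeister}, together with the Leibniz rule for the tensor action, shows that the $\PP$-action is unipotent with respect to this filtration: $\PP$ acts trivially on every $\mathrm{gr}_p(U)$.

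Next I would iterate the six-term exact sequence in $\PP$-homology associated to $0 \to U_{p+1} \to U_p \to \mathrm{gr}_p(U) \to 0$ (six-term because $\PP$ has cohomological dimension at most $1$).  Since $\PP$ acts trivially on $\mathrm{gr}_p(U)$, universal coefficients give $\HH_1(\PP;\mathrm{gr}_p(U)) = \HH_1(\PP;\Z) \otimes_\Z \mathrm{gr}_p(U) = \fH_g^b(H;\C) \otimes \mathrm{gr}_p(U) = \bigoplus_{\chi \in \hat{H}} \fH_g^b(\chi) \otimes \mathrm{gr}_p(U)$, a direct sum of homological representations of size $r'-p+1 \leq r'+1$ compatible with $H$; similarly $\mathrm{gr}_p(U)_\PP = \mathrm{gr}_p(U)$.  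Descending from $p = r'$ down to $p = 0$ and pasting these six-term sequences exhibits $\hat{U}$ as a finite iterated extension whose successive subquotients are subquotients of direct sums of homological representations of size at most $r'+1$ compatible with $H$.

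By inductive hypothesis (b), each such homological representation $V$ of size $s \leq r'+1$ lies in the class $\cD$ of $\Mod_g^b(H)$-modules on which the comparison map $\HH_j(\Mod_g^b(\ell);-) \to \HH_j(\Mod_g^b(H);-)$ is an isomorphism for all $j \leq k-1$, provided $g \geq 2(j+s)^2 + 7j + 6s + 2$; the choice $(j,s) = (i-1,r'+1)$ gives exactly the genus bound $g \geq 2(i+r')^2 + 7i + 6r' + 1$ assumed in the claim.  Working over $\C$ with the finite quotient $Q = \Mod_g^b(H)/\Mod_g^b(\ell)$, the class $\cD$ is closed under direct sums, extensions, and quotients, so a five-lemma chase in the long exact sequences of $\Mod_g^b$-homology propagates $\cD$-membership up the filtration of $\hat{U}$ to give the required isomorphism in degree $i-1$.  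The delicate point will be the kernels of connecting maps appearing in the six-term sequences: naively a ``sub'' operation costs one degree in the iso-range, and in the edge case $i=k$, $r'=r$ inductive hypothesis (c) does not cover $\HH_k$ for the largest pieces (which have size $r+1$).  The argument closes because each such kernel arises simultaneously as a submodule of $\HH_1(\PP;\mathrm{gr}_p(U))$ and as a quotient of the preceding filtration stage $\HH_1(\PP;U_{p+1})$, allowing the five-lemma step to be executed using only isomorphisms in degrees $\leq k-1$ from (b), supplemented by (c) where it applies.
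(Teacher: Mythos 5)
You have assembled the right ingredients---freeness of $\PP = \PP_{x_0}(\Sigma_g^b,H)$, the filtration of $U$ built from Lemma \ref{lemma:capboundary}, triviality of the $\PP$-action on its associated graded, the identification $\HH_1(\PP;\C) \cong \fH_g^b(H;\C)$, and the numerical identity $2((i-1)+(r'+1))^2+7(i-1)+6(r'+1)+2 = 2(i+r')^2+7i+6r'+1$---but there is a genuine gap at exactly the point you call delicate, and the mechanism you sketch to close it does not work. Splicing your six-term sequences writes $\hU$ as an iterated extension of the modules $K_p = \ker\bigl(\HH_1(\PP;\mathrm{gr}_p U) \rightarrow \HH_0(\PP;U_{p+1})\bigr)$, which are only \emph{submodules} of direct sums of homological representations. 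The class of coefficients for which the comparison map is an isomorphism in a given range of degrees is closed under extensions (using transfer surjectivity, Lemma \ref{lemma:transfer}, at the outer corner of the five lemma), but it is \emph{not} closed under quotients as you assert, and passing to a sub or quotient costs a degree and requires control of the complementary term; here the complementary term is the image of the connecting map inside $\HH_0(\PP;U_{p+1})$, i.e.\ the $\PP$-coinvariants of a filtration stage, which you never compute. Your proposed escape---that $K_p$ is also ``a quotient of the preceding filtration stage $\HH_1(\PP;U_{p+1})$''---is incorrect: from the six-term sequence, $K_p$ is a quotient of $\HH_1(\PP;U_p)$, the very object being analyzed, so this gives no leverage. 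In the edge case $i=k$, $r'=r$ the size-$(r+1)$ pieces are not covered by (c) in degree $k$, so any one-degree loss is fatal. Tellingly, your argument never uses the hypothesis $g \geq r'+h$, which is in the claim precisely to supply the missing coinvariants computation.

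The paper closes this gap by peeling off one tensor factor at a time rather than filtering all factors at once: it introduces ``extended homological representations'' and runs a double induction on the size $r'$ and on the number $s'$ of factors of the form $\fH_{g,1}^b(\chi_j)$. The crucial extra input is Corollary \ref{corollary:pushtensorpowerschi} (resting on the Reidemeister-pairing argument of Lemma \ref{lemma:pushtensorpowers}, and this is where $g \geq h+r'$ is used), which shows that $\oU \rightarrow \oU''$ is an isomorphism in the six-term sequence for $0 \rightarrow U' \rightarrow U \rightarrow U'' \rightarrow 0$; consequently $\hU'' \rightarrow \oU'$ is surjective and the only new object is $Q = \Image(\hU \rightarrow \hU'')$, which sits in the two short exact sequences $0 \rightarrow \hU' \rightarrow \hU \rightarrow Q \rightarrow 0$ and $0 \rightarrow Q \rightarrow \hU'' \rightarrow \oU' \rightarrow 0$ with every outer term controlled: $\oU'$ is an honest homological representation of size $r'-1$ (hypotheses (b) and (c), the latter being exactly where $r' \leq r$ when $i=k$ enters), $\hU''$ is handled by the inner induction on $s'$, $\hU'$ by the induction on $r'$, and the top corners by transfer. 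In particular, size-$(r'+1)$ coefficients only ever occur in homological degree at most $i-1 \leq k-1$, where (b) alone suffices; reproducing this bookkeeping is what identifying your $K_p$ would require, and doing so essentially forces one back to the paper's factor-by-factor argument.
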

\begin{proof}[Proof of claim]
To allow an inductive proof, we will allow $U$ to be a bit more general.
Say that a $\Mod_{g,1}^b(\ell)$-representation $V$ is an {\em extended homological representation} of size $r'$ if it can be written as $V = V_1 \otimes \cdots \otimes V_{r'}$, where
each $V_i$ is either $\fH_{g,1}^b(\chi_i)$ or $\fH_g^b(\chi_i)$ for some character $\chi_i$
of $\cD = \HH_1(\Sigma_g;\Z/\ell)$.  Here $\Mod_{g,1}^b(\ell)$ acts on $\fH_g^b(\chi_i)$ via the projection
$\Mod_{g,1}^b(\ell) \rightarrow \Mod_g^b(\ell)$ that fills in the puncture $x_0$.  

The number $s'$ of
tensor factors of the form $\fH_{g,1}^b(\chi_i)$ will be called the {\em nonextended size} of $V$.  Thus
$s' \leq r'$, with equality precisely when $V$ is a normal homological representation.
We say that $V$ is compatible with our symplectic subspace $H$ of $\HH_1(\Sigma_{g,1}^b;\Z/\ell)$ if each
$\chi_i$ is compatible with $H$.  This implies that the action of $\Mod_{g,1}^b(\ell)$ on $V$ extends
to $\Mod_{g,1}^b(H)$.

We will prove the claim for an extended homological representation $U$.
Let $0 \leq s' \leq r'$ be the nonextended size of $U$.  The proof will be by induction on $r'$, and for a fixed $r'$ will be
by induction on $s'$.  The base case is $r' \geq 0$ arbitrary (subject to the conditions in the claim!) and $s'=0$.
In this case, the action of $\Mod_{g,1}^{b}(H)$ on $U$ factors through $\Mod_g^b(H)$ and $U$ is a homological
representation of $\Mod_g^b(\ell)$ of size $r'$ that is compatible with $H$.  It follows that the action on $U$ of the kernel
$\PP_{x_0}(\Sigma_g^b,H)$ of $\Mod_{g,1}^{b}(H) \rightarrow \Mod_g^b(H)$ is trivial, so
\[\hU = \HH_1(\PP_{x_0}(\Sigma_g^b,H);U) \cong \HH_1(\PP_{x_0}(\Sigma_g^b,H);\C) \otimes U \cong \fH_g^b(H;\C) \otimes U.\]
Here we are using the fact from Theorem \ref{theorem:birmanpartial} that $\PP_{x_0}(\Sigma_g^b,H)$ is the kernel of the map
\[\PP_{x_0}(\Sigma_g^b) \cong \pi_1(\Sigma_g^b,x_0) \rightarrow \HH_1(\Sigma_g^b) = H \oplus H^{\perp} \stackrel{\text{proj}}{\longrightarrow} H,\]
so it can be identified with the fundamental group of the cover $S_H$ of $\Sigma_g^b$ used to define $\fH_g^b(H;\C) = \HH_1(S_H;\C)$.
Using Lemma \ref{lemma:intermediatecover}, the representation $\fH_g^b(H;\C) \otimes U$ is a direct sum
of homological representations of $\Mod_g^b(H)$ of size $r'+1$.  Since $i \leq k$, we have $i-1 < k$, so
we can apply our inductive hypothesis (b) to deduce that the map
\[\HH_{i-1}(\Mod_{g}^b(\ell);\fH_g^b(H;\C) \otimes U) \rightarrow \HH_{i-1}(\Mod_{g}^b(H);\fH_g^b(H;\C) \otimes U)\]
is an isomorphism.  Here we are using the fact that our genus assumption is
\[g \geq 2(i+r')^2+7i+6r'+1 = 2((i-1)+(r'+1))^2+7(i-1) + 6(r'+1) +2.\]
We remark that this is the origin of the bound in this claim.  This completes the proof of the base case.

We can now assume that $1 \leq s' \leq r'$ and that the claim is true whenever either $r'$ or $s'$ is smaller.
Reordering the tensor
factors of $U$ if necessary, we can write $U = U' \otimes \fH_{g,1}^b(\chi)$ for some extended homological representation
$U'$ of size $r'-1$ and nonextended size $s'-1$ and some character $\chi$ that is compatible with $H$.  Lemma \ref{lemma:capboundary} gives a short
exact sequence
\[0 \longrightarrow \C \longrightarrow \fH_{g,1}^b(\chi) \longrightarrow \fH_g^b(\chi) \longrightarrow 0\]
of $\Mod_{g,1}^b(H)$-representations.  Define $U'' = U' \otimes \fH_{g}^b(\chi)$, so
$U''$ is an extended homological representation of size $r'$ and nonextended size $s'-1$.  Tensoring
our exact sequence with $U'$, we get a short exact sequence
\begin{equation}
\label{eqn:21ushort}
0 \longrightarrow U' \longrightarrow U \longrightarrow U'' \longrightarrow 0
\end{equation}
of $\Mod_{g,1}^b(H)$-representations.  There is an associated long exact sequence in
$\PP_{x_0}(\Sigma_g^b,H)$-homology.  Since $\PP_{x_0}(\Sigma_g^b,H)$ is a free group, this involves
homology in degrees $0$ and $1$.  As notation, let
\[\hU = \HH_1(\PP_{x_0}(\Sigma_g^b,H);U),\quad \hU' = \HH_1(\PP_{x_0}(\Sigma_g^b,H);U'),\quad \hU'' = \HH_1(\PP_{x_0}(\Sigma_g^b,H);U'')\]
and
\[\oU = \HH_0(\PP_{x_0}(\Sigma_g^b,H);U),\quad \oU' = \HH_0(\PP_{x_0}(\Sigma_g^b,H);U'),\quad \oU'' = \HH_0(\PP_{x_0}(\Sigma_g^b,H);U'').\]
The long exact sequence in $\PP_{x_0}(\Sigma_g^b,H)$-homology associated to \eqref{eqn:21ushort} is thus of the form
\[0 \longrightarrow \hU' \longrightarrow \hU \longrightarrow \hU'' \longrightarrow \oU' \longrightarrow \oU \longrightarrow \oU'' \longrightarrow 0.\]
One of our genus assumptions is that $g \geq h+r'$ where $h$ is the genus of $H$.
Thus Corollary \ref{corollary:pushtensorpowerschi} implies\footnote{To
apply this to extended homological representations, we factor out the extended part.  For instance,
if $U = \fH_{g}^b(\chi_1) \otimes \fH_g^b(\chi_2) \otimes \fH_{g,1}^b(\chi_3) \otimes \fH_{g,1}^b(\chi_4)$, then
Corollary \ref{corollary:pushtensorpowerschi} implies that $\oU = U_{\PP_{x_0}(\Sigma_g^b,H)}$ equals
\[\fH_{g}^b(\chi_1) \otimes \fH_g^b(\chi_2) \otimes \left(\fH_{g,1}^b(\chi_3) \otimes \fH_{g,1}^b(\chi_4)\right)_{\PP_{x_0}(\Sigma_g^b,H)} = \fH_{g}^b(\chi_1) \otimes \fH_g^b(\chi_2) \otimes \fH_{g}^b(\chi_3) \otimes \fH_{g}^b(\chi_4).\]
Here we are using the fact that $\PP_{x_0}(\Sigma_g^b,H)$ acts trivially on the first two factors.}
that $\oU \cong \oU''$.  Letting $Q$ be the image of the map $\hU \rightarrow \hU''$, this implies that we
have short exact sequences
\begin{equation}
\label{eqn:21qseq1}
0 \longrightarrow \hU' \longrightarrow \hU \longrightarrow Q \longrightarrow 0
\end{equation}
and
\begin{equation}
\label{eqn:21qseq2}
0 \longrightarrow Q \longrightarrow \hU'' \longrightarrow \oU' \longrightarrow 0.
\end{equation}
To simplify our notation, let $M(\ell) = \Mod_{g}^b(\ell)$ and $M(H) = \Mod_{g}^b(H)$.
Our goal is to prove that the map
\begin{equation}
\label{eqn:21biggoal}
\HH_{i-1}(M(\ell);\hU) \rightarrow \HH_{i-1}(M(H);\hU)
\end{equation}
is an isomorphism. Both \eqref{eqn:21qseq1} and \eqref{eqn:21qseq2} induce
exact sequences in the homology of $M(\ell)$ and
$M(H)$, and also a map between these long exact sequences.

For \eqref{eqn:21qseq2}, this contains the segment
\[\begin{tikzcd}[font=\footnotesize, column sep=small, row sep=scriptsize]
\HH_{i}(M(\ell);\hU'') \arrow{r} \arrow{d}{f_1} & \HH_{i}(M(\ell);\oU') \arrow{r} \arrow{d}{f_2} & \HH_{i-1}(M(\ell);Q) \arrow{r} \arrow{d}{f_3} & \HH_{i-1}(M(\ell);\hU'') \arrow{r} \arrow{d}{f_4} & \HH_{i-1}(M(\ell);\oU') \arrow{d}{f_5} \\
\HH_{i}(M(H);\hU'')    \arrow{r}                & \HH_{i}(M(H);\oU')    \arrow{r}                & \HH_{i-1}(M(H);Q)    \arrow{r}                & \HH_{i-1}(M(H);\hU'')    \arrow{r}                & \HH_{i-1}(M(H);\oU')
\end{tikzcd}\]
We can understand the maps $f_i$ as follows:
\begin{itemize}
\item The transfer map lemma (Lemma \ref{lemma:transfer}) implies that $f_1$ is a surjection.
\item By construction, $U'$ is an extended homological representation of $\Mod_{g,1}^{b}(H)$ of size $r'-1$, so
just like above we can use Corollary \ref{corollary:pushtensorpowerschi} to see that
$\oU'$ is a homological representation of
$\Mod_g^b(H)$ of size $r'-1$.  Recall that we are assuming that $i \leq k$ and that if $i = k$ then $r' \leq r$ (so $r'-1 < r$).
We can therefore apply our inductive hypotheses (b) and (c) to see that $f_2$ and $f_5$ are isomorphisms.
\item By construction, $U''$ is an extended homological representation of size $r'$ and nonextended size $s'-1$.
By our induction on the nonextended size, we see that $f_4$ is an isomorphism.
\end{itemize}
Applying the five-lemma, we deduce that $f_3$ is an isomorphism.

We now turn to the long exact sequences in $M(\ell)$ and $M(H)$ homology induced by \eqref{eqn:21qseq1}.  These
contain
\[\begin{tikzcd}[font=\footnotesize, column sep=small, row sep=scriptsize]
\HH_{i}(M(\ell);Q) \arrow{r} \arrow{d}{f_6} & \HH_{i-1}(M(\ell);\hU') \arrow{r} \arrow{d}{f_7} & \HH_{i-1}(M(\ell);\hU) \arrow{r} \arrow{d}{f_8} & \HH_{i-1}(M(\ell);Q) \arrow{r} \arrow{d}{f_3} & \HH_{i-2}(M(\ell);\hU') \arrow{d}{f_9} \\
\HH_{i}(M(H);Q)    \arrow{r}                & \HH_{i-1}(M(H);\hU')    \arrow{r}                & \HH_{i-1}(M(H);\hU)    \arrow{r}                & \HH_{i-1}(M(H);Q)    \arrow{r}                & \HH_{i-2}(M(H);\hU')
\end{tikzcd}\]
Note that the map $f_3$ here is the same as the one from the previous diagram.  We can understand these new maps $f_i$ as follows:
\begin{itemize}
\item The transfer map lemma (Lemma \ref{lemma:transfer}) implies that $f_6$ is a surjection.
\item By construction, $U'$ is an extended homological representation of $\Mod_{g,1}^{b}(H)$ of size $r'-1$, so by our induction
on $r'$ we see that the maps $f_7$ and $f_9$ are isomorphisms.
\item We proved above that $f_3$ is an isomorphism.
\end{itemize}
Applying the five-lemma, we deduce that $f_8$ is an isomorphism.  This is exactly the map \eqref{eqn:21biggoal} we were supposed
to prove is an isomorphism, so this completes the proof of the claim.
\end{proof}

\begin{step}{4}[Capping the boundary]
\label{step:capboundary}
For some $k$ and $r$, make the inductive hypotheses (a)-(c) from Step \ref{step:induction}:
\begin{itemize}
\item[(a)] $r \geq 0$ and $k \geq 1$.
\item[(b)] We have already proved the theorem for $\HH_{i}$ for all $i < k$.
\item[(c)] For $\HH_k$, we have already proved the theorem for all homological representations of size less than $r$.  This
is vacuous if $r=0$.
\end{itemize}
Let $g \geq 0$ and $b \geq 1$.  Let $\fH_{g}^{b+1}(\uchi)$ be a size-$r$ homological representation
of $\Mod_{g}^{b+1}(\ell)$ and let $H$ be a symplectic subgroup of $\HH_1(\Sigma_{g}^{b+1}(\ell);\Z/\ell)$
that is compatible with $\fH_{g}^{b+1}(\uchi)$ and has genus at most $r$.  
Assume\footnote{Note that this is $1$ less than the bound we are trying to prove for $\HH_k$.} that $g \geq 2(k+r)^2+7k+6r+1$.
Then we prove that the map
\begin{equation}
\label{eqn:captoprove}
\HH_k\left(\Mod_{g}^{b+1}\left(\ell\right);\fH_{g}^{b+1}(\uchi)\right) \rightarrow \HH_k\left(\Mod_{g}^b\left(\ell\right);\fH_{g}^b(\uchi)\right)
\end{equation}
induced by gluing a disc to a boundary component $\partial$ of $\Sigma_g^{b+1}$ is an isomorphism.
\end{step}

We will need some of our initial calculations in this step to hold more generally when $H$ has
genus at most $r+1$, so for the moment we only impose this weaker condition.  At the very end
we will re-impose the condition that the genus of $H$ is at most $r$.

To simplify our notation, let $V = \fH_{g}^{b+1}(\uchi)$ and $W = \fH_{g}^b(\uchi)$.  The map \eqref{eqn:captoprove} fits into a commutative diagram
\begin{equation}
\label{eqn:captoprove2}
\begin{tikzcd}[row sep=scriptsize]
\HH_k\left(\Mod_{g}^{b+1}\left(\ell\right);V\right) \arrow{r} \arrow{d} & \HH_k\left(\Mod_{g}^b\left(\ell\right);W\right) \arrow{d} \\
\HH_k\left(\Mod_{g}^{b+1}\left(H\right);V)\right)   \arrow{r}           & \HH_k\left(\Mod_{g}^b\left(H\right);W\right).
\end{tikzcd}
\end{equation}
By Lemma \ref{lemma:intermediatecover}, the representations $V = \fH_{g}^{b+1}(\uchi)$ and
$W = \fH_g^b(\uchi)$ are direct summands of $\fH_g^{b+1}(H;\C)^{\otimes r}$ and
$\fH_g^b(H;\C)^{\otimes r}$, respectively.  Since $H$ has genus at most $r+1$, Theorem \ref{theorem:stability} 
implies that the bottom horizontal
map in \eqref{eqn:captoprove2} is an isomorphism if $g \geq (2r+4)(k+r)+(4r+6)$.  We will derive 
that the top horizontal map is an isomorphism from this, 
which first requires verifying that our genus assumption $g \geq 2(k+r)^2+7k+6r+1$ implies that
we are in the stable range $g \geq (2r+4)(k+r)+(4r+6)$ from Theorem \ref{theorem:stability}:
\begin{align*}
2(k+r)^2+7k+6r+1 &=    2(k+r+1)(k+r) + 5k + 4r + 1 \\
                 &\geq 2(r+2)(k+r)+ 4r + (5k+1) \\
                 &\geq (2r+4)(k+r) + 4r + 6.
\end{align*}
Here both inequalities use the fact that $k \geq 1$, which is our inductive hypothesis (a).

Since $b \geq 1$, the maps
$\Mod_g^{b+1}(\ell) \rightarrow \Mod_g^b(\ell)$ and $\Mod_g^{b+1}(H) \rightarrow \Mod_g^b(H)$ 
induced by gluing a disc to $\partial$ split via maps induced by an embedding $\Sigma_g^b \hookrightarrow \Sigma_g^{b+1}$ as follows:\\
\centerline{\psfig{file=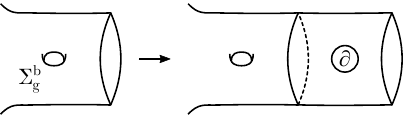,scale=1}}
A similar map gives a splitting of the map $V \rightarrow W$ induced by gluing a disc to $\partial$.  These
give compatible splitting of the top and bottom rows of \eqref{eqn:captoprove2}, which in particular
imply that they are surjections (as we already know for the bottom row).  We thus must prove that
the top row of \eqref{eqn:captoprove2} is an injection.

We can factor the horizontal maps in \eqref{eqn:captoprove2} as follows:
\[\begin{tikzcd}[row sep=scriptsize]
\HH_k\left(\Mod_{g}^{b+1}\left(\ell\right);V\right) \arrow{r}{\phi}  \arrow{d} & \HH_k\left(\Mod_{g,1}^b\left(\ell\right);V\right) \arrow{r}{\psi}  \arrow{d} & \HH_k\left(\Mod_{g}^b\left(\ell\right);W\right) \arrow{d} \\
\HH_k\left(\Mod_{g}^{b+1}\left(H\right);V)\right)   \arrow{r}{\ophi}           & \HH_k\left(\Mod_{g,1}^b\left(H\right);V\right)    \arrow{r}{\opsi}           & \HH_k\left(\Mod_{g}^b\left(H\right);W\right).
\end{tikzcd}\]
Here we are using the fact that $V = \fH_g^{b+1}(\uchi) = \fH_{g,1}^b(\uchi)$ (see Remark \ref{remark:boundarypunctures}).
To prove that the top horizontal map in \eqref{eqn:captoprove2} is an injection, it
is enough to prove that $\ker(\phi)=0$ and $\ker(\psi) \cap \Image(\phi) = 0$.  We will
derive this from the fact that $\opsi \circ \ophi$ is an isomorphism (Theorem \ref{theorem:stability}, as noted above), which
implies in particular that $\ker(\ophi) = 0$ and that $\ker(\opsi) \cap \Image(\ophi) = 0$.
We start by showing that $\ker(\phi)=0$:

\begin{claim}{4.1}
$\ker(\phi)=0$.
\end{claim}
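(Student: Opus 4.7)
The plan is to exploit the central extension from Proposition~\ref{proposition:cappartial} together with Remark~\ref{remark:caplevel},
\[1 \longrightarrow \Z \longrightarrow \Mod_g^{b+1}(\ell) \longrightarrow \Mod_{g,1}^b(\ell) \longrightarrow 1,\]
whose central $\Z$ is generated by $T_\partial$. Since $V = \fH_g^{b+1}(\uchi) \cong \fH_{g,1}^b(\uchi)$ (via Remark~\ref{remark:boundarypunctures}) is pulled back from $\Mod_{g,1}^b(\ell)$, the twist $T_\partial$ acts trivially on $V$. The two-row Hochschild--Serre spectral sequence therefore collapses into a Gysin long exact sequence
\[\cdots \to \HH_{k+1}(\Mod_{g,1}^b(\ell); V) \xrightarrow{d_2} \HH_{k-1}(\Mod_{g,1}^b(\ell); V) \to \HH_k(\Mod_g^{b+1}(\ell); V) \xrightarrow{\phi} \HH_k(\Mod_{g,1}^b(\ell); V) \to \cdots\]
in which $\phi$ is the base edge map. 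Exactness identifies $\ker(\phi)$ with $\coker(d_2)$, so the claim reduces to showing that $d_2$ is surjective.

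To prove surjectivity of $d_2$, I will compare with the analogous Gysin sequence attached to the central extension $1 \to \Z \to \Mod_g^{b+1}(H) \to \Mod_{g,1}^b(H) \to 1$, whose base edge map is $\ophi$ and whose differential I will call $\od_2$. We have already observed, as a consequence of Theorem~\ref{theorem:stability} (applicable in the stable range just verified at the start of Step~\ref{step:capboundary}), that $\opsi \circ \ophi$ is an isomorphism; hence $\ophi$ is injective, and the same exactness argument gives that $\od_2$ is surjective. Since both central extensions share the same $T_\partial$, the inclusions $\Mod_{g,1}^b(\ell) \hookrightarrow \Mod_{g,1}^b(H)$ and $\Mod_g^{b+1}(\ell) \hookrightarrow \Mod_g^{b+1}(H)$ form a morphism of central extensions, so naturality of Hochschild--Serre produces a commutative square
\[\begin{tikzcd}
\HH_{k+1}(\Mod_{g,1}^b(\ell); V) \arrow{r}{d_2} \arrow{d}{\alpha} & \HH_{k-1}(\Mod_{g,1}^b(\ell); V) \arrow{d}{\beta} \\
\HH_{k+1}(\Mod_{g,1}^b(H); V) \arrow{r}{\od_2}                   & \HH_{k-1}(\Mod_{g,1}^b(H); V).
\end{tikzcd}\]

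In this square, $\alpha$ is surjective by the transfer map lemma (Lemma~\ref{lemma:transfer}), and $\beta$ is an isomorphism by the inductive hypothesis (b) applied in degree $k-1$ to the size-$r$ homological representation $V$ on $\Sigma_{g,1}^b$; the requisite genus bound $g \geq 2(k+r-1)^2 + 7(k-1) + 6r + 2$ is comfortably implied by our standing assumption $g \geq 2(k+r)^2 + 7k + 6r + 1$ (the slack is $4(k+r)+4$). Since $\od_2 \circ \alpha = \beta \circ d_2$ is surjective and $\beta$ is an isomorphism, $d_2$ must be surjective, giving $\ker(\phi)=0$.

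The only delicate point is bookkeeping: confirming that the two Gysin sequences really are compatible under the vertical inclusions --- which they are because the two central subgroups coincide as $\langle T_\partial \rangle$ --- and that the inductive hypothesis can legitimately be invoked at the stated degree and representation size. No genuinely new geometric input is needed; everything hard has already been done by the time we reach this step.
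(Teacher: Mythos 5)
Your argument is correct and is essentially the same as the paper's: both pass to the two-row Gysin sequences of the central extensions from Proposition~\ref{proposition:cappartial}, reduce $\ker(\phi)=0$ to surjectivity of the differential (the paper's $\phi'$, your $d_2$), get surjectivity of the corresponding map $\ophi'$ from the injectivity of $\ophi$ supplied by Theorem~\ref{theorem:stability}, and then transfer it upstairs using the commutative square in which the $\HH_{k+1}$ map is surjective by Lemma~\ref{lemma:transfer} and the $\HH_{k-1}$ map is an isomorphism by inductive hypothesis (b). Your explicit check of the genus slack for invoking (b) in degree $k-1$ is a harmless addition; nothing is missing.
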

\begin{proof}[Proof of claim]
Proposition \ref{proposition:cappartial} and Remark \ref{remark:caplevel} 
give a commutative diagram of central extensions
\begin{equation}
\label{eqn:centraltogysin}
\begin{tikzcd}[row sep=scriptsize]
1 \arrow{r} & \Z \arrow{r} \arrow{d}{=} & \Mod_{g}^{b+1}(\ell) \arrow{r} \arrow{d} & \Mod_{g,1}^b(\ell) \arrow{r} \arrow{d} & 1 \\
1 \arrow{r} & \Z \arrow{r}              & \Mod_{g}^{b+1}(H)    \arrow{r}           & \Mod_{g,1}^b(H)    \arrow{r}           & 1 
\end{tikzcd}
\end{equation}
where the central $\Z$ is generated by the Dehn twist $T_{\partial}$.  Consider the two
associated Hochschild--Serre spectral sequences with coefficients in $V$ associated to the
central extensions in \eqref{eqn:centraltogysin}.  These spectral sequences have two
potentially nonzero rows, so they encode long exact Gysin sequences.  There is a map
between these Gysin sequences, which contains the following:
\[\begin{tikzcd}[font=\footnotesize, column sep=small, row sep=scriptsize]
\HH_{k+1}(\Mod_{g,1}^b(\ell);V) \arrow{r}{\phi'}  \arrow{d}{f_1} & \HH_{k-1}(\Mod_{g,1}^b(\ell);V) \arrow{r} \arrow{d}{f_2} & \HH_k(\Mod_g^{b+1}(\ell);V) \arrow{r}{\phi}  \arrow{d} & \HH_k(\Mod_{g,1}^b(\ell);V) \arrow{d} \\
\HH_{k+1}(\Mod_{g,1}^b(H);V)    \arrow{r}{\ophi'}                & \HH_{k-1}(\Mod_{g,1}^b(H);V)    \arrow{r}                & \HH_k(\Mod_g^{b+1}(H);V)    \arrow{r}{\ophi}           & \HH_k(\Mod_{g,1}^b(H);V).
\end{tikzcd}\]
To prove that $\ker(\phi)=0$, it is enough to prove that $\phi'$ is a surjection.  We know that $\ker(\ophi)=0$,
so $\ophi'$ is a surjection.   
Our inductive hypothesis (b) 
implies that $f_2$ is an isomorphism, and using the transfer map lemma (Lemma \ref{lemma:transfer})
we see that $f_1$ is a surjection.  It
follows that $\phi'$ is a surjection, as desired.
\end{proof}

The proof that $\ker(\psi) \cap \Image(\phi) = 0$ is a little more complicated.  A proof identical to the one
in the above claim shows that the maps
\[\HH_i(\Mod_{g}^{b+1}(\ell);V) \rightarrow \HH_{i}(\Mod_{g,1}^b(\ell);V) \quad \text{and} \quad
\HH_i(\Mod_{g}^{b+1}(H);V) \rightarrow \HH_{i}(\Mod_{g,1}^b(H);V)\]
are injections for $0 \leq i \leq k$.  It follows that up to degree $k$, the Gysin sequences
discussed in the proof of the above claim break up into short exact sequences.  In particular,
we have the following commutative diagram with exact rows:
\begin{equation}
\label{eqn:gysinbreak}
\begin{tikzcd}[column sep=1em, row sep=1em]
0 \arrow{r} & \HH_k(\Mod_g^{b+1}(\ell);V) \arrow{r}{\phi}  \arrow{d} & \HH_k(\Mod_{g,1}^b(\ell);V) \arrow{r}{\mu}  \arrow{d} & \HH_{k-2}(\Mod_{g,1}^b(\ell);V) \arrow{r} \arrow{d}{\cong} & 0 \\
0 \arrow{r} & \HH_k(\Mod_g^{b+1}(H);V)    \arrow{r}{\ophi}           & \HH_k(\Mod_{g,1}^b(H);V)    \arrow{r}{\omu}           & \HH_{k-2}(\Mod_{g,1}^b(H);V)    \arrow{r}                  & 0
\end{tikzcd}
\end{equation}
The isomorphism on the right-most vertical arrow comes from our inductive hypothesis (b).

We know that $\opsi \circ \ophi$ is an isomorphism, so
\[\HH_k(\Mod_{g,1}^b(H);V) = \Image(\ophi) \oplus \ker(\opsi).\]
Combining this with the bottom exact sequence in \eqref{eqn:gysinbreak}, we see that the map
\begin{equation}
\label{eqn:identifykeropsi}
\omu|_{\ker{\opsi}}\colon \ker(\opsi) \longrightarrow \HH_{k-2}(\Mod_{g,1}^b(H);V)
\end{equation}
is an isomorphism.
To prove that $\ker(\psi) \cap \Image(\phi) = 0$, it is enough to prove that the restriction of $\mu$ to $\ker(\psi)$ is
also an isomorphism.  To do that, since the right-hand vertical arrow in \eqref{eqn:gysinbreak} is an isomorphism
it is enough to prove that the map $\HH_k(\Mod_{g,1}^b(\ell);V) \rightarrow \HH_k(\Mod_{g,1}^b(H);V)$ restricts
to an isomorphism from $\ker(\psi)$ to $\ker(\opsi)$.

To do this, we must identify $\ker(\psi)$ and $\ker(\opsi)$.
Let $x_0$ be the puncture of $\Sigma_{g,1}^b$.  In light of Remark \ref{remark:birmanlevel}, Theorem
\ref{theorem:birmanpartial} gives a commutative diagram of Birman exact sequences
\begin{equation}
\label{eqn:birmandiagram}
\begin{tikzcd}[row sep=scriptsize]
1 \arrow{r} & \PP_{x_0}(\Sigma_g^b,\ell) \arrow{r} \arrow{d} & \Mod_{g,1}^b(\ell) \arrow{r} \arrow{d} & \Mod_g^b(\ell) \arrow{r} \arrow{d} & 1 \\
1 \arrow{r} & \PP_{x_0}(\Sigma_g^b,H)    \arrow{r}           & \Mod_{g,1}^b(H)    \arrow{r}           & \Mod_g^b(H)    \arrow{r}           & 1
\end{tikzcd}
\end{equation}
Since $b \geq 1$, the maps
$\Mod_{g,1}^{b}(\ell) \rightarrow \Mod_g^b(\ell)$ and $\Mod_{g,1}^{b}(H) \rightarrow \Mod_g^b(H)$
induced by filling in $x_0$ split via maps induced by an embedding $\Sigma_g^b \hookrightarrow \Sigma_{g,1}^{b}$ as follows:\\
\centerline{\psfig{file=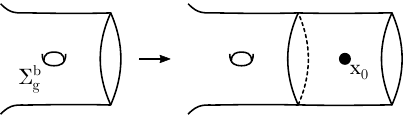,scale=1}}
It follows that all differentials coming out of the bottom rows of the Hochschild--Serre spectral sequences with
coefficients in $V$ associated to the rows of \eqref{eqn:birmandiagram} must vanish.  Since $\PP_{x_0}(\Sigma_g^b,\ell)$ 
and $\PP_{x_0}(\Sigma_g^b,H)$ are subgroups of the free group $\pi_1(\Sigma_g^b,x_0)$, 
these spectral sequence only have two potentially nonzero rows, so they degenerate at the $E^2$ page.  They thus break
up into a bunch of short exact sequences.

To identify these short exact sequences, we must identify the $E^2$-pages of our spectral sequences.  For
$\Mod_{g,1}^b(\ell)$, the two potentially nonzero rows of our spectral sequence are as follows:
\begin{itemize}
\item $E^2_{p0} = \HH_p(\Mod_{g}^b(\ell);\HH_0(\PP_{x_0}(\Sigma_g^b,\ell);V))$.  Recall that at the beginning
of this step we imposed the condition that
$H$ has genus at most $r+1$.  Our genus assumptions imply that
$g \geq (r+1) + r$, so Corollary \ref{corollary:pushtensorpowerschi} implies that
\begin{align*}
\HH_0(\PP_{x_0}(\Sigma_g^b,\ell);V) = \fH_{g,1}^b(\uchi)_{\PP_{x_0}(\Sigma_g^b,\ell)} \cong \fH_g^b(\uchi) = W.
\end{align*}
We conclude that $E^2_{p0} = \HH_p(\Mod_g^b(\ell);W)$.
\item $E^2_{p1} = \HH_p(\Mod_{g}^b(\ell);\HH_1(\PP_{x_0}(\Sigma_g^b,\ell);V))$.
\end{itemize}
The short exact sequence encoded by the $p+q=k$ diagonal of this spectral sequence is thus of the form
\[\begin{tikzcd}[column sep=0.7em, row sep=1em]
0 \arrow{r} & \HH_{k-1}(\Mod_{g}^b(\ell);\HH_1(\PP_{x_0}(\Sigma_g^b,\ell);V)) \arrow{r} & \HH_k(\Mod_{g,1}^b(\ell);V) \arrow{r}{\psi} & \HH_k(\Mod_g^b(\ell);W) \arrow{r} & 0.
\end{tikzcd}\]
A similar analysis holds for $\Mod_{g,1}^b(H)$, yielding an analogous short exact sequence.  There is a map
between these short exact sequences of the form  
\begin{equation}
\label{eqn:identifypsi}
\begin{tikzcd}[font=\footnotesize, column sep=0.7em, row sep=1em]
0 \arrow{r} & \HH_{k-1}(\Mod_{g}^b(\ell);\HH_1(\PP_{x_0}(\Sigma_g^b,\ell);V)) \arrow{r} \arrow{d} & \HH_k(\Mod_{g,1}^b(\ell);V) \arrow{r}{\psi}  \arrow{d} & \HH_k(\Mod_g^b(\ell);W) \arrow{r} \arrow{d} & 0 \\
0 \arrow{r} & \HH_{k-1}(\Mod_{g}^b(H);\HH_1(\PP_{x_0}(\Sigma_g^b,H);V))       \arrow{r}           & \HH_k(\Mod_{g,1}^b(H);V)    \arrow{r}{\opsi}           & \HH_k(\Mod_g^b(H);W)    \arrow{r}           & 0.
\end{tikzcd}
\end{equation}
It follows that to prove that the map $\ker(\psi) \rightarrow \ker(\opsi)$ is an isomorphism, we must prove that the map
\[\HH_{k-1}(\Mod_{g}^b(\ell);\HH_1(\PP_{x_0}(\Sigma_g^b,\ell);V)) \rightarrow \HH_{k-1}(\Mod_{g}^b(H);\HH_1(\PP_{x_0}(\Sigma_g^b,H);V))\]
is an isomorphism.\footnote{This will imply Theorem \ref{maintheorem:modprym}.  Indeed, consider the case where $V \cong \C$ is the
trivial representation of size $r=0$ and $H=0$.  We then have $\HH_1(\PP_{x_0}(\Sigma_g^b,\ell);V) \cong \fH_g^b(\ell;\C)$ and
$\HH_1(\PP_{x_0}(\Sigma_g^b,H);V) = \fH_g^b(\C)$.  The fact that this map
is an isomorphism thus becomes the fact that the map $\HH_{k-1}(\Mod_g^b(\ell);\fH_g^b(\ell;\C)) \rightarrow \HH_{k-1}(\Mod_g^b;\fH_g^b(\C))$
is an isomorphism.  The universal coefficients theorem says this is also true with the $\C$ replaced with a $\Q$, which is
exactly Theorem \ref{maintheorem:modprym} in the special case that the surface has no punctures.  The case where the surface has punctures
can be derived from this exactly like in Claim \ref{claim:reducepunctures}.}  This map factors as the composition of the maps
\begin{equation}
\label{eqn:toproveisomorphism1}
\HH_{k-1}(\Mod_{g}^b(\ell);\HH_1(\PP_{x_0}(\Sigma_g^b,\ell);V)) \rightarrow \HH_{k-1}(\Mod_{g}^b(\ell);\HH_1(\PP_{x_0}(\Sigma_g^b,H);V))
\end{equation}
and
\begin{equation}
\label{eqn:toproveisomorphism2}
\HH_{k-1}(\Mod_{g}^b(\ell);\HH_1(\PP_{x_0}(\Sigma_g^b,H);V)) \rightarrow \HH_{k-1}(\Mod_{g}^b(H);\HH_1(\PP_{x_0}(\Sigma_g^b,H);V)).
\end{equation}
Claim \ref{claim:31ppu} says that \eqref{eqn:toproveisomorphism2} is an isomorphism.  This claim
includes the assumption that $g$ is at least the genus of $H$ plus $r$, which follows from the condition
that the genus of $H$ is at most $r+1$ that we imposed at the beginning of this step.  

It therefore
remains to prove that \eqref{eqn:toproveisomorphism1} is an isomorphism.
At this point in the
proof, we re-impose the assumption that the genus of $H$ is at most $r$.  As we noted
at the beginning, it is enough to verify this step in that case; however, we will use
some of the above calculations for other symplectic subspaces of genus $r+1$.

\begin{claim}{4.2}
\label{claim:42iso}
The map \eqref{eqn:toproveisomorphism1} is an isomorphism.
\end{claim}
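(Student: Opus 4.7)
The map in Claim \ref{claim:42iso} is induced at the coefficient level by the inclusion $L := \PP_{x_0}(\Sigma_g^b,\ell) \hookrightarrow P := \PP_{x_0}(\Sigma_g^b,H)$ of a normal subgroup of finite index $|H^\perp|$. Working over $\Q$ (equivalently $\C$), the Hochschild--Serre spectral sequence of $1 \to L \to P \to H^\perp \to 1$ collapses to give $\hV_H \cong (\hV_\ell)_{H^\perp}$, with the map $\hV_\ell \to \hV_H$ realized as the $H^\perp$-coinvariants projection. Moreover, the $H^\perp$- and $\Mod_g^b(\ell)$-actions on $\hV_\ell$ commute: if $G$ denotes the subgroup of $\Mod_{g,1}^b(H)$ generated by $\Mod_{g,1}^b(\ell)$ and $P$, then $L \lhd G$ and $G/L \cong \Mod_g^b(\ell) \times H^\perp$ is a \emph{direct} product, because $\Mod_g^b(\ell)$ acts trivially on $\HH_1(\Sigma_g;\Z/\ell)$ and hence on $H^\perp$. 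Semisimplicity of $H^\perp$ in characteristic $0$ then yields a canonical $\Mod_g^b(\ell)$-equivariant splitting $\hV_\ell = \hV_H \oplus K$, where $K$ collects the nontrivial $H^\perp$-isotypic components, and Claim \ref{claim:42iso} reduces to proving $\HH_{k-1}(\Mod_g^b(\ell);K) = 0$.

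To establish this vanishing I would compare two Hochschild--Serre spectral sequences, namely the one for $1 \to L \to \Mod_{g,1}^b(\ell) \to \Mod_g^b(\ell) \to 1$ and the one for $1 \to P \to G \to \Mod_g^b(\ell) \to 1$, both with coefficients in $V$. Because $L$ and $P$ are free, both spectral sequences are concentrated in two rows; because both extensions split via the Birman splitting coming from $\Sigma_g^b \hookrightarrow \Sigma_{g,1}^b$, all differentials vanish. Corollary \ref{corollary:pushtensorpowerschi} applies (our genus bound comfortably exceeds $h+r$) and identifies the $q=0$ coefficients $V_L$ and $V_P$ both with $W := \fH_g^b(\uchi)$, the induced map being the identity under $\rho_1$. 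The two resulting short exact sequences assemble into a commutative diagram whose left vertical is the map in Claim \ref{claim:42iso} and whose right vertical is $\mathrm{id}_{\HH_k(\Mod_g^b(\ell);W)}$. By the five lemma it suffices to show that the middle vertical $\HH_k(\Mod_{g,1}^b(\ell);V) \to \HH_k(G;V)$ is an isomorphism, and since $\Mod_{g,1}^b(\ell)$ is a normal finite-index subgroup of $G$ with quotient $H^\perp$, the transfer (Lemma \ref{lemma:transfer}) identifies this arrow with the coinvariants projection. The problem therefore becomes proving that $H^\perp$ acts trivially on $\HH_k(\Mod_{g,1}^b(\ell);V)$.

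Establishing this triviality is the main obstacle. An element $h \in H^\perp$ acts via conjugation by any lift $\tth \in P$ (a point-push in $\Mod_{g,1}^b$) together with $\tth$'s action on $V$, which by Lemma \ref{lemma:pointpushreidemeister} is a Reidemeister transvection along the puncture class $\zeta$. My plan is to exploit the high-genus regime $g \geq 2(k+r)^2 + 7k + 6r + 2$: I would choose $\tth$ to be supported in a subsurface disjoint from a subsurface carrying a generating system for $\Mod_{g,1}^b(\ell)$ through which $\HH_k(\Mod_{g,1}^b(\ell);V)$ is already computed, so that the conjugation part of the action becomes inner on those generators (and hence trivial on homology), while the Reidemeister transvection part contributes only terms governed by lower-degree homology of coefficients of strictly smaller size. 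This is the $\HH_k$-level analogue of the mechanism used at the $\HH_0$ level in the proof of Lemma \ref{lemma:pushtensorpowers}; turning it into an honest argument will require an auxiliary spectral sequence comparison that invokes Claim \ref{claim:31ppu} at $i=k$ together with the inductive hypotheses (b) and (c) to dispose of the error terms, which is where I expect the bulk of the technical work to reside.
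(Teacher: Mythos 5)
Your opening reduction is exactly the paper's: you pass to the extension $1 \to \PP_{x_0}(\Sigma_g^b,\ell) \to \PP_{x_0}(\Sigma_g^b,H) \to C \to 1$, use the transfer to identify $\HH_1(\PP_{x_0}(\Sigma_g^b,H);V)$ with the $C$-coinvariants of $\HH_1(\PP_{x_0}(\Sigma_g^b,\ell);V)$, decompose into $C$-isotypic pieces, and reduce Claim \ref{claim:42iso} to killing the nontrivial isotypic summands. Your second paragraph is also correct, but be aware that it is an equivalence rather than a simplification: comparing the two split two-row spectral sequences and applying the five lemma shows that ``$C$ acts trivially on $\HH_k(\Mod_{g,1}^b(\ell);V)$'' is precisely equivalent to Claim \ref{claim:42iso}, and it is a statement in homological degree $k$ with coefficients of size $r$ --- the same inductive level as the theorem being proved --- so none of the available hypotheses (b), (c), nor Claim \ref{claim:31ppu} at $i=k$ (which concerns $\HH_{k-1}$ with the fixed coefficients $\HH_1(\PP_{x_0}(\Sigma_g^b,H);V)$), applies to it.

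The genuine gap is your third paragraph. The commuting-twist mechanism you invoke requires a surjectivity statement at degree $k$ and size $r$: that $\HH_k(\Mod_{g,1}^b(\ell);V)$ is carried by stabilizers of subsurfaces disjoint from the support of a lift $\tth$ of a generator of $C$. That is exactly what Step \ref{step:main} (Claims 5.1 and 5.2, via Proposition \ref{proposition:stabilitymachine}) establishes, and Step \ref{step:main} is proved using Step \ref{step:capboundary}, which contains Claim \ref{claim:42iso}; invoking it here is circular within the paper's induction. Moreover, even granted a disjointly supported lift, $\tth$ acts nontrivially on the coefficients $V$ (a Reidemeister transvection by Lemma \ref{lemma:pointpushreidemeister}), and agreeing with an inner action on a generating subgroup does not by itself give triviality on twisted homology; you offer no mechanism for disposing of these coefficient ``error terms.'' The paper's actual resolution avoids degree $k$ altogether: for a nontrivial character $\chi_0$ it chooses an auxiliary symplectic subgroup $H' \supset H$ of genus at most $r+1$ with $\chi_0|_{C'}=1$ (this is why Step \ref{step:capboundary} is set up for genus up to $r+1$), and uses the identifications \eqref{eqn:identifykeropsi} and \eqref{eqn:identifypsi} to rewrite both sides of the projection \eqref{eqn:projectionisomorphism} as $\HH_{k-2}(\Mod_{g,1}^b(H');V)$ and $\HH_{k-2}(\Mod_{g,1}^b(H);V)$, where inductive hypothesis (b) shows the map is an isomorphism, forcing $\HH_{k-1}(\Mod_g^b(\ell);U_{\chi_0})=0$. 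To complete your proposal you would need to replace your third paragraph by an argument of this kind --- one that drops the homological degree so the induction can close --- or supply an independent proof of the degree-$k$ triviality statement; as written, the claim is not proved.
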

\begin{proof}[Proof of claim]
Let $\cD = \HH_1(\Sigma_g;\Z/\ell)$.  By Remark \ref{remark:birmanlevel}, we have a short
exact sequence
\begin{equation}
\label{eqn:ppseq}
1 \longrightarrow \PP_{x_0}(\Sigma_g^b,\ell) \longrightarrow \PP_{x_0}(\Sigma_g^b) \longrightarrow \cD \longrightarrow 1.
\end{equation}
Regard $H$ as a subspace of $\cD$ via the map $\HH_1(\Sigma_g^b;\Z/\ell) \rightarrow \HH_1(\Sigma_g;\Z/\ell)$, and
let $C \subset \cD$ be its orthogonal complement with respect to the algebraic intersection pairing on $\cD$.
By Theorem \ref{theorem:birmanpartial}, the group $\PP_{x_0}(\Sigma_g^b,H)$ is the kernel of the map
\[\PP_{x_0}(\Sigma_g^b) \longrightarrow \cD = H \oplus C \stackrel{\text{proj}}{\longrightarrow} H.\]
Combining this with \eqref{eqn:ppseq}, we get a short exact sequence
\[1 \longrightarrow \PP_{x_0}(\Sigma_g^b,\ell) \longrightarrow \PP_{x_0}(\Sigma_g^b,H) \longrightarrow C \longrightarrow 1.\]
The group $\PP_{x_0}(\Sigma_g^b,H)$ acts by conjugation on $\PP_{x_0}(\Sigma_g^b,\ell)$ and on $V$
since $V$ is compatible with $H$.  Since inner automorphisms act 
trivially on homology (see, e.g., \cite[Proposition III.8.1]{BrownCohomology}), this
induces an action of the finite abelian group $C$ on $\HH_1(PP_{x_0}(\Sigma_g^b,\ell);V)$.  
Using the transfer map lemma (Lemma \ref{lemma:transfer}),
we deduce that
\begin{equation}
\label{eqn:coinvariantsc}
\HH_1(\PP_{x_0}(\Sigma_g^b,\ell);V)_{C} = \HH_1(\PP_{x_0}(\Sigma_g^b,H);V).
\end{equation}
The group $\Mod_g^b(\ell)$ also acts on $\HH_1(\PP_{x_0}(\Sigma_g^b,\ell);V)$.  Since $\Mod_g^b(\ell)$
acts trivially on $\cD$, the actions of $\Mod_g^b(\ell)$ and $C$ on $\HH_1(\PP_{x_0}(\Sigma_g^b,\ell);V)$
commute.  It follows that $\Mod_g^b(\ell)$ preserves the decomposition of
$\HH_1(\PP_{x_0}(\Sigma_g^b,\ell);V)$ into $C$-isotypic components.  

Just
like we talked about for the Prym representation in \S \ref{section:decompositionprym},
the irreducible representations of the finite abelian group $C$ are in
bijection with characters $\chi \in \hC$.  For $\chi \in \hC$, let $U_{\chi}$ denote
the corresponding isotypic component of $\HH_1(\PP_{x_0}(\Sigma_g^b,\ell);V)$.  We thus
have
\begin{equation}
\label{eqn:decomposeisotypic}
\HH_{k-1}(\Mod_g^b(\ell);\HH_1(\PP_{x_0}(\Sigma_g^b,\ell);V)) = \bigoplus_{\chi \in \hC} \HH_{k-1}(\Mod_g^b(\ell);U_{\chi}).
\end{equation}
We now return to \eqref{eqn:coinvariantsc}.  Taking the $C$-coinvariants like in
\eqref{eqn:coinvariantsc} kills exactly the $U_{\chi}$ such that $\chi$ is nontrivial
(c.f.\ the proof of Lemma \ref{lemma:intermediatecover}).  Letting $1 \in \hC$ denote
the trivial character, we thus see from \eqref{eqn:coinvariantsc} and \eqref{eqn:decomposeisotypic} that
\[\HH_{k-1}(\Mod_g^b(\ell);\HH_1(\PP_{x_0}(\Sigma_g^b,H);V)) = \HH_{k-1}(\Mod_g^b(\ell);U_1).\]
The map
\[\HH_{k-1}(\Mod_{g}^b(\ell);\HH_1(\PP_{x_0}(\Sigma_g^b,\ell);V)) \rightarrow \HH_{k-1}(\Mod_{g}^b(\ell);\HH_1(\PP_{x_0}(\Sigma_g^b,H);V))\]
we are trying to prove is an isomorphism can therefore be identified with the projection
\[\bigoplus_{\chi \in \hC} \HH_{k-1}(\Mod_g^b(\ell);U_{\chi}) \rightarrow \HH_{k-1}(\Mod_g^b(\ell);U_1).\]
Fixing some nontrivial $\chi_0 \in \hC$, we deduce that to prove the claim, it
suffices to prove that $\HH_{k-1}(\Mod_g^b(\ell);U_{\chi_0}) = 0$.

Recall that the genus of $H \subset \HH_1(\Sigma_{g,1}^b;\Z/\ell)$ is at most $r$.  Let $H'$ be a symplectic subspace
of $\HH_1(\Sigma_{g,1}^b;\Z/\ell)$ of genus at most $r+1$ with the following two properties:
\begin{itemize}
\item $H \subset H'$.  Regarding $H'$ as a subgroup of $\cD$ and letting $C'$ be its orthogonal complement, we
thus have $C' \subset C$.
\item The character $\chi_0\colon C \rightarrow \C^{\ast}$ vanishes on $C'$.
\end{itemize}
Since the image of $\chi_0$ is a finite cyclic group, such an $H'$ can be constructed using
an argument similar to \cite[Lemma 3.5]{PutmanPartialTorelli}.  We have
$\PP_{x_0}(\Sigma_g^b,H') \subset \PP_{x_0}(\Sigma_g^b,H)$, and arguments
identical to the ones we gave above show that
\[\HH_1(\PP_{x_0}(\Sigma_g^b,\ell);V)_{C'} = \HH_1(\PP_{x_0}(\Sigma_g^b,H');V).\]
Continuing just like above, since $\hC'$ is the set of $\chi \in \hC$ with $\chi|_{C'} = 1$ we deduce that
\[\HH_{k-1}(\Mod_g^b(\ell);\HH_1(\PP_{x_0}(\Sigma_g^b,H');V)) = \bigoplus_{\substack{\chi \in \hC \\ \chi|_{C'} = 1}} \HH_{k-1}(\Mod_g^b(\ell);U_{\chi}).\]
The map
\begin{equation}
\label{eqn:projectionisomorphism}
\HH_{k-1}(\Mod_g^b(\ell);\HH_1(\PP_{x_0}(\Sigma_g^b,H');V)) \rightarrow \HH_{k-1}(\Mod_g^b(\ell);\HH_1(\PP_{x_0}(\Sigma_g^b,H);V))
\end{equation}
can thus be identified with the projection
\[\bigoplus_{\substack{\chi \in \hC \\ \chi|_{C'} = 1}} \HH_{k-1}(\Mod_g^b(\ell);U_{\chi}) \rightarrow \HH_{k-1}(\Mod_g^b(\ell);U_1).\]
The summand $\HH_{k-1}(\Mod_g^b(\ell);U_{\chi_0})$ we are trying to prove is $0$ therefore appears
in the kernel of \eqref{eqn:projectionisomorphism}.

To prove the claim, we are therefore reduced to proving that \eqref{eqn:projectionisomorphism} is injective.  In fact, we will show that it is an isomorphism.
Our earlier work allows us to simplify its domain and codomain.  We start with the codomain.  By \eqref{eqn:identifykeropsi}, we have an isomorphism
\[\HH_{k-1}(\Mod_g^b(\ell);\HH_1(\PP_{x_0}(\Sigma_g^b,H);V)) \stackrel{\cong}{\rightarrow} \HH_{k-1}(\Mod_{g}^b(H);\HH_1(\PP_{x_0}(\Sigma_g^b,H);V)).\]
By \eqref{eqn:identifypsi}, the right hand side fits into a short exact sequence
\[\begin{tikzcd}[font=\small, column sep=0.7em, row sep=1em]
0 \arrow{r} & \HH_{k-1}(\Mod_{g}^b(H);\HH_1(\PP_{x_0}(\Sigma_g^b,H);V))       \arrow{r}           & \HH_k(\Mod_{g,1}^b(H);V)    \arrow{r}{\opsi}           & \HH_k(\Mod_g^b(H);W)    \arrow{r}           & 0.
\end{tikzcd}\]
Finally, \eqref{eqn:identifykeropsi} gives an isomorphism
\[\omu|_{\ker{\opsi}}\colon \ker(\opsi) \stackrel{\cong}{\longrightarrow} \HH_{k-2}(\Mod_{g,1}^b(H);V).\]
Combining the previous three facts, we get an isomorphism
\begin{equation}
\label{eqn:codomain}
\HH_{k-1}(\Mod_g^b(\ell);\HH_1(\PP_{x_0}(\Sigma_g^b,H);V)) \cong \HH_{k-2}(\Mod_{g,1}^b(H);V).
\end{equation}
This identifies the codomain of \eqref{eqn:projectionisomorphism}.

As for the domain, we can also run the above argument with $H'$ instead of $H$ since we were careful at the beginning of this step to allow
the genus to be at most $r+1$ instead of just $r$, and we see that
\begin{equation}
\label{eqn:domain}
\HH_{k-1}(\Mod_g^b(\ell);\HH_1(\PP_{x_0}(\Sigma_g^b,H');V)) \cong \HH_{k-2}(\Mod_{g,1}^b(H');V).
\end{equation}
Using the isomorphisms \eqref{eqn:codomain} and \eqref{eqn:domain}, we can identify identify the map \eqref{eqn:projectionisomorphism} we are trying
to prove is an isomorphism with the map
\[\HH_{k-2}(\Mod_{g,1}^b(H');V) \rightarrow \HH_{k-2}(\Mod_{g,1}^b(H);V).\]
But this is an isomorphism; indeed, using our inductive hypothesis (b)
we see that both the first map and the composition in
\[\HH_{k-2}(\Mod_{g,1}^b(\ell);V) \rightarrow \HH_{k-2}(\Mod_{g,1}^b(H');V) \rightarrow \HH_{k-2}(\Mod_{g,1}^b(H);V)\]
are isomorphisms.  This completes the proof of the claim.
\end{proof}

\begin{step}{5}[Completing the induction]
\label{step:main}
For some $k$ and $r$, make the inductive hypotheses (a)-(c) from Step \ref{step:induction}:
\begin{itemize}
\item[(a)] $r \geq 0$ and $k \geq 1$.
\item[(b)] We have already proved the theorem for $\HH_{i}$ for all $i < k$.
\item[(c)] For $\HH_k$, we have already proved the theorem for all homological representations of size less than $r$.  This
is vacuous if $r=0$.
\end{itemize}
Let $g \geq 0$ and $b \geq 1$.  Let $\fH_{g}^{b}(\uchi)$ be a size-$r$ homological representation
of $\Mod_{g}^{b}(\ell)$ and let $H$ be a symplectic subgroup of $\HH_1(\Sigma_{g}^{b}(\ell);\Z/\ell)$
that is compatible with $\fH_{g}^{b}(\uchi)$ and has genus at most $r$.  
Assume that $g \geq 2(k+r)^2+7k+6r+2$.  Then we prove that the map
\[\HH_k\left(\Mod_{g}^b\left(\ell\right);\fH_{g}^b(\uchi)\right) \rightarrow \HH_k\left(\Mod_{g}^b\left(H\right);\fH_{g}^b(\uchi)\right)\]
is an isomorphism.  In light of Step \ref{step:reducegenuspunctures}, this proves the
theorem for $\HH_k$ and homological representations of size $r$ in general, completing our induction.
\end{step}

By Lemma \ref{lemma:intermediatecover}, the $\Mod_g^b(H)$-representation $\fH_{g}^b(\uchi)$ is a direct summand
of $\fH_g^b(H;\C)^{\otimes r}$.  It follows that it is enough to prove that the map
\begin{equation}
\label{eqn:maintoprove}
\HH_k\left(\Mod_{g}^b\left(\ell\right);\fH_{g}^b(H;\C)^{\otimes r}\right) \rightarrow \HH_k\left(\Mod_{g}^b\left(H\right);\fH_{g}^b(H;\C)^{\otimes r}\right)
\end{equation}
is an isomorphism.  Step \ref{step:capboundary} implies that
\[\HH_k\left(\Mod_{g}^{b}\left(\ell\right);\fH_{g}^{b}(H;\C)\right) \cong \HH_k\left(\Mod_{g}^{1}\left(\ell\right);\fH_{g}^{1}(H;\C)\right).\]
Also, as we noted at the beginning of the proof of Step \ref{step:capboundary} our genus assumptions allow
us to use Theorem \ref{theorem:stability} to deduce that
\[\HH_k\left(\Mod_{g}^{b}\left(H\right);\fH_{g}^{b}(H;\C)\right) \cong \HH_k\left(\Mod_{g}^{1}\left(H\right);\fH_{g}^{1}(H;\C)\right).\]
We conclude that it is enough to prove that \eqref{eqn:maintoprove} is an isomorphism when $b=1$.

The transfer map lemma (Lemma \ref{lemma:transfer})
implies that
\[\HH_k\left(\Mod_{g}^1\left(H\right);\fH_{g}^1(H;\C)^{\otimes r}\right) = \left(\HH_k\left(\Mod_{g}^1\left(\ell\right);\fH_{g}^1(H;\C)^{\otimes r}\right)\right)_{\Mod_g^1(H)},\]
This reduces us to showing that
$\Mod_g^1(H)$ acts trivially on $\HH_k(\Mod_g^1(\ell);\fH_g^1(H;\C)^{\otimes r})$.  Lemma \ref{lemma:partialgen} says
that $\Mod_g^1(H)$ is generated by $\Mod_{g}^1(\ell)$ along with the set of all Dehn twists
$T_{\gamma}$ such that $[\gamma] \in H^{\perp}$.  Since inner automorphisms act
trivially on homology (see, e.g., \cite[Proposition III.8.1]{BrownCohomology}), this reduces us
to showing that such $T_{\gamma}$ act trivially.

Fix such a $\gamma$.  Say that an embedding $\Sigma_{g-1}^1 \hookrightarrow \Sigma_g^1$ is $H$-compatible if $H$ is contained
in the image of the induced map $\HH_1(\Sigma_{g-1}^1;\Z/\ell) \rightarrow \HH_1(\Sigma_g^1;\Z/\ell)$.  Fix
an $H$-compatible embedding $j\colon \Sigma_{g-1}^1 \hookrightarrow \Sigma_g^1$ such that
$\gamma$ is contained in the complement of the image of $j$:\\
\centerline{\psfig{file=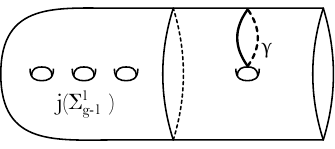,scale=1}}
Since $T_{\gamma}$ commutes with mapping classes supported on $\Sigma_{g-1}^1$,
it acts trivially on the image
of $\HH_k(\Mod_{g-1}^1(\ell);\fH_{g-1}^1(H;\C)^{\otimes r})$ in
$\HH_k(\Mod_g^1(\ell);\fH_g^1(H;\C)^{\otimes r}$.  It follows that it is enough to prove that the map
\[j_{\ast}\colon \HH_k(\Mod_{g-1}^1(\ell);\fH_{g-1}^1(H;\C)^{\otimes r}) \longrightarrow \HH_k(\Mod_{g}^1(\ell);\fH_{g}^1(H;\C)^{\otimes r})\]
is surjective.  For this, it is enough to prove the following two facts:
\begin{itemize}
\item The map
\[\bigoplus_{\substack{\Sigma_{g-1}^1 \hookrightarrow \Sigma_g^1 \\ \text{$H$-compatible}}} \HH_k(\Mod_{g-1}^1(\ell);\fH_{g-1}^1(H;\C)^{\otimes r}) \longrightarrow \HH_k(\Mod_{g}^1(\ell);\fH_{g}^1(H;\C)^{\otimes r})\]
is surjective.
\item Let $j_0\colon \Sigma_{g-1}^1 \hookrightarrow \Sigma_g^1$ and $j_1\colon \Sigma_{g-1}^1 \hookrightarrow \Sigma_g^1$ be two
$H$-compatible embeddings.  Let 
\[(j_i)_{\ast}\colon \HH_k(\Mod_{g-1}^1(\ell);\fH_{g-1}^1(H)^{\otimes r}) \longrightarrow \HH_k(\Mod_{g}^1(\ell);\fH_{g}^1(H)^{\otimes r})\]
be the induced map.  Then the images of $(j_0)_{\ast}$ and $(j_1)_{\ast}$ are the same.
\end{itemize}
These two facts are the subject of the following two claims:

\begin{claim}{5.1}
The map
\[\bigoplus_{\substack{\Sigma_{g-1}^1 \hookrightarrow \Sigma_g^1 \\ \text{$H$-compatible}}} \HH_k(\Mod_{g-1}^1(\ell);\fH_{g-1}^1(H;\C)^{\otimes r}) \longrightarrow \HH_k(\Mod_{g}^1(\ell);\fH_{g}^1(H;\C)^{\otimes r})\]
is surjective.
\end{claim}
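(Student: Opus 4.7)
The plan is to apply the fragment of the stability machine (Proposition \ref{proposition:stabilitymachine}) with $G = \Mod_g^1(\ell)$, with $\bbX = \bbTT_g^1(I, H)$ the complex of $I$-tethered $H$-orthogonal tori for any open interval $I \subset \partial \Sigma_g^1$, and with equivariant augmented coefficient system $\cM = \cH_g^1(H; \C)^{\otimes r}$ built from the partial Prym coefficient system of \S\ref{section:coefficientsystem}. Then $\cM\emptysimp = \fH_g^1(H;\C)^{\otimes r}$, and each vertex $[\iota] \in \bbX^0$ is a tethered torus whose complement gives an $H$-compatible embedding $\Sigma_{g-1}^1 \hookrightarrow \Sigma_g^1$ with $\cM[\iota] \cong \fH_{g-1}^1(H;\C)^{\otimes r}$. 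Once the vertex-stabilizer homology is correctly identified, the output of Proposition \ref{proposition:stabilitymachine} is precisely the surjection claimed.

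Hypothesis (i) will follow from the vanishing theorem (Theorem \ref{theorem:vanishing}): $\cH_g^1(H;\C)$ is strongly polynomial of degree $1$ by Lemma \ref{lemma:hstrongpoly}, so $\cM$ is strongly polynomial of degree $r$ by Lemma \ref{lemma:tensorpolynomial}, while Corollary \ref{corollary:tetheredtoricm} gives that $\bbX$ is weakly forward Cohen--Macaulay of dimension $\frac{g-(4h+3)}{2h+2}+1$, where $h \leq r$ is the genus of $H$; a direct computation shows this exceeds the needed $k+r = (k-1)+r+1$ under the standing bound $g \geq 2(k+r)^2 + 7k + 6r + 2$. Hypothesis (ii) is immediate from Corollary \ref{corollary:highconnectivityquotient}, which gives $\bbX / G$ as $\frac{g-h-5}{2}$-connected, well beyond $k$ in our range.

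The crux is hypothesis (iii): for a $p$-simplex $\sigma$ with $i - 1 \leq p \leq i + 1$ and $i \geq 1$, the map $\HH_{k-i}(G_\sigma; \cM(\sigma)) \to \HH_{k-i}(G; \cM\emptysimp)$ must be an isomorphism. Here the complement $X_\sigma \cong \Sigma_{g-p-1}^1$ is $H$-compatible and $\cM(\sigma) \cong \fH_{g-p-1}^1(H;\C)^{\otimes r}$. The stabilizer $G_\sigma$ contains $\Mod_{g-p-1}^1(\ell)$ (acting by the identity on the tethered-tori neighborhood) together with central Dehn twists about the separating curves bounding the tethered-tori neighborhood; these extra twists act trivially on $\cM(\sigma)$, so the transfer-plus-centrality argument of Corollary \ref{corollary:hatcap} will identify $\HH_{k-i}(G_\sigma; \cM(\sigma))$ with $\HH_{k-i}(\Mod_{g-p-1}^1(\ell); \fH_{g-p-1}^1(H;\C)^{\otimes r})$. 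The required isomorphism then reduces to the assertion that
\[\HH_{k-i}\bigl(\Mod_{g-p-1}^1(\ell); \fH_{g-p-1}^1(H;\C)^{\otimes r}\bigr) \stackrel{\cong}{\longrightarrow} \HH_{k-i}\bigl(\Mod_g^1(\ell); \fH_g^1(H;\C)^{\otimes r}\bigr),\]
which I obtain by chaining: since $k - i < k$, the inductive hypothesis (b) identifies each side with the corresponding homology for $\Mod_\bullet^1(H)$, and Theorem \ref{theorem:stability} then provides the stability isomorphism between these, its stable-range condition $g - p - 1 \geq (2h+2)((k-i) + r) + (4h+2)$ being satisfied with room to spare under our bound on $g$.

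The main obstacle I anticipate is the precise identification of the stabilizer $G_\sigma$ for a higher-dimensional simplex together with the identification of $\cM(\sigma) \hookrightarrow \cM\emptysimp$ as the natural inclusion of partial Pryms at the level of covers. This is exactly where the $\hMod$-versus-$\Mod$ distinction from \S\ref{section:subsurfacestabilizers} enters the picture: some dividing-curve twists need not individually lie in $\Mod_g^1(\ell)$, but bounded powers of them do, and $\cM(\sigma)$ is insensitive to them, so iterated use of Corollary \ref{corollary:hatcap} and the transfer-map lemma (Lemma \ref{lemma:transfer}) should bridge the gap without costing us any further in the genus range.
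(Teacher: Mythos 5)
Your proposal is correct and follows essentially the same route as the paper: the same application of Proposition \ref{proposition:stabilitymachine} to $\bbTT_g^1(I,H)$ with coefficients $\cH_g^1(H;\C)^{\otimes r}$, with (i) via Lemmas \ref{lemma:hstrongpoly}, \ref{lemma:tensorpolynomial}, Corollary \ref{corollary:tetheredtoricm} and Theorem \ref{theorem:vanishing}, (ii) via Corollary \ref{corollary:highconnectivityquotient}, and (iii) reduced to the inductive hypothesis plus Theorem \ref{theorem:stability}. The only (harmless) deviation is your anticipated need for Corollary \ref{corollary:hatcap} and the $\hMod$-versus-$\Mod$ issue: it does not arise here, since the stabilizer of a $p$-simplex is exactly $\Mod_{g-p-1}^1(\ell)$ (a twist about $\partial X_\sigma$ is supported on $X_\sigma$ and acts trivially on $\HH_1(X_\sigma;\Z/\ell)$, so it is already in that group); that subtlety only enters in the proof of Claim 5.2.
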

\begin{proof}[Proof of claim]
Let $I$ be an open interval in $\partial \Sigma_g^1$, and consider the complex $\bbTT_g^1(I,H)$ of
$I$-tethered $H$-orthogonal tori in $\Sigma_g^1$ that we introduced in \S \ref{section:tetheredtoruscomplex}.  Given
a $p$-simplex $\sigma = [\iota_0,\ldots,\iota_p]$, let $X_{\sigma}$ be the subsurface of $\Sigma_g^1$ defined in
\S \ref{section:coefficientsystem}:
\[X_{\sigma} = \Sigma_g^1 \setminus \text{Nbhd}\left(\partial \Sigma_g^1 \cup \Image\left(\iota_0\right) \cup \cdots \cup \Image\left(\iota_p\right)\right).\]
See here:\\
\centerline{\psfig{file=CoefficientSystem,scale=1}}
Thus $X_{\sigma} \cong \Sigma_{g-p-1}^1$, and the inclusion $X_{\sigma} \hookrightarrow \Sigma_g^1$ is
$H$-compatible.  The $\Mod_g^1(\ell)$-stabilizer of $\sigma$ consists of all elements of 
$\Mod_g^1(\ell)$ supported on $X_{\sigma}$, so $\Mod_g^1(\ell)_{\sigma} \cong \Mod_{g-p-1}^1(\ell)$.

We now recall the definition of the augmented coefficient system $\cH_g^1(H;\C)$ on
$\bbT_g^1(I,H)$ we defined in \S \ref{section:coefficientsystem}.
Let $\pi\colon S_H \rightarrow \Sigma_g^1$ be the cover used to define $\fH_g^1(H;\C)$, and for a simplex
$\sigma$ of $\bbT_g^1(I,H)$ let $\tX_{\sigma} = \pi^{-1}(X_{\sigma})$.  For a $p$-simplex $\sigma$ of $\bbT_g^1(I,H)$, we then have
\[\cH_g^1(H;\C)(\sigma) = \HH_1(\tX_{\sigma};\C) \cong \fH_{g-p-1}^1(H;\C).\]
In particular, for the empty simplex $\emptysimp$ we have
\[\cH_g^1(H;\C)\emptysimp = \fH_g^1(H;\C).\]
It follows that to prove the claim, it is enough to prove that the map
\[\bigoplus_{v \in \bbT_g^1(I,H)^{(0)}} \HH_k(\Mod_g^1(\ell)_v;\cH_g^1(H;\C)^{\otimes r}(v)) \longrightarrow \HH_k(\Mod_g^1(\ell);\cH_g^1(H;\C)^{\otimes r}\emptysimp)\]
is surjective.  This will follow from Proposition \ref{proposition:stabilitymachine} once we verify its three hypotheses.
This requires manipulating our bound on $g$, so we introduce the notation
\[\bb(k,r) = 2(k+r)^2+7k+6r+2.\]
Thus our assumption is that $g \geq \bb(k,r)$.

Hypothesis (i) is that $\RH_i(\bbTT_g^1(I,H);\cH_g^1(H;\C)^{\otimes r}) = 0$ for $-1 \leq i \leq k-1$.
Lemma \ref{lemma:hstrongpoly} says that $\cH_g^1(H;\C)$ is strongly polynomial of degree $1$, so
Lemma \ref{lemma:tensorpolynomial} implies that $\cH_g^1(H;\C)^{\otimes r}$ is strongly polynomial of
degree $r$.  We have assumed that the genus of $H$ is at most $r$, so Corollary \ref{corollary:tetheredtoricm}
implies that 
$\bbTT_g^1(I,H)$ is weakly forward Cohen--Macaulay of dimension $\frac{g-(4r+3)}{2r+2}+1$.
Applying Theorem \ref{theorem:vanishing}, we deduce that 
$\RH_i(\bbTT_g^1(I,H);\cH_g^1(H;\C)^{\otimes r}) = 0$ for $-1 \leq i \leq \frac{g-(4r+3)}{2r+2}-r$.
We must prove that this is at least $k-1$.  Manipulating
\[\frac{g-(4r+3)}{2r+2}-r \geq k-1,\]
we see that it is equivalent to
\[g \geq (2r+2)(k+r-1) + (4r+3) = 2(r+1)(k+r)+2r+1.\]
We thus must prove that $\bb(k,r) \geq 2(r+1)(k+r)+2r+1$.  But at the beginning of Step \ref{step:capboundary}
we proved\footnote{Actually, what we proved was $\bb(k,r)-1 \geq 2(k+r)(r+2)+(4r+6)$.} that $\bb(k,r) \geq 2(k+r)(r+2)+(4r+7)$, which is even stronger.

Hypotheses (ii) is that $\RH_i(\bbTT_g^1(I,H)/\Mod_g^1(\ell)) = 0$ for $-1 \leq i \leq k$.
We assumed that the genus of $H$ is at most $r$, so
Corollary \ref{corollary:highconnectivityquotient} implies that $\bbTT_g^1(I,H)/\Mod_g^1(\ell)$
is at least $\frac{g-r-5}{2}$-connected.  We want to prove that this is at least $k$.  Manipulating this,
we see that it is equivalent to
\[g \geq 2k+r+5.\]
We thus must prove that $\bb(k,r) \geq 2k+r+5$.  For this, we calculate: 
\begin{align*}
\bb(k,r) &= 2(k+r)^2+7k+6r +2 = 2k + r + (2(k+r)^2+5k+5r+2) \\
         &\geq 2k+r+(2+5+0+2) \geq 2k+r + 5,
\end{align*}
as desired.  Here we are using our inductive hypothesis (a), which says
that $k \geq 1$ and $r \geq 0$.

Hypothesis (iii) is that if $\sigma$ is a simplex of $\bbTT_g^1(I,H)$ and $i \geq 1$, then the map
\[\HH_{k-i}(\Mod_g^1(\ell)_{\sigma};\cH_g^1(H;\C)^{\otimes r}(\sigma)) \longrightarrow \HH_{k-i}(\Mod_g^1(\ell);\cH_g^1(H;\C)^{\otimes r}\emptysimp)\]
is an isomorphism if $i-1 \leq \dim(\sigma) \leq i+1$.
By our description of the simplex
stabilizers and the values of $\cH_g^1(H;\C)$, this is equivalent to proving that 
for $i \geq 1$, the map
\[\HH_{k-i}(\Mod_{g-h}^1(\ell);\fH_{g-h}^1(H;\C)^{\otimes r}) \longrightarrow \HH_{k-i}(\Mod_g^1(\ell);\fH_g^1(H;\C)^{\otimes r})\]
is an isomorphism if $i \leq h \leq i+2$.  In fact, we will prove
that it is an isomorphism for $1 \leq h \leq i+2$.  The above map fits into a commutative diagram
\[\begin{tikzcd}[row sep=scriptsize]
\HH_{k-i}(\Mod_{g-h}^1(\ell);\fH_{g-h}^1(H;\C)^{\otimes r}) \arrow{r} \arrow{d} & \HH_{k-i}(\Mod_g^1(\ell);\fH_g^1(H;\C)^{\otimes r}) \arrow{d} \\
\HH_{k-i}(\Mod_{g-h}^1(H);\fH_{g-h}^1(H;\C)^{\otimes r})    \arrow{r}           & \HH_{k-i}(\Mod_g^1(H);\fH_g^1(H;\C)^{\otimes r}).
\end{tikzcd}\]
What we will do is use our inductive hypothesis (b) to show that both vertical arrows are isomorphisms
and Theorem \ref{theorem:stability} to prove that the bottom horizontal arrow is an isomorphism.

We start by using our inductive hypothesis to show that both vertical arrows are isomorphisms.  To
show this, since $g \geq \bb(k,r)$ it is enough to prove that
\begin{equation}
\label{eqn:verticaltoprove}
\bb(k,r) \geq \bb(k-i,r)+(i+2) \quad \text{for $1 \leq i \leq k$}.
\end{equation}
In fact, we will prove something stronger, namely that for all $j, r \geq 0$ we have
have
\begin{equation}
\label{eqn:verticaltoprove2}
\bb(j+1,r) \geq \bb(j,r) + 7.
\end{equation}
Iterating this gives an even better bound than \eqref{eqn:verticaltoprove}, namely
that $\bb(k,r) \geq \bb(k-i,r)+7i$.  To see \eqref{eqn:verticaltoprove2}, we
calculate as follows:
\[\bb(j+1,r) = 2(j+r+1)^2 + 7(j+1)+6r+2 \geq 2(j+r)^2 + 7(j+1) + 6r + 2= \bb(j,r)+7.\]
We next use Theorem \ref{theorem:stability} to prove that the bottom horizontal arrow is
an isomorphism.  The bound in that theorem for $\HH_j$ is $\bb'(j,r) = 2(j+r)(r+1)+(4r+2)$,
so since $g \geq \bb(k,r)$ what we have to show is that
\[\bb(k,r) \geq \bb'(k-i,r)+(i+2) \quad \text{for $1 \leq i \leq k$}.\]
To see this, it is enough to prove that $\bb(k,r) \geq \bb'(k,r)+7$ and that $\bb'(j+1,r) \geq \bb'(j,r)+2$
for all $j,r \geq 0$.
For $\bb(k,r) \geq \bb'(k,r)+7$, we calculate as follows:
\begin{align*}
\bb(k,r) &= 2(k+r)^2+7k+6r+2 \geq 2(k+r)(r+1) + 7k+6r+2 \\
         &\geq 2(k+r)(r+1) + 4r + (7k+2) \\
         &\geq 2(k+r)(r+1)+4r+9 = \bb'(k,r) + 7.
\end{align*}
For $\bb'(j+1,r) \geq \bb'(j,r)+2$, we calculate as follows:
\[\bb'(j+1,r) = 2(j+r+1)(r+1)+(4r+2) = 2(j+r)(r+1) + (4r+2) + (2r+2) \geq \bb'(j,r)+2.\qedhere\]
\end{proof}

\begin{claim}{5.2}
Let $j_0\colon \Sigma_{g-1}^1 \hookrightarrow \Sigma_g^1$ and $j_1\colon \Sigma_{g-1}^1 \hookrightarrow \Sigma_g^1$ be two
$H$-compatible embeddings.  Let 
\[(j_i)_{\ast}\colon \HH_k(\Mod_{g-1}^1(\ell);\fH_{g-1}^1(H)^{\otimes r}) \longrightarrow \HH_k(\Mod_{g}^1(\ell);\fH_{g}^1(H)^{\otimes r})\]
be the induced map.  Then the images of $(j_0)_{\ast}$ and $(j_1)_{\ast}$ are the same.
\end{claim}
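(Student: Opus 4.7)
The plan is to reduce Claim 5.2 to a change-of-coordinates argument inside $\Mod_g^1(H)$ and then to analyze the resulting conjugation automorphism of $\HH_k(\Mod_g^1(\ell);\fH_g^1(H;\C)^{\otimes r})$. The key observation is that because both $j_0$ and $j_1$ are $H$-compatible, they can be transported into one another by an element of $\Mod_g^1(H)$, not just of $\Mod_g^1$.

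First I would establish the following refinement of the classical change-of-coordinates principle \cite[\S 1.3.2]{FarbMargalitPrimer}: there exists $\phi \in \Mod_g^1(H)$ with $\phi \circ j_0$ isotopic to $j_1$. The usual principle produces such a $\phi$ in $\Mod_g^1$; since both embeddings are $H$-compatible, one then post-composes $\phi$ with Dehn twists about simple closed curves whose homology classes lie in $H^\perp$ (all of which lie in $\Mod_g^1(H)$ by Lemma \ref{lemma:partialgen}) until the resulting element acts trivially on $H$ and therefore lies in $\Mod_g^1(H)$. With such $\phi$ in hand, a direct computation shows $(j_1)_\ast = c_\phi \circ (j_0)_\ast$, where $c_\phi$ is the automorphism of $\HH_k(\Mod_g^1(\ell);\fH_g^1(H;\C)^{\otimes r})$ induced jointly by conjugation by $\phi$ on the normal subgroup $\Mod_g^1(\ell) \triangleleft \Mod_g^1$ and by the natural action of $\phi \in \Mod_g^1(H)$ on the coefficient module. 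Thus $\Image((j_1)_\ast) = c_\phi(\Image((j_0)_\ast))$, and the claim reduces to showing $c_\phi$ preserves $\Image((j_0)_\ast)$ as a subset.

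By Lemma \ref{lemma:partialgen} it is enough to verify this preservation on a generating set for $\Mod_g^1(H)$: namely, elements of $\Mod_g^1(\ell)$ (for which $c_\phi$ acts trivially on the whole of $\HH_k(\Mod_g^1(\ell);V)$ by the inner automorphism principle \cite[Proposition III.8.1]{BrownCohomology}), and Dehn twists $T_\gamma$ with $[\gamma] \in H^\perp$. In the latter case $c_{T_\gamma}(\Image((j_0)_\ast)) = \Image((T_\gamma \circ j_0)_\ast)$, so the question becomes whether $\Image((j_0)_\ast) = \Image((j_0')_\ast)$ where $j_0' = T_\gamma \circ j_0$.

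The hard part will be this final equality. When $\gamma$ can be arranged disjoint from $j_0(\Sigma_{g-1}^1)$---which one can always do after composing with an element of $\Mod_g^1(\ell)$, which leaves $\Image((j_0)_\ast)$ invariant---the twist $T_\gamma$ commutes with the subgroup $(j_0)_\ast(\Mod_{g-1}^1(\ell))$ and preserves the image $(j_0)_\ast(\fH_{g-1}^1(H;\C)^{\otimes r}) \subset \fH_g^1(H;\C)^{\otimes r}$, so $j_0' = j_0$ up to isotopy and the claim is immediate. In the general case, I would exploit the high connectivity of $\bbTT_g^1(I,H)$ (Theorem \ref{theorem:tetheredtoricon}) to connect $j_0$ to $j_0'$ by a sequence of $H$-compatible embeddings whose consecutive members are edge-adjacent in $\bbTT_g^1(I,H)$: the corresponding tethered tori are disjoint in $\Sigma_g^1$, so the two complementary subsurfaces share a common $\Sigma_{g-2}^1$-subsurface $Y$. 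Equality of images for edge-adjacent pairs would then follow by factoring both push-forwards through this common $Y$ and analyzing the $d_1$-differential from edges to vertices in the spectral sequence of Proposition \ref{proposition:stabilitymachine}---the same spectral sequence already used in the proof of Claim 5.1---which captures precisely these edge relations.
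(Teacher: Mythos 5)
Your reduction to checking the effect of a generating set of $\Mod_g^1(H)$ on $\Image((j_0)_\ast)$ matches the paper's strategy, but the treatment of the twist generators has a genuine gap, in two places. First, the claim that for an arbitrary $T_\gamma$ with $[\gamma]\in H^{\perp}$ one "can always" arrange $\gamma$ to be disjoint from $j_0(\Sigma_{g-1}^1)$ after composing with an element of $\Mod_g^1(\ell)$ is false: elements of $\Mod_g^1(\ell)$ preserve mod-$\ell$ homology classes, and a curve disjoint from $j_0(\Sigma_{g-1}^1)$ must have class in the rank-two subgroup of $\HH_1(\Sigma_g^1;\Z/\ell)$ carried by the complementary handle, so any $\gamma$ whose class lies outside that subgroup (e.g.\ supported inside $j_0(\Sigma_{g-1}^1)$) cannot be moved off the subsurface. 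The paper sidesteps this by not using arbitrary twist generators: by Lemma \ref{lemma:partialgen} and Remark \ref{remark:specificgen} one may choose a specific generating set adapted to $j_0$ in which every twist except the single curve $\gamma_1$ fixes the subsurface $j_0(\Sigma_{g-1}^1)$, so only the one case $j_1=T_{\gamma_1}\circ j_0$, where $\gamma_1$ meets the subsurface in a controlled way, remains.

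Second, your fallback for the "general case" --- connecting $j_0$ to $T_\gamma\circ j_0$ by an edge path in $\bbTT_g^1(I,H)$ and factoring through the common $\Sigma_{g-2}^1$ of an edge --- does not close the argument. Factoring through the common subsurface $Y$ only shows both vertex images contain the image coming from $Y$; to upgrade this to equality you would need the map $\HH_k(\Mod_{g-2}^1(\ell);\fH_{g-2}^1(H;\C)^{\otimes r})\rightarrow \HH_k(\Mod_{g-1}^1(\ell);\fH_{g-1}^1(H;\C)^{\otimes r})$ to be surjective, which is exactly an instance (at one lower genus, hence outside the assumed stable range) of the statement that Claims 5.1 and 5.2 are jointly in the middle of proving; the paper's induction is on $k$ and $r$, not on $g$, so this is not available and the appeal to the $d_1$-differential of Proposition \ref{proposition:stabilitymachine} does not supply it --- independence of the vertex is precisely what the machine does not give for free here. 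The paper instead handles the remaining generator by embedding the images of $j_0$ and $T_{\gamma_1}\circ j_0$ in a common subsurface $\Sigma_{g-1}^2$, factoring both maps through $\hMod_{g-1}^2(\ell)$, and showing each $(j_i')_\ast$ is an isomorphism using Corollary \ref{corollary:hatcap} together with the boundary-capping isomorphism of Step \ref{step:capboundary} (which is why that step was proved with a bound one better than the one for $\HH_k$); some such additional input is needed to make your outline into a proof.
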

\begin{proof}[Proof of claim]
The group $\Mod_g^1(H)$ acts transitively\footnote{This can be proved directly 
along the same lines as Lemma \ref{lemma:identifyquotient}.  Alternatively, since
all $H$-compatible embeddings come from vertices of $\bbTT_g^1(I,H)$, it follows
from the fact that $\Mod_g^1(H)$ acts transitively on such vertices.  See \cite[Lemma 3.9]{PutmanPartialTorelli}
for a proof of this in a much more general context.}
 on the set of $H$-compatible embeddings $\Sigma_{g-1}^1 \hookrightarrow \Sigma_g^1$.  We can thus
find some $\phi \in \Mod_g^1(H)$ such that $j_1 = \phi \circ j_0$.  

Let $h$ be the genus of $H$, so by our assumptions $h \leq r$.
Lemma \ref{lemma:partialgen}
says that $\Mod_g^1(H)$ is generated by $\Mod_g^1(\ell)$ along with any set $S$ of Dehn twists
about simple closed nonseparating curves $\gamma$ with $[\gamma] \in H^{\perp}$ such that $S$
maps to a generating set for $\Sp(H^{\perp}) \cong \Sp_{2(g-h)}(\Z/\ell)$.  In fact, as
Remark \ref{remark:specificgen} points out, we
can take
\[S = \{T_{\alpha_1},\ldots,T_{\alpha_{g-h}},T_{\beta_1},\ldots,T_{\beta_{g-h}},T_{\gamma_1},\ldots,T_{\gamma_{g-h-1}}\},\]
where the $\alpha_i$ and $\beta_i$ and $\gamma_i$ are as follows:\\
\centerline{\psfig{file=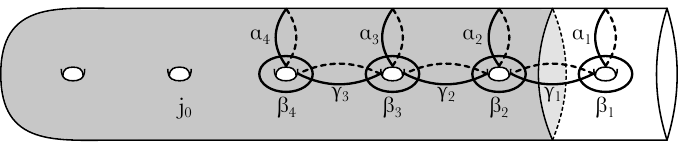,scale=1}}
The image of $j_0$ is the shaded region and $H$ consists of all elements of homology orthogonal to the
curves about whose twists are in $S$, so $H$ is supported on the handles on the left side of the figure
that have no $S$-curves around them.  

The element $\phi \in \Mod_g^1(H)$ with $j_1 = \phi \circ j_0$ from the first paragraph
can thus be written as $\phi = \phi_1 \cdots \phi_n$ with each $\phi_i$ either in $\Mod_g^1(\ell)$
or $S^{\pm 1}$.  We can therefore ``connect'' $j_0$ and $j_1$ by the sequence of $H$-compatible embeddings
\[j_0, \quad \phi_1 \circ j_0, \quad \phi_1 \phi_2 \circ j_0, \quad \ldots, \quad \phi_1 \cdots \phi_n \circ j_0 = j_1.\]
It is enough to prove that the maps on homology induced by consecutive embeddings $\phi_1 \cdots \phi_i \circ j_0$ and
$\phi_1 \cdots \phi_i \phi_{i+1} \circ j_0$ in this sequence have the same image.  Multiplying these
on the left by $(\phi_1 \cdots \phi_i)^{-1}$, we see that in fact it is enough to prove that the maps
on homology induced by $j_0$ and $\phi_i \circ j_0$ have the same image.  We remark that this
type of argument was systematized in \cite{PutmanConnectivityNote}, which has many examples of it. 

This is trivial if $\phi_i \in \Mod_g^1(\ell)$
since the images differ by an inner automorphism of $\Mod_g^1(\ell)$ and inner automorphisms act
trivially on homology (see, e.g., \cite[Proposition III.8.1]{BrownCohomology}).  It is also trivial
if $\phi_i$ is an element of $S^{\pm 1}$ that fixes the subsurface $j_0(\Sigma_{g-1}^1)$.  The
remaining case is where $\phi_i = T_{\gamma_{1}}^{\pm 1}$.  It is enough to deal
with the case where the sign is positive; indeed, if the maps on homology induced by $j_0$ and $T_{\gamma_{1}} \circ j_0$
have the same image, then we can multiply both by $T_{\gamma_{1}}^{-1}$ and deduce that the same 
is true for $T_{\gamma_{1}}^{-1} \circ j_0$ and $j_0$.

In summary, we have reduced ourselves to handling the case where $j_1 = T_{\gamma_{1}} \circ j_0$ as in
the following:\\
\centerline{\psfig{file=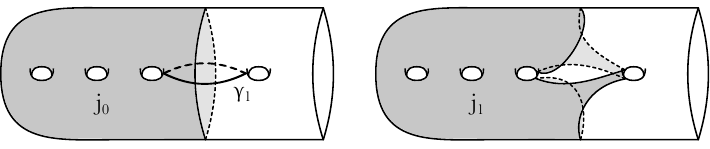,scale=1}}
We can find an embedding $\iota\colon \Sigma_{g-1}^2 \hookrightarrow \Sigma_g^1$ whose image
contains the images of $j_0$ and $j_1$:\\
\centerline{\psfig{file=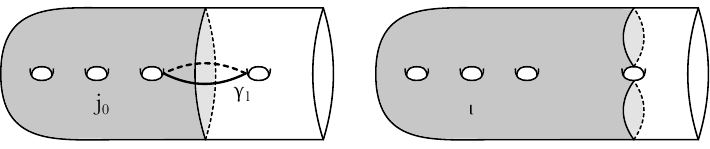,scale=1}}
Using the notation from \S \ref{section:subsurfacestabilizers}, the embedding $\iota$ induces
a homomorphism $\hMod_{g-1}^2(\ell) \rightarrow \Mod_g(\ell)$, and each $j_i$ factors as
\[\Mod_{g-1}^1(\ell) \stackrel{j'_i}{\longrightarrow} \hMod_{g-1}^2(\ell) \longrightarrow \Mod_g(\ell).\]
To prove that the images of $(j_0)_{\ast}$ and $(j_1)_{\ast}$ are the same, it is enough to prove
that the maps
\[(j'_i)_{\ast}\colon \HH_k(\Mod_{g-1}^1(\ell);\fH_{g-1}^1(H)^{\otimes r}) \longrightarrow \HH_k(\hMod_{g-1}^2(\ell);\fH_{g-1}^2(H)^{\otimes r})\]
are surjective.  In fact, we will prove they are isomorphisms.  Consider the composition
\begin{equation}
\label{eqn:composeidentity}
\Mod_{g-1}^1(\ell) \stackrel{j'_i}{\longrightarrow} \hMod_{g-1}^2(\ell) \stackrel{f}{\hookrightarrow} \Mod_{g-1}^2(\ell) \stackrel{f'}{\longrightarrow} \Mod_{g-1}^1(\ell),
\end{equation}
where the final map glues a disc to one of the components of $\partial \Sigma_{g-1}^1$.  This comes from a map 
$\Sigma_{g-1}^1 \rightarrow \Sigma_{g-1}^1$ that is homotopic to the identity:\\
\centerline{\psfig{file=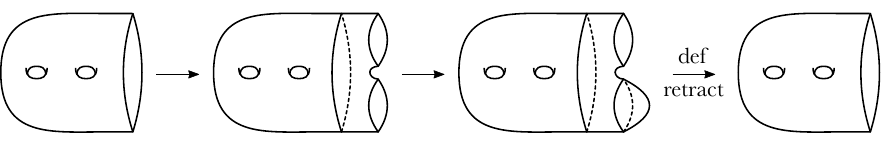,scale=1}}
Letting $W = \fH_{g-1}^1(H)^{\otimes r}$ and $V = \fH_{g-1}^2(H)^{\otimes r}$, it follows that the following composition
is the identity (and in particular, is an isomorphism):
\[\HH_k(\Mod_{g-1}^1(\ell);W) \stackrel{(j'_i)_{\ast}}{\rightarrow} \HH_k(\hMod_{g-1}^2(\ell);V) \stackrel{f_{\ast}}{\rightarrow} \HH_k(\Mod_{g-1}^2(\ell);V) \stackrel{(f')_{\ast}}{\rightarrow} \HH_k(\Mod_{g-1}^1(\ell);W).\]
Here for the final map we are using the map $V \rightarrow W$ induced by the map that fills in a boundary component.
Corollary \ref{corollary:hatcap} says that $f_{\ast}$ is an isomorphism, and Step \ref{step:capboundary} says that $(f')_{\ast}$ is an 
isomorphism.\footnote{This is why we needed the bound in Step \ref{step:capboundary} to be one better than the bound
we are proving for $\HH_k$.}  We
conclude that $(j'_i)_{\ast}$ is an isomorphism, as desired.
\end{proof}

This completes the proof of Theorem \ref{maintheorem:generalcase}.
\end{proof}

\section{Closed surfaces}
\label{section:closed}

We close by showing how to derive Theorems \ref{maintheorem:mod} and \ref{maintheorem:modstd} for closed surfaces.

\subsection{Alternate standard representation}
This first requires the following variant on Theorem \ref{maintheorem:modstd} for non-closed surfaces.  Let
$\Mod_{g,p}^b$ act on $\HH_1(\Sigma_g)$ via the homomorphism $\Mod_{g,p}^b \rightarrow \Mod_g$ that fills
in the punctures and glues discs to the boundary components.  Also, recall that
$\Mod_{g,p}^b[\ell]$ is the kernel of the action of $\Mod_{g,p}^b$ on
$\HH_1(\Sigma_g;\Z/\ell)$.

\begin{maintheorem}
\label{maintheorem:modstd2}
Let $g,p,b \geq 0$ and $\ell \geq 2$ be such that $p+b \geq 1$.  Then for $r \geq 0$, the maps
\begin{equation}
\label{eqn:modstd2toprove}
\HH_k\left(\Mod_{g,p}^b\left(\ell\right);\HH_1\left(\Sigma_{g};\Q\right)^{\otimes r}\right) \rightarrow \HH_k\left(\Mod_{g,p}^b;\HH_1\left(\Sigma_{g};\Q\right)^{\otimes r}\right)
\end{equation}
and
\[\HH_k\left(\Mod_{g,p}^b\left[\ell\right];\HH_1\left(\Sigma_{g};\Q\right)^{\otimes r}\right) \rightarrow \HH_k\left(\Mod_{g,p}^b;\HH_1\left(\Sigma_{g};\Q\right)^{\otimes r}\right)\]
are isomorphisms if $g \geq 2(k+r)^2+7k+6r+2$.
\end{maintheorem}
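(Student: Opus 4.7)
The plan is to deduce Theorem~\ref{maintheorem:modstd2} from Theorem~\ref{maintheorem:modstd}, which has already been established for non-closed surfaces as a consequence of Theorem~\ref{maintheorem:generalcase}, via a change-of-coefficients argument; the second assertion (about $\Mod_{g,p}^b[\ell]$) will then be extracted from the first (about $\Mod_{g,p}^b(\ell)$) by a transfer-map argument. The starting observation is that filling in the punctures and gluing discs to the boundary components of $\Sigma_{g,p}^b$ produces a $\Mod_{g,p}^b$-equivariant short exact sequence
\[
0 \longrightarrow B \longrightarrow \HH_1(\Sigma_{g,p}^b;\Q) \longrightarrow \HH_1(\Sigma_g;\Q) \longrightarrow 0,
\]
where $B \cong \Q^{p+b-1}$ is spanned by the homology classes of loops around the punctures and boundary components; since $\Mod_{g,p}^b$ fixes each of these loops up to isotopy, its action on $B$ is trivial.

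To prove the first assertion, I would induct on $r$, with base case $r=0$ supplied by Theorem~\ref{maintheorem:mod}. For the inductive step I would actually prove the stronger statement that for all $s,t \geq 0$ with $s+t=r$ and all $k' \leq k$, the map $\HH_{k'}(\Mod_{g,p}^b(\ell); W_{s,t}) \to \HH_{k'}(\Mod_{g,p}^b; W_{s,t})$ is an isomorphism, where $W_{s,t} := \HH_1(\Sigma_{g,p}^b;\Q)^{\otimes s} \otimes \HH_1(\Sigma_g;\Q)^{\otimes t}$. This is handled by a secondary induction on $s$ running from $r$ down to $0$. The base case $s=r$, $t=0$ is simply Theorem~\ref{maintheorem:modstd} applied to $\HH_1(\Sigma_{g,p}^b;\Q)^{\otimes r}$, whose genus bound matches the one we want. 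For the step, tensoring the displayed SES with $W_{s,t-1}$ yields the short exact sequence
\[
0 \longrightarrow B \otimes W_{s,t-1} \longrightarrow W_{s+1,t-1} \longrightarrow W_{s,t} \longrightarrow 0
\]
of $\Mod_{g,p}^b$-representations. The left term is a direct sum of $\dim B$ copies of $W_{s,t-1}$, whose total tensor degree is $r-1$, so the primary inductive hypothesis furnishes isomorphisms on its homology at every degree $\leq k$; the middle term has total degree $r$ but $s$-count $s+1$, so the secondary inductive hypothesis applies at every degree $\leq k$. A five-lemma chase between the long exact sequences in $\Mod_{g,p}^b(\ell)$- and $\Mod_{g,p}^b$-homology then yields iso for $W_{s,t}$ at every $k' \leq k$. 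Throughout, the genus bounds align because $2(k'+r')^2 + 7k' + 6r' + 2$ is monotone in both variables.

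For the second assertion, consider the factorization
\[
\HH_k(\Mod_{g,p}^b(\ell); V) \xrightarrow{\alpha} \HH_k(\Mod_{g,p}^b[\ell]; V) \xrightarrow{\beta} \HH_k(\Mod_{g,p}^b; V)
\]
with $V = \HH_1(\Sigma_g;\Q)^{\otimes r}$. By Lemma~\ref{lemma:transfer}, both $\alpha$ and $\beta$ are surjective, while the composition $\beta \circ \alpha$ is the isomorphism established in the first assertion. Since $\alpha$ is surjective and $\beta \circ \alpha$ is injective, $\alpha$ must be injective as well, hence an isomorphism; therefore $\beta$ is an isomorphism too. The main (indeed, only) nontrivial point is the bookkeeping of the genus bound to ensure that the mixed tensor powers $W_{s,t}$ appearing in the secondary induction can all be handled at the same bound $g \geq 2(k+r)^2 + 7k + 6r + 2$; this drops out of the secondary induction's base case (Theorem~\ref{maintheorem:modstd} at $(k,r)$ matches exactly) together with monotonicity. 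Beyond that, the argument is elementary homological algebra layered on top of the deep machinery already developed in the paper.
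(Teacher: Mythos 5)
Your proposal is correct and follows essentially the same route as the paper: reduce the $\Mod_{g,p}^b[\ell]$ statement to the $\Mod_{g,p}^b(\ell)$ one by the transfer surjectivity argument, and prove the latter by a change-of-coefficients induction comparing $\HH_1(\Sigma_{g,p}^b;\Q)$ with $\HH_1(\Sigma_g;\Q)$ via equivariant short exact sequences with trivial kernel, a five-lemma chase, and monotonicity of the genus bound. The only (cosmetic) difference is bookkeeping: the paper fills in punctures/boundary components one at a time, inducting over mixed tensor products $\HH_1(\Sigma_{g,p_1}^{b_1};\Q)\otimes\cdots\otimes\HH_1(\Sigma_{g,p_r}^{b_r};\Q)$ with kernel $\Q$ at each step, whereas you fill them all at once using the trivial $(p+b-1)$-dimensional kernel $B$ and a double induction on $(r,s)$, which is an equally valid and slightly tidier packaging.
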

\begin{proof}
The transfer map lemma (Lemma \ref{lemma:transfer}) implies that both maps in the composition
\[\begin{tikzcd}[font=\small, column sep=1em]
\HH_k\left(\Mod_{g,p}^b\left(\ell\right);\HH_1\left(\Sigma_{g};\Q\right)^{\otimes r}\right) \arrow{r} &
\HH_k\left(\Mod_{g,p}^b\left[\ell\right];\HH_1\left(\Sigma_{g};\Q\right)^{\otimes r}\right) \arrow{r} &
\HH_k\left(\Mod_{g,p}^b;\HH_1\left(\Sigma_{g};\Q\right)^{\otimes r}\right)
\end{tikzcd}\]
are surjections.  It is thus enough to prove that \eqref{eqn:modstd2toprove} is an isomorphism.
If $p+b = 1$, then $\HH_1(\Sigma_{g,p}^b;\Q) \cong \HH_1(\Sigma_g;\Q)$ and this reduces to Theorem \ref{maintheorem:modstd}.  We can thus
assume that $p+b \geq 2$.

Choose some arbitrary ordering on the punctures and boundary components of $\Sigma_{g,p}^b$, and for $0 \leq p' \leq p$ and $0 \leq b' \leq b$ let
$\Mod_{g,p}^b$ act on $\HH_1(\Sigma_{g,p'}^{b'};\Q)$ by filling in the first $p-p'$ punctures and gluing discs to the first $b-b'$ boundary components.
For sequences $\up = (p_1,\ldots,p_r)$ with $0 \leq p_i \leq p$ and $\ub = (b_1,\ldots,b_r)$ with $0 \leq b_i \leq b$, define
\[U(\up,\ub) = \HH_1(\Sigma_{g,p_1}^{b_1};\Q) \otimes \cdots \otimes \HH_1(\Sigma_{g,p_r}^{b_r};\Q).\]
We will prove more generally that the map
\[\HH_k\left(\Mod_{g,p}^b\left(\ell\right);U(\up,\ub)\right) \rightarrow \HH_k\left(\Mod_{g,p}^{b};U(\up,\ub)\right)\]
is an isomorphism if $g \geq 2(k+r)^2+7k+6r+2$.

If $r=0$, then our representation is the trivial representation and the theorem
reduces to Theorem \ref{maintheorem:modstd}.  Assume, therefore, that $r>0$.  For $\up$ and $\ub$ as above, define
\[d(\up,\ub) = \sum_{i=1}^r (p-p_i) + \sum_{i=1}^r (b-b_i) \geq 0.\]
The proof will be by induction on $d(\up,\ub)$.  The base case is when $d(\up,\ub) = 0$, in which case
$U(\up,\ub) = \HH_1(\Sigma_{g,p}^b;\Q)^{\otimes r}$ and the theorem follows from Theorem \ref{maintheorem:modstd}.
Assume, therefore, that $d(\up,\ub) > 0$ and that the theorem is true whenever this is smaller.

If for some $i$ we have $p_i = b_i = 0$, then increasing either $p_i$ or $b_i$ by $1$ does not change
$U(\up,\ub)$, so the theorem follows by induction.  We can therefore assume that for all $i$ we have
either $p_i >0$ or $b_i>0$.  Since $d(\up,\ub)>0$, there is some $i$ such that either $p_i<p$ or $b_i<b$ (or both).
We will give the details for when $b_i<b$.  The case where $p_i<p$ is similar.

Reordering the indices, we can assume that $b_r<b$.  Let
\[\ub' = (b_1,\ldots,b_{r-1}) \quad \text{and} \quad \ub'' = (b_1,\ldots,b_{r-1},b_r+1).\]
Since we do not have $b_r = p_r = 0$, we have a short exact sequence
\[0 \longrightarrow \Q \longrightarrow \HH_1(\Sigma_{g,p_r}^{b_r+1};\Q) \longrightarrow \HH_1(\Sigma_{g,p_r}^{b_r};\Q) \longrightarrow 0\]
of representations of $\Mod_{g,p}^b$.  Tensoring this with $U(\up,\ub')$, we get a short exact sequence
\[0 \longrightarrow U(\up,\ub') \longrightarrow U(\up,\ub'') \longrightarrow U(\up,\ub) \longrightarrow 0\]
of $\Mod_{g,p}^b$-representations.  This induces long exact sequences in both $\Mod_{g,p}^b(\ell)$ and
$\Mod_{g,p}^b(H)$ homology, and a map between these long exact sequences.  As notation,
let
\[U = U(\up,\ub) \quad \text{and} \quad U' = U(\up,\ub') \quad \text{and} \quad U'' = U(\up,\ub'')\]
and let $M(\ell) = \Mod_{g,p}^b(\ell)$ and $M(H) = \Mod_{g,p}^b(H)$.  This map between long exact
sequences contains the segment
\[\begin{tikzcd}[font=\footnotesize, column sep=small, row sep=scriptsize]
\HH_{k}(M(\ell);U') \arrow{r} \arrow{d}{f_1} & \HH_{k}(M(\ell);U'') \arrow{r} \arrow{d}{f_2} & \HH_{k}(M(\ell);U) \arrow{r} \arrow{d}{f_3} & \HH_{k-1}(M(\ell);U') \arrow{r} \arrow{d}{f_4} & \HH_{k-2}(M(\ell);U'') \arrow{d}{f_5} \\
\HH_{k}(M(H);U')    \arrow{r}                & \HH_{k}(M(H);U'')    \arrow{r}                & \HH_{k}(M(H);U)    \arrow{r}                & \HH_{k-1}(M(H);U')    \arrow{r}                & \HH_{k-2}(M(H);U'')
\end{tikzcd}\]
If $g \geq 2(k+r)^2+7k+6r+2$, then our inductive hypothesis implies that $f_1$ and $f_2$ and $f_4$ and $f_5$ are isomorphisms, so by the five-lemma $f_3$ is an isomorphism, as desired.
\end{proof}

\subsection{Closed surfaces}
The following theorem subsumes Theorems \ref{maintheorem:mod} and \ref{maintheorem:modstd} for closed surfaces. 
Theorem \ref{maintheorem:mod}, which concerns the trivial representation, is the case $r=0$.

\begin{theorem}
\label{theorem:closed}
Let $g \geq 0$ and $\ell \geq 2$.  Then for $r \geq 0$, the map
\[\HH_k\left(\Mod_{g}\left(\ell\right);\HH_1\left(\Sigma_{g};\Q\right)^{\otimes r}\right) \rightarrow \HH_k\left(\Mod_{g};\HH_1\left(\Sigma_{g};\Q\right)^{\otimes r}\right)\]
is an isomorphism if $g \geq 2(k+r)^2+7k+6r+2$.
\end{theorem}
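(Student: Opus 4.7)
Plan: I would reduce the closed-surface case to the one-puncture case (covered by Theorem \ref{maintheorem:modstd2}) via the Birman exact sequence for filling in a puncture. Consider the commutative diagram
\[\begin{tikzcd}[row sep=scriptsize]
1 \arrow{r} & \PP_{x_0}(\Sigma_g, \ell) \arrow{r} \arrow[hook]{d} & \Mod_{g,1}(\ell) \arrow{r} \arrow[hook]{d} & \Mod_g(\ell) \arrow{r} \arrow[hook]{d} & 1 \\
1 \arrow{r} & \pi_1(\Sigma_g) \arrow{r} & \Mod_{g,1} \arrow{r} & \Mod_g \arrow{r} & 1
\end{tikzcd}\]
Both fibers are closed surface groups (Poincar\'e duality groups of dimension $2$), and by Lemma \ref{lemma:pointpushh1} they act trivially on $V_r = \HH_1(\Sigma_g; \Q)^{\otimes r}$, since the loop class $\zeta$ around $x_0$ vanishes in $\HH_1(\Sigma_g)$. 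The two associated Hochschild--Serre spectral sequences then each have three nonzero rows ($q = 0, 1, 2$). Since mapping classes preserve orientation, the action on $\HH_2$ of the fiber is trivial, so the $q = 0$ and $q = 2$ rows have the same $E_2$-terms $\HH_p(\Mod_g(\ell); V_r)$ (resp.\ $\HH_p(\Mod_g; V_r)$), which is exactly the target of our desired isomorphism. By Theorem \ref{maintheorem:modstd2} the map between abutments is an isomorphism in the stable range.

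The proof then proceeds by induction on $k$. For $k = 0$, I use the argument already given at the end of Step 1 in the proof of Theorem \ref{maintheorem:generalcase}: Looijenga's work on the image of the Prym representation combined with the Borel density theorem computes the coinvariants correctly on the closed surface, giving the $k=0$ case directly. For the inductive step, I analyze the $q = 1$ row $E_2^{p, 1} = \HH_p(\Mod_g(\ell); \fH_g(\ell; \C) \otimes V_r)$, using the natural $\Mod_g(\ell)$-action on the closed Prym representation $\fH_g(\ell; \C) = \HH_1(\PP_{x_0}; \C)$ obtained by conjugation descent from $\Mod_{g,1}(\ell)$, which is well-defined because inner automorphisms act trivially on homology. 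Decomposing by Lemma \ref{lemma:intermediatecover} into isotypic components $\bigoplus_\chi \fH_g(\chi; \C)$ for the deck group $H = \HH_1(\Sigma_g; \Z/\ell)$: the trivial character contributes $\HH_1(\Sigma_g; \C)$, and its contribution matches the bottom $q = 1$ row $\HH_p(\Mod_g; V_{r+1})$ by the inductive hypothesis applied at degree $k-1$ with size $r + 1$; for $\chi \ne 1$, I apply the method of Claim 4.2 in the proof of Theorem \ref{maintheorem:generalcase}, using Lemma \ref{lemma:extendhomological} to extend the action to a partial level subgroup $\Mod_g(H')$ of genus at most $r+1$ and then combining the transfer map lemma with Theorem \ref{theorem:stability} to show the nontrivial-character summands vanish (or collapse) in the stable range.

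Combining the resulting $q = 1$ isomorphism with the abutment isomorphism via a careful spectral sequence comparison (a five-lemma argument on the filtration long exact sequences, exploiting the coincidence of the $q = 0$ and $q = 2$ rows in each spectral sequence) should yield the desired isomorphism on the $q = 0$ row. The main obstacle I anticipate is controlling the potentially nontrivial $d_2$ and $d_3$ differentials in the three-row spectral sequences, which correspond to Euler-like classes of the universal surface bundles over $B\Mod_g(\ell)$ and $B\Mod_g$ and need not vanish rationally; the compatibility of these differentials under the map of spectral sequences is essential in order to separate $E_\infty^{p,0}$ from $E_\infty^{p-2,2}$ in the filtration of the abutment. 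The bookkeeping matching the genus bound $g \geq 2(k+r)^2 + 7k + 6r + 2$ with the bound required to apply Theorem \ref{maintheorem:modstd2} at degree $k-1$ and size $r+1$ should work out as in the analogous estimates in Steps 4 and 5 of the proof of Theorem \ref{maintheorem:generalcase}.
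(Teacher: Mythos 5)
Your overall plan --- compare the Hochschild--Serre spectral sequences of the Birman extensions for $\Mod_{g,1}(\ell)\to\Mod_g(\ell)$ and $\Mod_{g,1}\to\Mod_g$, and feed Theorem \ref{maintheorem:modstd2} in on the abutments --- is a genuinely different route from the paper, which instead passes to diffeomorphism groups via Earle--Eells and uses Randal-Williams' semisimplicial resolution by embedded discs together with the spectral sequence comparison theorem.  Your route can be made to work, but as written it has two real problems.  First, you misidentify the fiber of the level-$\ell$ extension.  On a once-punctured surface the loop about the puncture is nullhomologous, so by Lemma \ref{lemma:pointpushh1} every point-push acts trivially on $\HH_1(\Sigma_{g,1};\Z/\ell)$; hence the kernel of $\Mod_{g,1}(\ell)\to\Mod_g(\ell)$ is the \emph{full} surface group $\pi_1(\Sigma_g)$ (this is Theorem \ref{theorem:birmanpartial} with $p=b=0$, and is stated explicitly in Step 1 of the proof of Theorem \ref{maintheorem:generalcase}); note that with a proper kernel your top row would not even be exact with quotient $\Mod_g(\ell)$.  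Consequently the $q=1$ row is $\HH_p\bigl(\Mod_g(\ell);\HH_1(\Sigma_g;\Q)\otimes \HH_1(\Sigma_g;\Q)^{\otimes r}\bigr)$, i.e.\ the standard representation again (handled by induction at degree $k-1$ and size $r+1$, with the bound $2(k+r)^2+7k+6r+1$, which is within your hypothesis), and \emph{not} $\HH_p(\Mod_g(\ell);\fH_g(\ell;\C)\otimes V_r)$.  Your entire middle paragraph --- the closed Prym representation, the deck-group isotypic decomposition, the passage to $\Mod_g(H')$, and the hoped-for vanishing of nontrivial character summands --- rests on this misidentification; moreover the closed-cover Prym representation genuinely behaves differently (cf.\ the remark in the introduction about \cite[Theorem B]{PutmanAbelianCovers}), so that vanishing is not something you could expect for free.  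The silver lining is that the correct identification makes your argument simpler, not harder.

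Second, the step you yourself flag as the main obstacle --- extracting the isomorphism on $E^2_{k,0}$ from the isomorphism of abutments in the presence of possibly nonzero $d^2$ and $d^3$ --- is exactly the crux of this approach, and you leave it at ``should yield.''  It is not automatic, because $E^\infty_{k,0}$ is only a subquotient of the group you want, and the unknown differentials emanate from the very spot you are comparing.  It can be done by a Zeeman-type comparison: using (i) the abutment isomorphism in degrees $\leq k$ from Theorem \ref{maintheorem:modstd2}, (ii) the inductive isomorphisms on $E^2_{p,q}$ for all $p\leq k-1$ and $q=0,1,2$, and (iii) surjectivity of every comparison map supplied by the transfer (Lemma \ref{lemma:transfer}), one checks that the maps on $E^\infty_{k-1,1}$ and $E^\infty_{k-2,2}$ are isomorphisms and then argues on the filtration of $\HH_k(\Mod_{g,1};\HH_1(\Sigma_g;\Q)^{\otimes r})$ to get injectivity at $(k,0)$ (surjectivity being free from the transfer).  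The coincidence of the $q=0$ and $q=2$ rows that you emphasize plays no role in this.  Until that comparison argument is actually written out, and the $q=1$ row is corrected as above, the proposal is a plausible alternative outline rather than a proof; by contrast, the paper's disc-resolution argument sidesteps the issue entirely, since there the unknown level-$\ell$ groups appear in the fiber direction of the spectral sequence, where the comparison theorem applies directly.
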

\begin{proof}
To simplify our notation, let $V = \HH_1(\Sigma_g;\Q)^{\otimes r}$.
We will adapt to our situation a beautiful argument of Randal-Williams \cite{RandalWilliamsDiscs} for proving homological stability for mapping class groups of closed surfaces.
For $g \geq 3$ and $b \geq 0$, let $\fD_g^b = \Diff^{+}(\Sigma_g^b,\partial \Sigma_g^b)$ and let
$\fD_g^b[\ell]$ be the kernel of the action of $\fD_g^b$ on $\HH_1(\Sigma_g;\Z/\ell)$ obtained by gluing discs to all the boundary components.
We thus have
\[\Mod_g^b = \pi_0(\fD_g^b) \quad \text{and} \quad \Mod_g^b[\ell] = \pi_0(\fD_g^b[\ell]).\]
Since $g \geq 3$, theorems of Earle--Eells \cite{EarleEells} and Earle--Schatz \cite{EarleSchatz} say that the components of $\fD_g^b$ are all contractible.  This implies that
\[\HH_k(\Mod_g^b;V) \cong \HH_k(B\fD_g^b;V) \quad \text{and} \quad \HH_k(\Mod_g^b[\ell];V) \cong \HH_k(B\fD_g^b[\ell];V).\]
By Theorem \ref{maintheorem:modstd2}, for $b \geq 1$ the map
\[\HH_k(\Mod_g^b[\ell];V) \rightarrow \HH_k(\Mod_g^b;V)\]
is an isomorphism if $g \geq 2(k+r)^2+7k+6r+2$.  It follows that the map
\begin{equation}
\label{eqn:knowncases}
\HH_k(B\fD_g^b[\ell];V) \rightarrow \HH_k(B\fD_g^b;V)
\end{equation}
is also an isomorphism if $g \geq 2(k+r)^2+7k+6r+2$.  Our goal is to prove that the map
\[\HH_k(B\fD_g[\ell];V) \rightarrow \HH_k(B\fD_g;V)\]
is an isomorphism in that same range.

Assume that $g \geq 2(k+r)^2+7k+6r+2$.
Randal-Williams (\cite{RandalWilliamsDiscs}; see \cite[\S 5]{WahlHandbook} for an expository reference) introduced a semisimplicial space of discs embedded in $\Sigma_g$ and proved its geometric realization was contractible.  He then showed that this leads to a spectral sequence converging to the homology of $\fD_g$.
Though he worked with trivial coefficients, his exact same argument also works with the coefficient system $V$, for which the
spectral sequence in question has the form
\[E^1_{pq} = \HH_q(B \fD_g^{p+1};V) \Rightarrow \HH_{p+q}(B\fD_g;V).\]
The key fact that underlies the identification of this spectral sequence is the fact that for all $p$, the group $\fD_g$ acts transitively
on the set of orientation-preserving embeddings
\begin{equation}
\label{eqn:discembeddings}
\sqcup_{i=0}^p \bbD^2 \rightarrow \Sigma_g
\end{equation}
and the stabilizer of one of these embeddings is isomorphic to $\fD_g^{p+1}$.  The same thing is true for $\fD_g[\ell]$; indeed, even the identity component
of $\fD_g$ acts transitively on embeddings \eqref{eqn:discembeddings}.  We thus also get a spectral sequence with
\[(E')^1_{pq} = \HH_q(B \fD_g^{p+1}[\ell];V) \Rightarrow \HH_{p+q}(B\fD_g[\ell];V).\]
We remark that $\fD_g^{p+1}[\ell]$ appears here rather than $\fD_g^{p+1}(\ell)$ since the stabilizer only fixes
$\HH_1(\Sigma_g;\Z/\ell)$, not $\HH_1(\Sigma_g^{p+1};\Z/\ell)$.
There is a map $E' \rightarrow E$ between these spectral sequences, and by our discussion of \eqref{eqn:knowncases} above
the map $(E')^1_{pq} \rightarrow E^1_{pq}$ is an isomorphism for $q \leq k$ and all $p$.  It is also a surjection
for all $p$ and $q$ by the transfer map lemma (Lemma \ref{lemma:transfer}).  By the spectral sequence comparison theorem, we deduce
that the map
\[\HH_k(B\fD_g[\ell];V) \rightarrow \HH_k(B\fD_g;V)\]
is an isomorphism, as desired.
\end{proof}

\end{document}